\definecolor{e-mail}{rgb}{0,.40,.80}
\definecolor{reference}{rgb}{.20,.60,.22}
\definecolor{citation}{rgb}{0,.40,.80}
\newtheorem{thm}{Theorem}[section]
\crefname{thm}{theorem}{theorems}
\newtheorem*{theorem}{Theorem}
\newtheorem{prop}[thm]{Proposition}
\crefname{prop}{proposition}{propositions}
\newtheorem*{proposition}{Proposition}
\newtheorem{cor}[thm]{Corollary}
\newtheorem{lm}[thm]{Lemma}
\theoremstyle{definition}
\newtheorem{defn}[thm]{Definition}
\theoremstyle{remark}
\newtheorem{remark}[thm]{Remark}
\newtheorem{example}[thm]{Example}
\newcommand{\cA}{\mathcal{A}}
\newcommand{\cB}{\mathcal{B}}
\newcommand{\cC}{\mathcal{C}}
\newcommand{\cD}{\mathcal{D}}
\newcommand{\cE}{\mathcal{E}}
\newcommand{\cF}{\mathcal{F}}
\newcommand{\cL}{\mathcal{L}}
\newcommand{\cM}{\mathcal{M}}
\newcommand{\cO}{\mathcal{O}}
\newcommand{\B}{\mathrm{B}}
\newcommand{\C}{\mathbf{C}}
\newcommand{\D}{\mathrm{D}}
\renewcommand{\L}{\mathrm{L}}
\newcommand{\N}{\mathrm{N}}
\newcommand{\R}{\mathrm{R}}
\newcommand{\T}{\mathrm{T}}
\newcommand{\U}{\mathrm{U}}
\newcommand{\Z}{\mathbf{Z}}
\newcommand{\rZ}{\mathrm{Z}}
\newcommand{\bZ}{\mathbb{Z}}
\renewcommand{\b}{\mathfrak{b}}
\newcommand{\g}{\mathfrak{g}}
\newcommand{\h}{\mathfrak{h}}
\newcommand{\n}{\mathfrak{n}}
\renewcommand{\sl}{\mathfrak{sl}}
\newcommand{\act}{\mathrm{act}}
\DeclareMathOperator{\ad}{ad}
\newcommand{\bimod}{\mathrm{bimod}}
\newcommand{\BiMod}[2]{{}_{#1}\mathrm{BMod}_{#2}}
\newcommand{\botimes}{\overline{\otimes}}
\newcommand{\Bun}{\mathrm{Bun}}
\newcommand{\bu}{\mathbf{1}}
\newcommand{\Cat}{\mathrm{Cat}}
\newcommand{\coev}{\mathrm{coev}}
\newcommand{\coind}{\mathrm{coind}}
\newcommand{\cp}{\mathrm{cp}}
\newcommand{\CoMod}{\mathrm{CoMod}}
\newcommand{\Dist}{\mathrm{Dist}}
\newcommand{\Day}{\mathrm{Day}}
\newcommand{\End}{\mathrm{End}}
\newcommand{\ev}{\mathrm{ev}}
\newcommand{\fd}{\mathrm{fd}}
\newcommand{\forget}{\mathrm{forget}}
\newcommand{\Frac}{\mathrm{Frac}}
\newcommand{\free}{\mathrm{free}}
\newcommand{\Fun}{\mathrm{Fun}}
\newcommand{\gen}{\mathrm{gen}}
\newcommand{\Gm}{\mathbb{G}_{\mathrm{m}}}
\newcommand{\HC}{\mathrm{HC}}
\newcommand{\hc}{\mathrm{hc}}
\newcommand{\Hom}{\mathrm{Hom}}
\newcommand{\id}{\mathrm{id}}
\newcommand{\Ind}{\mathrm{Ind}}
\newcommand{\ind}{\mathrm{ind}}
\newcommand{\Irr}{\mathrm{Irr}}
\newcommand{\lf}{\mathrm{lf}}
\newcommand{\LMod}{\mathrm{LMod}}
\newcommand{\op}{\mathrm{op}}
\newcommand{\mop}{\otimes\op}
\newcommand{\PrL}{\mathrm{Pr}^{\mathrm{L}}}
\newcommand{\bq}{\breve{q}}
\newcommand{\QCoh}{\mathrm{QCoh}}
\DeclareMathOperator{\Rep}{Rep}
\newcommand{\res}{\mathrm{res}}
\newcommand{\SL}{\mathrm{SL}}
\newcommand{\St}{\mathrm{St}}
\newcommand{\Sym}{\mathrm{Sym}}
\newcommand{\Uq}{\mathrm{U}_q}
\newcommand{\univ}{\mathrm{univ}}
\renewcommand{\Vec}{\mathrm{Vec}}
\newcommand{\Vect}{\mathrm{Vect}}
\newcommand{\wt}{\mathrm{wt}}
\newcommand{\ZDr}{\mathrm{Z}_{\mathrm{Dr}}}
\newcommand{\ham}{/\!\!/}
\newcommand{\defterm}[1]{\textbf{\emph{#1}}}
\newcommand{\adj}[2]{
\xymatrix{
#1 \ar@<.5ex>[r] & #2 \ar@<.5ex>[l]
}
}
\begin{document}
\title{A categorical approach to dynamical quantum groups}
\address{Institut f\"{u}r Mathematik, Universit\"{a}t Z\"{u}rich, Zurich, Switzerland}
\email{artem.kalmykov@math.uzh.ch}
\author{Artem Kalmykov}
\address{Institut f\"{u}r Mathematik, Universit\"{a}t Z\"{u}rich, Zurich, Switzerland}
\email{pavel.safronov@math.uzh.ch}
\author{Pavel Safronov}
\begin{abstract}
We present a categorical point of view on dynamical quantum groups in terms of categories of Harish-Chandra bimodules. We prove Tannaka duality theorems for forgetful functors into the monoidal category of Harish-Chandra bimodules in terms of a slight modification of the notion of a bialgebroid. Moreover, we show that the standard dynamical quantum groups $F(G)$ and $F_q(G)$ are related to parabolic restriction functors for classical and quantum Harish-Chandra bimodules. Finally, we exhibit a natural Weyl symmetry of the parabolic restriction functor using Zhelobenko operators and show that it gives rise to the action of the dynamical Weyl group.
\end{abstract}
\maketitle

\section*{Introduction}

\subsection*{Categorical approach to quantum groups}

Let $G$ be an affine algebraic group over a field $k$. The Tannaka duality theorems \cite{Saavedra,Deligne} imply that one can uniquely reconstruct $G$ from the data of a symmetric monoidal category $\Rep(G)$ of $G$-representations and the forgetful symmetric monoidal functor
\begin{equation}
F\colon \Rep(G)\longrightarrow \Vect.
\end{equation}
Namely, $F$ admits a right adjoint $F^\R\colon \Vect\rightarrow \Rep(G)$ and the algebra $\cO(G)$ of polynomial functions on $G$ can be reconstructed as
\[\cO(G)\cong FF^\R(k),\]
where the Hopf algebra structure on $\cO(G)$ is reconstructed from the monoidal structure on $F$.

Suppose $G$ is a reductive algebraic group, $q\in\C^\times$ and consider the category $\Rep_q(G)$ of representations of the quantum group with divided powers \cite{Lusztig,ChariPressley}. Then $\Rep_q(G)$ carries a natural braided monoidal structure and the forgetful functor
\begin{equation}
F\colon \Rep_q(G)\longrightarrow \Vect
\label{eq:RepqGforgetful}
\end{equation}
is merely monoidal. In the same way the Hopf algebra $\cO_q(G)$ of functions on the quantum group is reconstructed as $FF^\R(k)$.

The failure of the forgetful functor to preserve the braiding is captured by the \emph{$R$-matrix} (see \cref{def:Rmatrix}), i.e. a collection of maps
\[R_{V, W}\colon V\otimes W\longrightarrow V\otimes W\]
for two representations $V,W\in\Rep_q(G)$. Moreover, for three representations $U,V,W\in\Rep_q(G)$ the $R$-matrix satisfies the \emph{Yang--Baxter equation}
\begin{equation}
R_{UV} R_{UW} R_{VW} = R_{VW} R_{UW} R_{UV}
\end{equation}
in $\End(U\otimes V\otimes W)$.

\subsection*{Dynamical quantum groups}

In several areas of mathematical physics a version of the above equation has appeared for a \emph{dynamical $R$-matrix} $R_{V, W}(\lambda)\colon V\otimes W\rightarrow V\otimes W$ which depends on a parameter $\lambda\in\h^*$ (dual space of the Cartan subalgebra $\h\subset \g$); the corresponding \emph{dynamical Yang--Baxter equation} is
\begin{equation}
R_{UV}(\lambda- h^{(3)}) R_{UW}(\lambda) R_{VW}(\lambda-h^{(1)}) = R_{VW}(\lambda) R_{UW}(\lambda-h^{(2)}) R_{UV}(\lambda),
\label{eq:DYBE}
\end{equation}
where the shifts refer to the $\h$-weights of the corresponding elements of $U\otimes V\otimes W$. We refer to $\U\h\cong \cO(\h^*)$ as the base of the dynamical quantum group. As explained by Felder \cite{Felder}, the equation \eqref{eq:DYBE} is closely related to the star-triangle relation for face-type statistical mechanical models \cite{Baxter}. Moreover, it naturally appears in the description of the exchange algebra in Liouville and Toda conformal field theories \cite{GervaisNeveu}. The study of the dynamical $R$-matrix gave rise to the theory of dynamical quantum groups; see \cite{EtingofSchiffmann,EtingofICM} for reviews.

On the classical level ordinary quantum groups correspond to Poisson-Lie structures on $G$ \cite{DrinfeldICM}. Similarly, dynamical quantum groups correspond to dynamical Poisson groupoid structures on the trivial groupoid $\h^*\times G\times \h^*\rightrightarrows \h^*$ (see \cite{LGSX} for Poisson groupoids and \cite{EtingofVarchenkoGeometry} for the dynamical version). After quantization ordinary quantum groups become Hopf algebras while dynamical quantum groups become bialgebroids or Hopf algebroids (see \cite{Takeuchi} for the original definition of bialgebroids, \cite{Lu,Xu} for Hopf algebroids and \cite{EtingofVarchenkoSolutions} for the dynamical version).

One is naturally led to wonder about the categorical interpretation of dynamical quantum groups similar to the categorical interpretation \eqref{eq:RepqGforgetful} of ordinary quantum groups explained above. Our first goal is to develop such an approach (inspired by a previous work by Donin and Mudrov \cite{DoninMudrovVectorBundles,DoninMudrovQuantumGroupoids}) and prove Tannaka-type reconstruction statements.

\subsection*{Dynamical quantum groups via Harish-Chandra bimodules}

An important object in representation theory is the category $\HC(G)$ of \emph{Harish-Chandra bimodules}: the monoidal category of $\U\g$-bimodules with an integrable diagonal action. As we will explain shortly, the theory of dynamical quantum groups turns out to be closely related to the category $\HC(H)$ of Harish-Chandra bimodules for a torus $H$. In the main body of the paper (see \cref{sect:HCgeneral}), we present a general formalism which incorporates classical and quantum examples as well as non-abelian bases (following \cite{SafronovQMM}), but for simplicity here we stick to the case of $\HC(H)$.

First, we introduce the notion of a Harish-Chandra bialgebroid, which is a slight variant of the notion of an $\h$-bialgebroid introduced in \cite[Section 4.1]{EtingofVarchenkoSolutions}, see \cref{def:HCbialgebroid} for the general definition and \cref{ex:hbiaglebroids} for the case of $\HC(H)$. Namely, it is a bigraded algebra $B=\oplus_{\alpha, \beta\in\Lambda} B_{\alpha\beta}$, where $\Lambda$ is the character lattice of $H$, together with two quantum moment maps $s,t\colon \cO(\h^*)\rightarrow B$, a coproduct $\Delta\colon B\rightarrow B\times_{\U\h} B$, where the Takeuchi product introduced in \cite{Takeuchi} is
\[(B\times_{\U\h} B)_{\alpha\beta} = \bigoplus_{\delta\in\Lambda} B_{\alpha\delta}\otimes_{\cO(\h^*)} B_{\delta\beta},\]
and a counit $\epsilon\colon B\rightarrow \D(H)$ into the algebra of differential operators on $H$. We prove the following equivalent characterization of Harish-Chandra bialgebroids (see \cref{thm:HClaxmonoidalcomonad}).

\begin{theorem}
A colimit-preserving lax monoidal comonad $\bot\colon \HC(H)\rightarrow \HC(H)$ is the same as a Harish-Chandra bialgebroid $B$, so that $\bot(M) = B\times_{\U\h} M$.
\end{theorem}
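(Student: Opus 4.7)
The plan is to run a Tannakian / Eilenberg--Watts reconstruction adapted to $\HC(H)$, peeling off the data of a lax monoidal comonad one layer at a time: throughout, I identify the functor $\bot$ with its value $B := \bot(\bu)$ on the monoidal unit $\bu = \D(H)$, and view each further layer of structure as additional data on $B$. The first and main step is an Eilenberg--Watts-type recognition. The category $\HC(H)$ is presentable, and $\bu = \D(H)$ together with its bigraded twists forms a compact generating family. Any colimit-preserving endofunctor $\bot$ is then reconstructed as $\bot(M) \simeq B \times_{\U\h} M$, the Takeuchi product with $B = \bot(\bu)$: this is the unique colimit-preserving endofunctor whose value on $\bu$ is $B$ and which respects the bigrading. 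As an object of $\HC(H)$, $B$ automatically carries a bigrading $B = \bigoplus_{\alpha,\beta} B_{\alpha\beta}$ and a $\cO(\h^*)$-bimodule structure; once $B$ acquires an algebra structure, the left and right $\cO(\h^*)$-actions encode the source/target maps $s,t\colon \cO(\h^*)\rightrightarrows B$ with commuting images.

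Next I would translate the remaining structures under this identification. A comultiplication $\bot\to\bot\circ\bot$, evaluated on $\bu$, yields the coproduct $\Delta\colon B\to B\times_{\U\h} B$; a counit $\bot\to\id$ yields $\epsilon\colon B\to \bu = \D(H)$. Coassociativity and counitality translate term-for-term into the coalgebroid axioms. Separately, the lax monoidal transformations
\[\mu_{M,N}\colon (B\times_{\U\h} M)\otimes_{\U\h} (B\times_{\U\h} N)\longrightarrow B\times_{\U\h} (M\otimes_{\U\h} N),\qquad \eta\colon \bu\to \bot(\bu)\]
are determined, by naturality in $M,N$ together with the Eilenberg--Watts recognition, by their values at $M = N = \bu$; this yields a multiplication $B\otimes B\to B$ and an algebra unit. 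Associativity and unitality for $(\mu,\eta)$ become the algebra axioms on $B$, and the final compatibility between the comonad and lax monoidal structures --- that $\Delta$ and $\epsilon$ are monoidal natural transformations --- becomes exactly the statement that they are algebra homomorphisms, i.e.\ the remaining bialgebroid axiom. The reverse construction, taking a Harish-Chandra bialgebroid $B$ to the comonad $\bot := B\times_{\U\h}(-)$, is direct.

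The hardest step will be the Eilenberg--Watts recognition itself: establishing it in a form that genuinely produces the Takeuchi product (rather than a naive relative tensor) and correctly identifies the source/target maps from the bigrading on $B$. Once this bridge is in place, the translation of the comonad and lax monoidal coherences into bialgebroid identities is systematic.
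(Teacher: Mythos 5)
There is a genuine gap at the very first step. You identify $\bot$ with $B:=\bot(\bu)$, but the value of a colimit-preserving endofunctor on the monoidal unit does not determine the functor, and in this setting it visibly cannot produce the bialgebroid. First, the unit of $\HC(H)$ is $\U\h$ (the free rank-one module over itself), not $\D(H)$; the algebra $\D(H)$ is the unit of the Takeuchi product, i.e.\ the object of $\HC(H)\otimes\HC(H)$ corresponding to the \emph{identity} functor, which is why it appears as the target of the counit. Second, $\bot(\U\h)$ is an object of $\HC(H)$ and therefore carries only a single $\Lambda$-grading, whereas the bialgebroid $B=\bigoplus_{\alpha,\beta}B_{\alpha\beta}$ lives in $\HC(H)\otimes\HC(H)\cong \HC(H\times H)$ and is bigraded. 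Concretely, since $\U\h$ is concentrated in degree $0$, one has $B\times_{\U\h}\U\h=\bigoplus_\alpha B_{\alpha 0}$, so $\bot(\bu)$ only sees the column $\beta=0$ of $B$, and your uniqueness claim (``the unique colimit-preserving endofunctor whose value on $\bu$ is $B$'') fails: two such comonads can agree on $\bu$ and differ on the other generators. The correct reconstruction must use the values of $\bot$ on \emph{all} the compact projective generators $\free(k_\alpha)$, $\alpha\in\Lambda$ --- equivalently the coend $\int^{x\in\HC^{\cp}}\bot(x)\boxtimes x^{\vee}$ --- and these are not determined by $\bot(\bu)$ because $\bot$ is only lax monoidal, not a module functor.

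The paper's argument repairs exactly this point: it uses the self-duality of the cp-rigid category $\HC$ to produce a \emph{duoidal} equivalence $\HC\otimes\HC\simeq\Fun^{\L}(\HC,\HC)$, matching composition with the Takeuchi (convolution) product and Day convolution with the pointwise product on $\HC\otimes\HC^{\mop}$; under this equivalence a colimit-preserving lax monoidal comonad is literally a bimonoid, and an algebra for the pointwise product is identified (via \cref{prop:HCalgebras}) with an algebra in $\cC\otimes\cC^{\mop}$ equipped with a quantum moment map. This last identification is also stronger than what you wrote: the moment-map condition is the shift relation $s(f(\lambda))a=as(f(\lambda+\alpha))$, not merely that $s$ and $t$ have commuting images. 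Your second paragraph --- translating $\Delta$, $\epsilon$ and the lax structure into coproduct, counit and product --- is essentially the bookkeeping the duoidal formalism performs, but it only becomes valid once $B$ has been reconstructed from all the generators rather than from the unit alone.
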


We may similarly define comodules over a Harish-Chandra bialgebroid in terms of a $\Lambda$-graded $\cO(\h^*)$-module $M=\oplus_{\alpha\in\Lambda} M_\alpha$ together with a coaction map $M\rightarrow B\times_{\U\h} M$. We prove the following Tannaka reconstruction theorem (see \cref{HarishChandraTannakaReconstruction}).

\begin{theorem}
Suppose $\cD$ is a monoidal category with a monoidal functor $F\colon \cD\rightarrow \HC(H)$ which admits a colimit-preserving right adjoint $F^\R\colon \HC(H)\rightarrow \cD$. Then there is a Harish-Chandra bialgebroid $B$, such that $(F\circ F^\R)(-)\cong B\times_{\U\h}(-)$ and $F$ factors through a monoidal functor
\[\cD\longrightarrow \CoMod_B(\HC(H)).\]
If $F$ is conservative and preserves equalizers, the above functor is an equivalence.
\end{theorem}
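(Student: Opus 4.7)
The plan is to apply comonadic Tannaka reconstruction, leveraging the preceding theorem which identifies colimit-preserving lax monoidal comonads on $\HC(H)$ with Harish-Chandra bialgebroids.

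First, I would form the comonad $\bot := F\circ F^\R$ on $\HC(H)$ arising from the adjunction $F\dashv F^\R$ and check that it fits into the hypotheses of that theorem. Colimit preservation is automatic: $F$ is a left adjoint so preserves colimits, and $F^\R$ preserves colimits by hypothesis. The lax monoidal structure comes from the general doctrinal-adjunction fact that the right adjoint of a strong monoidal functor is canonically lax monoidal (its structure maps are the mates of the inverses of the structure maps of $F$); composing $F$ strong with $F^\R$ lax yields a lax monoidal structure on $\bot$, and compatibility with the counit and comultiplication of the comonad follows from the triangle identities. Applying the preceding theorem produces a Harish-Chandra bialgebroid $B$ together with a natural isomorphism $\bot(M)\cong B\times_{\U\h} M$.

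Second, I would construct the comparison functor $\widetilde{F}\colon \cD\to\CoMod_B(\HC(H))$ by sending $X\in\cD$ to $F(X)\in\HC(H)$ equipped with the coaction
\[F(X)\xrightarrow{F(\eta_X)} FF^\R F(X)\cong B\times_{\U\h} F(X),\]
where $\eta$ is the unit of $F\dashv F^\R$. The coassociativity and counitality of this coaction are the standard identities for the coalgebra of a comonad. The monoidal structure on $\widetilde{F}$ comes from the lax monoidal structure on $\bot$ together with the monoidal structure on $F$: the isomorphism $F(X\otimes Y)\cong F(X)\otimes F(Y)$ is a map of $B$-comodules because the two coactions agree after unfolding the definition of the lax structure on $\bot$, which is precisely how the coproduct on $B$ was defined in the previous theorem.

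Third, the final assertion follows from the dual of Beck's theorem: under the hypotheses that $F$ is conservative and preserves equalizers, together with the existence of the right adjoint, $F$ is comonadic over $\HC(H)$, so $\widetilde{F}\colon\cD\to\CoMod_\bot(\HC(H))=\CoMod_B(\HC(H))$ is an equivalence of underlying categories. The step I expect to require the most care is not the underlying equivalence, which is classical Barr--Beck, but rather the monoidal upgrade: one must equip $\CoMod_B(\HC(H))$ with a monoidal structure, defined using the coproduct $\Delta\colon B\to B\times_{\U\h} B$ on the bialgebroid, and verify that under this structure $\widetilde{F}$ is strong monoidal and that the equivalence respects tensor products. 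This amounts to tracing through how the lax monoidal structure on $\bot$ encodes $\Delta$ (as in the previous theorem) and matching it with the strong monoidal structure on $F$, which is a formal but diagram-heavy verification.
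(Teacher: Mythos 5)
Your proposal is correct and follows essentially the same route as the paper: form the colimit-preserving lax monoidal comonad $\bot = FF^\R$ (lax monoidality of $F^\R$ by doctrinal adjunction), invoke \cref{thm:HClaxmonoidalcomonad} to identify it with $B\times_{\U\h}(-)$, factor $F$ through $\CoMod_\bot$ by the standard comonadic comparison, and conclude with the dual Barr--Beck theorem. The monoidal upgrade of the comparison functor, which you correctly flag as the diagram-heavy part, is exactly the point the paper delegates to the dual of Szlach\'{a}nyi's Proposition 3.5.
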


Let us now explain the origin of dynamical R-matrices. Assume that $\cD$ in addition has a braided monoidal structure. Moreover, assume that the functor $F\colon \cD\rightarrow \HC(H)$ lands in free Harish-Chandra bimodules, i.e. there is a functor $F'\colon \cD\rightarrow \Rep(H)$ and an equivalence $F(x)\cong \U\h\otimes F'(x)$ for any object $x\in\cD$. The following is \cref{prop:dynamicalFRT}.

\begin{proposition}
Under the above assumptions the image of the braiding under $F\colon \cD\rightarrow \HC(H)$ gives rise to dynamical $R$-matrices $R\colon \h^*\rightarrow \End(F'(x)\otimes F'(y))$ satisfying the dynamical Yang--Baxter equation \eqref{eq:DYBE}.
\end{proposition}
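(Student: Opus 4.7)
The plan is to extract the dynamical $R$-matrices from the braiding on $\cD$ using the freeness assumption, and then to derive the dynamical Yang--Baxter equation from the Yang--Baxter relation for the braiding in $\cD$ via a careful analysis of tensor products of morphisms between free Harish-Chandra bimodules.

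First, I would identify morphisms of free bimodules concretely. For a $\Lambda$-graded vector space $V=\bigoplus V_\alpha$, the Harish-Chandra condition pins down the right $\U\h$-action on $\U\h\otimes V$: under $\U\h\cong\cO(\h^*)$, the element $h\in\h$ acts on $1\otimes v_\alpha$ by multiplication by the function $\lambda\mapsto\lambda(h)-\alpha(h)$. Using this, one checks that a morphism $\U\h\otimes V\to\U\h\otimes W$ of Harish-Chandra bimodules is the same as a weight-preserving function $\bar\phi\colon\h^*\to\Hom_\Lambda(V,W)$, and that tensoring with identities satisfies the two shift formulas
\[\psi\otimes_{\U\h}\id_W \;\leftrightarrow\; \bar\psi(\lambda)\otimes\id_W, \qquad \id_V\otimes_{\U\h}\phi \;\leftrightarrow\; \id_V\otimes\bar\phi(\lambda-h^{(1)}),\]
where $h^{(1)}$ denotes the $\h$-weight of the left tensor factor. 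Iterating these gives the general tensor-product formula and is the source of all dynamical shifts.

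Second, the braiding $\sigma_{x,y}\colon x\otimes y\to y\otimes x$ maps under $F$, via the monoidal structure, to a morphism $F(x)\otimes_{\U\h}F(y)\to F(y)\otimes_{\U\h}F(x)$ in $\HC(H)$, which under freeness and the above identification becomes a weight-preserving function $\tilde R(\lambda)\colon F'(x)\otimes F'(y)\to F'(y)\otimes F'(x)$. I then define $R_{F'(x),F'(y)}(\lambda):=\tau\circ\tilde R(\lambda)\in\End(F'(x)\otimes F'(y))$, where $\tau$ swaps tensor factors.

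Third, I would apply the Yang--Baxter equation for the braiding in $\cD$,
\[(c_{V,W}\otimes\id_U)(\id_V\otimes c_{U,W})(c_{U,V}\otimes\id_W)=(\id_W\otimes c_{U,V})(c_{U,W}\otimes\id_V)(\id_U\otimes c_{V,W}),\]
push it through $F$, and translate each factor using the two shift formulas. Identity tensors on the right produce no shift, while identity tensors on the left produce exactly the shifts $\lambda\mapsto\lambda-h^{(i)}$ appearing in \eqref{eq:DYBE}. Conjugating by appropriate swaps to convert each $\tilde R$ into an $R$ then reorganizes the resulting identity into the DYBE. The main obstacle is purely combinatorial bookkeeping: one must correctly match up the six shift terms across the two sides of the Yang--Baxter equation, track the reindexing of factors after each braiding, and absorb the swaps $\tau$ cleanly so that the output takes the precise form \eqref{eq:DYBE}. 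Nothing further is needed beyond the hexagon/Yang--Baxter relation in $\cD$ and the two shift formulas above.
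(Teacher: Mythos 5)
Your proposal is correct and follows essentially the same route as the paper: the paper also derives the dynamical Yang--Baxter equation from the braid relation satisfied by $\check{R}=J^{-1}\circ F(\sigma)\circ J$ in $\HC(H)$, converts $\check{R}$ to $R$ by composing with the flip, and obtains the shifts $\lambda-h^{(i)}$ from the identification $(\U\h\otimes V)\otimes_{\U\h}(\U\h\otimes W)\cong\U\h\otimes V\otimes W$, exactly as in your two shift formulas. The only difference is one of presentation: you spell out the shift bookkeeping that the paper leaves implicit by reference to its earlier description of morphisms of free Harish-Chandra bimodules.
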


The above proposition is a direct quantum analogue of an interpretation of classical dynamical $r$-matrices in terms of 1-shifted Poisson morphisms (see \cite{CPTVV} for what this means) $[\h^*/H]\rightarrow \B G$, see \cite[Proposition 5.7]{SafronovPoissonLie}.

Let us compare these results to Tannaka reconstruction results for bialgebroids proven in \cite{Szlachanyi,ShimizuTannaka}. Suppose $R$ is a ring. It is shown in \cite[Theorem 5.4]{Szlachanyi} that a colimit-preserving \emph{oplax monoidal monad} on the category $\BiMod{R}{R}$ of $R$-bimodules is the same as a bialgebroid over $R$. Comparing it to our \cref{thm:HClaxmonoidalcomonad}, the difference is that we work with \emph{lax monoidal comonads} instead, replace $\BiMod{\U\h}{\U\h}$ by the full subcategory $\HC(H)$ of Harish-Chandra bimodules and replace Takeuchi's bialgebroids by $\h$-bialgebroids (i.e. adding an extra integrability assumption).

Szlach\'{a}nyi \cite[Theorem 3.6]{Szlachanyi} has proven a Tannaka-type reconstruction result for monoidal functors $F\colon \cD\rightarrow \BiMod{R}{R}$ admitting \emph{left adjoints} in terms of \emph{modules} over the corresponding bialgebroid. Shimizu has also proven a version of such a Tannaka reconstruction result in terms of \emph{comodules} over the bialgebroid (see \cite[Theorem 4.3, Lemma 4.18]{ShimizuTannaka}).

\subsection*{Parabolic restriction}

There are two standard dynamical quantum groups $F(G)$ and $F_q(G)$ introduced in \cite{EtingofVarchenkoExchange} in terms of the so-called \emph{exchange construction}. Here $F(G)$ quantizes the standard rational dynamical $r$-matrix and $F_q(G)$ quantizes the standard trigonometric dynamical $r$-matrix (see \cite[Section 4]{EtingofSchiffmann}). Our second goal of the paper is to relate these dynamical quantum groups to objects in geometric representation theory.

Let $G$ be a split reductive algebraic group over a characteristic zero field $k$, $B\subset G$ a Borel subgroup and $H=B/[B, B]$ the abstract Cartan subgroup; we denote by $\g,\b,\h$ their Lie algebras. Consider the correspondence of algebraic stacks
\begin{equation}
\xymatrix{
& [\b/B] \ar[dl] \ar[dr] & \\
[\g^*/G] && [\h^*/H].
}
\label{eq:correspondence}
\end{equation}
It appears in many areas of symplectic geometry and geometric representation theory:
\begin{itemize}
\item Let $\tilde{\g}$ be the variety parametrizing Borel subgroups of $G$ together with an element $x\in\g$ contained in the Lie algebra of the corresponding Borel subgroup. The projection $\tilde{\g}\rightarrow \g$ is known as the Grothendieck--Springer resolution (see \cite[Section 3.1.31]{ChrissGinzburg}). We may identify $[\tilde{\g}/G]\cong [\b/B]$, so that the projection $[\b/B]\rightarrow [\g^*/G]$ is identified with the Grothendieck--Springer resolution $[\tilde{\g}/G]\rightarrow [\g/G]$. The study of the categories of $\D$-modules on this correspondence is closely related to Springer theory (see \cite{Gunningham} and references there).

\item Let $N\subset B$ be the unipotent radical. Then we may identify
\[[\b/B]\cong [G\backslash \T^*(G/N) / H],\]
where $\T^*(G/N)/H\rightarrow G/B$ is the universal family of twisted cotangent bundles over the flag variety parametrized by $\lambda\in\h^*$. In particular, quantization of this correspondence is closely related to the Beilinson--Bernstein localization theorem \cite{BeilinsonBernstein} (see \cite{BZNadlerBB}).

\item The stacks $[\g^*/G],[\h^*/H]$ have 1-shifted symplectic structures in the sense of \cite{PTVV}; moreover, \eqref{eq:correspondence} is a 1-shifted Lagrangian correspondence. It is shown in \cite[Section 2.2.1]{CalaqueTFT} that a Lagrangian $L$ in $[\g^*/G]$ is the same as a Hamiltonian $G$-space, i.e. an algebraic symplectic variety $X$ equipped with a symplectic $G$-action and a moment map $X\rightarrow \g^*$. Composing the Lagrangian $L\rightarrow [\g^*/G]$ with the correspondence \eqref{eq:correspondence} we obtain a Lagrangian in $[\h^*/H]$, i.e. a Hamiltonian $H$-space. It is shown in \cite{SafronovImplosion} that this procedure coincides with the procedure of symplectic implosion \cite{GuilleminJeffreySjamaar,DancerKirwanSwann}.

\item One may replace Lie algebras by the corresponding groups, i.e. one may consider the correspondence $[G/G]\leftarrow [B/B]\rightarrow [H/H]$. It is shown in \cite[Theorem A]{Boalch} that this correspondence (and its analogue for a parabolic subgroup) appears in the description of logarithmic connections on a disk.
\end{itemize}

Consider the induced bimodule category
\begin{equation}
\QCoh([\g^*/G])\curvearrowright \QCoh([\b/B]) \curvearrowleft \QCoh([\h^*/H]),
\label{eq:classicalbimodule}
\end{equation}
where $\QCoh([\g^*/G])$ is the symmetric monoidal category of quasi-coherent sheaves on the stack $[\g^*/G]$. Explicitly, it can be identified as
\[\QCoh([\g^*/G])\cong \LMod_{\Sym(\g)}(\Rep G)\]
and similarly for $H$.

In \cref{sect:classicalHC} we study a quantum version of the bimodule \eqref{eq:classicalbimodule}:
\begin{equation}
\HC(G)\curvearrowright \cO^{\univ} \curvearrowleft \HC(H).
\label{eq:quantumbimodule}
\end{equation}
Here, as before, $\HC(G)$ is the monoidal category of Harish-Chandra bimodules, i.e. $\U\g$-bimodules with an integrable diagonal action. $\cO^{\univ}$ is a universal version of category $\cO$: it is the category of $\U\g$-modules internal to the category $\Rep(H)$ whose $\n$-action is locally nilpotent. Equivalently, it is the category of $(\U\g, \U\h)$-bimodules whose diagonal $B$-action is integrable. The module structure on either side is given by the tensor product of bimodules using the latter description of $\cO^{\univ}$. The universal Verma module $M^{\univ} = \U\g\otimes_{\U\b}\U\h$ is naturally an object of $\cO^{\univ}$.

Let us explain how it relates to the classical picture. The algebra $\U\g$ has a natural PBW filtration; consider the corresponding Rees algebra over $k[\hbar]$. The above constructions can be repeated to produce $k[\hbar]$-linear categories, so that at $\hbar=0$ the bimodule \eqref{eq:quantumbimodule} reduces to the bimodule \eqref{eq:classicalbimodule}.

Passing to the right adjoint of the action functor $\HC(H)\rightarrow \cO^{\univ}$ on the universal Verma module $M^{\univ}$, one obtains the parabolic restriction functor
\[\res\colon \HC(G)\longrightarrow \HC(H)\]
given by $\res(X) = (X/X\n)^N$, which is naturally lax monoidal. The following statement combines \cref{prop:HCalgebras} and \cref{rmk:Mickelsson} and provides a quantization of symplectic implosion.

\begin{proposition}
An algebra in $\HC(G)$ is a $G$-equivariant algebra $A$ with a quantum moment map $\U\g\rightarrow A$. We have an isomorphism of algebras $\res(A)\cong A\ham N$, where $A\ham N$ is the quantum Hamiltonian reduction by $N$.
\end{proposition}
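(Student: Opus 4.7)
The plan is to establish the two assertions separately.

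For the first, I unpack the monoidal structure on $\HC(G)$: the tensor product is $\otimes_{\U\g}$ and the monoidal unit is $\U\g$ with its standard bimodule structure. An algebra object therefore consists of a Harish-Chandra bimodule $A$, a bimodule map $u\colon \U\g\to A$ (unit) and a bimodule map $m\colon A\otimes_{\U\g} A\to A$ (multiplication) satisfying the usual axioms. From the unit axioms $m\circ(u\otimes\id)=\id=m\circ(\id\otimes u)$ together with the fact that $u$ is a bimodule map, one deduces that $u$ is an algebra homomorphism and that the bimodule structure on $A$ is recovered as $x\cdot a=u(x)a$ and $a\cdot x=au(x)$. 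Integrability of the diagonal action then amounts to the infinitesimal action $x.a=[u(x),a]$ exponentiating to a $G$-action making $u$ equivariant, which is precisely the data of a quantum moment map. The converse is routine: a $G$-equivariant algebra with moment map defines a Harish-Chandra bimodule via $x\cdot a\cdot y=u(x)au(y)$ whose multiplication factors through $\otimes_{\U\g}$.

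For the second assertion, I apply the formula $\res(X)=(X/X\n)^N$ to $A$. Under the identification just established, the right action of $\n$ on $A$ is $a\cdot x=au(x)$, so $A\n=Au(\n)$ is the right ideal generated by $u(\n)$. Hence
\[\res(A)=\bigl(A/Au(\n)\bigr)^N,\]
which matches the underlying $H$-equivariant $\U\h$-module of the Hamiltonian reduction $A\ham N$.

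To check that the algebra structures agree, I will use the lax monoidal structure on $\res$, which equips $\res(A)$ with a multiplication as the composite
\[\res(A)\otimes_{\U\h}\res(A)\longrightarrow\res(A\otimes_{\U\g} A)\xrightarrow{\res(m)}\res(A).\]
The plan is to describe the first map explicitly and check that it sends $[a]\otimes[b]$ to $[a\otimes b]$ for $N$-invariant classes, so that after applying $\res(m)$ the product becomes $[ab]$, which is precisely the formula defining the multiplication on $A\ham N$. The unit is identified similarly from $u\colon \U\g\to A$.

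The main obstacle is making the lax monoidal structure on $\res$ explicit. The functor $\res$ arises abstractly via the $\HC(G)$-action on $M^{\univ}$ composed with the right adjoint of the module-category action functor $\HC(H)\to\cO^{\univ}$, and its lax structure is built formally from the adjunction unit and counit together with the $\HC(G)$-module structure on $\cO^{\univ}$. The technical work lies in unwinding these formal manipulations to recover the concrete formula $[a]\otimes[b]\mapsto[a\otimes b]$, and in confirming that the relative tensor product $\otimes_{\U\h}$ on $\res(A)\otimes_{\U\h}\res(A)$ is compatible with the single representatives $a\otimes b$ in $A\otimes_{\U\g} A$. Once this identification is in place, matching with the standard multiplication on $A\ham N$ is immediate.
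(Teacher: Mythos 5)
Your proposal is correct and follows essentially the same route as the paper: the first assertion is the content of \cref{prop:HCalgebras} (the paper phrases it via the embedding of $\HC(G)$ into $\U\g$-bimodules, where an algebra is an algebra with a map from $\U\g$ and membership in $\HC$ is the moment map condition, which is what you unwind by hand), and the second is \cref{rmk:Mickelsson}. The ``main obstacle'' you flag is already resolved in the paper: the lax monoidal structure on $\res$ is recorded explicitly in \eqref{eq:resmonoidal} as $[x]\otimes[y]\mapsto[x\otimes y]$, well-defined because $[y]$ is $N$-invariant, so the multiplication on $\res(A)$ is indeed $[a][b]=[ab]$ as in the Mickelsson algebra $A\ham N$.
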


For a generic central character $\chi\colon \rZ(\U\g)\rightarrow \C$, the BGG category $\cO_\chi$ with that central character is semisimple with simple objects given by Verma modules. We prove an analogous statement in the universal case. The following statement combines \cref{thm:Ofreerank1} and \cref{cor:resmonoidal}.

\begin{theorem}
Consider the subcategories $\HC(H)^{\gen}\subset \HC(H)$ and $\cO^{\univ, \gen}\subset \cO^{\univ}$ of modules with generic $\h$-weights. Then the functor $\HC(H)\rightarrow \cO^{\univ, \gen}$ is an equivalence. In particular,
\[\res^{\gen}\colon \HC(G)\longrightarrow \HC(H)^{\gen}\]
is strongly monoidal and colimit-preserving.
\end{theorem}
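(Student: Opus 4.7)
The plan is to first establish the equivalence between $\HC(H)^{\gen}$ and $\cO^{\univ,\gen}$, and then derive the second assertion by a formal module-theoretic argument.

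I would analyse the adjunction given by the action functor $A\colon\HC(H)\to\cO^{\univ}$, $Y\mapsto M^{\univ}\otimes_{\U\h}Y=\U\g\otimes_{\U\b}Y$, and its right adjoint $\widetilde{\res}\colon\cO^{\univ}\to\HC(H)$, $Z\mapsto Z^{\n}$. (Precomposition of $\widetilde{\res}$ with the action of $\HC(G)$ on $M^{\univ}$, namely $X\mapsto X\otimes_{\U\g}M^{\univ}=X/X\n$, recovers the formula $\res(X)=(X/X\n)^{N}$.) Fully faithfulness of $A$ on the generic locus reduces to showing that the natural injection $Y\hookrightarrow(\U\g\otimes_{\U\b}Y)^{\n}$ coming from the highest-weight line is an isomorphism for $Y\in\HC(H)^{\gen}$. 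This is a PBW computation whose fibre over $\lambda\in\h^{\ast}$ is the classical statement that $(M_{\lambda}\otimes Y_{\lambda})^{\n}=Y_{\lambda}$ precisely when $\lambda$ avoids the Verma reducibility walls.

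Essential surjectivity onto $\cO^{\univ,\gen}$ is the heart of the matter and amounts to a universal-family version of the classical theorem that BGG category $\cO$ at a generic central character is semisimple with Verma modules as simples. For $Z\in\cO^{\univ,\gen}$ I would show that the counit $\U\g\otimes_{\U\b}Z^{\n}\to Z$ is an isomorphism by localising $\U\h$ away from the affine root hyperplanes $\langle\lambda+\rho,\alpha\rangle\in\bZ$, so that the Harish-Chandra centre acts through pairwise distinct characters on the summands of $Z$ indexed by the $W$-orbit of a generic weight. On that locus, the vanishing of $\mathrm{Ext}^{1}$ between Verma modules with non-resonant distinct highest weights splits any Verma filtration of $Z$, and one concludes that $Z$ is generated freely from its highest-weight submodule $Z^{\n}$ by the action of $\U\n^{-}$.

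With the equivalence in place, the second assertion is formal. The left action of $\HC(G)$ on $\cO^{\univ}$ commutes with the right $\HC(H)$-action, so there is a natural isomorphism $X\cdot(M^{\univ}\otimes_{\U\h}Y)\cong(X\cdot M^{\univ})\otimes_{\U\h}Y$. Passing to right adjoints gives a projection formula $\widetilde{\res}(X\cdot Z)\cong\res(X)\otimes_{\U\h}\widetilde{\res}(Z)$ on the generic locus, where $\widetilde{\res}\circ A=\id$. Applied with $Z=X'\cdot M^{\univ}$ this yields $\res^{\gen}(X\otimes_{\U\g}X')\cong\res^{\gen}(X)\otimes_{\U\h}\res^{\gen}(X')$, which is strong monoidality, and colimit preservation is automatic because $\res^{\gen}$ becomes inverse to the colimit-preserving equivalence $A$. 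The main obstacle is the generic-semisimplicity step: in contrast to the classical statement, the argument must be performed uniformly in $\lambda$, compatibly with the integrable diagonal $H$-action and the right $\U\h$-module structure, and the key technical issue is pinning down the precise open locus in $\h^{\ast}$ where the needed $\mathrm{Ext}^{1}$-vanishing holds.
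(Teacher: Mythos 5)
Your overall skeleton matches the paper's: analyse the adjunction $\act_H\dashv(-)^N$ between $\HC(H)^{\gen}$ and $\cO^{\univ,\gen}$, show it is an equivalence, and then deduce strong monoidality and colimit-preservation of $\res^{\gen}=\act_H^\R\circ\act_G$ by the formal argument that the counit and lax module structure maps of an equivalence are isomorphisms (this is \cref{prop:freerank1monoidal} in the paper). That last formal step, and the fully-faithfulness step, are essentially fine.

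The genuine gap is in the step you yourself flag as "the heart of the matter": essential surjectivity. You propose to split a Verma filtration of $Z\in\cO^{\univ,\gen}$ using $\mathrm{Ext}^1$-vanishing between Vermas with distinct non-resonant highest weights, but (i) a general object of $\cO^{\univ,\gen}$ is an arbitrary, possibly infinitely generated, $(\U\g,\U\h)$-bimodule with locally nilpotent $\n$-action, and the existence of a Verma filtration is not available a priori --- indeed it is essentially equivalent to the statement being proved; and (ii) the fibrewise $\mathrm{Ext}^1$-vanishing over individual $\lambda\in\h^{*,\gen}$ does not obviously globalize to a splitting over the localized base $(\U\h)^{\gen}$, which is exactly the uniformity problem you leave open. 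The paper's proof avoids both issues by a different device: the extremal projector $P\in T(\g)$ of Asherova--Smirnov--Tolstoy (\cref{thm:extremalprojector}) gives, uniformly over generic weights, a natural isomorphism $(-)_{\n_-}\cong(-)^N$ on $\cO^{\univ,\gen}$ (\cref{prop:coinvinv}); hence $(-)^N$ is exact, and it is conservative because local nilpotence of the $\n$-action forces $M^N\neq 0$ for $M\neq 0$. A right adjoint that is exact and conservative, paired with a fully faithful left adjoint (which here follows from the PBW identification of $(M^{\univ}\otimes_{\U\h}X)_{\n_-}$ with $X$), is automatically an equivalence --- no semisimplicity or $\mathrm{Ext}$ computation is needed. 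Without the extremal projector, or some substitute establishing exactness of $(-)^N$ on the generic locus, your argument does not close. A smaller point: colimit-preservation of $\res^{\gen}$ follows because $(-)^N$, being inverse to the equivalence $\act_H$, preserves colimits and $\act_G$ does too; $\res^{\gen}$ itself is not inverse to $\act_H$ as your last paragraph suggests.
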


The key step in the above statement is to prove that the Verma module for generic highest weights is projective; in the universal setting this is captured by the existence of the \emph{extremal projector} \cite{AsherovaSmirnovTolstoy} (see \cref{thm:extremalprojector}) which splits the projection $\U\g\rightarrow M^{\univ}$ for generic $\h$-weights.

There is a natural monoidal functor $\free\colon\Rep(G)\rightarrow \HC(G)$ given by $V\mapsto \U\g\otimes V$, so we get a monoidal functor
\[\Rep(G)\xrightarrow{\free} \HC(G)\xrightarrow{\res^{\gen}} \HC(H)^{\gen}.\]

We, moreover, show in \cref{thm:FGexchange} that the $\h$-bialgebroid reconstructed from $\Rep(G) \rightarrow \HC(H)^{\gen}$ is isomorphic to $F(G)$, so that $\Rep(G)$ is equivalent to $F(G)$-comodules. We also prove analogous statements in the setting of quantum groups in \cref{sect:quantumres}.

These results have the following interpretation. The same braided monoidal category $\Rep_q(G)$ has different monoidal functors $\Rep_q(G)\rightarrow \Vect$ corresponding to different choices of the classical $r$-matrix; by Tannaka duality this corresponds to non-standard quantum groups, such as the Cremmer--Gervais quantum group in the case $G=\SL_n$. In this paper we study monoidal functors $\Rep_q(G)\rightarrow \HC_q(H)^{\gen}$ which give rise to dynamical quantum groups. Note that these are different ways to study the same braided monoidal category.

We also expect that the approach to dynamical quantum groups $F(G)$ and $F_q(G)$ presented here in terms of the correspondence \eqref{eq:correspondence} might be useful to have an interpretation of Felder's dynamical quantum group \cite{Felder} in terms of the 1-shifted Lagrangian correspondence $\Bun_G(E)\leftarrow \Bun_B(E)\rightarrow \Bun_H(E)$ of moduli stacks of bundles on an elliptic curve $E$. It is interesting to note that the same correspondence is closely related to Feigin--Odesskii algebras \cite{FeiginOdesskii} (in particular, Sklyanin algebras \cite{Sklyanin}), see \cite[Example 4.11]{SafronovPoissonLie} and \cite{HuaPolishchuk}.

It is shown in \cite[Theorem 3.11]{BZBJ2} that $\HC_q(G)$-module categories are the same as $\Rep_q(G)$-braided module categories \cite[Section 5.1]{Brochier}. In particular, the monoidal functor $\res^{\gen}\colon \HC_q(G)\rightarrow \HC_q(H)^{\gen}$ allows one to transfer $\Rep_q(H)$-braided module categories to $\Rep_q(G)$-braided module categories.

\subsection*{Dynamical Weyl group}

Let $W=\N(H)/H$ be the Weyl group and $\hat{W}$ the braid group covering $W$. The group $W$ naturally acts on the symmetric monoidal category $\Rep(H)$, so that we may consider the category of $W$-invariants $\Rep(H)^W$. Moreover, there exists a map $\hat{W}\rightarrow \N(H)$ lifting $\hat{W}\rightarrow W$ \cite{Tits}, so that the forgetful functor $\Rep(G)\rightarrow \Rep(H)$ factors through a symmetric monoidal functor
\begin{equation}
\Rep(G)\longrightarrow \Rep(H)^{\hat{W}}.
\end{equation}
Our third goal of the paper is to exhibit Weyl symmetry of the parabolic restriction functor for Harish-Chandra bimodules. A similar setup works for quantum groups using the quantum Weyl group \cite{Lusztig,Soibelman,KirillovReshetikhin}. Note, however, that the resulting functor
\begin{equation}
\Rep_q(G)\longrightarrow \Rep_q(H)^{\hat{W}}
\end{equation}
is \emph{not} monoidal: in fact, the failure of the quantum Weyl group to be monoidal is related to the failure of the functor $\Rep_q(G)\rightarrow \Rep_q(H)$ to be braided; this can be encapsulated in the notion of a braided Coxeter category \cite{AppelToledanoLaredo}.

Zhelobenko \cite{ZhelobenkoCocycles} in the study of Mickelsson algebras has introduced a collection of \emph{Zhelobenko operators} $q_w\colon \U\g\rightarrow \U\g$ for every element of the Weyl group $w\in W$ satisfying the braid relations (see \cref{thm:classicalZhelobenko}). It was realized in \cite{KhoroshkinOgievetsky} that these operators give an action of the braid group $\hat{W}$ on a localized Mickelsson algebra.

Consider the $W$-action on $\HC(H)$, where $W$ acts on $\U\h$ via the \emph{dot action} (the usual $W$-action shifted by the half-sum of positive roots $\rho$) and on $H$ via the usual action. The above results directly imply the following statement (see \cref{thm:classicalZhelobenkoFactorization}).

\begin{theorem}
The Zhelobenko operators define a monoidal functor
\[\res^{\gen}\colon \HC(G)\longrightarrow \HC(H)^{\gen, \hat{W}}\]
lifting $\res^{\gen}\colon \HC(G)\rightarrow \HC(H)^{\gen}$.
\end{theorem}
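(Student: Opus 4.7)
The plan is to reduce the statement to the content of the classical Zhelobenko theorem (\cref{thm:classicalZhelobenko}), using that the earlier theorem already makes $\res^{\gen}\colon \HC(G)\rightarrow \HC(H)^{\gen}$ strongly monoidal. It then suffices to (a) exhibit for every $X\in\HC(G)$ and every $w\in\hat{W}$ an isomorphism $\phi_w(X)\colon \res^{\gen}(X)\xrightarrow{\sim} w_\ast\res^{\gen}(X)$ in $\HC(H)^{\gen}$, (b) check naturality in $X$ and the braid relations, and (c) verify compatibility of $\{\phi_w\}$ with the monoidal structure of $\res^{\gen}$.

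For (a), I would take the Zhelobenko operator $q_w$ on a suitable localization of $\U\g$ and apply it via the left $\U\g$-action on $X$. Since $q_w$ preserves the left ideal $\U\g\cdot\n$ and intertwines the adjoint $N$-action, it descends to an operator on $(X/X\n)^N$; on the generic locus the denominators occurring in $q_w$ become invertible, so this operator is an isomorphism. The key structural property I will use is that $q_w$ exchanges the left and right $\U\h$-actions through the dot action by $w$, so $\phi_w(X)$ identifies $\res^{\gen}(X)$ with its pushforward along the dot action, which is exactly the $W$-action on $\HC(H)^{\gen}$ from the statement. Naturality in $X$ is immediate from the universal construction, and the braid relations in (b) follow directly from those for $\{q_w\}$.

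The heart of the argument is (c): one must verify that $\phi_w(X\otimes_{\U\g} Y) = \phi_w(X)\otimes \phi_w(Y)$ after identifying both sides via the monoidal coherence of $\res^{\gen}$. Conceptually, this is the statement that $q_w$ is an automorphism of the Harish-Chandra bialgebroid $\res^{\gen}(\U\g)$, i.e.\ the Mickelsson algebra, that governs $\res^{\gen}$ in the Tannakian sense set up earlier in the paper. That $q_w$ is an algebra automorphism of the Mickelsson algebra is classical; the new content is compatibility with its bialgebroid coproduct $\Delta$. I expect this to be the main obstacle. To handle it, I would reduce by naturality and colimit-preservation of $\res^{\gen}$ to the case of free Harish-Chandra bimodules $X=\U\g\otimes V$ with $V\in\Rep(G)$, where $\Delta$ is explicitly controlled by the action of $\U\g$ on the universal Verma module $M^{\univ}$; the required coproduct compatibility then follows from the fact that $q_w$ transports $M^{\univ}$ to its dot-twisted version compatibly with the $\U\g$-action, which is exactly the intertwining property of Zhelobenko operators on Verma modules. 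Composing with the forgetful functor $\HC(H)^{\gen, \hat{W}}\rightarrow \HC(H)^{\gen}$ recovers $\res^{\gen}$ by construction, giving the lifting assertion.
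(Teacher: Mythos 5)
Your steps (a) and (b) follow the paper's proof: the isomorphisms $\res^{\gen}(X)\xrightarrow{\sim} S_{s_\alpha}(\res^{\gen}(X))$ are the Zhelobenko operators $\bq_\alpha$, their compatibility with the twisted $\U\h$-bimodule structure is parts (1)--(2) of \cref{thm:classicalZhelobenko}, and the braid relations are part (3). (One small correction of emphasis: $\bq_\alpha$ is not an element of a localization of $\U\g$ acting by left multiplication on $X$; it is built from the diagonal adjoint action, the Tits lift $T_{s_\alpha}$, and \emph{right} multiplication by $f_\alpha^n g_{n,\alpha}^{-1}$, and it is this mixed structure that makes properties (1)--(2) nontrivial. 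You also omit the unit check, i.e.\ that $\bq_\alpha$ fixes the image of $\U\h\hookrightarrow (M^{\univ})^N$ up to the dot twist; this is easy since $\bq_\alpha(1)=1$, but it is part of the data of a monoidal lift.)

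The genuine gap is in step (c), which you correctly flag as the heart of the matter but do not actually close. What is needed is the identity $\bq_w([x\otimes y])=[\bq_w(x)\otimes\bq_w(y)]$ in $\res^{\gen}(X\otimes_{\U\g}Y)$ for the explicit lax monoidal structure $[x]\otimes[y]\mapsto[x\otimes y]$, i.e.\ the multiplicativity of the Zhelobenko operators with respect to the relative tensor product of Harish-Chandra bimodules. This is \cref{thm:classicalZhelobenkoMultiplicative} (Khoroshkin--Ogievetsky, Theorem~3), a substantive external theorem, and the paper's proof consists precisely of invoking it (after using \cref{prop:coinvinv} to exchange $N$-invariants and $\n_-$-coinvariants so that the representatives satisfy the hypothesis $\n y\subset Y\n$). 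Your proposed substitute --- reducing to free bimodules $\U\g\otimes V$ and appealing to ``the intertwining property of Zhelobenko operators on Verma modules'' --- does not supply this: the reduction to free objects by colimits and naturality is legitimate, but on free bimodules the required statement is exactly the cocycle identity of \cref{prop:classicaldynamicalWeylgroupmultiplicative} relating $A_{s_\alpha,V\otimes U}$, $J_{V,U}$ and $A_{s_\alpha,V}$, $A_{s_\alpha,U}$, and in this paper that identity is \emph{deduced from} the multiplicativity theorem rather than the other way around. A property of $\bq_w$ on a single Verma module does not control how $\bq_w$ interacts with the coproduct-like mixing of the two tensor factors (the terms $(a_i)_{(1)}w_j$ appearing in \eqref{eq:resmonoidal}), so as written this step is circular or at best incomplete without importing the Khoroshkin--Ogievetsky multiplicativity result explicitly.
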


Suppose $V\in\Rep(G)$. Then $\res^{\gen}(\U\g\otimes V)\cong (\U\h)^{\gen}\otimes V$, where $(\U\h)^{\gen}\supset \U\h$ is a certain localization (see \cref{def:genericweight}). In particular, the $\hat{W}$-symmetry is captured by certain rational maps $A_{w, V}\colon \h^*\rightarrow \End(V)$ satisfying the braid relation. We prove in \cref{thm:TVdynamicalWeylGroup} that these coincide with the \emph{dynamical Weyl group} operators introduced in \cite{TarasovVarchenko,EtingofVarchenkoDynamicalWeyl}.

Let us mention a relationship between these results and the \emph{generalized Harish--Chandra isomorphism} \cite{KhoroshkinNazarovVinberg}. Consider the functor $\widetilde{\res}\colon \HC(G)\rightarrow \HC(H)$ given by $\tilde{\res}(X) = \n_- X \backslash X / X\n_+$. There is a natural transformation $\res(X)\rightarrow \widetilde{\res}(X)$ which becomes an isomorphism in $\HC(H)^{\gen}$ (see \cref{prop:coinvinv}). We obtain a restriction map
\begin{equation}
\xymatrix{
\Hom_{\HC(G)}(\U\g, \U\g\otimes V) \ar^{\widetilde{\res}}[r] \ar^{\sim}[d] & \Hom_{\HC(H)}(\U\h, \U\h\otimes V) \ar^{\sim}[d] \\
(\U\g\otimes V)^G \ar[r] & \U\h\otimes V^H
}
\end{equation}
The object $(\U\h)^{\gen}\in\HC(H)^{\gen}$ has a canonical $\hat{W}$-equivariance structure given by the dot action of $W$ on $\U\h$. In particular, Zhelobenko operators define maps $\U\h\otimes V^H\rightarrow (\U\h)^{\gen}\otimes V^H$ and, in fact, the action factors through the action of the Weyl group. The resulting homomorphism
\[\widetilde{\res}\colon (\U\g\otimes V)^G\longrightarrow (\U\h\otimes V^H)^W\]
is shown in \cite{KhoroshkinNazarovVinberg} to be an isomorphism. It generalizes the usual Harish-Chandra isomorphism (see e.g. \cite[Theorem 1.10]{HumphreysBGG}) which is obtained for $V=k$ the trivial one-dimensional representation.

The papers \cite{BravermanFinkelberg,GinzburgRiche} gave an interpretation of the dynamical Weyl group in terms of equivariant cohomology of the affine Grassmannian of the Langlands dual group, using the geometric Satake equivalence. It would be interesting to see the appearance of the Zhelobenko operators using the Langlands dual interpretation of Harish-Chandra bimodules from \cite{BezrukavnikovFinkelberg}.

Let us mention a categorical point of view on the Weyl symmetry of the parabolic restriction functor $\res^{\gen}\colon \HC(G)\rightarrow \HC(H)$. By abstract reasons the action functor $\HC(G)\rightarrow \cO^{\univ}$ factors through the category of coalgebras over a comonad $\St\colon \cO^{\univ}\rightarrow \cO^{\univ}$ obtained from the right adjoint of the action functor. In particular, for generic weights parabolic restriction factors through the category of $\St$-coalgebras in $\HC(H)^{\gen}$. We expect that there is an equivalence between $\St$ and the comonad corresponding to the $W$-action on $\HC(H)^{\gen}$. We refer to \cite{BZNadlerBB}, where it is called the Weyl comonad, and \cite[Theorem 4.6]{Gunningham} for an analogous theorem in the setting of $\D$-modules.

\subsection*{Acknowledgements}

The authors were supported by the NCCR SwissMAP grant of the Swiss National Science Foundation. We would like to thank Adrien Brochier, Damien Calaque, Sam Gunningham and David Jordan for useful discussions.

\section{Background}

In this section we recall some facts about locally presentable categories, cp-rigid monoidal categories and Tannaka reconstruction for bialgebras.

\subsection{Locally presentable categories}

Let $k$ be a field. All categories and functors we will consider are $k$-linear. Throughout this paper we work with locally presentable categories (we refer to \cite{AdamekRosicky} and \cite[Section 2]{BCJF} for more details). Here are the main examples:
\begin{itemize}
\item If $\cC$ is a small category, the category of presheaves $\Fun(\cC^{\op}, \Vect)$ is locally presentable. For instance, this applies to the category $\LMod_A$ of (left) modules over a $k$-algebra $A$.

\item If $\cC$ is a small category which admits finite colimits, the ind-completion $\Ind(\cC)$ (see \cite[Chapter 6]{KashiwaraSchapira} for what it means) is locally presentable.

\item If $C$ is a $k$-coalgebra, the category of $C$-comodules $\CoMod_C$ is locally presentable (see \cite[Corollary 9]{Wischnewsky} noting that a Grothendieck category is locally presentable). In fact, $\CoMod_C$ is the ind-completion of the category of finite-dimensional $C$-comodules (see \cite[Corollaire 2.2.2.3]{Saavedra}).

\item If $\cC, \cD$ are locally presentable categories, the category $\Fun^\L(\cC, \cD)$ of colimit-preserving functors from $\cC$ to $\cD$ is locally presentable.
\end{itemize}

It turns out that many examples of locally presentable categories are, in fact, presheaf categories.

\begin{defn}
Let $\cC$ be a locally presentable category. An object $x\in\cC$ is \defterm{compact projective} if $\Hom_\cC(x, -)\colon \cC\rightarrow \Vect$ preserves colimits. $\cC$ has \defterm{enough compact projectives} if every object receives a nonzero morphism from a compact projective.
\end{defn}

We denote by $\cC^{\cp}\subset \cC$ the full subcategory of compact projective objects.

\begin{prop}
Suppose $\cC$ has enough compact projectives. Then the functor
\[\cC\longrightarrow \Fun((\cC^{\cp})^\op, \Vect)\]
given by $x\mapsto (y\mapsto \Hom_\cC(y, x))$ is an equivalence.
\end{prop}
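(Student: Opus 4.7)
I would construct a left adjoint $L$ to the functor $F\colon \cC \to \Fun((\cC^{\cp})^{\op}, \Vect)$, $x\mapsto \Hom_\cC(-, x)|_{\cC^{\cp}}$, and then show that both unit and counit of $L\dashv F$ are isomorphisms.

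First, $F$ preserves limits (as a $\Hom$ in its second argument) and, crucially, it also preserves colimits because for each $y\in\cC^{\cp}$ the functor $\Hom_\cC(y,-)$ preserves colimits by definition of compact projectivity. Since $\cC$ is locally presentable and $\cC^{\cp}$ is essentially small (the compact projectives are $\kappa$-presentable for the cardinal controlling $\cC$), the target $\Fun((\cC^{\cp})^{\op},\Vect)$ is a presheaf category, hence locally presentable. The adjoint functor theorem for locally presentable categories then furnishes a left adjoint $L$.

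Next, I would show that the unit $\eta\colon \id\to F\circ L$ is a natural isomorphism, i.e.\ $L$ is fully faithful. It is enough to check this on representables $h_y=\Hom_\cC(-,y)$ for $y\in\cC^{\cp}$: by the defining adjunction together with the Yoneda lemma one has $L(h_y)\cong y$, hence $F(L(h_y))\cong h_y$. Since every presheaf is a canonical colimit of representables and both $L$ (as a left adjoint) and $F$ (by the previous step) preserve colimits, $\eta$ is an isomorphism on every presheaf. From the triangle identity $F\epsilon\circ \eta_F=\id_F$, the fact that $\eta_F$ is invertible then implies that $F(\epsilon_x)\colon F(LF(x))\to F(x)$ is an isomorphism for every $x\in\cC$.

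It remains to show that $F$ is conservative, for then $\epsilon$ is itself a natural isomorphism and the adjunction is an equivalence. For a morphism $\phi\colon x\to y$ with $F(\phi)$ invertible, every map from a compact projective $z$ into $y$ lifts uniquely through $\phi$, so combining this with the hypothesis that every nonzero object receives a nonzero map from a compact projective, and applying the argument to the kernel and cokernel of $\phi$, one concludes that $\phi$ is an isomorphism. This last step is the main obstacle: extracting honest conservativity from the \emph{enough compact projectives} hypothesis requires that each $\Hom_\cC(z,-)$ with $z\in\cC^{\cp}$ not only be colimit-preserving but also additive and exact enough to detect vanishing of kernels and cokernels in the underlying locally presentable $k$-linear category. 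Once that is in place, the equivalence follows formally from colimit preservation, Yoneda, and the triangle identities.
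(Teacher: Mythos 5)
The paper states this proposition without proof, treating it as standard background (it is in essence the Gabriel--Mitchell/Freyd recognition theorem for presheaf categories on a ringoid), so there is no argument of the authors' to compare against; your adjunction-theoretic route is the standard one. The first two steps are sound: $L$ exists and is most transparently given by the left Kan extension $L(P)=\int^{y\in\cC^{\cp}}P(y)\otimes y$ of the inclusion $\cC^{\cp}\hookrightarrow\cC$ along the Yoneda embedding, so that $L(h_y)\cong y$ is exactly the co-Yoneda lemma the paper records as \cref{prop:ninjaYoneda}; the unit is then an isomorphism on representables and hence everywhere by colimit-density, and the triangle identity reduces the whole statement to conservativity of $F$, as you say.

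The gap you flag at the end is genuine, and you should be precise about what closes it. In the setting of this paper one should take $\cC$ to be abelian (every example considered --- module categories, comodule categories, Grothendieck categories --- is), and then no extra ``exactness'' hypothesis is needed: for $z\in\cC^{\cp}$ the functor $\Hom_\cC(z,-)$ is left exact because it is a Hom and right exact because it preserves cokernels, these being colimits; hence it is exact. So if $F(\phi)$ is invertible then $\Hom_\cC(z,\ker\phi)=0$ and $\Hom_\cC(z,\mathrm{coker}\,\phi)=0$ for every compact projective $z$, the enough-compact-projectives hypothesis forces $\ker\phi=\mathrm{coker}\,\phi=0$, and mono plus epi implies iso in an abelian category. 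Without abelianness the statement is not provable from the literal hypotheses: a general locally presentable $k$-linear category need not satisfy ``mono and epi implies iso,'' and the weak form of ``enough compact projectives'' used here (detecting nonzero objects) does not by itself imply that $\cC^{\cp}$ generates $\cC$ under colimits. So either add the abelian hypothesis and run your kernel/cokernel argument as above, or strengthen ``enough'' to mean that the compact projectives generate under colimits, in which case conservativity can be bypassed entirely: the essential image of the fully faithful, colimit-preserving $L$ is closed under colimits and contains $\cC^{\cp}$, hence is all of $\cC$, and $F$ is inverse to the equivalence $L$.
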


Locally presentable categories naturally form a symmetric monoidal 2-category $\PrL$ \cite{Bird}:
\begin{itemize}
\item Its objects are locally presentable categories.
\item Its 1-morphisms are colimit-preserving functors.
\item Its 2-morphisms are natural transformations.
\item The tensor product is uniquely determined by the following property: for $\cC,\cD,\cE\in\PrL$ a colimit-preserving functor $\cC\otimes \cD\rightarrow \cE$ is the same as a bifunctor $\cC\times\cD\rightarrow \cE$ preserving colimits in each variables.
\item The unit is $\Vect\in\PrL$.
\end{itemize}

An important fact about locally presentable categories is that a colimit-preserving functor between locally presentable categories admits a right adjoint. We will now write a formula for the adjoint assuming the source category has enough compact projectives. Let us first recall the notion of a coend (see \cite{Loregian} for more details on coends).

\begin{defn}
Suppose $\cC$ and $\cD$. The \defterm{coend} of a bifunctor $F\colon \cC\times \cC^{\op}\rightarrow \cD$ is the coequalizer
\[\int^{x\in\cC} F(x, x) =
\mathrm{coeq}\left(
\xymatrix{
\coprod_{x\rightarrow y} F(x, y) \ar@<.5ex>[r] \ar@<-.5ex>[r] & \coprod_{x} F(x, x)
}
\right).
\]
\end{defn}

We will use the following Yoneda-like property of coends (see \cite[Proposition 2.2.1]{Loregian}).

\begin{prop}
For any functors $F\colon \cC\rightarrow \cD, G\colon \cC^{\op}\rightarrow \cD$ we have natural isomorphisms
\[\int^{x\in\cC} \Hom_\cC(x, y)\otimes F(x)\cong F(y),\qquad \int^{x\in\cC} \Hom_\cC(y, x)\otimes G(x)\cong G(y).\]
\label{prop:ninjaYoneda}
\end{prop}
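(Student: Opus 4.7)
The plan is to prove the first isomorphism explicitly by writing down mutually inverse maps directly on the coequalizer presentation of the coend, and then derive the second isomorphism by applying the first to $\cC^{\op}$ (since a coend over $\cC^{\op}$ is the same as a coend over $\cC$ with the variance of the bifunctor swapped).

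First I would construct a natural map $\varphi_y \colon \int^{x\in\cC} \Hom_\cC(x,y)\otimes F(x)\rightarrow F(y)$. To do this, observe that the family of evaluation maps
\[\ev_{x,y}\colon \Hom_\cC(x,y)\otimes F(x)\longrightarrow F(y),\qquad f\otimes a\longmapsto F(f)(a)\]
defines a cocone on the diagram whose coequalizer is the coend: for any morphism $g\colon x\rightarrow x'$, the two composites from $\Hom_\cC(x',y)\otimes F(x)$ agree by functoriality of $F$, namely $F(f\circ g)(a) = F(f)(F(g)(a))$ for $f\colon x'\rightarrow y$ and $a\in F(x)$. Hence $\ev_{x,y}$ factors uniquely through $\varphi_y$.

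In the other direction, I would construct $\psi_y\colon F(y)\rightarrow \int^{x\in\cC} \Hom_\cC(x,y)\otimes F(x)$ as the composite
\[F(y)\cong k\otimes F(y)\xrightarrow{\id_y\otimes\id} \Hom_\cC(y,y)\otimes F(y)\hookrightarrow \coprod_{x}\Hom_\cC(x,y)\otimes F(x)\twoheadrightarrow \int^{x\in\cC}\Hom_\cC(x,y)\otimes F(x).\]
The composition $\varphi_y\circ \psi_y$ sends $a\in F(y)$ to $F(\id_y)(a)=a$, so it is the identity. For the opposite composition $\psi_y\circ \varphi_y$, the main point is to verify that for any $f\colon x\rightarrow y$ and $a\in F(x)$, the element $f\otimes a$ in the $x$-component of the coproduct becomes equal in the coend to $\id_y\otimes F(f)(a)$ in the $y$-component; this is precisely the coequalizer relation applied to the morphism $f\colon x\rightarrow y$, which identifies the images of $f\otimes a$ and $\id_y\otimes F(f)(a)$. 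Therefore $\psi_y\circ \varphi_y$ acts as the identity on each summand of the coproduct after descending to the coend, hence equals the identity on the coend. Naturality in $y$ of both $\varphi$ and $\psi$ is immediate from functoriality.

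The only genuine subtlety, which I expect to be the main (if small) obstacle, is to spell out carefully the coequalizer relation that identifies $f\otimes a$ with $\id_y\otimes F(f)(a)$: one must apply the parallel pair in the definition of the coend to the specific morphism $f\colon x\rightarrow y$ together with the element $\id_y\otimes a\in \Hom_\cC(y,y)\otimes F(x)$ (indexed by the pair $(x,y)$ in the coproduct over morphisms), and observe that the two maps send this element respectively to $f\otimes a$ and to $\id_y\otimes F(f)(a)$. Once this is unwound, both compositions are the identity, yielding the first isomorphism. The second isomorphism is obtained by applying the first to the opposite category $\cC^{\op}$ with the contravariant functor $G$ viewed as a covariant functor out of $\cC^{\op}$.
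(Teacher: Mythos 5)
Your argument is correct and is the standard direct proof of the co-Yoneda lemma via the coequalizer presentation of the coend; the paper itself gives no proof and simply cites \cite[Proposition 2.2.1]{Loregian}, where essentially this argument appears. You correctly isolate the one real point, namely that the coequalizer relation applied to $f\colon x\rightarrow y$ and the element $\id_y\otimes a$ identifies $f\otimes a$ with $\id_y\otimes F(f)(a)$. The only cosmetic caveat is that for a general target $\cD$ the ``elements'' $f\otimes a$ should be read via the universal property of the copower $\Hom_\cC(x,y)\otimes F(x)$ rather than literally, but this does not affect the argument.
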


The following is an immediate corollary.

\begin{prop}
Suppose $F\colon \cC\rightarrow \cD$ is a colimit-preserving functor of locally presentable categories, where $\cC$ has enough compact projectives. Then the right adjoint is given by the coend
\[F^\R(x) = \int^{y\in\cC^{\cp}} \Hom_\cD(F(y), x)\otimes y.\]
The counit of the adjunction $FF^\R(x)\rightarrow x$ is given by the evaluation map $\Hom_\cD(F(y), x)\otimes F(y)\rightarrow x$; the unit of the adjunction $z\rightarrow F^\R F(z)$ is given by including the identity map $\id\colon F(z)\rightarrow F(z)$ in the coend.
\label{prop:rightadjoint}
\end{prop}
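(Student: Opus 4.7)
The plan is to verify directly that the formula
\[F^\R(x) = \int^{y\in\cC^{\cp}} \Hom_\cD(F(y), x)\otimes y\]
is right adjoint to $F$ by producing a natural isomorphism $\Hom_\cC(z, F^\R(x)) \cong \Hom_\cD(F(z), x)$, using the ninja Yoneda lemma (\cref{prop:ninjaYoneda}) and the density presentation coming from the equivalence $\cC\simeq \Fun((\cC^{\cp})^{\op}, \Vect)$ in the previous proposition. First I would note that the coend exists in $\cC$, since $\cC$ is cocomplete and $\cC^{\cp}$ is essentially small by local presentability; so $F^\R(x)$ is a well-defined object of $\cC$, functorial in $x$.

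Next I would establish the adjunction on compact projectives. For $z\in\cC^{\cp}$, the functor $\Hom_\cC(z, -)$ preserves colimits, hence commutes with the coend defining $F^\R(x)$. Applying this, together with \cref{prop:ninjaYoneda} to the functor $y\mapsto \Hom_\cD(F(y), x)$ on $\cC^{\cp}$, yields
\[\Hom_\cC(z, F^\R(x)) \cong \int^{y\in\cC^{\cp}}\Hom_\cD(F(y), x)\otimes \Hom_\cC(z, y) \cong \Hom_\cD(F(z), x),\]
which is clearly natural in $z\in\cC^{\cp}$ and in $x\in\cD$.

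To extend to arbitrary $z\in\cC$, I would use that the previous proposition allows one to identify $z$ with its density coend $z\cong \int^{y\in\cC^{\cp}}\Hom_\cC(y, z)\otimes y$, since this identity holds in $\Fun((\cC^{\cp})^{\op}, \Vect)$ by \cref{prop:ninjaYoneda} and the equivalence is colimit-preserving. Applying $F$ (which preserves colimits) gives $F(z)\cong \int^{y}\Hom_\cC(y, z)\otimes F(y)$. Taking $\Hom_\cD(-, x)$ and $\Hom_\cC(-, F^\R(x))$ converts coends to ends, and both sides become
\[\int_{y\in\cC^{\cp}}\Hom_{\Vect}\bigl(\Hom_\cC(y, z), \Hom_\cD(F(y), x)\bigr),\]
where the second computation uses the compact projective case established above. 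Comparing gives the desired natural isomorphism $\Hom_\cC(z, F^\R(x))\cong \Hom_\cD(F(z), x)$ for all $z$.

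Finally I would read off the unit and counit. The counit at $x$ is the morphism $FF^\R(x)\to x$ corresponding to the identity of $F^\R(x)$; unwinding the isomorphism, on the summand $\Hom_\cD(F(y), x)\otimes F(y)$ of the coend $FF^\R(x)=\int^y \Hom_\cD(F(y), x)\otimes F(y)$ this is exactly the evaluation map. For the unit, the image of the identity under the isomorphism displayed in the compact projective case is precisely the inclusion of $\id\colon F(z)\to F(z)$ into the $z$-component of the coend; for general $z$, one extends by the density presentation. No real obstacle arises: the argument is an exercise in coend calculus, and the only thing that needs attention is being careful about variances when applying Yoneda in the form of \cref{prop:ninjaYoneda}.
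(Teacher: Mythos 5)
Your proof is correct and follows essentially the same route as the paper, which states this proposition without proof as an ``immediate corollary'' of \cref{prop:ninjaYoneda}: your write-up is exactly the standard unwinding of that claim, checking the adjunction isomorphism on compact projectives via the coend Yoneda lemma and extending by density. The identification of the unit and counit is also handled correctly.
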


\subsection{Cp-rigidity}

By convention all monoidal categories $\cC$ we consider in this paper are locally presentable such that the tensor product bifunctor $\cC\times\cC\rightarrow \cC$ preserves colimits in each variable. So, by the universal property of the tensor product in $\PrL$ it descends to a colimit-preserving functor
\[T\colon \cC\otimes\cC\longrightarrow \cC.\]
We denote by $\cC^{\mop}$ the same category with the opposite monoidal structure.

We will consider rigid monoidal categories in the text. Since we work with large categories, we cannot expect all objects to be dualizable (as in the category of all vector spaces); instead, we will restrict our attention to compact projective objects.

\begin{defn}
Let $\cC$ be a monoidal category with enough compact projectives. It is \defterm{cp-rigid} if every compact projective object admits left and right duals.
\end{defn}

\begin{lm}
Suppose $\cC$ is a cp-rigid monoidal category and $x,y\in\cC$ are compact projective objects. Then $x\otimes y$ is also compact projective.
\label{lm:cptensorproduct}
\end{lm}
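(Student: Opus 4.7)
The plan is to exploit the duality to turn $\Hom_\cC(x\otimes y,-)$ into a composition of two functors that manifestly preserve colimits. By definition, $x\otimes y$ is compact projective iff $\Hom_\cC(x\otimes y,-)\colon \cC\to \Vect$ preserves colimits, so this is what I have to check.

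Since $\cC$ is cp-rigid and $x$ is compact projective, $x$ admits a left dual ${}^\vee x$ with unit and counit witnessing the adjunction
\[
\Hom_\cC(x\otimes y, z)\cong \Hom_\cC(y, {}^\vee x\otimes z),
\]
natural in $z$. Thus the functor $\Hom_\cC(x\otimes y,-)\colon \cC\to \Vect$ is naturally isomorphic to the composition
\[
\cC\xrightarrow{{}^\vee x\otimes -}\cC\xrightarrow{\Hom_\cC(y,-)}\Vect.
\]
The first functor preserves all colimits by the standing assumption that the tensor product in $\cC$ preserves colimits in each variable; the second preserves colimits because $y$ is compact projective. A composition of colimit-preserving functors preserves colimits, so $\Hom_\cC(x\otimes y,-)$ preserves colimits, and hence $x\otimes y$ is compact projective.

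There is no real obstacle: the only thing to be careful about is choosing the correct handedness of the dual (the left dual of $x$, not the right dual) so that the adjunction produces $\Hom_\cC(y,-)$ rather than $\Hom_\cC(x,-)$ on the right. One could equally well use the right dual of $y$ to move $y$ across and obtain $\Hom_\cC(x,-\otimes y^\vee)$; the argument is symmetric and gives the same conclusion.
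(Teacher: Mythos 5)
Your proof is correct and is essentially the paper's argument: the paper uses the symmetric variant you mention at the end, writing $\Hom_\cC(x\otimes y,-)\cong \Hom_\cC(x,(-)\otimes y^\vee)$ and composing $(-)\otimes y^\vee$ with $\Hom_\cC(x,-)$, which is the mirror image of your decomposition via ${}^\vee x$. Both routes rely on the same two facts (duality of compact projectives and colimit-preservation of the tensor product in each variable), so there is no substantive difference.
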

\begin{proof}
We have
\[
\Hom_\cC(x\otimes y, -) \cong \Hom_\cC(x, (-)\otimes y^\vee).
\]
By assumption the tensor product preserves colimits in each variable, so $(-)\otimes y^\vee$ is colimit-preserving. Since $x$ is compact projective, $\Hom_\cC(x, -)$ is colimit-preserving. Therefore, $\Hom_\cC(x\otimes y, -)$ is also colimit-preserving.
\end{proof}

If $\cC$ is cp-rigid, the tensor product functor $T\colon\cC\otimes \cC\rightarrow \cC$ admits a colimit-preserving right adjoint $T^\R\colon \cC\rightarrow \cC\otimes \cC$ (see e.g. \cite[Section 5.3]{BJS}). It has the following explicit formula.

\begin{prop}
Suppose $\cC$ is a cp-rigid monoidal category. Then
\[T^\R(y) \cong \int^{x\in\cC^{\cp}} (y\otimes x^\vee)\boxtimes x.\]
\label{prop:TRformula}
\end{prop}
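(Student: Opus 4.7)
The plan is to verify the formula by checking directly that the right-hand side corepresents the functor $z \mapsto \Hom_\cC(T(z), y)$ on a generating set of $\cC \otimes \cC$. Since $\cC$ has enough compact projectives, the external products $a\boxtimes b$ with $a, b \in \cC^{\cp}$ form such a generating set, and by Lemma~\ref{lm:cptensorproduct} we have $T(a\boxtimes b) = a\otimes b \in \cC^{\cp}$, so the adjoint is determined by its behavior against these objects.

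Denote the candidate by $R(y) = \int^{x\in\cC^{\cp}} (y\otimes x^\vee)\boxtimes x$. I would compute
\begin{align*}
\Hom_{\cC\otimes\cC}\!\left(a\boxtimes b,\; R(y)\right)
&\cong \int^{x\in\cC^{\cp}} \Hom_\cC(a,\, y\otimes x^\vee)\otimes \Hom_\cC(b,\, x) \\
&\cong \Hom_\cC(a,\, y\otimes b^\vee) \\
&\cong \Hom_\cC(a\otimes b,\, y) \\
&= \Hom_\cC(T(a\boxtimes b),\, y).
\end{align*}
The first isomorphism uses that $a\boxtimes b$ is compact projective in $\cC\otimes\cC$ (so $\Hom$ out of it commutes with the coend) together with the factorization $\Hom_{\cC\otimes\cC}(a\boxtimes b,\, u\boxtimes v)\cong \Hom_\cC(a,u)\otimes \Hom_\cC(b,v)$ on external products of compact projectives. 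The second is the co-Yoneda identity (second formula of Proposition~\ref{prop:ninjaYoneda}) applied to the contravariant functor $x \mapsto \Hom_\cC(a, y\otimes x^\vee)$, whose contravariance comes from the duality $x \mapsto x^\vee$. The third is the right duality $b \dashv b^\vee$ provided by cp-rigidity. These isomorphisms identify $R$ with $T^\R$ on the compact projective generators, and since both $R$ and $T^\R$ preserve colimits, extending by colimits yields the identification for all $y$.

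An alternative route is to apply Proposition~\ref{prop:rightadjoint} directly to $T$, which gives
\[
T^\R(y) \cong \int^{a, b \in \cC^{\cp}} \Hom_\cC(a \otimes b,\, y) \otimes (a \boxtimes b),
\]
then use rigidity to replace $\Hom_\cC(a\otimes b, y)$ by $\Hom_\cC(a,\, y\otimes b^\vee)$, and finally integrate out $a$ via the first co-Yoneda identity $\int^a \Hom_\cC(a, z)\otimes a \cong z$. The main point requiring care either way is the behavior of compact projectives and internal hom in the $\PrL$-tensor product $\cC \otimes \cC$, namely that they are generated under colimits by external products $a\boxtimes b$ with $a, b \in \cC^{\cp}$ and that hom out of such objects factors as a tensor product; granted this, the computation is a formal manipulation of coends.
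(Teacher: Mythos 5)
Your proposal is correct; in fact your ``alternative route'' is verbatim the paper's proof: apply \cref{prop:rightadjoint} to $T$, use dualizability of compact projectives to rewrite $\Hom_\cC(a\otimes b, y)$ as $\Hom_\cC(a, y\otimes b^\vee)$, and collapse the coend over $a$ with \cref{prop:ninjaYoneda}. Your primary argument is the same computation run in reverse, packaged as a corepresentability check against the generators $a\boxtimes b$, so both routes rest on exactly the same two ingredients as the paper.
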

\begin{proof}
By \cref{prop:rightadjoint} the right adjoint is
\[T^\R(y) \cong \int^{x_1,x_2\in\cC^{\cp}} \Hom_\cC(x_1\otimes x_2, y)\otimes (x_1\boxtimes x_2).\]
Since compact projective objects in $\cC$ are dualizable, we can rewrite it as
\begin{align*}
T^\R(y)&\cong \int^{x_1,x_2\in\cC^{\cp}} \Hom_\cC(x_1, y\otimes x_2^\vee)\otimes (x_1\boxtimes x_2) \\
&\cong \int^{x_2\in\cC^{\cp}} (y\otimes x_2^\vee)\boxtimes x_2,
\end{align*}
where in the last isomorphism we have used \cref{prop:ninjaYoneda}.
\end{proof}

Consider $\cC\otimes \cC$ as a $\cC\otimes \cC^{\mop}$-module category via the left action on the first factor and the right action on the second factor. By \cite[Proposition 4.1]{BJS} $T^\R$ is a functor of $\cC\otimes\cC^{\mop}$-module categories. This can be expressed in the following isomorphism.

\begin{prop}
Suppose $\cC$ is as before. Then $T^\R\colon \cC\rightarrow \cC\otimes \cC$ is a functor of $\cC\otimes\cC^{\mop}$-module categories. Concretely, for any object $y\in\cC$ there is a natural isomorphism
\[\int^{x\in\cC^{\cp}} (y\otimes x^\vee)\boxtimes x\cong \int^{x\in\cC^{\cp}} x^\vee\boxtimes (x\otimes y)\]
which is given for a compact projective $y\in\cC$ by
\[x^\vee\boxtimes(x\otimes y)\xrightarrow{\coev_y\otimes \id} (y\otimes y^\vee\otimes x^\vee)\boxtimes (x\otimes y)\xrightarrow{\pi_{x\otimes y}} \int^{x\in\cC^{\cp}} (y\otimes x^\vee)\boxtimes x.\]
\label{prop:TRmodulefunctor}
\end{prop}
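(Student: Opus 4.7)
The plan is to leverage the formal fact from \cite[Proposition 4.1]{BJS} that the right adjoint of a colimit-preserving module functor in $\PrL$ is again a module functor. Since $T\colon \cC\otimes \cC\to \cC$ is evidently a $\cC\otimes\cC^{\mop}$-module functor (with $\cC$ acting on the first tensor-factor, $\cC^{\mop}$ on the second, both descending from the multiplication $T$ itself), its right adjoint $T^\R$ inherits a $\cC\otimes\cC^{\mop}$-module structure. This settles the first assertion of the proposition and, at the same time, tells us that any two coend-presentations of $T^\R$ must agree canonically.

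To produce the second presentation, I would rerun the argument of \cref{prop:TRformula} in the symmetric way, using the adjunction $\Hom_\cC(x_1\otimes x_2, y)\cong \Hom_\cC(x_2, {}^\vee x_1\otimes y)$ in place of $\Hom_\cC(x_1\otimes x_2, y)\cong \Hom_\cC(x_1, y\otimes x_2^\vee)$, and contracting via \cref{prop:ninjaYoneda} in $x_2$ rather than $x_1$. After relabelling $x={}^\vee x_1$ this produces the alternative formula
\[T^\R(y) \cong \int^{x\in\cC^{\cp}} x^\vee \boxtimes (x\otimes y),\]
which, combined with \cref{prop:TRformula}, yields the asserted natural isomorphism.

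It remains to verify that the explicit dinatural family in the statement represents this canonical isomorphism for compact projective $y$. Dinaturality in $x$ is a short check: for $f\colon x_1\to x_2$ in $\cC^{\cp}$, the two composites $x_2^\vee\boxtimes(x_1\otimes y)\rightrightarrows \int^{x'}(y\otimes(x')^\vee)\boxtimes x'$ agree because $(f\otimes \id_y)^\vee = \id_{y^\vee}\otimes f^\vee$ and the target coend is dinatural with respect to $f\otimes \id_y\colon x_1\otimes y\to x_2\otimes y$ (both objects being compact projective by \cref{lm:cptensorproduct}). Hence the family descends to a well-defined morphism between the two coends.

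Finally, to confirm agreement with the canonical isomorphism I would compose with the counit $TT^\R(y)\to y$. On the summand indexed by $x$ the result becomes
\[x^\vee\otimes x\otimes y\xrightarrow{\coev_y\otimes \id} y\otimes y^\vee\otimes x^\vee\otimes x\otimes y\xrightarrow{\id_y\otimes \ev_{x\otimes y}} y,\]
which upon expanding $\ev_{x\otimes y} = \ev_y\circ(\id_{y^\vee}\otimes \ev_x\otimes \id_y)$ and invoking the triangle identity $(\id_y\otimes \ev_y)\circ(\coev_y\otimes \id_y)=\id_y$ collapses to $\ev_x\otimes \id_y$. This is precisely the universal morphism witnessing the second coend-presentation of $T^\R(y)$, so the two isomorphisms coincide. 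The main obstacle is this last triangle-identity bookkeeping; the extension from compact projective $y$ to general $y$ is then automatic by naturality in $y$, since both sides are functorial in $y$ and the cp case determines the isomorphism uniquely.
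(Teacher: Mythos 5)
Your proposal is correct and follows the same route as the paper, which simply cites \cite[Proposition 4.1]{BJS} for the module-functor structure of $T^\R$ and states the explicit isomorphism without further argument. Your additional steps — rederiving the second coend presentation symmetrically via \cref{prop:ninjaYoneda}, checking dinaturality of the explicit family, and identifying it with the canonical comparison map by composing with the counit and applying the triangle identity — correctly fill in the verification that the paper leaves implicit.
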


\begin{cor}
The object $T^\R(\bu)\in \cC\otimes \cC^{\mop}$ has a natural algebra structure.
\label{cor:TRalgebra}
\end{cor}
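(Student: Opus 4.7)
I would exhibit $T^\R(\bu)$ as the internal endomorphism object of $\bu \in \cC$ with respect to a natural $\cC\otimes\cC^{\mop}$-module structure on $\cC$, from which the algebra structure follows by a standard construction.

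The first step is to observe that $\cC$ is a $\cC\otimes\cC^{\mop}$-module category via $(a\boxtimes b)\cdot x = a\otimes x\otimes b$, and that after identifying $\cC\otimes\cC$ with $\cC\otimes\cC^{\mop}$ as plain categories, the tensor product functor $T\colon \cC\otimes\cC\to\cC$ coincides with the action functor $\act_{\bu}\colon\cC\otimes\cC^{\mop}\to\cC$, $(a\boxtimes b)\mapsto (a\boxtimes b)\cdot\bu = a\otimes b$. Consequently $T^\R(\bu)$ is the internal endomorphism object of $\bu$ for this module structure.

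The second step is to spell out the unit and the multiplication. For the unit, I would use the unit of the adjunction $T\dashv T^\R$ at $\bu\boxtimes\bu \in \cC\otimes\cC^{\mop}$, noting $T(\bu\boxtimes\bu)=\bu$, giving
\[\eta\colon \bu\boxtimes\bu\longrightarrow T^\R T(\bu\boxtimes\bu) = T^\R(\bu).\]
For the multiplication, the $\cC\otimes\cC^{\mop}$-module functor structure on $T^\R$ from \cref{prop:TRmodulefunctor}, applied with $Y = T^\R(\bu) \in \cC\otimes\cC^{\mop}$ acting on $\bu\in\cC$, yields a natural isomorphism
\[T^\R(\bu)\otimes T^\R(\bu)\;\cong\; T^\R\bigl(TT^\R(\bu)\bigr),\]
which when composed with $T^\R$ of the counit $\epsilon\colon TT^\R(\bu)\to\bu$ gives
\[\mu = T^\R(\epsilon)\colon T^\R(\bu)\otimes T^\R(\bu)\longrightarrow T^\R(\bu).\]
Equivalently, using the coend formula of \cref{prop:TRformula} together with \cref{lm:cptensorproduct},
\[T^\R(\bu)\otimes T^\R(\bu)\;\cong\;\int^{x,y\in\cC^{\cp}}(y\otimes x)^\vee\boxtimes (y\otimes x),\]
and the multiplication is induced by the structural map of the coend $\int^{z\in\cC^{\cp}} z^\vee\boxtimes z$ at $z=y\otimes x$.

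The third step is to verify associativity and unitality. Both reduce to the triangle identities of $T\dashv T^\R$ together with the associativity and unit coherences of the module structure on $\cC$, and I would present them as commuting diagrams. The main potential obstacle is the bookkeeping of these coherences; however, nothing beyond the routine verification of the internal-End construction in a module category is required, so the argument is essentially formal once the identification $T = \act_{\bu}$ and \cref{prop:TRmodulefunctor} are in hand.
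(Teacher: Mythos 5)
Your proposal is correct and follows essentially the same route as the paper: the paper's proof observes that $T$ and $T^\R$ are both functors of $\cC\otimes\cC^{\mop}$-module categories (the latter by \cref{prop:TRmodulefunctor}), so that the monad $T^\R T$ is a module endofunctor of $\cC\otimes\cC^{\mop}$ and its value $T^\R T(\bu\boxtimes\bu)=T^\R(\bu)$ on the unit is an algebra. Your identification of $T$ with the action functor $\act_\bu$ and of $T^\R(\bu)$ with the internal endomorphism object of $\bu$ is the standard unpacking of exactly this argument, and the unit and multiplication you write down coincide with the ones produced by the paper's construction.
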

\begin{proof}
$T^\R T$ is naturally a monad on $\cC\otimes \cC^{\mop}$. By definition $T\colon \cC\otimes \cC\rightarrow \cC$ is a functor $\cC\otimes \cC^{\mop}$-module categories. By \cref{prop:TRmodulefunctor} $T^\R\colon \cC\rightarrow \cC\otimes \cC$ is also a functor of $\cC\otimes \cC^{\mop}$-module categories. Therefore, $(T^\R T)(\bu_{\cC\otimes \cC})$ has a natural algebra structure.
\end{proof}

The key property of cp-rigid monoidal categories is that they are canonically self-dual objects of $\PrL$.

\begin{thm}
Let $\cC$ be a cp-rigid monoidal category with a compact projective unit. The evaluation and coevaluation pairings
\begin{align}
\ev&\colon \cC\otimes\cC\xrightarrow{T}\cC\xrightarrow{\Hom_{\cC}(\bu, -)}\Vect \label{eq:ev} \\
\coev&\colon \Vect\xrightarrow{(-)\otimes\bu}\cC\xrightarrow{T^\R}\cC\otimes \cC. \label{eq:coev}
\end{align}
establish self-duality of $\cC$ as an object of the symmetric monoidal bicategory $\PrL$.
\label{thm:cprigidselfdual}
\end{thm}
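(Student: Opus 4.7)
The plan is to verify the two triangle identities that define self-duality in the symmetric monoidal bicategory $\PrL$: the composites
\[\cC \xrightarrow{\id\otimes\coev} \cC\otimes\cC\otimes\cC \xrightarrow{\ev\otimes\id} \cC \quad\text{and}\quad \cC \xrightarrow{\coev\otimes\id} \cC\otimes\cC\otimes\cC \xrightarrow{\id\otimes\ev} \cC\]
must both be naturally isomorphic to $\id_\cC$. The functor $\ev$ preserves colimits because $\bu$ is compact projective, and $T^\R$ preserves colimits by cp-rigidity, so both composites lie in $\PrL$. Since $\cC$ has enough compact projectives, it then suffices to exhibit the natural isomorphisms on $y\in\cC^{\cp}$.

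For the first triangle, at $y\in\cC^{\cp}$ I would unpack
\[(\id\otimes\coev)(y) = y\boxtimes T^\R(\bu) \cong \int^{x\in\cC^{\cp}} y\boxtimes x^\vee\boxtimes x\]
via \cref{prop:TRformula} and then apply $\ev\otimes\id$ to obtain
\[\int^{x\in\cC^{\cp}} \Hom_\cC(\bu, y\otimes x^\vee)\otimes x \cong \int^{x\in\cC^{\cp}} \Hom_\cC(x, y)\otimes x,\]
where the isomorphism uses the right-tensoring duality adjunction $-\otimes x \dashv -\otimes x^\vee$. By \cref{prop:ninjaYoneda} applied to the inclusion $\cC^{\cp}\hookrightarrow\cC$, this coend is canonically $y$.

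For the second triangle a parallel computation produces
\[\int^{x\in\cC^{\cp}} \Hom_\cC(\bu, x\otimes y)\otimes x^\vee \cong \int^{x\in\cC^{\cp}} \Hom_\cC(x^\vee, y)\otimes x^\vee,\]
where now I would use the companion adjunction $x^\vee\otimes - \dashv x\otimes -$. To finish I would observe that $(-)^\vee$ restricts to a contravariant self-equivalence of $\cC^{\cp}$ --- duals of compact projectives are themselves compact projective, since $\Hom_\cC(x^\vee,-)\cong \Hom_\cC(\bu, x\otimes -)$ is a composite of colimit-preserving functors --- and then reindex the coend by $z = x^\vee$, rewriting it as $\int^{z\in\cC^{\cp}} \Hom_\cC(z, y)\otimes z$, which is $y$ by one more use of ninja Yoneda.

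The main subtlety lies in the reindexing step for the second triangle: one must justify that a coend $\int^{x\in\cA} H(x,x)$ of a bifunctor $\cA\times\cA^{\op}\to\cD$ is invariant under contravariant self-equivalences of $\cA$, a formal consequence of the dinatural symmetry between the two slots of $H$. Naturality in $y$ and coherence with the 2-categorical structure of $\PrL$ then come for free, since every isomorphism appearing in the argument is assembled from canonical categorical data (the duality units and counits, the universal property of coends, and ninja Yoneda).
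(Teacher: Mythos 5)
Your argument is correct, but it is genuinely different from what the paper does: the paper offers no proof at all for \cref{thm:cprigidselfdual} and simply points to an analogous $\infty$-categorical statement in the cited reference, whereas you carry out a direct verification of the two zigzag identities inside $\PrL$. Your reductions are all sound: both zigzag composites are colimit-preserving (using that $\bu$ is compact projective for $\ev$ and cp-rigidity for $T^\R$), so it suffices to produce the isomorphisms naturally on $\cC^{\cp}$, where $\cC\cong\Fun((\cC^{\cp})^{\op},\Vect)$; the first triangle reduces via \cref{prop:TRformula} and the adjunction $-\otimes x\dashv -\otimes x^\vee$ to $\int^{x\in\cC^{\cp}}\Hom_\cC(x,y)\otimes x\cong y$, which is \cref{prop:ninjaYoneda}; and the second triangle reduces via $x^\vee\otimes-\dashv x\otimes-$ to the same coend after the substitution $z=x^\vee$. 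You correctly isolate the only delicate point, namely that $(-)^\vee$ is a contravariant self-equivalence of $\cC^{\cp}$ (your observation that $\Hom_\cC(x^\vee,-)\cong\Hom_\cC(\bu,x\otimes-)$ is colimit-preserving, together with the existence of left duals guaranteed by cp-rigidity, gives this) and that a coend is invariant under precomposition with such an equivalence because the roles of the covariant and contravariant slots may be interchanged in the defining coequalizer. What your approach buys is a self-contained, $1$-categorical proof that stays entirely within the toolkit the paper has already set up (coends, ninja Yoneda, the explicit formula for $T^\R$), at the cost of some bookkeeping; what the paper's citation buys is brevity and a statement that is robust at the level of $\infty$-categories, which is not needed here.
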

\begin{proof}
See \cite[Proposition 2.16]{HSSS} for an analogous statement on the level of $\infty$-categories.
\end{proof}

\begin{remark}
The conclusion of the theorem remains true if we drop the assumption that the unit of $\cC$ is compact and projective and replace $\Hom_{\cC}(\bu, -)\colon \cC\rightarrow \Vect$ by the colimit-preserving functor which coincides with $\Hom_{\cC}(\bu, -)$ on compact projective objects.
\end{remark}

\begin{cor}
Let $\cC$ be a cp-rigid monoidal category with a compact projective unit and $\cD$ any monoidal category. Then the functor
\begin{equation}
\cD\otimes \cC\longrightarrow \Fun^\L(\cC, \cD)
\label{eq:cprigidfunctors}
\end{equation}
given by
\[d\boxtimes c\mapsto (c'\mapsto \ev(c, c')\otimes d)\]
is an equivalence.
\end{cor}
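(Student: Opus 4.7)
The plan is to deduce the corollary directly from the self-duality of $\cC$ established in \cref{thm:cprigidselfdual}. In any symmetric monoidal bicategory, a self-duality $(X, \ev, \coev)$ of an object $X$ produces for every object $Y$ a canonical equivalence
\[Y \otimes X \xrightarrow{\sim} \Hom(X, Y),\]
sending a 1-morphism $f\colon \bu \to Y \otimes X$ to the composite
\[X \simeq \bu \otimes X \xrightarrow{f \otimes \id_X} Y \otimes X \otimes X \xrightarrow{\id_Y \otimes \ev} Y,\]
with inverse $F \mapsto (F \otimes \id_X) \circ \coev$; that these are mutually inverse is a formal consequence of the zig-zag identities for $(\ev,\coev)$. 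Specializing to $\PrL$ with $X = \cC$ and $Y = \cD$, and using the identification $\Fun^\L(\Vect, \cE) \simeq \cE$ of colimit-preserving functors out of the unit with objects, yields the desired equivalence $\cD \otimes \cC \simeq \Fun^\L(\cC, \cD)$.

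To confirm that this abstract equivalence coincides with the functor written in the statement, I would evaluate on a pure tensor $d \boxtimes c \in \cD \otimes \cC$, viewed as a colimit-preserving functor $\Vect \to \cD \otimes \cC$. The general recipe above sends this to the functor
\[c' \longmapsto (\id_\cD \otimes \ev)(d \boxtimes c \boxtimes c') = d \otimes \ev(c, c');\]
since $\ev(c, c') \in \Vect$ and $\cD$ is $\Vect$-linear, this is the same as $\ev(c, c') \otimes d$, matching the formula in the statement. Both sides of the equivalence preserve colimits in $d\boxtimes c$ separately, so checking agreement on pure tensors suffices.

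The substantive content is entirely contained in \cref{thm:cprigidselfdual}: once self-duality of $\cC$ is granted, the corollary is a formal consequence of duality theory in symmetric monoidal bicategories. The only care required here is bookkeeping, namely matching the orderings of tensor factors in the abstract self-duality formula with the explicit definitions of $\ev$ and $\coev$ in \eqref{eq:ev}--\eqref{eq:coev}, and invoking the $\Vect$-linear structure on $\cD$ to rewrite $d \otimes \ev(c,c')$ as $\ev(c,c') \otimes d$.
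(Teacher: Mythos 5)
Your argument is exactly the one the paper intends: the corollary is stated as an immediate consequence of the self-duality in \cref{thm:cprigidselfdual}, and your deduction via the standard duality equivalence $Y\otimes X\simeq \Fun^\L(X,Y)$ in $\PrL$, together with the check on pure tensors $d\boxtimes c$, correctly fills in the formal details the paper leaves implicit. No issues.
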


\subsection{Duoidal categories}

Let us now study monoidal properties of the equivalence \eqref{eq:cprigidfunctors}. The functor category $\Fun^\L(\cC, \cD)$ has a natural monoidal structure given by the Day convolution \cite{Day} defined by
\begin{equation}
(F\otimes_{\Day} G)(x) = \int^{x_1,x_2\in\cC^{\cp}} \Hom_{\cC}(x_1\otimes x_2, x)\otimes F(x_1)\otimes G(x_2)
\label{eq:Dayconvolution}
\end{equation}
with the unit functor
\[x\mapsto \Hom_\cC(\bu_\cC, x)\otimes \bu_\cD.\]

\begin{prop}
The equivalence \eqref{eq:cprigidfunctors} upgrades to a monoidal equivalence
\[\cD\otimes\cC^{\mop}\xrightarrow{\sim} \Fun^\L(\cC, \cD),\]
where we equip $\Fun^\L(\cC, \cD)$ with the Day convolution monoidal structure.
\label{prop:Dayconvolutionbimodules}
\end{prop}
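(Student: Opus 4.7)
The plan is to show that the equivalence from the preceding corollary intertwines the monoidal structures, leaving the underlying equivalence of categories already established. So the work is to exhibit, for $d_1, d_2 \in \cD$ and compact projective $c_1, c_2 \in \cC$, a natural isomorphism between the Day convolution of the associated functors and the functor associated to $(d_1 \otimes d_2) \boxtimes (c_2 \otimes c_1)$, then extend by colimits and check the unit and coherences.

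The central computation is a direct unpacking of \eqref{eq:Dayconvolution}. For $F_i(c') = \ev(c_i, c') \otimes d_i = \Hom_\cC(\bu, c_i \otimes c') \otimes d_i$, one obtains
\[
(F_1 \otimes_{\Day} F_2)(x) = \int^{x_1, x_2 \in \cC^{\cp}} \Hom_\cC(x_1 \otimes x_2, x) \otimes \Hom_\cC(\bu, c_1 \otimes x_1) \otimes \Hom_\cC(\bu, c_2 \otimes x_2) \otimes d_1 \otimes d_2.
\]
Using that $c_1, c_2$ are dualizable, rewrite $\Hom_\cC(\bu, c_i \otimes x_i) \cong \Hom_\cC(c_i^\vee, x_i)$, and then apply the ninja Yoneda isomorphisms of \cref{prop:ninjaYoneda} twice to eliminate $x_1$ and $x_2$. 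The result is $\Hom_\cC(c_1^\vee \otimes c_2^\vee, x) \otimes d_1 \otimes d_2 \cong \Hom_\cC(\bu, c_2 \otimes c_1 \otimes x) \otimes d_1 \otimes d_2$, which is precisely the functor associated to $(d_1 \otimes d_2) \boxtimes (c_2 \otimes c_1)$. The appearance of the reversed tensor product $c_2 \otimes c_1$ is forced by this computation and explains why the left-hand side must be $\cD \otimes \cC^{\mop}$ rather than $\cD \otimes \cC$.

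Next I would verify the unit: the Day convolution unit sends $x \mapsto \Hom_\cC(\bu_\cC, x) \otimes \bu_\cD = \ev(\bu_\cC, x) \otimes \bu_\cD$, which is the image of $\bu_\cD \boxtimes \bu_\cC$ as required. The associator and unitor coherences follow from naturality of the ninja Yoneda isomorphisms together with the associativity of the tensor products in $\cC$ and $\cD$, since the chain of isomorphisms above is functorial in $c_1, c_2, d_1, d_2, x$. Finally, since both the Day convolution and the tensor product on $\cD \otimes \cC^{\mop}$ preserve colimits in each variable, and since $\cD \otimes \cC^{\mop}$ is generated under colimits by objects of the form $d \boxtimes c$ with $c$ compact projective, the comparison isomorphism on these generators extends uniquely to a monoidal structure on the whole equivalence.

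The main obstacle I expect is bookkeeping rather than any deep issue: carefully verifying that the various coend manipulations assemble into natural transformations of bifunctors $\cC \times \cC \to \cD \otimes \cC^{\mop}$, and then into the hexagon and triangle coherences, is the most delicate part. One efficient way to organize this is to note that the equivalence of the previous corollary identifies the monoidal structure on $\Fun^\L(\cC, \cD)$ induced from the algebra structure on $T^\R(\bu)$ of \cref{cor:TRalgebra} (transported through self-duality of \cref{thm:cprigidselfdual}) with the product monoidal structure on $\cD \otimes \cC^{\mop}$; the appearance of $\cC^{\mop}$ then encodes the standard fact that in a self-dual algebra object of $\PrL$ the algebra structure on the dual is opposite to the original.
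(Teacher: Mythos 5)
Your proposal is correct and follows essentially the same route as the paper's proof: unpack the Day convolution on objects $d_i\boxtimes c_i$, use dualizability of compact projectives to rewrite $\Hom_\cC(\bu, c_i\otimes x_i)\cong\Hom_\cC(c_i^\vee, x_i)$, apply \cref{prop:ninjaYoneda} to collapse the coend to $\Hom_\cC(\bu, c_2\otimes c_1\otimes x)$, and match units. The additional remarks on extending along colimits and on the duoidal/self-duality perspective are consistent with what the paper leaves implicit.
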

\begin{proof}
Clearly, the units are compatible since $\bu_\cD\boxtimes \bu_\cC$ is sent to the functor $(x\mapsto \ev(\bu_\cC, x)\otimes \bu_\cD)$.

Now consider two objects $d_1\boxtimes c_1, d_2\boxtimes c_2\in \cD\otimes \cC$. Their Day convolution is computed by
\begin{align*}
&((d_1\boxtimes c_1)\otimes_{\Day} (d_2\boxtimes c_2))(x) \\
= &\int^{x_1,x_2\in\cC^\cp} \Hom_{\cC}(x_1\otimes x_2, x)\otimes (d_1\otimes d_2)\otimes \Hom_\cC(\bu, c_1\otimes x_1)\otimes \Hom_\cC(\bu, c_2\otimes x_2).
\end{align*}

So, we have to exhibit a natural isomorphism
\[\Hom_\cC(\bu, c_2\otimes c_1\otimes x)\cong \int^{x_1,x_2\in\cC^\cp} \Hom_{\cC}(x_1\otimes x_2, x)\otimes \Hom_\cC(\bu, c_1\otimes x_1)\otimes \Hom_{\cC}(\bu, c_2\otimes x_2).\]

By assumption $\cC$ is generated by compact projectives, so it is enough to define this isomorphism on those.

The right-hand side is
\begin{align*}
&\quad \int^{x_1,x_2} \Hom(x_1\otimes x_2, x)\otimes \Hom(\bu, c_1\otimes x_1)\otimes \Hom(\bu, c_2\otimes x_2) \\
&\cong \int^{x_1,x_2} \Hom(x_1\otimes x_2, x)\otimes \Hom(c_1^\vee, x_1)\otimes \Hom(c_2^\vee, x_2) \\
&\cong \Hom(c_1^\vee\otimes c_2^\vee, x) \\
&\cong \Hom(\bu, c_2\otimes c_1\otimes x),
\end{align*}
where we have used \cref{prop:ninjaYoneda} in the third line.
\end{proof}

We will now examine monoidal properties of the self-duality pairings \eqref{eq:ev} and \eqref{eq:coev}.

\begin{prop}
The functors
\[\ev\colon \cC^{\mop}\otimes \cC\longrightarrow \Vect,\qquad \coev\colon \Vect\rightarrow \cC\otimes \cC^{\mop}\]
have a natural lax monoidal structure.
\label{prop:evcoevlaxmonoidal}
\end{prop}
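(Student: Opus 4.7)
The plan is to handle the two functors separately, using different strategies.

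For the coevaluation, the lax monoidal structure is an immediate consequence of \cref{cor:TRalgebra}: a lax monoidal functor $\Vect \to \cC \otimes \cC^{\mop}$ is the same datum as an algebra object in $\cC \otimes \cC^{\mop}$, namely its value on the tensor unit $k$. Since $\coev(k) = T^\R(\bu_\cC)$ was already equipped with an algebra structure in \cref{cor:TRalgebra} via the monad $T^\R T$, this upgrades $\coev$ to a lax monoidal functor, with the multiplication coming from the monad multiplication and the unit coming from the unit of the adjunction $T \dashv T^\R$ applied to $\bu_\cC$.

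For the evaluation, I would construct the lax structure by hand. Recall that the monoidal product on $\cC^{\mop} \otimes \cC$ sends $(c_1 \boxtimes c_2) \otimes (c_1' \boxtimes c_2')$ to $(c_1' \otimes c_1) \boxtimes (c_2 \otimes c_2')$. Given $\alpha \in \ev(c_1 \boxtimes c_2) = \Hom_\cC(\bu, c_1 \otimes c_2)$ and $\beta \in \ev(c_1' \boxtimes c_2') = \Hom_\cC(\bu, c_1' \otimes c_2')$, I would define
\[
\mu(\alpha \otimes \beta) := (\id_{c_1'} \otimes \alpha \otimes \id_{c_2'}) \circ \beta,
\]
viewed as a morphism $\bu \to c_1' \otimes c_1 \otimes c_2 \otimes c_2'$ after using the unit isomorphism to identify $c_1' \otimes c_2'$ with $c_1' \otimes \bu \otimes c_2'$. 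The unit morphism $k \to \ev(\bu_\cC \boxtimes \bu_\cC) = \Hom_\cC(\bu_\cC, \bu_\cC)$ sends $1$ to $\id_{\bu_\cC}$. Naturality in all four object variables is manifest. Associativity reduces, via one application of the interchange law $(f_1 \otimes g_1) \circ (f_2 \otimes g_2) = (f_1 \circ f_2) \otimes (g_1 \circ g_2)$, to the observation that both bracketings of a triple product yield the evident morphism
\[
(\id_{c_1''} \otimes \id_{c_1'} \otimes \alpha \otimes \id_{c_2'} \otimes \id_{c_2''}) \circ (\id_{c_1''} \otimes \beta \otimes \id_{c_2''}) \circ \gamma;
\]
unitality is immediate from the definition.

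The one conceptually interesting point, which I do not expect to be a serious obstacle, is that the tensor product functor $T\colon \cC^{\mop} \otimes \cC \to \cC$ is \emph{not} itself lax monoidal in general: such a structure would demand natural maps $(c_1' \otimes c_1) \otimes (c_2 \otimes c_2') \to (c_1 \otimes c_2) \otimes (c_1' \otimes c_2')$, which require a braiding. Thus one cannot simply deduce lax monoidality of $\ev$ by composing $T$ with the lax monoidal functor $\Hom_\cC(\bu, -)\colon \cC \to \Vect$. The insertion formula for $\mu$ above sidesteps this by exploiting the unitality of $\bu_\cC$ rather than any braiding, essentially using the fact that $\bu_\cC$ is an algebra in the trivial way.
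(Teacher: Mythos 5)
Your proof is correct, and for the coevaluation half it is word-for-word the paper's argument: a lax monoidal structure on a functor out of $\Vect$ is an algebra structure on the value at $k$, supplied by \cref{cor:TRalgebra}. For the evaluation half you take a mildly different route: the paper defines the multiplication $\ev(c_1,c_2)\otimes\ev(d_1,d_2)\to\ev(d_1\otimes c_1, c_2\otimes d_2)$ on compact projectives by passing through the duality identifications $\Hom(\bu, c\otimes c')\cong\Hom(c^\vee, c')$ and tensoring morphisms, whereas your insertion formula $\beta\mapsto(\id_{c_1'}\otimes\alpha\otimes\id_{c_2'})\circ\beta$ never mentions duals. The two maps coincide — unwinding the paper's diagram with the triangle identities for $\coev_{c_1}$ recovers exactly your composite — so this is a presentational rather than mathematical difference; what your version buys is that naturality and associativity become one-line interchange-law checks and the formula is defined on all objects at once rather than only on compact projectives, while the paper's version makes the compatibility with the duality pairings of \cref{thm:cprigidselfdual} more visible. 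Your closing observation that $T\colon\cC^{\mop}\otimes\cC\to\cC$ is not itself lax monoidal (so one cannot simply compose with $\Hom_\cC(\bu,-)$) is accurate and correctly identifies why some such hands-on construction is needed.
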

\begin{proof}
We begin with the evaluation functor. The unit map $k\rightarrow \ev(\bu, \bu)=\Hom_\cC(\bu, \bu)$ is given by the inclusion of the identity. Suppose $c_1\boxtimes c_2,d_1\boxtimes d_2\in\cC^{\mop}\otimes \cC$ are two compact projective objects. Then we define $\ev(c_1, c_2)\otimes \ev(d_1, d_2)\rightarrow \ev(d_1\otimes c_1, c_2\otimes d_2)$ via the commutative diagram
\[
\xymatrix{
\ev(c_1, c_2)\otimes \ev(d_1, d_2) \ar[r] \ar@{=}[d] & \ev(d_1\otimes c_1, c_2\otimes d_2) \ar@{=}[d] \\
\Hom(\bu, c_1\otimes c_2)\otimes \Hom(\bu, d_1\otimes d_2) \ar[r] \ar^{\sim}[d] & \Hom(\bu, d_1\otimes c_1\otimes c_2\otimes d_2) \ar^{\sim}[d] \\
\Hom(c_1^\vee, c_2)\otimes \Hom(d_1^\vee, d_2) \ar[r] & \Hom(c_1^\vee\otimes d_1^\vee, c_2\otimes d_2)
}
\]

Next we consider the coevaluation functor. A lax monoidal structure on $\coev$ is the same as an algebra structure on $\coev(k)=T^\R(\bu)$, which, in turn, is provided by \cref{cor:TRalgebra}.
\end{proof}

Now suppose $\cC,\cD$ are cp-rigid monoidal categories with compact projective units and $\cE$ any monoidal category. Then the composition functor
\[\Fun^\L(\cD, \cE)\otimes \Fun^\L(\cC, \cD)\longrightarrow \Fun^\L(\cC, \cE)\]
has a natural lax monoidal structure with respect to the Day convolution.

\begin{prop}
Suppose $\cC,\cD,\cE$ are as above. The diagrams
\[
\xymatrix@C=2cm{
\Fun^\L(\cD, \cE)\otimes \Fun^\L(\cC, \cD) \ar[r] & \Fun^\L(\cC, \cE) \\
\cE\otimes \cD^{\mop}\otimes \cD\otimes \cC^{\mop} \ar^{\id\otimes \ev\otimes \id}[r] \ar^{\sim}[u] & \cE\otimes \cC^{\mop} \ar^{\sim}[u]
}
\]
and
\[
\xymatrix{
\Vect \ar^-{\id}[r] & \Fun^\L(\cC, \cC) \\
\Vect \ar^-{\coev}[r] \ar@{=}[u] & \cC\otimes \cC^{\mop} \ar^{\sim}[u]
}
\]
of lax monoidal functors with respect to the Day convolution commute up to a monoidal natural isomorphism.
\label{prop:Funcompositionlaxmonoidal}
\end{prop}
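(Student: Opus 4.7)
The plan is to first exhibit the underlying natural isomorphisms of functors as formal consequences of the self-duality of $\cC$ and $\cD$ in $\PrL$ from \cref{thm:cprigidselfdual}, and then verify that these isomorphisms are compatible with the lax monoidal structures on both sides.

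For the first diagram, recall from \cref{prop:Dayconvolutionbimodules} that the equivalence $\cD\otimes\cC^{\mop}\xrightarrow{\sim}\Fun^\L(\cC, \cD)$ sends $d\boxtimes c$ to the functor $x\mapsto \ev(c, x)\otimes d$. Hence for compact projective inputs $e\boxtimes d_2\boxtimes d_1\boxtimes c$, the top path sends the object to
\[
x\longmapsto \ev(d_2, \ev(c, x)\otimes d_1)\otimes e \cong \ev(c, x)\otimes \ev(d_2, d_1)\otimes e,
\]
while the bottom path produces the same functor directly; the isomorphism then extends by colimit-preservation. To check that this is a \emph{monoidal} natural isomorphism, I would take two such input objects and expand each lax structure map explicitly. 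The lax monoidal structure on composition arises by inserting $y_i=G_i(x_i)$ into the coend defining $(F_1\otimes_{\Day} F_2)(G_1 x_1\otimes G_2 x_2)$, while the lax structure on $\ev$ from \cref{prop:evcoevlaxmonoidal} is built from the duality isomorphisms $\Hom_\cC(\bu, c_1\otimes c_2)\cong \Hom_\cC(c_1^\vee, c_2)$. A coend manipulation invoking \cref{prop:ninjaYoneda} shows that after transport through \cref{prop:Dayconvolutionbimodules} the two structure maps agree.

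For the second diagram, the identity functor on $\cC$ is strictly monoidal, so it corresponds under \cref{prop:Dayconvolutionbimodules} to some algebra $I\in\cC\otimes\cC^{\mop}$. Since applying $c\boxtimes c'\mapsto \ev(c', -)\otimes c$ to $I$ must reproduce $\id_\cC$, the ninja Yoneda lemma \cref{prop:ninjaYoneda} forces $I\cong \coev(k)=T^\R(\bu)$. The algebra structure on $\id_\cC$ (trivial, as it is strictly monoidal) corresponds under the equivalence to the algebra structure on $T^\R(\bu)$ from \cref{cor:TRalgebra}, which is exactly the one used to define the lax monoidal structure on $\coev$ in \cref{prop:evcoevlaxmonoidal}.

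The hard part will be the coend bookkeeping: the conceptual content is essentially formal, amounting to the observation that the self-duality of $\cC$ in $\PrL$ is compatible with its monoidal structure in a ``Frobenius algebra'' fashion. However, translating between the external picture (with $\cC\otimes\cC^{\mop}$, $\ev$, and $\coev$) and the internal picture (with functor categories and Day convolution) requires chasing coends through several dualization steps, and particular care is needed because $\cC^{\mop}$ reverses the tensor order, so that the duality $c\mapsto c^\vee$ swaps sides at each stage.
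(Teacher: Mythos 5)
The paper states \cref{prop:Funcompositionlaxmonoidal} without proof, so there is no argument of record to compare yours against. Your identification of the underlying natural isomorphisms is correct and is the evident one: evaluating both composites on generators $e\boxtimes d_2\boxtimes d_1\boxtimes c$ and using that $\Hom_\cD(\bu_\cD,-)$ commutes with tensoring by a vector space (this is where compact projectivity of the unit enters) gives the first square, and the ninja Yoneda lemma identifies the functor associated to $\coev(k)=T^\R(\bu)$ with $\id_\cC$, consistent with the assertion in \cref{ex:convolutionproduct} that $\coev(k)$ is the unit for convolution.

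However, the substantive content of the proposition is that these isomorphisms are \emph{monoidal}, and on this point your write-up only describes the verification rather than carrying it out. Two checks remain open in your text: (i) that the lax structure map for composition with respect to Day convolution, transported through \cref{prop:Dayconvolutionbimodules}, reduces to the structure map $\ev(c_1,c_2)\otimes\ev(d_1,d_2)\to\ev(d_1\otimes c_1, c_2\otimes d_2)$ of \cref{prop:evcoevlaxmonoidal} (with the order reversal in the $\cD^{\mop}$ slot, which you correctly flag as a danger point); and (ii) that the algebra structure on $T^\R(\bu)$ produced by \cref{cor:TRalgebra} via the monad $T^\R T$ and the module-functor isomorphism of \cref{prop:TRmodulefunctor} agrees, under the equivalence, with the canonical Day-convolution algebra structure on the identity functor. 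Your phrase that the latter is ``exactly'' the structure used to define the lax structure on $\coev$ is an assertion of the thing to be proved, not a proof of it. Neither check hides an obstruction --- both are coend chases of the kind you anticipate --- but as written the proposal establishes only the unadorned natural isomorphisms, not their compatibility with the lax monoidal structures, which is what the proposition claims.
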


Recall the following notion (see \cite[Definition 6.1]{AguiarMahajan} where it is called a 2-monoidal category).

\begin{defn}
A \defterm{duoidal category} is a category $\cC$ equipped with two monoidal structures $(\cC, \circ, I)$ and $(\cC, \otimes, J)$, such that the functors $\circ\colon \cC\times \cC\rightarrow \cC$ and $I\colon \Vect\rightarrow \cC$ are lax monoidal with respect to $(\otimes, J)$.
\end{defn}

\begin{example}
Consider the category $\Fun^\L(\cC, \cC)$. It carries a monoidal structure $\circ$ given by the composition of functors whose unit $I$ is the identity functor. It also carries the Day convolution monoidal structure $\otimes_{Day}$. It is shown in \cite[Proposition 50]{GarnerLopezFranco} that the two are compatible so that $\Fun^\L(\cC, \cC)$ is a duoidal category.
\end{example}

\begin{example}
Consider the category $\cC\otimes \cC$. It carries a convolution monoidal structure $\circ$ defined by
\[(M_1\boxtimes M_2)\circ (N_1\boxtimes N_2) = \ev(M_2, N_1)\otimes M_1\boxtimes N_2\]
whose unit is $I = \coev(k)\in\cC\otimes \cC$. It also carries a pointwise monoidal structure $\cC\otimes \cC^{\mop}$ whose unit is $J = \bu_\cC\boxtimes \bu_\cC$. It follows from \cref{prop:evcoevlaxmonoidal} that the two monoidal structures are compatible, so that $\cC\otimes \cC$ becomes a duoidal category. Moreover, $\cC$ is naturally a module category over $\cC\otimes \cC$ with respect to convolution:
\[(\cC\otimes \cC)\otimes \cC\longrightarrow \cC\]
is given by
\[(c_1\boxtimes c_2)\boxtimes d\mapsto \ev(c_2, d)\otimes c_1.\]
\label{ex:convolutionproduct}
\end{example}

We are now ready to relate the two duoidal structures. The following statement combines \cref{prop:Dayconvolutionbimodules,prop:Funcompositionlaxmonoidal}.

\begin{thm}
Suppose $\cC$ is a cp-rigid monoidal category with a compact projective unit. The equivalence \eqref{eq:cprigidfunctors}
\[\cC\otimes \cC\longrightarrow \Fun^\L(\cC, \cC)\]
given by $c_1\boxtimes c_2\mapsto (d\mapsto\ev(c_2, d)\otimes c_1)$ upgrades to an equivalence of duoidal categories, where the two monoidal structures are the convolution product and the pointwise monoidal structure on $\cC\otimes \cC^{\mop}$ while the two monoidal structures on $\Fun^\L(\cC, \cC)$ are the composition of functors and Day convolution. This equivalence intertwines $\cC$ as a $\cC\otimes \cC$-module category with respect to convolution and $\cC$ as a $\Fun^\L(\cC, \cC)$-module category with respect to composition of functors.
\label{thm:endofunctorsduoidal}
\end{thm}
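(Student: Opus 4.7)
The plan is to assemble \cref{prop:Dayconvolutionbimodules} and \cref{prop:Funcompositionlaxmonoidal} into a single duoidal statement. The underlying equivalence of categories $\cC\otimes\cC\simeq\Fun^\L(\cC,\cC)$ is already given by the cp-rigid self-duality of \cref{thm:cprigidselfdual}, so the only task is to identify the two monoidal structures on each side and confirm the duoidal interchange data match.

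First, I would apply \cref{prop:Dayconvolutionbimodules} with $\cD=\cC$ to match the pointwise monoidal structure on $\cC\otimes\cC^{\mop}$ with Day convolution on $\Fun^\L(\cC,\cC)$. Then, specializing \cref{prop:Funcompositionlaxmonoidal} to $\cC=\cD=\cE$, the first diagram says the composition monoidal product on $\Fun^\L(\cC,\cC)$ corresponds under the equivalence to the middle-evaluation functor $\id\otimes\ev\otimes\id$ on $\cC\otimes\cC^{\mop}\otimes\cC\otimes\cC^{\mop}$, which is precisely the convolution product of \cref{ex:convolutionproduct}; the second diagram identifies the identity functor (the unit for composition) with $\coev(k)=T^\R(\bu)$, which is the convolution unit $I$ by \cref{cor:TRalgebra}. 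This pairs up the two monoidal structures.

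For the duoidal compatibility proper, I would observe that the interchange constraints on both sides arise by unpacking the \emph{same} lax-monoidality data supplied by \cref{prop:Funcompositionlaxmonoidal}: on the $\cC\otimes\cC$ side they are built from the lax monoidal structures of $\ev$ and $\coev$ established in \cref{prop:evcoevlaxmonoidal}, while on $\Fun^\L(\cC,\cC)$ they come from the standard duoidal structure on endofunctors (composition versus Day convolution). Since both are transported by the same equivalence and derive from the same underlying lax monoidality of evaluation, they automatically agree. The module-category statement is then immediate from the defining formula of the equivalence: $c_1\boxtimes c_2$ maps to the functor $d\mapsto\ev(c_2,d)\otimes c_1$, and evaluating this functor at $d$ recovers the convolution action of $c_1\boxtimes c_2$ on $d$ verbatim.

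The main obstacle is essentially bookkeeping, keeping track of the direction of the various lax structure maps and the coherence isomorphisms relating the two monoidal products. No genuinely new calculation is needed: the preceding propositions have packaged the hard work in terms of lax monoidality of $\ev$ and $\coev$, and the theorem is the statement that these two packagings are the same equivalence viewed from two sides.
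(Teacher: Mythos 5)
Your proposal matches the paper's own treatment: the theorem is stated there precisely as the combination of \cref{prop:Dayconvolutionbimodules} (identifying the pointwise structure with Day convolution) and \cref{prop:Funcompositionlaxmonoidal} (identifying composition/identity with convolution/$\coev(k)$ as lax monoidal functors), which is exactly the assembly you describe. The observation that the module-category statement is immediate from the defining formula of the equivalence is also the intended reading, so the proposal is correct and follows essentially the same route.
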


\subsection{Bimodules and lax monoidal functors}

Suppose $f\colon A\rightarrow B$ is a homomorphism of algebras. Then $B$ becomes an $(A, B)$-bimodule with a distinguished element given by $1\in B$. Conversely, the data of an $(A, B)$-bimodule $M$ with a distinguished element $1_M\in M$, such that the action map $B\rightarrow M$ is an isomorphism, is the same as the data of a homomorphism $A\rightarrow B$. In this section we will describe a similar construction on the categorical level. Recall from \cite[Chapters 7.1,7.2]{EGNO} the notion of a module category over a monoidal category.

Suppose $\cC$ and $\cD$ are monoidal categories and $\cM$ a $(\cC, \cD)$ bimodule category together with a distinguished object $\Dist\in\cM$. The action functors of $\cC$ and $\cD$ on $\Dist\in\cM$ define colimit-preserving functors
\[\act_\cC\colon \cC\longrightarrow \cM,\qquad \act_\cD\colon \cD\longrightarrow \cM\]
which we write as $x\mapsto x\otimes \Dist$ and $y\mapsto \Dist\otimes y$, respectively. By the adjoint functor theorem these admit right adjoints that we denote by $\act^\R_\cC$ and $\act^\R_\cD$. The counit of the adjunction defines a natural morphism
\[\epsilon\colon \Dist\otimes\act^\R_\cD(m)\rightarrow m\]
for $m\in\cM$. Moreover, $\act_\cD\colon \cD\rightarrow \cM$ is a functor of right $\cD$-module categories, so $\act^\R_\cD\colon \cM\rightarrow \cD$ is a lax $\cD$-module functor, i.e. we have a natural morphism
\[\phi\colon\act^\R_\cD(m)\otimes y\longrightarrow \act^\R_\cD(m\otimes y)\]
satisfying an associativity axiom.

Consider the functor
\[F_{\cC\cD}=\act^\R_\cD\circ\act_\cC\colon \cC\longrightarrow \cD.\]

\begin{prop}
The morphisms
\[\bu_\cB\rightarrow \act^\R_\cB\circ \act_\cB(\bu_\cB)\cong \act^\R_\cB\circ \act_\cA(\bu_\cA)\]
and
\begin{align*}
\act^\R_\cB(x\otimes\Dist)\otimes\act^\R_\cB(y\otimes\Dist)&\rightarrow \act^\R_\cB(x\otimes \Dist\otimes \act^\R_\cB(y\otimes \Dist)) \\
&\rightarrow \act^\R_\cB(x\otimes y\otimes \Dist)
\end{align*}
define the structure of a lax monoidal functor on $F_{\cA\cB}$.
\label{prop:laxmonoidalactionadjoint}
\end{prop}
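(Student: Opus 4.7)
The plan is to assemble the lax monoidal structure on $F_{\cC\cD} = \act^\R_\cD \circ \act_\cC$ from two pieces of auxiliary structure that are already in hand: the canonical lax right $\cD$-module structure $\phi\colon \act^\R_\cD(m)\otimes y\to \act^\R_\cD(m\otimes y)$ on $\act^\R_\cD$, and the counit $\epsilon\colon \Dist\otimes \act^\R_\cD(m)\to m$ of the adjunction $\act_\cD\dashv\act^\R_\cD$. First I would record the identification $\act_\cC(\bu_\cC)=\Dist=\act_\cD(\bu_\cD)$, so that the unit of the adjunction at $\bu_\cD$ supplies a map $\bu_\cD\to \act^\R_\cD(\Dist)=F_{\cC\cD}(\bu_\cC)$; this is the candidate unit constraint. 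For the multiplication constraint, I would take the composite written in the statement, expanding the middle step using the $(\cC,\cD)$-bimodule associator of $\cM$ and the left $\cC$-module structure on $\cM$ to unambiguously identify $x\otimes \Dist\otimes \act^\R_\cD(y\otimes \Dist)$ and $(x\otimes y)\otimes \Dist$.

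Next I would verify the two monoidal coherence axioms by diagram chasing. Associativity requires showing that the two ways of factoring $F(x)\otimes F(y)\otimes F(z)\to F(x\otimes y\otimes z)$ agree; after unwinding $\mu_{x,y}$, each factoring reduces to a composite of (i) the associativity axiom for the lax module structure $\phi$, (ii) naturality of $\phi$ and of $\epsilon$, (iii) the bimodule pentagon in $\cM$, and (iv) compatibility of the left $\cC$-action with the associator of $\cC$; these combine in the expected way because the only non-naturality move is a single instance of $\phi$-associativity. Unitality reduces on each side to the unit axiom for $\phi$ together with one of the triangle identities for $\act_\cD\dashv\act^\R_\cD$, using again $\act_\cD(\bu_\cD)=\Dist$.

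The main obstacle is not conceptual but bookkeeping: several layers of coherence (for the monoidal categories $\cC$ and $\cD$, for the bimodule category $\cM$, and for the adjunction $\act_\cD\dashv\act^\R_\cD$) must be carried through a single diagram, and one needs to be consistent about whether each instance of $\otimes$ denotes multiplication in $\cC$, in $\cD$, the left $\cC$-action on $\cM$, or the right $\cD$-action on $\cM$. A more structural packaging is available: regard $\cM$ as a left $\cC\otimes \cD^{\mop}$-module category and $\Dist$ as a pointed module functor $\cC\otimes \cD^{\mop}\to\cM$, so that $F_{\cC\cD}$ is the image of the convolution unit under the equivalence of \cref{thm:endofunctorsduoidal} applied relative to $\cD$; the lax monoidal structure is then inherited from the duoidal compatibility. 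However, since the proposition is only used later for individual algebras, the direct verification above is sufficient.
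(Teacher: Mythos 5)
Your proposal is correct and follows essentially the same route as the paper: the multiplication constraint is the composite of the lax $\cD$-module structure $\phi$ on $\act^\R_\cD$ with the counit $\epsilon$, the unit constraint comes from the adjunction unit at $\bu_\cD$ via $\act_\cC(\bu_\cC)\cong\Dist\cong\act_\cD(\bu_\cD)$, and the coherence axioms are checked by the same diagram chase. The only ingredient you leave unnamed is that, beyond plain naturality, $\epsilon$ must be a natural transformation of $\cD$-module functors (i.e.\ compatible with $\phi$) --- this is exactly what makes the middle square of the reduced associativity diagram commute, and it holds automatically because $\act^\R_\cD$ is the right adjoint of the strong module functor $\act_\cD$.
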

\begin{proof}
Let us prove the associativity condition. For brevity denote $a^\R=\act^\R_\cB$, $D=\Dist$. We have to show that the diagram
\[
\xymatrix{
(a^\R(x\otimes D)\otimes a^\R(y\otimes D))\otimes a^\R(z\otimes D) \ar^{\sim}[r] \ar^{\phi}[d] & a^\R(x\otimes D)\otimes (a^\R(y\otimes D)\otimes a^\R(z\otimes D)) \ar^{\phi}[d] \\
a^\R(x\otimes D\otimes a^\R(y\otimes D))\otimes a^\R(z\otimes D) \ar^{\epsilon}[d] & a^\R(x\otimes D)\otimes a^\R(y\otimes D\otimes a^\R(z\otimes D)) \ar^{\epsilon}[d] \\
a^\R((x\otimes y)\otimes D)\otimes a^\R(z\otimes D) \ar^{\phi}[d] & a^R(x\otimes D)\otimes a^\R((y\otimes z)\otimes D) \ar^{\phi}[d] \\
a^\R((x\otimes y)\otimes D\otimes a^\R(z\otimes D)) \ar^{\epsilon}[d] & a^\R(x\otimes D\otimes a^\R((y\otimes z)\otimes D)) \ar^{\epsilon}[d] \\
a^\R((x\otimes y)\otimes z\otimes D) \ar^{\sim}[r] & a^\R(x\otimes (y\otimes z)\otimes D)
}
\]
is commutative. Using naturality and the associativity condition for the lax module structure on $\act^\R$, the above diagram is reduced to
\[
\xymatrix@C=-1cm{
& a^\R(x\otimes D)\otimes (a^\R(y\otimes D)\otimes a^\R(z\otimes D)) \ar_{\phi}[dl] \ar^{\phi}[dr] & \\
a^\R(x\otimes D\otimes a^\R(y\otimes D)\otimes a^\R(z\otimes D)) \ar^{\epsilon}[d] \ar^{\phi}[drr] && a^\R(x\otimes D)\otimes a^\R(y\otimes D\otimes a^\R(z\otimes D)) \ar^{\phi}[d] \\
a^\R((x\otimes y)\otimes D \otimes a^\R(z\otimes D)) \ar^{\epsilon}[dd] && a^R(x\otimes D\otimes a^\R(y\otimes D\otimes a^\R(z\otimes D))) \ar^{\epsilon}[d] \ar^{\epsilon}[ll] \\
 && a^\R(x\otimes D\otimes a^\R((y\otimes z)\otimes D)) \ar^{\epsilon}[d] \\
a^\R((x\otimes y)\otimes z\otimes D) \ar^{\sim}[rr] && a^\R(x\otimes (y\otimes z)\otimes D)
}
\]
The top segment commutes by naturality of $\phi$. The middle segment commutes since $\epsilon$ is a natural transformation of $\cD$-module functors. The bottom segment commutes by naturality of $\epsilon$.

Unitality is proven analogously.
\end{proof}

Note that in the above construction we may freely replace $\cC$ and $\cD$, so we similarly obtain a lax monoidal functor
\[F_{\cD\cC}\colon \cD\longrightarrow \cC.\]

\begin{defn}
Suppose $\cD$ is a monoidal category and $\cM$ a $\cD$-module category with a distinguished object. $\cM$ is \defterm{free of rank 1} if the action functor $\act_\cD\colon \cD\rightarrow \cM$ is an equivalence.
\label{def:freerank1}
\end{defn}

\begin{prop}
Suppose $\cM$ is free of rank 1 over $\cD$. Then the lax monoidal functor $F_{\cC\cD}\colon \cC\rightarrow \cD$ is strongly monoidal and it preserves colimits.
\label{prop:freerank1monoidal}
\end{prop}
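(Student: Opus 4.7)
The plan is to exploit that an equivalence of categories is canonically its own quasi-inverse (up to isomorphism with the adjoint), so that the counit and the lax module structure map used to define the lax monoidal structure on $F_{\cC\cD}$ in \cref{prop:laxmonoidalactionadjoint} become isomorphisms.

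First I would observe that, since $\act_\cD\colon \cD\to \cM$ is an equivalence, the right adjoint $\act^\R_\cD\colon \cM\to\cD$ is a quasi-inverse; in particular it is an equivalence, the unit and counit are both natural isomorphisms, and $\act^\R_\cD$ preserves all colimits. The functor $\act_\cC\colon \cC\to\cM$ preserves colimits because the $\cC$-action on $\cM$ is a morphism in $\PrL$. Hence $F_{\cC\cD}=\act^\R_\cD\circ\act_\cC$ preserves colimits.

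For the monoidal claim, I would inspect the two arrows in the composite appearing in \cref{prop:laxmonoidalactionadjoint}. The second arrow uses the counit $\epsilon\colon \Dist\otimes\act^\R_\cD(m)\to m$, which is an isomorphism because it is the counit of an equivalence. The first arrow uses the lax right $\cD$-module structure
\[\phi\colon \act^\R_\cD(m)\otimes y\longrightarrow \act^\R_\cD(m\otimes y).\]
I would argue that $\phi$ is also invertible: the functor $\act_\cD$ is strong $\cD$-linear (the structural isomorphism being the associator $(\Dist\otimes y)\otimes y'\cong\Dist\otimes(y\otimes y')$), and the right adjoint of a strong module equivalence inherits a strong module structure — concretely, applying $\act_\cD$ to both sides of $\phi$ and using the counit isomorphism turns $\phi$ into the associator, which is invertible, so $\phi$ itself is invertible because $\act_\cD$ is an equivalence. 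Putting these two facts together shows that the lax monoidal structure maps of $F_{\cC\cD}$ are isomorphisms, so $F_{\cC\cD}$ is strongly monoidal.

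The only real obstacle is bookkeeping for $\phi$: one must justify that a right adjoint of a strong module equivalence is a strong module functor, rather than merely lax. I would handle this either by the direct check sketched above (apply $\act_\cD$, use the counit, reduce to an associator) or by invoking the standard mate calculus, which gives a strong module structure on $\act^\R_\cD$ whose underlying lax structure is exactly the $\phi$ used in \cref{prop:laxmonoidalactionadjoint}. Unitality is analogous and I would omit the details.
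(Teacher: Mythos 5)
Your proposal is correct and follows essentially the same route as the paper: the paper's proof likewise observes that since $\act_\cD$ is an equivalence, both the counit $\epsilon$ and the lax module structure map $\phi$ are isomorphisms (hence $F_{\cC\cD}$ is strongly monoidal), and that $\act^\R_\cD$ preserves colimits because it is the inverse of $\act_\cD$. Your extra care in justifying the invertibility of $\phi$ via the mate calculus is a welcome elaboration of a step the paper simply asserts.
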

\begin{proof}
Since $\act_\cD$ is an equivalence, both the counit $\epsilon\colon \Dist\otimes \act^\R_\cD(m)\rightarrow m$ and the structure of a lax module functor $\phi\colon \act^\R_\cD(m)\otimes y\rightarrow \act^\R_\cD(m\otimes y)$ are isomorphisms. In particular, $F_{\cC\cD}$ is strongly monoidal.

Moreover, $\act_\cD^\R$ is the inverse to $\act_\cD$, so it preserves colimits.
\end{proof}

\subsection{Tannaka reconstruction for bialgebras}
\label{sect:TannakaBialgebras}

Recall Tannaka reconstruction results for bialgebras; we refer to \cite{Deligne,Saavedra} for the commutative case and \cite{UlbrichFibre,UlbrichHopf,SchauenburgTannaka} for the general case.

Let $B\in\Vect$ be a bialgebra. Then $\cC=\CoMod_B$, the category of (left) $B$-comodules, is locally presentable. Moreover, it is equipped with a conservative and colimit-preserving monoidal forgetful functor $F\colon \cC\rightarrow \Vect$ which admits a colimit-preserving right adjoint $F^\R\colon \Vect\rightarrow \cC$ sending $V$ to the cofree $B$-comodule $B\otimes V$ cogenerated by $V$. There is a converse to this statement.

\begin{prop}
Suppose $\cC$ is a monoidal category with a colimit-preserving monoidal forgetful functor $F\colon \cC\rightarrow \Vect$ which admits a colimit-preserving right adjoint $F^\R\colon \Vect\rightarrow \cC$. Then $B=FF^\R(k)$ is a bialgebra and $F$ factors as
\[\cC\longrightarrow \CoMod_B.\]
Moreover, the latter functor is an equivalence if, and only if, $F$ is conservative and preserves equalizers.
\label{prop:tannakabialgebras}
\end{prop}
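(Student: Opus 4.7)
The plan is to realise $B = FF^\R(k)$ simultaneously as an algebra and a coalgebra in $\Vect$, verify compatibility to obtain a bialgebra, and then identify the factorisation $\cC \to \CoMod_B$ with the comparison functor of a comonadic adjunction.

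For the coalgebra structure I would use that $T := FF^\R$ is the comonad on $\Vect$ arising from $F \dashv F^\R$. Since both $F$ and $F^\R$ preserve colimits, so does $T$, and any colimit-preserving endofunctor of $\Vect$ is canonically of the form $B \otimes (-)$ with $B = T(k) = FF^\R(k)$. Transporting the comultiplication $T \to T^2$ and the counit $T \to \id$ along this equivalence and evaluating at $k$ equips $B$ with coassociative counital coalgebra data $\Delta\colon B \to B \otimes B$ and $\epsilon\colon B \to k$. For the algebra structure, strong monoidality of $F$ induces, via the mates under the adjunction, a canonical lax monoidal structure on $F^\R$; in particular $F^\R(k)$ becomes an algebra in $\cC$ (with multiplication $F^\R(k) \otimes F^\R(k) \to F^\R(k \otimes k) \cong F^\R(k)$ and unit $\bu_\cC \to F^\R(k)$), and applying the strong monoidal $F$ makes $B = FF^\R(k)$ into an algebra in $\Vect$. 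To promote $B$ to a bialgebra I then verify that $\Delta$ and $\epsilon$ are algebra homomorphisms; this reduces to a diagram chase interlocking the lax monoidal structure on $F^\R$ with the comonad structure on $T$, using only the triangle identities and naturality. I expect this bialgebra compatibility to be the most bookkeeping-heavy part of the argument, though conceptually routine.

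Having the bialgebra $B$, for each $X \in \cC$ the composite
\[F(X) \xrightarrow{F(\eta_X)} FF^\R F(X) \cong B \otimes F(X),\]
where $\eta$ is the unit of $F \dashv F^\R$, provides a $B$-coaction on $F(X)$, yielding the desired lift $\tilde F\colon \cC \to \CoMod_B$. Monoidality of $\tilde F$ then follows from that of $F$ together with the bialgebra structure on $B$, by naturality of $\eta$ on tensor products.

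Finally, for the ``if and only if'' claim I would invoke the comonadic form of Barr--Beck. The category $\CoMod_B$ is tautologically the Eilenberg--Moore category of $T$-comodules, and $\tilde F$ is the canonical comparison functor. Since $\cC$ is locally presentable it has all equalizers, so the crude comonadicity criterion says $\tilde F$ is an equivalence as soon as $F$ is conservative and preserves equalizers; an explicit quasi-inverse is given by sending a comodule $(V,\rho)$ to the equalizer of $F^\R(\rho)$ and the natural map $F^\R(V) \to F^\R(B \otimes V)$ induced by the comonad structure, and conservativeness plus equalizer preservation ensure this inverts $\tilde F$. Conversely, the forgetful functor $\CoMod_B \to \Vect$ is always conservative and preserves equalizers, so if $\tilde F$ is an equivalence then $F$ inherits both properties.
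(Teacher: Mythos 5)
Your argument is correct and is essentially the same route the paper takes: the paper states this proposition without proof (citing the literature), but its proof of the generalization to Harish-Chandra bialgebroids (\cref{HarishChandraTannakaReconstruction} via \cref{thm:HClaxmonoidalcomonad}) proceeds exactly as you do --- $F^\R$ is lax monoidal by doctrinal adjunction, $FF^\R$ is a colimit-preserving lax monoidal comonad hence of the form $B\otimes(-)$ for a bialgebra $B$ (the paper packages your ``bookkeeping-heavy'' compatibility check as the identification of bimonoids in the duoidal category $\Fun^\L(\Vect,\Vect)\cong\Vect$), and the equivalence is the dual Barr--Beck comparison. All the individual steps you outline, including the converse direction using exactness of the forgetful functor from $\CoMod_B$ over a field, are sound.
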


\begin{remark}
A more familiar statement of Tannaka reconstruction is obtained by passing to compact objects in the above statement. Namely, for a small abelian monoidal category $\cC^\mathrm{c}$ with a biexact tensor product and a monoidal functor
\[F\colon \cC^\mathrm{c}\longrightarrow \Vec\]
to the category of finite-dimensional vector spaces there is a canonical bialgebra $B$ (the bialgebra of coendomorphisms of $F$, see \cite[Section 1.10]{EGNO}), such that $F$ factors through
\[\cC^\mathrm{c}\longrightarrow \CoMod^{\fd}_B\]
through the category of finite-dimensional $B$-comodules. Moreover, the latter functor is an equivalence if, and only if, $F$ is exact and faithful. We refer to \cite[Section 5.4]{EGNO} for more details.
\end{remark}

Let us now be more explicit. Consider the setup of \cref{prop:tannakabialgebras}, where $\cC$ is a monoidal category with enough compact projectives and a compact projective unit. Since $F^\R$ preserves colimits, $F$ preserves compact projective objects. In particular, for $y\in\cC^{\cp}$ the vector space $F(y)$ is finite-dimensional. So, by \cref{prop:rightadjoint} the bialgebra $B$ is
\begin{equation}
B = \int^{y\in\cC^{\cp}} F(y)^\vee\otimes F(y).
\label{eq:bialgebracoend}
\end{equation}
For $y\in\cC^{\cp}$ let us denote by
\[\pi_y\colon F(y)^\vee \otimes F(y)\rightarrow B\]
the natural projection. For $y,z\in\cC$ denote by
\[J_{y, z}\colon F(y)\otimes F(z)\xrightarrow{\sim} F(y\otimes z)\]
the monoidal structure on $F$ (the unit isomorphism will be implicit). The bialgebra structure on $B$ is given on generators as follows:
\begin{itemize}
\item The coproduct is
\[F(y)^\vee\otimes F(y)\xrightarrow{\id\otimes\coev_{F(y)}\otimes \id}F(y)^\vee\otimes F(y)\otimes F(y)^\vee\otimes F(y)\xrightarrow{\pi_y\otimes \pi_y} B\otimes B.\]
\item The counit is
\[F(y)^\vee\otimes F(y)\xrightarrow{\ev_{F(y)}} k.\]
\item The product is
\begin{align*}
(F(y)^\vee\otimes F(y))\otimes (F(z)^\vee\otimes F(z))&\cong (F(y)\otimes F(z))^\vee\otimes F(y)\otimes F(z)\\
&\xrightarrow{(J_{y,z}^{-1})^\vee\otimes J_{y,z}} F(y\otimes z)^\vee\otimes F(y\otimes z)\\
&\xrightarrow{\pi_{y\otimes z}} B.
\end{align*}
\item The unit is
\[k\cong F(\bu)^\vee\otimes F(\bu)\xrightarrow{\pi_\bu} B.\]
\end{itemize}

It will also be useful to think about $\pi_y$ as elements
\[T_y\in B\otimes \End(F(y)).\]
The following statement is immediate from the above formulas.

\begin{thm}
The bialgebra $B$ is spanned, as a $k$-vector space, by the matrix coefficients of $T_y$ for $y\in\cC^{\cp}$, subject to the relation
\begin{equation}
F(f) \circ T_x = T_y \circ F(f)
\label{eq:explicitcoend}
\end{equation}
for every $f\colon x\rightarrow y$. Moreover:
\begin{itemize}
\item For $y\in\cC^{\cp}$ we have
\begin{equation}
\Delta(T_y) = T_y\otimes T_y.
\end{equation}

\item For $y\in\cC^{\cp}$ we have
\begin{equation}
\epsilon(T_y) = \id_{F(y)}\in\End(F(y)).
\end{equation}

\item Suppose $x,y\in\cC^{\cp}$ are two objects. Then
\begin{equation}
J^{-1}_{x,y}T_{x\otimes y}J_{x,y}=(T_x\otimes \id_{F(y)})(\id_{F(x)}\otimes T_y)
\label{eq:tensorproducttt}
\end{equation}
as elements of $B\otimes \End(F(x\otimes y))\cong B\otimes \End(F(x)\otimes F(y))$.

\item $T_\bu\in B\otimes \End(F(\bu))\cong B$ is the unit.
\end{itemize}
\label{thm:Bmatrixpresentation}
\end{thm}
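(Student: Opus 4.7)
The plan is to unpack the explicit bialgebra structure on $B = FF^{\R}(k)$ listed just before the theorem and translate it into the matrix notation $T_y$. By \cref{prop:rightadjoint} we have the coend presentation \eqref{eq:bialgebracoend}, and by definition of a coend this realizes $B$ as the coequalizer of
\[
\coprod_{f\colon x\to y} F(y)^\vee \otimes F(x) \rightrightarrows \coprod_{y \in \cC^{\cp}} F(y)^\vee \otimes F(y),
\]
where for $f\colon x \to y$ the two parallel arrows are $F(f)^\vee \otimes \id_{F(x)}$, landing in the $x$-summand, and $\id_{F(y)^\vee} \otimes F(f)$, landing in the $y$-summand. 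Thus $B$ is spanned by the images of the projections $\pi_y$ for $y \in \cC^{\cp}$, which is precisely the spanning claim in the matrix-coefficient language. Repackaging each $\pi_y$ as the element $T_y \in B \otimes \End(F(y))$ corresponding to $\id_{F(y)}$ under $\End(F(y))^\vee \cong F(y)^\vee \otimes F(y)$, the coequalizer relation becomes $F(f) \circ T_x = T_y \circ F(f)$ in $B \otimes \Hom(F(x), F(y))$, which is \eqref{eq:explicitcoend}.

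The three easy formulas are obtained by substituting the generator $\pi_y(\id_{F(y)})$, which represents $T_y$, into the explicit formulas for the bialgebra structure. For the coproduct, the composite $(\pi_y \otimes \pi_y) \circ (\id \otimes \coev_{F(y)} \otimes \id)$ inserts $\id_{F(y)} \in F(y) \otimes F(y)^\vee$ and projects into $B \otimes B$, producing $T_y \otimes T_y \in B \otimes B \otimes \End(F(y))$. For the counit, $\ev_{F(y)}$ applied to the $B$-factor of $T_y$ pairs $\id_{F(y)}$ with itself and yields $\id_{F(y)}$. For the unit, $T_{\bu}$ is by definition the image of $1 \in k \cong F(\bu)^\vee \otimes F(\bu)$ under $\pi_{\bu}$, which is exactly the unit of $B$.

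The main computation is the product formula \eqref{eq:tensorproducttt}. Starting from the product map
\[
(F(x)^\vee \otimes F(x)) \otimes (F(y)^\vee \otimes F(y)) \xrightarrow{(J_{x,y}^{-1})^\vee \otimes J_{x,y}} F(x\otimes y)^\vee \otimes F(x\otimes y) \xrightarrow{\pi_{x\otimes y}} B
\]
(after swapping the middle factors), one evaluates on $\id_{F(x)} \otimes \id_{F(y)}$. Tracing through the dualization, the result is $\pi_{x \otimes y}$ of the element corresponding to $T_{x \otimes y}$ conjugated by $J_{x,y}$, that is $J_{x,y}^{-1} T_{x \otimes y} J_{x,y} \in B \otimes \End(F(x) \otimes F(y))$. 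On the other hand, $T_x \otimes \id_{F(y)}$ and $\id_{F(x)} \otimes T_y$ are images under $\pi_x$ and $\pi_y$ respectively, and their product in $B$ is by definition the image of $\id_{F(x)} \otimes \id_{F(y)}$ under the same product map; hence the two sides of \eqref{eq:tensorproducttt} agree. The main obstacle is purely bookkeeping: carefully tracking the dualizations and the placement of $J_{x,y}$ under the identifications $F(y)^\vee \otimes F(y) \cong \End(F(y))^\vee$, but no new ideas are required beyond the generic bialgebra construction from Tannaka duality.
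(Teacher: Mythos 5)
Your proposal is correct and is essentially the paper's own argument: the paper states this theorem as ``immediate from the above formulas,'' meaning exactly the translation you carry out — realizing the coend \eqref{eq:bialgebracoend} as a coequalizer to get the spanning claim and relation \eqref{eq:explicitcoend}, and then evaluating the explicit generator-level formulas for $\Delta$, $\epsilon$, the product, and the unit on the canonical elements $T_y$. Your tracing of the product formula through the dualizations to obtain \eqref{eq:tensorproducttt} is the only step requiring any care, and you handle it correctly.
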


Let us now study what happens when $\cC$ is in addition equipped with a braiding.

\begin{defn}
Suppose $\cC$ is a braided monoidal category and $F\colon \cC\rightarrow \Vect$ a monoidal functor. For $x,y\in\cC$ the \defterm{$R$-matrix} is
\[R_{x,y}\colon F(x)\otimes F(y)\xrightarrow{J_{x,y}} F(x\otimes y)\xrightarrow{F(\sigma_{x, y})} F(y\otimes x)\xrightarrow{J_{y,x}^{-1}} F(y)\otimes F(x) \xrightarrow{\sigma_{F(x), F(y)}^{-1}} F(x)\otimes F(y).\]
\label{def:Rmatrix}
\end{defn}

It will be convenient to use the standard matrix notation for $R$-matrices acting on several variables: given $x,y,z\in\cC$ we denote
\[R_{12} = R_{x,y}\otimes \id\]
as an element of $\End(F(x)\otimes F(y)\otimes F(z))$ and similarly for $R_{13}$ and $R_{23}$. We let the transposed $R$-matrix $R_{21}$ be
\[F(x)\otimes F(y)\xrightarrow{\sigma^{-1}} F(y)\otimes F(x)\xrightarrow{R_{y, x}} F(y)\otimes F(x)\xrightarrow{\sigma} F(x)\otimes F(y).\] We also denote
\[T_1 = T_x\otimes \id,\qquad T_2 = \id\otimes T_y\]
as elements of $B\otimes \End(F(x)\otimes F(y))$.

\begin{prop}
Suppose $x,y,z\in\cC$. Then the $R$-matrix satisfies the Yang--Baxter equation
\begin{equation}
R_{12}R_{13}R_{23} = R_{23}R_{13}R_{12}
\label{eq:YangBaxter}
\end{equation}
in $\End(F(x)\otimes F(y)\otimes F(z))$. Moreover, $T$ satisfies the FRT relation
\begin{equation}
R_{12}T_1T_2 = T_2T_1 R_{12}
\label{eq:RTT}
\end{equation}
in $B\otimes \End(F(x)\otimes F(y))$.
\label{prop:YBRTTequations}
\end{prop}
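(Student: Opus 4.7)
The strategy is to derive both identities from the braiding axioms for $\cC$ and the compatibility of $F$ with the monoidal structures via $J$. It is convenient to introduce the ``pulled-back braiding''
\[\tilde{\sigma}_{x,y} := J^{-1}_{y,x}\circ F(\sigma_{x,y})\circ J_{x,y}\colon F(x)\otimes F(y)\longrightarrow F(y)\otimes F(x),\]
so that by definition $R_{x,y} = \sigma_{F(x),F(y)}^{-1}\circ \tilde{\sigma}_{x,y}$. Thus $R$ measures the discrepancy between two braidings on the image of $F$: the one transported from $\cC$ and the symmetric braiding on $\Vect$.

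For the Yang--Baxter equation, the plan is to first show that $\tilde{\sigma}$ satisfies the hexagon axioms on $\Vect$, i.e.,
\[\tilde{\sigma}_{x\otimes y,\,z} = (\tilde{\sigma}_{x,z}\otimes \id)\circ (\id\otimes \tilde{\sigma}_{y,z}),\qquad \tilde{\sigma}_{x,\,y\otimes z} = (\id\otimes \tilde{\sigma}_{x,z})\circ (\tilde{\sigma}_{x,y}\otimes \id).\]
This is a direct diagram chase from the hexagon axioms for $\sigma$ in $\cC$, the naturality of $J$, and the associator coherence for $F$ as a monoidal functor. Since the symmetric braiding $\sigma_{\Vect}$ obviously satisfies the same hexagons, substituting $\tilde{\sigma} = \sigma_{\Vect}\circ R$ into both hexagons for $\tilde{\sigma}$ and using naturality of $\sigma_{\Vect}$ to move the $R$-factors past the $\sigma_{\Vect}$-factors, one obtains two identities for $R$-matrices acting on triple tensor products whose comparison gives \eqref{eq:YangBaxter}. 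Equivalently, this is the standard fact that the ratio of two braidings compatible with the same underlying monoidal structure satisfies the Yang--Baxter equation.

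For the FRT relation, the input is formula \eqref{eq:tensorproducttt},
\[T_1 T_2 = J^{-1}_{x,y}\circ T_{x\otimes y}\circ J_{x,y}\qquad\text{in } B\otimes \End(F(x)\otimes F(y)),\]
together with the naturality identity \eqref{eq:explicitcoend} applied to $\sigma_{x,y}\colon x\otimes y\to y\otimes x$, namely $F(\sigma_{x,y})\circ T_{x\otimes y} = T_{y\otimes x}\circ F(\sigma_{x,y})$. The computation is direct: unwinding the definitions,
\[R\cdot T_1 T_2 = \sigma^{-1}_{F(x),F(y)}\,J^{-1}_{y,x}\,F(\sigma_{x,y})\,T_{x\otimes y}\,J_{x,y},\]
moving $F(\sigma_{x,y})$ past $T_{x\otimes y}$ via naturality, and inserting $J_{y,x}J^{-1}_{y,x}$, the middle becomes $J^{-1}_{y,x} T_{y\otimes x} J_{y,x} = (T_y\otimes \id)(\id\otimes T_x)$ by \eqref{eq:tensorproducttt} with $x$ and $y$ swapped, while the trailing factor is $\sigma_{F(x),F(y)}\,R$. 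Conjugation by $\sigma_{F(x),F(y)}$ turns $(T_y\otimes \id)(\id\otimes T_x)$ into $(\id\otimes T_y)(T_x\otimes \id) = T_2 T_1$, giving $R\cdot T_1 T_2 = T_2 T_1\cdot R$.

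The main obstacle is the verification of the hexagon axioms for $\tilde{\sigma}$: this is conceptually routine but bookkeeping-heavy, as it requires threading the monoidal coherence constraints for $F$ through the hexagon diagrams in $\cC$ in both orders. Once that step is in place, Yang--Baxter reduces to a formal consequence of two compatible braidings, and the FRT relation follows from \eqref{eq:tensorproducttt} together with the single naturality identity for $T$ applied to $\sigma_{x,y}$.
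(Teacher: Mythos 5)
Your proposal is correct and follows essentially the same route as the paper: your $\tilde{\sigma}_{x,y}$ is exactly the paper's $\check{R}_{x,y}$, the Yang--Baxter equation is deduced from the braid relation satisfied by this transported braiding, and the FRT relation is obtained from the naturality identity \eqref{eq:explicitcoend} applied to $\sigma_{x,y}$ together with \eqref{eq:tensorproducttt}. The only difference is that you spell out the hexagon verification that the paper compresses into the remark that the braid relation ``holds in any braided monoidal category.''
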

\begin{proof}
Denote
\[\check{R}_{x,y}\colon F(x)\otimes F(y)\xrightarrow{J_{x,y}} F(x\otimes y)\xrightarrow{F(\sigma_{x, y})} F(y\otimes x)\xrightarrow{J_{y,x}^{-1}} F(y)\otimes F(x).\]
Then the Yang--Baxter equation \eqref{eq:YangBaxter} is equivalent to the braid equation
\[\check{R}_{12}\check{R}_{23}\check{R}_{12} = \check{R}_{23}\check{R}_{12}\check{R}_{23}\]
which holds in any braided monoidal category.

By \eqref{eq:explicitcoend}, we have $F(\sigma_{x,y})T_{x\otimes y} = T_{y\otimes x} F(\sigma_{x,y})$. Relation \eqref{eq:tensorproducttt} and the equality 
\[ F(\sigma_{x,y}) = J_{y,x} \hat{R}_{x,y} J^{-1}_{x,y} \]
imply \eqref{eq:RTT}. 
\end{proof}

\begin{remark}
Quantum groups were originally introduced in \cite{FST,FRT} as bialgebras as in \cref{thm:Bmatrixpresentation} satisfying the FRT relation \eqref{eq:RTT}. The above statements show, conversely, that this relation naturally follows from the categorical framework.
\end{remark}

\subsection{Coend algebras and reflection equation}
\label{sect:coend}

Let $\cC$ be a cp-rigid monoidal category. Recall the formula for the right adjoint $T^\R\colon \cC\rightarrow \cC\otimes \cC$ for the tensor product functor $\cC\otimes\cC\rightarrow \cC$ from \cref{prop:TRformula}.

\begin{defn}
The \defterm{canonical coend} is the object $\cF\in\cC$ defined by
\begin{equation}
\cF = T T^\R(\bu) = \int^{x\in\cC^{\cp}} x^\vee\otimes x.
\label{eq:REAcoend}
\end{equation}
\label{def:coend}
\end{defn}

For $x\in \cC^{\cp}$ let us denote by
\[\pi_x\colon x^\vee\otimes x\rightarrow \cF\]
the natural projection.

Now, suppose in addition that $\cC$ is braided monoidal. Then $\cF$ admits a structure of a braided Hopf algebra (see e.g. \cite{LyubashenkoMajid,Lyubashenko,ShimizuCoends}). Explicitly, the algebra structure is given on generators as follows:
\begin{itemize}
\item The product is
\begin{align*}
(x^\vee\otimes x)\otimes (y^\vee\otimes y)&\xrightarrow{\sigma_{x^\vee\otimes x, y^\vee}} y^\vee\otimes x^\vee\otimes x\otimes y\\
&\cong (x\otimes y)^\vee\otimes x\otimes y\\
&\xrightarrow{\pi_{x\otimes y}}\cF.
\end{align*}

\item The unit is
\[\bu\xrightarrow{\pi_\bu} \cF.\]
\end{itemize}

Consider a monoidal functor $F\colon \cC\rightarrow \Vect$. The projections $\pi_x$ give rise to elements
\[K_x\in F(\cF)\otimes \End(F(x)).\]

Comparing the formulas \eqref{eq:bialgebracoend} and \eqref{eq:REAcoend}, we see that there is an isomorphism of vector spaces
\[F(\cF)\cong B.\]
In particular, as before, $F(\cF)$ is spanned, as a $k$-vector space, by the matrix coefficients of $K_x$ for $x\in\cC^{\cp}$ subject to the relation \eqref{eq:explicitcoend} for every $f\colon x\rightarrow y$. As before, $K_\bu\in F(\cF)$ is the unit. However, the multiplication is different. The following was proved in \cite{MajidFoundations,DoninKulishMudrov}.

\begin{prop}
Suppose $x,y\in\cC^{\cp}$ are two objects. Then the reflection equation
\begin{equation}
R_{21}K_1R_{12}K_2 = K_2R_{21}K_1R_{12}
\label{eq:reflectionequation}
\end{equation}
holds in $F(\cF)\otimes \End(F(x)\otimes F(y))$.
\end{prop}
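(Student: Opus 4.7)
The plan is to unwind the explicit product formula on $\cF$ in matrix notation, obtain a braided analogue of the FRT relation \eqref{eq:tensorproducttt} for $K$, and then apply the coend naturality relation \eqref{eq:explicitcoend} to the braiding morphism $\sigma_{x,y}\colon x \otimes y \to y \otimes x$ to extract the reflection equation. The guiding intuition is that $\cF$ differs from the bialgebra $B$ only by the braiding $\sigma_{x^\vee \otimes x,\, y^\vee}$ inserted into the product formula, so the reflection equation should arise as the ``braided twist'' of the FRT relation for $B$.

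First, I would compute $K_1 K_2$ by applying $F$ to the coend product formula
\[\cF(x) \otimes \cF(y) \xrightarrow{\sigma_{x^\vee \otimes x,\, y^\vee} \otimes \id_y} \cF(x \otimes y) \xrightarrow{\pi_{x \otimes y}} \cF,\]
and using the monoidal structure isomorphisms $J$ to convert the braiding appearing in this formula into an $R$-matrix via \cref{def:Rmatrix}. Tracking the matrix entries carefully through the identifications $\End(F(x)) \cong F(x)^\vee \otimes F(x) \cong F(x^\vee) \otimes F(x)$, I expect to obtain an identity of the shape
\[K_1\, R_{12}\, K_2 = J_{x,y}^{-1}\, K_{x \otimes y}\, R_{12}'\, J_{x,y}\]
in $F(\cF) \otimes \End(F(x) \otimes F(y))$, for an appropriate combination $R_{12}'$ of $R$-matrix factors. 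This is the braided analogue of \eqref{eq:tensorproducttt}.

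Second, I repeat the same calculation with $x$ and $y$ swapped, expressing $K_2\, R_{21}\, K_1$ in terms of $J_{y,x}^{-1}\, K_{y \otimes x}\, J_{y,x}$. These two expressions are related by the coend naturality \eqref{eq:explicitcoend} applied to $f = \sigma_{x,y}\colon x \otimes y \to y \otimes x$, giving
\[F(\sigma_{x,y})\, K_{x \otimes y} = K_{y \otimes x}\, F(\sigma_{x,y}).\]
Rewriting $F(\sigma_{x,y}) = J_{y,x}\, \sigma_{F(x), F(y)}\, R_{x,y}\, J_{x,y}^{-1}$ via the defining formula of the $R$-matrix transforms this into a precise relation between $J_{x,y}^{-1} K_{x \otimes y} J_{x,y}$ and $J_{y,x}^{-1} K_{y \otimes x} J_{y,x}$ up to $R$-matrix conjugation. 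Substituting this into the two braided-FRT identities obtained in the first step then shows that both sides of the reflection equation \eqref{eq:reflectionequation} collapse to the same expression.

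The main obstacle is the first step: simultaneously tracking conventions for braidings on dual objects, the coevaluation maps hidden in the identification of $\End(F(x))$ with a tensor product of $F(x)^\vee$ and $F(x)$, and the $J$ isomorphisms. The asymmetric placement of $R_{12}$ on one side and $R_{21}$ on the other in \eqref{eq:reflectionequation} reflects the asymmetry of the product formula in $\cF$, where only the $x^\vee \otimes x$ tensorand (rather than all of $\cF(y)$) is braided past $y^\vee$, so the $R$-matrix contribution naturally involves only the $F(x)$--$F(y)^\vee$ crossing. Pinning down the precise positioning of the $R$-matrix factors in the output of the first step is essentially a careful diagram chase; once this is accomplished, the remaining steps are formal manipulations.
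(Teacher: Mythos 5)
The paper does not actually prove this proposition: it is stated with a citation to Majid and Donin--Kulish--Mudrov, so there is no in-paper argument to compare against. That said, your outline is the standard derivation from those references, and it is the exact braided analogue of the paper's own proof of the FRT relation in \cref{prop:YBRTTequations}: establish a tensor-product decomposition of $K_{x\otimes y}$ (the analogue of \eqref{eq:tensorproducttt}), apply the coend naturality \eqref{eq:explicitcoend} to $f=\sigma_{x,y}$, and rewrite $F(\sigma_{x,y})$ via $J$ and the $R$-matrix. Your observation that the asymmetry $R_{21}K_1R_{12}K_2$ versus $K_2R_{21}K_1R_{12}$ traces back to the asymmetric braiding $\sigma_{x^\vee\otimes x,\,y^\vee}$ in the product on $\cF$ is also the right diagnosis.

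The gap is that the decisive computation --- your ``first step'' --- is only asserted in the form ``I expect to obtain an identity of the shape $K_1R_{12}K_2 = J^{-1}K_{x\otimes y}R'_{12}J$ for an appropriate $R'_{12}$,'' and everything hinges on what that appropriate combination actually is. The braiding being pushed through $F$ is $\sigma_{x^\vee\otimes x,\,y^\vee}$, which by the hexagon decomposes into $\sigma_{x^\vee,y^\vee}$ and $\sigma_{x,y^\vee}$; under $F$ these do not give $R_{x,y}$ or $R_{y,x}$ directly but partial transposes and inverses of them, and converting those into operators acting on $F(x)\otimes F(y)$ (multiplying $K_1$, $K_2$ on the correct sides) requires the evaluation/coevaluation moves hidden in the identification $\End(F(x))\cong F(x)^\vee\otimes F(x)$. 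This is not a routine bookkeeping exercise: getting $R_{21}K_1R_{12}$ rather than, say, $R_{12}^{-1}K_1R_{12}$ on both sides is exactly the content of the proposition, and a sign- or transpose-level error at this stage produces a different (wrong) quadratic relation. Until that step is carried out explicitly, the argument is a correct and well-chosen strategy rather than a proof; the remaining steps (the swapped computation and the naturality argument) are indeed formal once the first identity is pinned down.
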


\begin{remark}
The reflection equation algebra in the theory of quantum groups was introduced in \cite{KulishSklyanin} as the algebra generated by the matrix elements of $K$ satisfying the reflection equation \eqref{eq:reflectionequation}. We see that it coincides with $F(\cF)$. So, $\cF$ is also sometimes known as the reflection equation algebra.
\end{remark}

\begin{example}
Suppose $H$ is a Hopf algebra and consider $\cC=\LMod_H$. Then the coend algebra $\cF$ is a Drinfeld twist of the restricted dual Hopf algebra
\[H^\circ=\int^{V\in\LMod_H^{\cp}} V^\vee\otimes V,\]
see \cite[Definition 4.12]{DoninMudrovREA}.
\end{example}

\section{Harish-Chandra bimodules}

In this section we study categories of classical and quantum Harish-Chandra bimodules as well as introduce Harish-Chandra bialgebroids.

\subsection{General definition}
\label{sect:HCgeneral}

We will now present a general categorical definition which encompasses categories of both classical and quantum Harish-Chandra bimodules. We refer to \cref{sect:classicalHC} for a relationship to the usual Harish-Chandra bimodules. This formalism is closely related to the theory of dynamical extensions of monoidal categories introduced in \cite{DoninMudrovVectorBundles}, see \cref{rmk:DoninMurov}.

Throughout this section we fix a cp-rigid monoidal category $\cC$. Recall from \cite[Definition 7.13.1]{EGNO} that the Drinfeld center $\ZDr(\cC)$ is the braided monoidal category consisting of pairs $(z, \tau)$, where $z\in\cC$ and
\[\tau_x\colon x\otimes z\xrightarrow{\sim} z\otimes x\]
is a natural isomorphism satisfying standard compatibilities. The monoidal structure is given by
\[(z, \tau)\otimes (z', \tau') = (z\otimes z', \tilde{\tau}),\]
where $\tilde{\tau}$ is the composite
\[x\otimes z\otimes z'\xrightarrow{\tau_x\otimes \id_{z'}} z\otimes x\otimes z'\xrightarrow{\id\otimes \tau'_x} z\otimes z'\otimes x,\]
where we omit associators. We refer to \cite[Proposition 8.5.1]{EGNO} for the braided monoidal structure on $\ZDr(\cC)$.

\begin{defn}
Let $(\cL, \tau)$ be a commutative algebra in $\ZDr(\cC)$. The \defterm{category of Harish-Chandra bimodules} is
\[\HC(\cC, \cL) = \LMod_\cL(\cC).\]
When there is no confusion, we simply denote $\HC=\HC(\cC, \cL)$.
\end{defn}

\begin{remark}
A commutative algebra in the Drinfeld center is called a \emph{base algebra} in \cite[Definition 4.1]{DoninMudrovVectorBundles}. The full subcategory of $\HC(\cC, \cL)$ consisting of free left $\cL$-modules is called a \emph{dynamical extension} of $\cC$ over $\cL$ in \cite[Section 4.2]{DoninMudrovVectorBundles}.
\label{rmk:DoninMurov}
\end{remark}

If $H$ is a Hopf algebra, recall that the Drinfeld center $\ZDr(\LMod_H)$ is equivalent to the category of Yetter--Drinfeld modules over $H$ (see \cite[Section XIII.5]{Kassel}). This gives rise to the following important example.

\begin{prop}
Suppose $H$ is a Hopf algebra and consider $\cC=\LMod_H$. A commutative algebra $\cL$ in $\ZDr(\LMod_H)$ is the same as an $H$-algebra $\cL$ equipped with a left $H$-coaction $\delta\colon\cL\rightarrow H\otimes \cL$, a map of $H$-algebras, denoted by $x\mapsto x_{(-1)}\otimes x_{(0)}$ satisfying
\[xy = y_{(0)} (S^{-1}(y_{(-1)}))\triangleright x,\qquad x,y\in \cL.\]
The corresponding isomorphism $\tau_M\colon M\otimes \cL\rightarrow \cL\otimes M$ is given by
\[m\otimes x\mapsto x_{(0)}\otimes (S^{-1}(x_{(-1)})\triangleright m).\]
\label{prop:YDmodules}
\end{prop}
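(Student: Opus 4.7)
The plan is to reduce the statement to the known equivalence between $\ZDr(\LMod_H)$ and the category of (left-left) Yetter--Drinfeld modules recalled in the paragraph above the proposition (cf.\ \cite[Section XIII.5]{Kassel}), fixing conventions so that the half-braiding acquires the form stated in the proposition, and then translating the additional axioms for a commutative algebra across this equivalence.

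First, I would reconstruct the coaction from the half-braiding. Given a half-braiding $\tau_M \colon M \otimes \cL \to \cL \otimes M$, apply it with $M = H$ viewed as the left regular $H$-module and define $\delta \colon \cL \to H \otimes \cL$ as the composite $\cL \cong k \otimes \cL \xrightarrow{1 \otimes \id} H \otimes \cL \xrightarrow{\tau_H^{-1}} \cL \otimes H$ followed by the swap, twisted by $S^{-1}$ on the $H$-factor; concretely the Yoneda-type argument that for every left $H$-module $M$ and $m \in M$ one has the module map $H \to M$, $h \mapsto h \triangleright m$, combined with naturality of $\tau$, forces the formula
\[\tau_M(m \otimes x) = x_{(0)} \otimes (S^{-1}(x_{(-1)}) \triangleright m)\]
once $\delta$ is defined in this way. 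The axioms for a half-braiding (compatibility with the associator and unit) translate exactly into the coassociativity and counitality of $\delta$ together with the Yetter--Drinfeld compatibility with the $H$-action; the appearance of $S^{-1}$ is what makes $\tau_M$, rather than its inverse, $H$-linear. Conversely, the displayed formula produces a well-defined half-braiding from any Yetter--Drinfeld structure on $\cL$.

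Second, I would translate the algebra axioms. An algebra in $\ZDr(\LMod_H)$ is an algebra $(\cL, \mu, 1)$ in $\LMod_H$, i.e.\ an $H$-equivariant algebra, such that $\mu$ and the unit are morphisms in $\ZDr(\LMod_H)$. Writing out the center axiom for $\mu$ against $\tau_{\cL \otimes \cL} = (\id \otimes \tau_\cL)(\tau_\cL \otimes \id)$ and chasing elements yields precisely the statement that $\delta(xy) = \delta(x)\delta(y)$; the unit condition similarly gives $\delta(1) = 1 \otimes 1$. Thus an algebra in $\ZDr(\LMod_H)$ is the same as an $H$-algebra $\cL$ equipped with a Yetter--Drinfeld coaction which is simultaneously an algebra map $\cL \to H \otimes \cL$.

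Finally, commutativity of the algebra in the braided sense reads $\mu = \mu \circ c_{\cL,\cL}$, where $c_{\cL,\cL} = \tau_\cL$. Substituting the formula $\tau_\cL(x \otimes y) = y_{(0)} \otimes (S^{-1}(y_{(-1)}) \triangleright x)$ one obtains the displayed identity $xy = y_{(0)}\,(S^{-1}(y_{(-1)}) \triangleright x)$, and conversely this identity together with the previous data encodes braided commutativity. The main technical obstacle is purely bookkeeping: keeping track of which side of $\tau$ is applied and where $S$ versus $S^{-1}$ appears, so that the conventions for the braided monoidal structure on $\ZDr(\LMod_H)$ match the left-left Yetter--Drinfeld structure producing exactly the formula in the statement. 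Once conventions are pinned down by evaluating $\tau_H$ on the regular module, the three steps above go through by direct unpacking.
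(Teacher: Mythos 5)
Your proposal is correct and follows essentially the same route as the paper: both reduce to the Yetter--Drinfeld description of $\ZDr(\LMod_H)$ cited just before the proposition, match the half-braiding axioms with coassociativity/counitality of $\delta$, identify compatibility of $\tau$ with the multiplication as the condition that $\delta$ be an algebra map, and read off the displayed identity from braided commutativity. The only difference is that you spell out the reconstruction of $\delta$ from $\tau_H$ on the regular module, which the paper delegates to the cited equivalence.
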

\begin{proof}
The compatibility of $\tau_M$ with the monoidal structure on $\LMod_H$ follows from the coassociativity and counitality of the $H$-coaction. The compatibility of $\tau_M$ with the algebra structure on $\cL$ is equivalent to the equation
\[
x_{(0)}y_{(0)}\otimes S^{-1}(y_{(-1)}) S^{-1}(x_{(-1)}) \triangleright m = (xy)_{(0)}\otimes S^{-1}((xy)_{(-1)})\triangleright m,
\]
which follows from the condition that $\cL\rightarrow H\otimes \cL$ is an algebra map. The commutativity of the multiplication on $\cL\in\ZDr(\LMod_H)$ is
\[xy = y_{(0)} (S^{-1}(y_{(-1)}))\triangleright x.\]
\end{proof}

\begin{remark}
The inverse morphism $\cL\otimes M\rightarrow M\otimes \cL$ is given by
\[x\otimes m\mapsto x_{(-1)}\triangleright m\otimes x_{(0)}.\]
\label{rmk:inversefieldgoal}
\end{remark}

\begin{example}
Consider a Hopf algebra $H$ and let $\cL=H$.  Consider the adjoint action of $H$ on $\cL$:
\[h\otimes x\mapsto h_{(1)} x S(h_{(2)})\]
for $h\in H$ and $x\in\cL$. Consider the $H$-coaction $\cL\rightarrow H\otimes \cL$ given by the coproduct on $H$. Then
\[S^{-1}(y)\triangleright x = S^{-1}(y_{(2)})xy_{(1)}.\]
In particular,
\begin{align*}
y_{(2)} (S^{-1}(y_{(1)}))\triangleright x &= y_{(3)} S^{-1}(y_{(2)})x y_{(1)} \\
&= \epsilon(y_{(2)}) x y_{(1)} \\
&= xy,
\end{align*}
which shows that $(\cL, \tau)$ is a commutative algebra in $\ZDr(\LMod_H)$.
\label{ex:hopfalgebraadjoint}
\end{example}

Since $\cL$ is a commutative algebra in $\ZDr(\cC)$, the category $\HC$ has a natural monoidal structure given by the relative tensor product: given left $\cL$-modules $M,N\in\cC$, we may turn $M$ into a right $\cL$-module using $\tau_M$ and then the tensor product is given by $M\otimes_\cL N$. We also have an adjunction
\[
\adj{\free\colon \cC}{\HC\colon \forget,}
\]
where $\free\colon\cC\rightarrow \HC$  is the monoidal functor $x\mapsto \cL\otimes x$ given by the free left $\cL$-module and $\forget\colon\HC\rightarrow \cC$ is given by forgetting the $\cL$-module structure.

Observe that $\cL^{\op}$ is an algebra in $\cC^{\mop}$. Moreover, it lifts to a commutative algebra in $\ZDr(\cC^{\mop})$ if we consider the inverse isomorphism $\tau_x$.

\begin{lm}
There is a natural monoidal equivalence $\HC(\cC, \cL)^{\mop}\cong \HC(\cC^{\mop}, \cL^{\op})$.
\label{lm:HCop}
\end{lm}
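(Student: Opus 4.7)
The plan is to construct an explicit monoidal equivalence. First I observe that an object of $\HC(\cC^{\mop}, \cL^{\op}) = \LMod_{\cL^{\op}}(\cC^{\mop})$ is, by unwinding definitions, exactly a right $\cL$-module in $\cC$: the action map $\cL^{\op}\otimes_{\cC^{\mop}}M\to M$ in $\cC^{\mop}$ is a morphism $M\otimes\cL\to M$ in $\cC$, and the opposite algebra structure on $\cL^{\op}$ together with the reversed tensor of $\cC^{\mop}$ convert the left-module axiom into the usual right-module axiom. Accordingly, I define $\Phi\colon \HC(\cC,\cL)^{\mop}\to\HC(\cC^{\mop},\cL^{\op})$ by sending a left $\cL$-module $(M,a)$ to $M$ equipped with the right $\cL$-action $r_M := a\circ\tau_M\colon M\otimes\cL\to\cL\otimes M\to M$, and the identity on morphisms. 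Naturality and invertibility of $\tau$ immediately give that $\Phi$ is an equivalence of underlying categories, with inverse sending a right action $r$ to the left action $r\circ\tau^{-1}$.

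The substantive step is to produce the monoidal structure on $\Phi$. The tensor product in $\HC(\cC,\cL)^{\mop}$ is $M\otimes^{\op}N = N\otimes_\cL M$, realized as the coequalizer of $r_N\otimes\id_M$ and $\id_N\otimes a_M$ on $N\otimes\cL\otimes M\rightrightarrows N\otimes M$. On the other side, the half-braiding for $\cL^{\op}\in\ZDr(\cC^{\mop})$ is $\tau^{-1}$, so the right $\cL^{\op}$-action on $\Phi(M)$ in $\cC^{\mop}$ induced from its left action $r_M$ computes to $r_M\circ\tau_M^{-1} = a_M$. Combined with the identification of $\cC^{\mop}$-tensor products as reverse-order $\cC$-tensor products, this shows that $\Phi(M)\otimes_{\HC(\cC^{\mop},\cL^{\op})}\Phi(N)$ is likewise the coequalizer in $\cC$ of the pair $r_N\otimes\id_M$ and $\id_N\otimes a_M$, yielding a canonical isomorphism of underlying objects.

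It then remains to check that this isomorphism intertwines the right $\cL$-module structures and is coherent. The right $\cL$-action on $\Phi(N\otimes_\cL M)$ is $(a_N\otimes\id_M)\circ\tau_{N\otimes_\cL M}$; using the Drinfeld-center compatibility $\tau_{N\otimes M} = (\tau_N\otimes\id)\circ(\id\otimes\tau_M)$ together with the coequalizer relation identifying the $r_N$-action on $N$ with the $a_M$-action on $M$, this reduces to $\id_N\otimes r_M$, which is exactly the right $\cL$-action on $\Phi(M)\otimes_{\HC(\cC^{\mop},\cL^{\op})}\Phi(N)$ coming from $\Phi(M)$. Unit compatibility is immediate because commutativity of $\cL$ in $\ZDr(\cC)$ forces $\tau_\cL$ to agree with the identity on the multiplication, so $\Phi(\cL)$ is the tautological unit of $\HC(\cC^{\mop},\cL^{\op})$. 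The associator hexagon reduces to the analogous coequalizer calculation for triple tensor products, using naturality of $\tau$ and its compatibility with tensor products.

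The main obstacle is purely bookkeeping: tracking how $\tau$ becomes $\tau^{-1}$ under the double reversal $\cC\rightsquigarrow\cC^{\mop}$, $\cL\rightsquigarrow\cL^{\op}$, and confirming that the two swaps of ``left'' and ``right''---one from passing to $\cdot^{\mop}$ on the monoidal structure, one from passing to $\cdot^{\op}$ on the algebra---cancel out consistently with the direction of the half-braiding. No individual verification is deep; conceptually the lemma is the natural statement that the construction $\HC(-,-)$ intertwines taking opposite monoidal categories on input and output.
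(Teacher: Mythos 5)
Your proof is correct. The paper states this lemma without proof, and your explicit construction --- sending a left module $(M,a)$ to the right module $r_M = a\circ\tau_M$, computing that the induced right $\cL^{\op}$-action in $\cC^{\mop}$ recovers $a$, and matching the two relative tensor products via the compatibility $\tau_{N\otimes M}=(\tau_N\otimes\id_M)\circ(\id_N\otimes\tau_M)$ together with the coequalizer relation --- is precisely the argument the authors leave implicit, given that they already observe that $(\cL^{\op},\tau^{-1})$ is a commutative algebra in $\ZDr(\cC^{\mop})$ and define the monoidal structure on $\HC$ by the same left-to-right conversion.
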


The following construction explains why $\HC$ deserves to be called the category of bimodules. There is a natural monoidal functor
\begin{equation}
\bimod\colon \HC\longrightarrow \BiMod{\cL}{\cL}(\cC)
\label{eq:bimod}
\end{equation}
given by sending a left $\cL$-module $M$ to the $\cL$-bimodule, where the right $\cL$-action is obtained via $\tau_M$. It realizes $\HC$ as a full subcategory of $\BiMod{\cL}{\cL}(\cC)$ consisting of objects $M\in\BiMod{\cL}{\cL}(\cC)$ such that the right and left actions are related by $\tau_M$.

Let us now analyze categorical properties of $\HC$.

\begin{prop}
The category $\HC$ is cp-rigid. Moreover, we may take $\free(V)\in\HC$ for all $V\in\cC^{\cp}$ as the generating set of compact projective objects. If the unit of $\cC$ is compact projective, so is the unit in $\HC$.
\label{prop:HCcprigid}
\end{prop}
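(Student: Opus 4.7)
The approach exploits the free-forget adjunction $\free \colon \cC \rightleftarrows \HC \colon \forget$ together with the key observation that the monad $\cL \otimes (-)$ on $\cC$ preserves all colimits (since the tensor product on $\cC$ is colimit-preserving in each variable). This implies that colimits in $\HC = \LMod_\cL(\cC)$ are created by the forgetful functor; in particular, $\forget$ itself preserves colimits.

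First, I would verify that $\free$ sends $\cC^\cp$ into $\HC^\cp$. For $V \in \cC^\cp$, the adjunction identity $\Hom_\HC(\free(V), M) \cong \Hom_\cC(V, \forget(M))$, combined with the colimit-preservation of both $\forget$ and $\Hom_\cC(V, -)$, shows that $\Hom_\HC(\free(V), -)$ preserves colimits. This already establishes the last clause of the proposition: if $\bu_\cC$ is compact projective, then $\bu_\HC = \cL = \free(\bu_\cC)$ is compact projective. Next I would check that $\HC$ has enough compact projectives: for nonzero $M \in \HC$, the object $\forget(M) \in \cC$ is nonzero, so admits a nonzero map $V \to \forget(M)$ with $V \in \cC^\cp$, and by adjunction this produces a nonzero map $\free(V) \to M$.

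For cp-rigidity, I would first note that $\free \colon \cC \to \HC$ is strong monoidal (since $(\cL \otimes V) \otimes_\cL (\cL \otimes W) \cong \cL \otimes V \otimes W$), so if $V \in \cC^\cp$ admits left and right duals in $\cC$, then $\free(V^\vee)$ and $\free({}^\vee V)$ serve as left and right duals for $\free(V)$ in $\HC$. Then, for an arbitrary $P \in \HC^\cp$, I would produce an epimorphism $\bigoplus_{i} \free(V_i) \twoheadrightarrow P$ as follows: the action map $\cL \otimes P \twoheadrightarrow P$ is (split) epimorphic in $\cC$ hence in $\HC$; choosing an epimorphism $\bigoplus_i V_i \twoheadrightarrow \forget(P)$ in $\cC$ (possible since $\cC$ has enough compact projectives in the strong sense by the representability theorem quoted in the paper) and applying the left adjoint $\free$ produces an epimorphism $\bigoplus_i \free(V_i) \twoheadrightarrow \cL \otimes P$, which composes to the desired epimorphism. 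Projectivity of $P$ splits this, and compactness of $P$ forces the splitting to factor through a finite subsum, so $P$ is a retract of $\free(V_1 \oplus \cdots \oplus V_n) = \free(V)$ with $V \in \cC^\cp$. Finally, retracts of dualizable objects are dualizable (split the corresponding idempotent on the dual, using that idempotents split in the locally presentable category $\HC$), so $P$ has left and right duals.

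The main technical point is the production of the epimorphism $\bigoplus_i \free(V_i) \twoheadrightarrow P$; once this is in hand the cp-rigidity conclusion follows from the general principle that a retract of a dualizable object is dualizable. Everything else is a formal consequence of the adjunction and the colimit-preservation of $\cL \otimes (-)$.
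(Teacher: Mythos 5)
Your proof is correct and follows the same strategy as the paper: show that the objects $\free(V)$ for $V\in\cC^{\cp}$ are compact projective (via the colimit-preserving right adjoint $\forget$), that they generate $\HC$ (via conservativity of $\forget$), and that they are dualizable (via strong monoidality of $\free$). The only difference is that where the paper invokes \cite[Proposition 4.1]{BJS} to reduce cp-rigidity to the generating compact projectives, you prove that reduction directly by exhibiting an arbitrary compact projective of $\HC$ as a retract of some $\free(V)$ with $V\in\cC^{\cp}$; this is exactly the content of the cited lemma, so the two arguments coincide in substance.
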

\begin{proof}
The functor $\free\colon \cC\rightarrow \HC$ has a colimit-preserving right adjoint $\forget\colon \HC\rightarrow \cC$. So, $\free(V)\in\HC$ is compact projective if $V\in\cC^{\cp}$.

The category $\HC$ is generated by $\free(V)$ for $V\in\cC$ since $\forget$ is conservative. But since $\cC$ has enough compact projectives, we may restrict to $V\in\cC^{\cp}$.

Since $\cC$ is cp-rigid, the objects $V\in\cC^{\cp}$ are dualizable. Since $\free\colon \cC\rightarrow \HC$ is monoidal, the objects $\free(V)\in\HC$ are also dualizable. But we have just shown that such objects are the generating compact projective objects, while by \cite[Proposition 4.1]{BJS} it is enough to check cp-rigidity on the generating compact projective objects.

The unit of $\HC$ is $\cL$ viewed as a free left $\cL$-module of rank 1, so
\[\Hom_{\HC}(\cL, -)\cong \Hom_{\cC}(\bu_\cC, \forget(-))\]
which shows that $\cL$ is compact projective if, and only if, $\bu_\cC\in\cC$ is.
\end{proof}

\subsection{Quantum moment maps}

Recall that for an algebra $A\in\Rep(G)$ a quantum moment map is a map $\mu\colon\U\g\rightarrow A$ such that the infinitesimal $\g$-action on $A$ is given by $[\mu(x), -]$ for $x\in\g$. The following version of this definition in our setting was introduced in \cite[Definition 3.1]{SafronovQMM}.

\begin{defn}
Let $A\in\cC$ be an algebra. A \defterm{quantum moment map} is an algebra map $\mu\colon \cL\rightarrow A$ such that the diagram
\begin{equation}
\xymatrix@R=0.5cm{
\cL\otimes A \ar^{\mu\otimes \id}[r] & A\otimes A \ar^{m}[dr] & \\
&& A \\
A\otimes \cL \ar^{\tau_A}[uu] \ar^{\id\otimes \mu}[r] & A\otimes A \ar_{m}[ur]
}
\label{eq:quantummomentmap}
\end{equation}
commutes.
\label{def:quantummomentmap}
\end{defn}

\begin{remark}
Recall that $\cL\in\ZDr(\cC)$ is a commutative algebra. The quantum moment map condition expressed by \eqref{eq:quantummomentmap} says that $\mu\colon \cL\rightarrow A$ is a central map.
\end{remark}

\begin{prop}
An algebra in $\HC$ is an algebra in $\cC$ equipped with a quantum moment map.
\label{prop:HCalgebras}
\end{prop}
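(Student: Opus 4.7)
The plan is to show both directions of the claimed correspondence and check they are mutually inverse.

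For the forward direction, let $(A,m,\eta)$ be an algebra in $\HC$, so $A\in\LMod_\cL(\cC)$ together with a left $\cL$-linear multiplication $m\colon A\otimes_\cL A\to A$ and unit $\eta\colon \cL\to A$. First I would pre-compose with the canonical projection $\pi\colon A\otimes A\twoheadrightarrow A\otimes_\cL A$ to obtain $m'=m\circ\pi\colon A\otimes A\to A$, which inherits associativity from $m$ and is unital with unit $\eta\circ u_\cL\colon \bu\to A$, giving $A$ an algebra structure in $\cC$. The left $\cL$-action on $A$ must then agree with $m'\circ(\eta\otimes\id_A)$, so $\cL$-linearity of $\eta$ reads $\eta\circ\mathrm{mult}_\cL=m'\circ(\eta\otimes\eta)$, i.e.\ $\eta$ is an algebra map in $\cC$.

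The quantum moment map condition comes from the requirement that $m'$ coequalize the two maps $A\otimes\cL\otimes A\rightrightarrows A\otimes A$ defining $A\otimes_\cL A$, namely $\id_A\otimes a_L$ and $a_R\otimes\id_A$, where $a_L=m'\circ(\eta\otimes\id_A)$ and $a_R=a_L\circ\tau_A$ (this is how $\HC$ was defined). Restricting this coequalizer identity along the algebra unit $\bu\to A$ in the third slot and using unitality of $m'$ collapses the left-hand side to $a_R\colon A\otimes\cL\to A$ and the right-hand side to $m'\circ(\id_A\otimes\eta)$. Substituting $a_R=m'\circ(\eta\otimes\id_A)\circ\tau_A$ yields exactly the diagram \eqref{eq:quantummomentmap} defining a quantum moment map.

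For the backward direction, given an algebra $A\in\cC$ and a quantum moment map $\mu\colon\cL\to A$, I would define $a_L=m_A\circ(\mu\otimes\id_A)\colon \cL\otimes A\to A$; this is a left $\cL$-action because $\mu$ is an algebra map and $m_A$ is associative. The moment map condition \eqref{eq:quantummomentmap} precisely says that the induced right action $a_R=a_L\circ\tau_A$ equals $m_A\circ(\id_A\otimes\mu)$. Combined with associativity of $m_A$, this implies $m_A\circ(a_R\otimes\id_A)=m_A\circ(\id_A\otimes a_L)$, so $m_A$ descends to a morphism $m\colon A\otimes_\cL A\to A$ in $\cC$, which is automatically left $\cL$-linear. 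Associativity and unitality of $m$ follow from those of $m_A$, so $(A,m,\mu)$ is an algebra in $\HC$.

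The two constructions are evidently inverse: in both cases the underlying object of $\cC$, its multiplication, and the map $\cL\to A$ are literally preserved, since the $\cL$-action on $A$ in $\HC$ is always given by left multiplication by $\mu$. The only step requiring attention is the coequalizer manipulation in the forward direction, which is the main content of the proof; everything else amounts to unwinding definitions.
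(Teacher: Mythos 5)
Your argument is correct: both directions check out, and the key step --- deriving the quantum moment map equation by restricting the coequalizer identity $m'\circ(a_R\otimes\id_A)=m'\circ(\id_A\otimes a_L)$ along the unit in the last tensor slot and using unitality --- is exactly the computation that makes the statement true. The paper packages this differently: it first pushes $A$ through the monoidal embedding $\bimod\colon \HC\rightarrow \BiMod{\cL}{\cL}(\cC)$ of \eqref{eq:bimod}, invokes the standard identification of algebras in a bimodule category $\BiMod{\cL}{\cL}(\cC)$ with algebras in $\cC$ receiving an algebra map from $\cL$, and then observes that membership in the full subcategory $\HC\subset\BiMod{\cL}{\cL}(\cC)$ (right action determined by the left action via $\tau_A$) is precisely the condition \eqref{eq:quantummomentmap}. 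Your proof re-derives that standard identification by hand rather than citing it, which makes it longer but self-contained and makes visible where each hypothesis enters (in particular, that the unit $\eta$ of the $\HC$-algebra is forced to be the moment map, since the left $\cL$-action is $m'\circ(\eta\otimes\id_A)$ by unitality); the paper's route buys brevity and makes clear that the only genuinely new content beyond the bimodule formalism is the compatibility with $\tau_A$. One small point worth stating explicitly in your backward direction is that the descended multiplication $m\colon A\otimes_\cL A\to A$ is left $\cL$-linear because $m_A\circ(a_L\otimes\id_A)=a_L\circ(\id_\cL\otimes m_A)$, which again follows from associativity of $m_A$ and the definition $a_L=m_A\circ(\mu\otimes\id_A)$; you assert it is "automatic" without saying why.
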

\begin{proof}
Via the embedding $\bimod\colon \HC\rightarrow \BiMod{\cL}{\cL}(\cC)$ of \eqref{eq:bimod} an algebra $A\in\HC$ gives rise to an algebra in $\BiMod{\cL}{\cL}(\cC)$. An algebra in the category of bimodules is the same as an algebra $A\in \cC$ equipped with an algebra map $\mu\colon\cL\rightarrow A$. The condition that it lands in $\HC\subset \BiMod{\cL}{\cL}(\cC)$ is precisely the quantum moment map equation \eqref{eq:quantummomentmap}.
\end{proof}

The following is \cite[Definition 3.10]{SafronovQMM}.

\begin{defn}
Suppose $\epsilon\colon \cL\rightarrow \bu_\cC$ is a morphism of algebras in $\cC$ and $A$ is an algebra equipped with a quantum moment map. The \defterm{Hamiltonian reduction of $A$} is
\[\Hom_{\cC}(\bu_\cC, A\otimes_\cL \bu)\cong \Hom_{\LMod_A(\cC)}(A\otimes_\cL \bu_\cC, A\otimes_\cL \bu_\cC).\]
\label{def:Hamiltonianreduction}
\end{defn}

A canonical example of an algebra with a quantum moment map we will use is the following. Let $T_{\HC}\colon \HC\otimes \HC\rightarrow \HC$ be the tensor product functor. By \cref{prop:evcoevlaxmonoidal} the object $T^\R(\bu_{\HC})\in\HC\otimes \HC^{\mop}$ is an algebra. Identifying $\HC(\cC, \cL)^{\mop}\cong \HC(\cC^{\mop}, \cL^{\op})$ using \cref{lm:HCop}, we see that
\[(\forget\otimes \forget)(T_{\HC}^\R(\bu_{\HC}))\in\cC\otimes \cC^{\mop}\]
is an algebra equipped with a quantum moment map from $\cL\boxtimes \cL^{\op}$.

\begin{defn}
Let $\cC, \HC$ be as before. The algebra $\cD\in\cC\otimes \cC^{\mop}$ is
\[\cD = (\forget\otimes \forget)(T_{\HC}^\R(\bu_{\HC})).\]
\label{def:Heisenbergdouble}
\end{defn}

We denote the canonical quantum moment map by
\begin{equation}
\mu\colon \cL\boxtimes \cL^{\op}\longrightarrow T_\cC^\R(\cL).
\label{eq:idmomentmap}
\end{equation}

\begin{prop}
We have an equivalence
\[\cD\cong \int^{x\in\cC^{\cp}} (\cL\otimes x^\vee)\boxtimes x\cong \int^{x\in\cC^{\cp}} x^\vee\boxtimes (x\otimes \cL),\]
where the latter isomorphism is provided by \cref{prop:TRmodulefunctor}. The algebra structure is given by
\begin{align*}
((\cL\otimes x^\vee)\boxtimes x)\otimes((\cL\otimes y^\vee)\boxtimes y)&\cong (\cL\otimes x^\vee\otimes \cL\otimes y^\vee)\boxtimes (y\otimes x) \\
&\xrightarrow{\id\otimes \tau_{x^\vee}\otimes \id} (\cL\otimes \cL\otimes x^\vee\otimes y^\vee)\boxtimes (y\otimes x) \\
&\xrightarrow{m\otimes \id} (\cL\otimes (y\otimes x)^\vee)\boxtimes (y\otimes x) \\
&\xrightarrow{\pi_{y\otimes x}} \int^{x\in\cC^{\cp}} (\cL\otimes x^\vee)\boxtimes x
\end{align*}
The two quantum moment maps $\cL,\cL^{\op}\rightarrow \cD$ are given by
\[\cL\cong (\cL\otimes \bu)\boxtimes \bu\xrightarrow{\pi_\bu} \cD\]
and
\[\cL^{\op}\cong \bu\boxtimes (\bu\otimes \cL)\xrightarrow{\pi_\bu} \cD.\]
\end{prop}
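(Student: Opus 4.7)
The plan is to reduce the coend description of $\cD$ to the abstract formula for $T_\cC^\R$ from Proposition \ref{prop:TRformula}, using the monoidality of $\free\colon\cC\to\HC$ as a bridge. Since $\cL$ is a commutative algebra in $\ZDr(\cC)$, the free functor is strong monoidal, hence $\free\circ T_\cC\cong T_\HC\circ(\free\boxtimes\free)$. Passing to right adjoints in $\PrL$ and using $(\free\boxtimes\free)^\R=\forget\boxtimes\forget$, I would verify by Yoneda that $\cD\cong T_\cC^\R(\cL)$: for any $X,Y\in\cC$ the chain
\begin{align*}
\Hom_{\cC\otimes\cC}(X\boxtimes Y,\cD)
&\cong \Hom_{\HC\otimes\HC}(\free X\boxtimes\free Y,T_\HC^\R(\bu_\HC)) \\
&\cong \Hom_\HC(\free(X\otimes Y),\cL) \\
&\cong \Hom_\cC(X\otimes Y,\cL) \\
&\cong \Hom_{\cC\otimes\cC}(X\boxtimes Y,T_\cC^\R(\cL))
\end{align*}
uses in turn the adjunctions $\free\boxtimes\free\dashv\forget\boxtimes\forget$, then $T_\HC\dashv T_\HC^\R$ together with monoidality of $\free$, then $\free\dashv\forget$, and finally $T_\cC\dashv T_\cC^\R$. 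Proposition \ref{prop:TRformula} with $y=\cL$ then yields the first coend formula, and Proposition \ref{prop:TRmodulefunctor} with $y=\cL$ gives the second.

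For the algebra structure, I would use Corollary \ref{cor:TRalgebra} to equip $T_\HC^\R(\bu_\HC)$ with a canonical algebra structure in $\HC\otimes\HC^{\mop}$, and observe that $\forget\colon\HC\to\cC$ is lax monoidal via the canonical quotient $M_1\otimes M_2\twoheadrightarrow M_1\otimes_\cL M_2$. On free modules $M_i=\free(V_i)$ this quotient unfolds as $(m\otimes\id)\circ(\id\otimes\tau_{V_1}\otimes\id)$, precisely the composition appearing in the stated multiplication. Hence $\forget\boxtimes\forget$ is lax monoidal and transports the algebra structure to $\cD$. Tracing the product of $(\cL\otimes x^\vee)\boxtimes x$ and $(\cL\otimes y^\vee)\boxtimes y$ through the tensor in $\cC\otimes\cC^{\mop}$, which reverses the second slot to $y\otimes x$, composing with the lax structure in the first slot, and using $x^\vee\otimes y^\vee=(y\otimes x)^\vee$, then recovers exactly the stated formula.

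For the quantum moment maps, Lemma \ref{lm:HCop} identifies $\HC\otimes\HC^{\mop}\cong\HC(\cC\otimes\cC^{\mop},\cL\boxtimes\cL^{\op})$, so by Proposition \ref{prop:HCalgebras} the algebra $\cD$ in $\cC\otimes\cC^{\mop}$ acquires a quantum moment map $\cL\boxtimes\cL^{\op}\to\cD$ whose two components I would identify with the maps in the statement. These arise as the units of the algebra $T_\HC^\R(\bu_\HC)$ restricted to each of its two $\cL$-factors, which under the coend presentations correspond to the $V=\bu$ summands, yielding $\pi_\bu\colon(\cL\otimes\bu)\boxtimes\bu\to\cD$ and $\pi_\bu\colon\bu\boxtimes(\bu\otimes\cL)\to\cD$ respectively.

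The main obstacle is bookkeeping. One must reconcile two coend presentations of the same object $\cD$: the ``direct'' $\int^V(\cL\otimes V^\vee)\boxtimes(\cL\otimes V)$ arising from applying $\forget\boxtimes\forget$ to $T_\HC^\R(\bu_\HC)$, and the simplified $\int^V(\cL\otimes V^\vee)\boxtimes V=T_\cC^\R(\cL)$ in which the multiplication formula is stated. The Yoneda identification of the two is clean, but verifying that the multiplication transported from the direct presentation agrees with the stated one requires carefully tracking the interplay of the coend relations (which use the extra $\HC$-morphisms in the index category and absorb the second copy of $\cL$) with the $\tau$-twisted multiplication on $\cL$ coming from the lax monoidal structure on $\forget$.
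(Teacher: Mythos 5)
Your argument is correct and follows essentially the same route as the paper: the identification $\cD\cong T_\cC^\R(\cL)$ obtained by passing to right adjoints in the monoidality square for $\free$ (your Yoneda chain just spells out the paper's ``by adjunction''), followed by \cref{prop:TRformula} and \cref{prop:TRmodulefunctor} for the two coend presentations. The only immaterial difference is in the second step: you transport the algebra structure of \cref{cor:TRalgebra} along the lax monoidal $\forget\boxtimes\forget$ and then simplify the resulting coend, whereas the paper first rewrites $\cD\cong(\forget\circ\free\otimes\id)\,T_\cC^\R(\bu)$ via the module-functor isomorphism and reads off the multiplication there; your explicit unfolding of the lax structure of $\forget$ on free modules correctly produces the $\tau$-twisted formula, and the coend-bookkeeping issue you flag is real but handled exactly by the natural isomorphism $(\forget\otimes\forget)\circ T_{\HC}^\R\cong T_\cC^\R\circ\forget$ that you already established.
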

\begin{proof}
Since $\free\colon \cC\rightarrow \HC$ is a monoidal functor, by adjunction we get a natural isomorphism
\begin{equation}
(\forget\otimes \forget) \circ T_{\HC}^\R\cong T_\cC^\R\circ \forget,
\label{eq:idharishchandraiso}
\end{equation}
where $T_\cC\colon \cC\otimes \cC\rightarrow \cC$ is the tensor product functor. In particular, applying \eqref{eq:idharishchandraiso} to $\bu_{\HC}$, we get isomorphisms
\[\cD\cong \int^{x\in\cC^{\cp}} (\cL\otimes x^\vee)\boxtimes x\cong \int^{x\in\cC^{\cp}} x^\vee\boxtimes (x\otimes \cL)\]
using \cref{prop:TRformula}.

We have a natural isomorphism
\[(\forget\circ\free\otimes \id)\circ T_\cC^\R(-)\cong T_\cC^\R(\forget\circ\free(-))\]
given by \cref{prop:TRmodulefunctor} which gives rise to an algebra isomorphism
\[\cD\cong(\forget\circ\free\otimes\id)\circ T_\cC^\R(\bu)\]
which gives the required formula.
\end{proof}

\begin{example}
Suppose $H$ is a Hopf algebra, $\cL$ is a commutative algebra in $\ZDr(\LMod_H)$ (see \cref{prop:YDmodules}) and $\cC=\LMod_H$. Let
\[H^\circ = \int^{V\in\LMod_H^{\cp}} V^\vee\otimes V\]
is the restricted dual Hopf algebra. By construction $\cL$ is an $H$-comodule algebra and $H^\circ$ is an $H$-module algebra (via the left $H$-action). Then $\cD$ is the smash product algebra generated by $\cL$ and $H^\circ$ with the additional relation
\[hl = l_{(0)} (S^{-1}(l_{(-1)})\triangleright h)\]
for $h\in H^\circ$ and $l\in\cL$.
\end{example}

\subsection{Classical Harish-Chandra bimodules}
\label{sect:classicalHC}

Let $G$ be a reductive group over a characteristic zero field $k$ and denote by $\g$ its Lie algebra. Let $\Rep(G)$ be the ind-completion of the category of finite-dimensional representations. The category $\Rep(G)$ is semisimple, so it has enough compact projectives and its unit is compact projective.

Suppose $V\in\Rep(G)$ is a $G$-representation. For $x\in\U\g$ and $v\in V$ we denote by $x\triangleright v$ the induced $\U\g$-action on $V$. Consider the natural isomorphism
\begin{equation}
\tau_V\colon V\otimes \U\g\longrightarrow \U\g\otimes V
\label{eq:classicalfieldgoal}
\end{equation}
given by
\[
v\otimes x\mapsto x\otimes v - 1\otimes xv
\]
for $x\in\g$. It follows from \cref{prop:YDmodules} that $(\U\g, \tau)$ defines a commutative algebra in $\ZDr(\Rep(G))$.

\begin{defn}
The \defterm{category of classical Harish-Chandra bimodules} is
\[\HC(G) = \HC(\Rep(G), \U\g).\]
\label{def:classicalHC}
\end{defn}

\begin{remark}
The embedding \eqref{eq:bimod} realizes $\HC(G)$ as the category of $\U\g$-bimodules whose diagonal $\g$-action is integrable (see \cite[Definition 5.2]{BernsteinGelfand} for the original definition of Harish-Chandra bimodules).
\end{remark}

The following easy lemma (see \cite[Example 3.4]{SafronovQMM}) shows that the definition of a quantum moment map we gave in \cref{def:quantummomentmap} coincides with the classical notion of a quantum moment map.

\begin{lm}
Let $A\in\Rep(G)$ be an algebra. A quantum moment map $\mu\colon\U\g\rightarrow A$ is the same as an algebra map such that for every $x\in\g$ the commutator $[\mu(x), -]$ coincides with the differential of the $G$-action.
\end{lm}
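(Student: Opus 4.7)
The plan is to unpack \cref{def:quantummomentmap} for the specific commutative algebra $(\U\g, \tau)\in\ZDr(\Rep(G))$ with half-braiding given by \eqref{eq:classicalfieldgoal}, and show the resulting identity is exactly the classical moment map equation.

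First I will verify the easier direction: the abstract condition implies the classical one. Restricting the diagram \eqref{eq:quantummomentmap} to $a\otimes x\in A\otimes\g\subset A\otimes\U\g$, the formula $\tau_A(a\otimes x)= x\otimes a-1\otimes(x\triangleright a)$ together with $\mu(1)=1$ gives
\[
(m\circ(\mu\otimes\id)\circ\tau_A)(a\otimes x)=\mu(x)a-x\triangleright a,\qquad (m\circ(\id\otimes\mu))(a\otimes x)=a\mu(x),
\]
so commutativity of \eqref{eq:quantummomentmap} on $\g$ is exactly $[\mu(x),a]=x\triangleright a$, the classical moment map condition.

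For the converse I need to extend from $\g$ to all of $\U\g$. I will proceed by induction on the length of a monomial $y=x_1\cdots x_n$ with $x_i\in\g$. Writing the two legs of the diagram as $P(a\otimes y)$ and $Q(a\otimes y)=a\mu(y)$, both are $k$-linear in $y$, and using $\Delta(y)=y_{(1)}\otimes y_{(2)}$ for $y\in\U\g$ (viewed as the Yetter--Drinfeld coaction) the map $P$ reads $P(a\otimes y)=\mu(y_{(2)})\,(S^{-1}(y_{(1)})\triangleright a)$. Since $\mu$ is an algebra map and $S^{-1}$ is an anti-homomorphism, a direct expansion using $\Delta(yz)=\Delta(y)\Delta(z)$ reduces the equality $P=Q$ on $yz$ to the identity
\[
[\mu(y),x\triangleright a]=y\triangleright(x\triangleright a),
\]
which is the classical moment map condition for $y$ applied to $x\triangleright a$; so the inductive step goes through.

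The main (mild) obstacle is purely bookkeeping in the induction: one must keep track of the Sweedler terms in $\Delta(y)$ and the signs in $S^{-1}$, and cleanly separate the use of (i) multiplicativity of $\mu$, (ii) the antipode on primitive generators $S(x)=-x$, and (iii) the fact that $[\mu(x),-]$ is a derivation of $A$ so that the classical condition propagates through products in $\U\g$. Once these pieces are in place, the equivalence is immediate and no deeper input is required.
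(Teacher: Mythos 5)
The paper offers no proof of this lemma at all---it is flagged as ``easy'' with a pointer to \cite{SafronovQMM}---so there is nothing to compare against line by line; I will assess your argument on its own terms. Your forward direction is correct and complete: restricting \eqref{eq:quantummomentmap} along $\g\subset\U\g$, using \eqref{eq:classicalfieldgoal} and $\mu(1)=1$, gives exactly $[\mu(x),a]=x\triangleright a$. Your Sweedler formula $P(a\otimes y)=\mu(y_{(2)})\,(S^{-1}(y_{(1)})\triangleright a)$ for the long leg of the diagram is also the right starting point for the converse (it is \cref{prop:YDmodules} specialized to \cref{ex:hopfalgebraadjoint}).

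The gap is in the identity you claim the inductive step reduces to. For a monomial $y$ of length at least $2$ the equation $[\mu(y),b]=y\triangleright b$ is false: already $[\mu(x_1x_2),b]=\mu(x_1)(x_2\triangleright b)+(x_1\triangleright b)\mu(x_2)$, which is not $(x_1x_2)\triangleright b$. The operator $[\mu(y),-]$ agrees with the action of $y$ only for primitive $y$, and your item (iii), that the classical condition ``propagates through products in $\U\g$,'' is precisely the step that does not exist. What the induction actually reduces to is the single-generator condition again, applied to a shifted argument. Concretely, for $x\in\g$ and $y$ satisfying the inductive hypothesis $P(a\otimes y)=a\mu(y)$, the expansion $\Delta(yx)=y_{(1)}x\otimes y_{(2)}+y_{(1)}\otimes y_{(2)}x$ together with $S^{-1}(x)=-x$ and multiplicativity of $\mu$ gives
\[
P(a\otimes yx)=\mu(y_{(2)})\mu(x)\bigl(S^{-1}(y_{(1)})\triangleright a\bigr)-\mu(y_{(2)})\bigl(x\triangleright(S^{-1}(y_{(1)})\triangleright a)\bigr),
\]
and applying $[\mu(x),b]=x\triangleright b$ with $b=S^{-1}(y_{(1)})\triangleright a$ to the first summand cancels the second and leaves $P(a\otimes y)\mu(x)=a\mu(y)\mu(x)=a\mu(yx)$. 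So your induction does close, but only by invoking the hypothesis on elements of $\g$, never a ``condition for $y$.''

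A packaging that avoids the Sweedler bookkeeping altogether: both legs of \eqref{eq:quantummomentmap} are unital associative right $\U\g$-actions on $A$. The leg $a\otimes y\mapsto a\mu(y)$ is one because $\mu$ is an algebra map; the leg $P$ is one because it is exactly the right $\cL$-action produced by the embedding \eqref{eq:bimod} from the left $\cL$-module structure $l\otimes a\mapsto\mu(l)a$ (equivariance of $\mu$, needed for this to be a morphism in $\Rep(G)$, follows on generators from $\mu([x,l])=[\mu(x),\mu(l)]=x\triangleright\mu(l)$). Two right actions of $\U\g$ that agree on the generating subspace $\g$ agree everywhere, which is your induction stated once and for all.
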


In the same way, the quantum Hamiltonian reduction from \cref{def:Hamiltonianreduction} coincides with the usual definition
\[A\ham G = (A/A\mu(\g))^G\]
of the reduced algebra.

Given a variety $X$ equipped with a $G$-action, the algebra of differential operators $\D(X)$ carries a quantum moment map $\mu\colon \U\g\rightarrow \D(X)$ which sends $\g\subset \U\g$ to vector fields on $X$ generating the infinitesimal action. For instance, $\D(G)$ carries a moment map
\begin{equation}
\mu\colon \U\g\otimes \U\g^{\op}\rightarrow \D(G)
\label{eq:DGmomentmap}
\end{equation}
coming from the left and right $G$-action on itself. Let us explain how it arises in our context.

By the Peter--Weyl theorem we have an isomorphism of algebras
\[\cO(G)\cong \int^{V\in\Rep^{\fd}(G)} V^\vee\boxtimes V\in\Rep(G)\otimes \Rep(G),\]
where $\cO(G)$ carries a $G\times G$-action coming from the left and right $G$-action on itself. Using this we can also describe the algebra $\cD$ from \cref{def:Heisenbergdouble}.

\begin{prop}
The algebra $\cD\in\HC(G)\otimes \HC(G)^{\mop}$ is isomorphic to $\D(G)\cong\U\g\otimes \cO(G)$ equipped with the $G\times G$-action and the quantum moment map \eqref{eq:DGmomentmap}.
\end{prop}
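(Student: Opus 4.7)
The plan is to combine the general coend formula for $\cD$ from the preceding proposition with the Peter--Weyl identification of $\cO(G)$ already stated, and then match the algebra structure and the two moment maps. First, applying the preceding proposition to $\cC = \Rep(G)$ and $\cL = \U\g$ gives
\[
\cD \cong \int^{V \in \Rep(G)^{\fd}} (\U\g \otimes V^\vee) \boxtimes V.
\]
Since $V\in\Rep(G)^{\cp}$ is exactly $V\in \Rep(G)^{\fd}$ by semisimplicity, and since the endofunctor $(\U\g \otimes -) \boxtimes \id$ on $\Rep(G)\otimes \Rep(G)$ preserves colimits, I can pull $\U\g$ out of the coend to obtain
\[
\cD \cong \U\g \otimes \int^{V} V^\vee \boxtimes V \cong \U\g \otimes \cO(G),
\]
where the last identification uses the Peter--Weyl presentation of $\cO(G)$ recalled just before the statement. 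This identifies the underlying object of $\cD$ with $\D(G)$.

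Next I would verify the algebra structures agree. The restriction of the multiplication on $\cD$ to the subalgebra $\int^V V^\vee \boxtimes V$ (the part with trivial $\U\g$-factor) is, by the explicit product formula in the preceding proposition, exactly the coend algebra structure on $\cO(G)$, so $\cO(G)\hookrightarrow \cD$ is a map of algebras. Similarly, the two moment maps $\U\g,\U\g^{\op} \rightarrow \cD$ described in the preceding proposition embed $\U\g$ as $\U\g \otimes 1$ and $1\otimes \U\g$ inside $\U\g \otimes \cO(G)$ through the two alternative descriptions of the coend (the second via \cref{prop:TRmodulefunctor}). All that remains is to compute the cross relation between $\U\g$ and $\cO(G)$. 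In the product formula the only nontrivial input is the twist $\tau_{V^\vee}$; using the explicit classical formula \eqref{eq:classicalfieldgoal}, for $x\in\g$ and $\xi\in V^\vee$,
\[
\tau_{V^\vee}(\xi\otimes x) = x\otimes \xi - 1\otimes (x\triangleright \xi).
\]
Unwinding the product formula with this twist shows that in $\cD$ one has $[x, f] = x\triangleright f$ for $x\in\g$ and $f\in\cO(G)$ a matrix coefficient, which is exactly the commutation relation in the smash product $\cO(G)\#\U\g \cong \D(G)$.

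Finally, I would identify the $G\times G$-structure. The diagonal $G$-action on $(\U\g\otimes V^\vee)\boxtimes V$ is the adjoint action on $\U\g$ tensored with the $G$-action on $V^\vee\otimes V$; under $\cO(G) \cong \int^V V^\vee\boxtimes V$ this corresponds to the left-regular (resp. right-regular) $G\times G$-action on $\cO(G)$, so the full $G\times G$-action on $\cD$ coincides with the left/right translation action on $\D(G)$. Combined with the identification of the two moment maps as the embeddings $\U\g \hookrightarrow \D(G)$ generating left- and right-invariant vector fields, this realizes the isomorphism $\cD \cong \D(G)$ in $\HC(G)\otimes \HC(G)^{\mop}$ together with its moment map \eqref{eq:DGmomentmap}.

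The main obstacle is bookkeeping in the middle step: one must carefully track the order in which $\U\g$ factors are moved past $V^\vee$ via $\tau_{V^\vee}$ and verify that the resulting signs and left-vs-right conventions match the standard presentation of $\D(G)$ as the smash product of $\cO(G)$ with the enveloping algebra of left-invariant vector fields.
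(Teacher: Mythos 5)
Your proposal is correct and follows exactly the route the paper intends: the paper states this proposition without proof, as an immediate consequence of the general coend description of $\cD$ (with its explicit product and moment maps) together with the Peter--Weyl identification $\cO(G)\cong\int^{V}V^\vee\boxtimes V$, and your write-up supplies precisely those details, including the cross relation $[x,f]=x\triangleright f$ coming from $\tau_{V^\vee}$. The only imprecision is describing the second moment map as landing in ``$1\otimes\U\g$'': it is the image of $\bu\boxtimes(\bu\otimes\cL)$ under the comparison isomorphism of \cref{prop:TRmodulefunctor}, which in $\U\g\otimes\cO(G)$ is the copy of invariant vector fields for the \emph{other} translation and is not of the form $1\otimes(-)$ --- but you flag this, and it does not affect the argument.
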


In the abelian case the category of Harish-Chandra bimodules has a straightforward description. Suppose $H$ is a split torus; let $\h$ be its Lie algebra and $\Lambda=\Hom(H, \Gm)$ the character lattice. Then $\Rep(H)$ is equivalent to the category of $\Lambda$-graded vector spaces and $\HC(H)$ is equivalent to the category of $\Lambda$-graded $\Sym(\h)$-modules $\oplus_{\lambda\in\Lambda} M(\lambda)$.

Given $\lambda\in\Lambda$ we consider the translation functor $\lambda^*\colon \LMod_{\Sym(\h)}\rightarrow \LMod_{\Sym(\h)}$. Then the monoidal structure $\otimes^{\HC}$ on $\HC(H)$ is given by
\[M\otimes^{\HC} N = \bigoplus_{\lambda\in\Lambda} \lambda^*(M)\otimes N(\lambda).\]

Suppose $V\in\Rep(H)$. Given a vector $v\in V$ of weight $\mu\in\Lambda$ and $f\in\cO(\h^*)\cong\U\h$ the map \eqref{eq:classicalfieldgoal} is given by
\[v\otimes f(\lambda)\mapsto f(\lambda - \mu)\otimes v\]
for $\lambda\in\h^*$. It is convenient to write it as
\[v\otimes f(\lambda)\mapsto f(\lambda - h)\otimes v,\]
where $h$ is understood as acting on $v\in V$. Similarly, given a collection of representations $V_1, \dots, V_n\in \Rep(H)$ and vectors $v_i\in V_i$ we denote
\[f(\lambda - h^{(i)}) v_1\otimes \dots v_n = f(\lambda - \mu_i) v_1\otimes \dots v_n\]
if $v_i$ has weight $\mu_i\in\Lambda$.

\subsection{Quantum groups}
\label{sect:quantumgroups}

In this section we fix our conventions for quantum groups. Fix $k=\C$. Let $G$ be a connected reductive group, $B, B_-\subset G$ a pair of opposite Borel subgroups and $H=B\cap B_-$ a Cartan subgroup. Denote by $\Lambda=\Hom(H, \Gm)$ its weight lattice and $\Lambda^\vee=\Hom(\Gm, H)$ the coweight lattice; we denote by $\langle -, -\rangle\colon \Lambda^\vee\times\Lambda\rightarrow \Z$ the canonical pairing. For two simple roots $\alpha_i,\alpha_j\in\Lambda$ denote by $\alpha_i\cdot \alpha_j\in\Z$ the corresponding entry of the symmetrized Cartan matrix. Choose an integer $d\in\Z$ and a symmetric bilinear form $(-, -)\colon \Lambda\times\Lambda\rightarrow \frac{1}{d}\Z$, such that $(\alpha_i, \alpha_j) = \alpha_i\cdot \alpha_j$. Given a complex number $q^{1/d}\in\C^\times$ we have the exponentiated pairing
\[\Pi\colon \Lambda\times \Lambda\longrightarrow \C^\times\]
given by $\lambda,\mu\mapsto q^{-(\lambda, \mu)}$. Our assumption is that $q^{1/d}$ is not a root of unity.

We denote by $\Uq(\g)$ the quantum group defined as in \cite{Lusztig} with a slight modification that its Cartan part is $\Uq(\h)=k[\Lambda]$ with Cartan generators $K_\mu$ for $\mu\in\Lambda$ (note that the Cartan part in \cite{Lusztig} is $k[\Lambda^\vee]$). We denote by $\Uq(\b)\subset \Uq(\g)$ the quantum Borel subalgebra, $\Uq(\n), \Uq(\n_-)\subset \Uq(\g)$ the quantum nilpotent subalgebras and $\Uq^{>0}(\n),\Uq^{<0}(\n_-)$ their augmentation ideals. For each simple root $\alpha$ we denote by $\{E_\alpha, K_\alpha, F_\alpha\}$ the corresponding generators of the $\Uq(\sl_2)$-subalgebra (they are denoted by $E_i, \tilde{K}_i, F_i$ in \cite[Section 3.1.1]{Lusztig}).

We have the corresponding categories obtained from this data:
\begin{itemize}
\item $\Rep_q(H)$ is the braided monoidal category of $\Lambda$-graded vector spaces with the braiding given by $\Pi\tau$, where $\tau$ is the map exchanging the tensor factors.

\item $\Rep_q(G)$ is the ind-completion of the braided monoidal category of finite-dimensional $\Lambda$-graded vector spaces with a $\Uq(\g)$-module structure, such that for every vector $x_\lambda$ of weight $\lambda\in\Lambda$ we have $K_\mu x_\lambda = q^{(\mu, \lambda)} x_\lambda$. The braiding is given by $\Theta\circ\Pi\circ\tau$, where $\Theta\in\Uq(\n_-)\widehat{\otimes}\Uq(\n)$ is the so-called quasi $R$-matrix. We refer to \cite[Section 32]{Lusztig} for more details.

\item $\Rep_q(B)$ is the ind-completion of the monoidal category of finite-dimensional $\Lambda$-graded vector spaces with a compatible $\Uq(\b)$-module structure.
\end{itemize}

\begin{defn}
A $\Uq(\g)$-module $M$ is \defterm{integrable} if it lies in the image of the forgetful functor
\[\Rep_q(G)\longrightarrow \LMod_{\Uq(\g)}.\]
\end{defn}
Equivalently, an integrable $\Uq(\g)$-module is a locally finite type 1 $\Uq(\g)$-module. We introduce an analogous definition for $\Uq(\b)$-modules.

Denote by $\cO_q(G)\in\Rep_q(G)$ the coend algebra from \cref{def:coend}.

\begin{defn}
A $\Uq(\g)$-module $M$ is \defterm{locally finite} if for every $m\in M$ the vector space $\Uq(\g)m$ is finite-dimensional.
\end{defn}

The algebra $\Uq(\g)$ with respect to the adjoint $\Uq(\g)$-action on itself $x, y\mapsto x_{(1)}yS(x_{(2)})$ is not locally finite and we denote by
\[\Uq(\g)^\lf\subset\Uq(\g)\]
the largest locally finite submodule.

\begin{example}
Consider $\Uq(\sl_2)$ with the generators $E,K,F$ and relations
\[KE = q^2 EK,\qquad KF = q^{-2} FK,\qquad EF-FE=\frac{K-K^{-1}}{q-q^{-1}}.\]
Then $\Uq(\sl_2)^\lf$ is the subalgebra generated by $EK^{-1}$, $F$ and $K^{-1}$.
\label{ex:Uqsl2lf}
\end{example}

It is easy to see that $\Uq(\g)^\lf\subset \Uq(\g)$ is a subalgebra, but note that it is not a subcoalgebra. Nevertheless, the following is shown in \cite[Theorem 7.1.6]{Joseph}.

\begin{prop}
$\Uq(\g)^\lf\subset \Uq(\g)$ is a left coideal, i.e. the coproduct restricts to a map
\[
\Delta\colon \Uq(\g)^\lf\longrightarrow\Uq(\g)\otimes \Uq(\g)^{\lf}.
\]
\label{prop:Uqlfcoideal}
\end{prop}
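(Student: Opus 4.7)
The approach is to first establish the compatibility of $\Delta$ with the adjoint action, then exploit the finite-dimensionality of the ad-submodule generated by a given element of $\Uq(\g)^\lf$. The basic identity I will derive is
\[
\Delta(\mathrm{ad}(z)(y)) = \sum z_{(1)}\, y_{(1)}\, S(z_{(4)}) \otimes z_{(2)}\, y_{(2)}\, S(z_{(3)}),
\]
where $\Delta^{(4)}(z) = z_{(1)}\otimes z_{(2)}\otimes z_{(3)}\otimes z_{(4)}$ is the $4$-fold iterated coproduct and $\Delta(y) = y_{(1)}\otimes y_{(2)}$; this follows from $\Delta$ being an algebra homomorphism together with the antipode compatibility $\Delta\circ S = (S\otimes S)\circ \tau\circ \Delta$.

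Now I fix $y\in \Uq(\g)^\lf$ and write $\Delta(y) = \sum_i a_i\otimes b_i$ in reduced form with linearly independent $a_i$. Let $V := \mathrm{ad}(\Uq(\g))(y)$, which is finite-dimensional by hypothesis; then $\Delta(V)\subset \Uq(\g)\otimes \Uq(\g)$ is also finite-dimensional, and the subspace $W\subset \Uq(\g)$ spanned by second-tensor-factor components of $\Delta(V)$ is finite-dimensional with $b_i\in W$. It suffices to show $W\subset \Uq(\g)^\lf$.

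The main step is to conclude from the identity above that $(\id\otimes \mathrm{ad}(w))\Delta(y) = \sum_i a_i\otimes \mathrm{ad}(w)(b_i)$ lies in the finite-dimensional subspace $\Uq(\g)\otimes W$ for all $w\in \Uq(\g)$. I reformulate the identity as $\Delta(\mathrm{ad}(z)(y)) = \Psi(\Delta^{(4)}(z),\Delta(y))$, where $\Psi(u_1\otimes u_2\otimes u_3\otimes u_4,\,a\otimes b) := u_1 a S(u_4)\otimes u_2 b S(u_3)$; the left-hand side lies in $\Delta(V)$ for every $z$. The desired evaluation $(\id\otimes \mathrm{ad}(w))\Delta(y) = \Psi(1\otimes \Delta(w)\otimes 1,\Delta(y))$ will then be obtained by realizing $1\otimes \Delta(w)\otimes 1$ as a contraction of $\Delta^{(6)}(w)$ via the antipode axiom $m(\id\otimes S)\Delta = \epsilon$ applied to the outer pairs of factors, combined with $(\epsilon\otimes\id)\Delta = \id$. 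Linear independence of the $a_i$ then forces each $\mathrm{ad}(w)(b_i)$ to lie in $W$, hence $b_i\in \Uq(\g)^\lf$.

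The main technical obstacle I expect is the Sweedler-index bookkeeping in the extraction step: the non-cocommutativity of $\Uq(\g)$ prevents direct specialization of $z$ to produce the pattern $1\otimes \Delta(w)\otimes 1$ within $\Delta^{(4)}(\Uq(\g))$, forcing a passage through the $6$-fold iterated coproduct and intervention of powers of the antipode. A cleaner alternative route would be to invoke the structural decomposition $\Uq(\g)^\lf = \bigoplus_\lambda F(V(\lambda))$ into ad-isotypic components admitting explicit matrix-coefficient descriptions twisted by $K_{-2\lambda}$, on which the coideal property can be verified summand by summand from the formula for the coproduct on matrix coefficients.
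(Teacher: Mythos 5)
Your argument is essentially correct, but note that the paper does not prove this proposition at all: it is quoted from Joseph's book (\cite[Theorem 7.1.6]{Joseph}), so what you have written is a self-contained replacement for an external citation rather than an alternative to an in-paper argument. Your route is the standard Joseph--Letzter one, and the one step you flag as delicate does go through. Concretely, the ``contraction'' you describe is the identity
\[
\sum y_{(1)}\otimes \ad(w)(y_{(2)}) \;=\; \sum \bigl(S(w_{(1)})\otimes 1\bigr)\,\Delta\bigl(\ad(w_{(2)})(y)\bigr)\,\bigl(w_{(3)}\otimes 1\bigr),
\]
which follows from your formula $\Delta(\ad(z)(y))=z_{(1)}y_{(1)}S(z_{(4)})\otimes z_{(2)}y_{(2)}S(z_{(3)})$ applied to $z=w_{(2)}$ and two applications of the antipode axiom; the total number of Sweedler legs of $w$ involved is five or six depending on how you bracket, but that is immaterial. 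Since $\ad(w_{(2)})(y)$ lies in the finite-dimensional module $V=\ad(\Uq(\g))(y)$ and the extra factors act only on the first tensor leg, the right-hand side lies in $\Uq(\g)\otimes W$, and picking out coefficients against the linearly independent $a_i$ gives $\ad(w)(b_i)\in W$ for all $w$, hence $b_i\in\Uq(\g)^\lf$, as you say. Two small points: define $W$ as the smallest subspace with $\Delta(V)\subset \Uq(\g)\otimes W$ (this is the precise meaning of ``span of second-tensor-factor components,'' and it is finite-dimensional); and your proposed alternative via the decomposition of $\Uq(\g)^\lf$ into $\ad$-isotypic pieces is best avoided here, since that decomposition is a substantially deeper theorem of Joseph--Letzter and would make the logic lopsided. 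The direct Hopf-algebraic argument you give is the right one, and it has the additional merit of using nothing beyond bijectivity of the antipode, so it applies verbatim to the locally finite part of any Hopf algebra acting on itself adjointly.
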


\begin{remark}
There is a close relationship between the algebras $\Uq(\g)^{\lf}$ and $\cO_q(G)$ which can be established using the quantum Killing form \cite{Rosso}. If $G$ is semisimple simply-connected, $\Uq(\g)^\lf\cong \cO_q(G)$, see \cite[Proposition 7.1.23]{Joseph} and \cite[Theorem 2.113]{VoigtYuncken}.
\end{remark}

\subsection{Quantum Harish-Chandra bimodules}
\label{sect:quantumHC}

For $V\in\Rep_q(G)$, $v\in V$ an $x\in\Uq(\g)$ we denote by $x\triangleright v$ the $\Uq(\g)$-action. For $x\in\Uq(\g)^\lf$ we denote by $\Delta(x) = x_{(1)}\otimes x_{(2)}$ the coproduct on $\Uq(\g)$, where we note that $x_{(2)}\in\Uq(\g)^\lf$ by \cref{prop:Uqlfcoideal}. We define the natural isomorphism
\begin{equation}
\tau_V\colon V\otimes \Uq(\g)^\lf\longrightarrow \Uq(\g)^\lf\otimes V
\label{eq:quantumfieldgoal}
\end{equation}
by
\[v\otimes x\mapsto x_{(2)}\otimes S^{-1}(x_{(1)})\triangleright v.\]
Consider $\Uq(\g)^\lf\in\Rep_q(G)$ with respect to the adjoint action. It follows from \cref{prop:YDmodules} that $(\Uq(\g)^\lf, \tau)$ is a commutative algebra in $\ZDr(\Rep_q(G))$.

\begin{defn}
The \defterm{category of quantum Harish-Chandra bimodules} is
\[\HC_q(G) = \HC(\Rep_q(G), \Uq(\g)^\lf).\]
\end{defn}

\begin{remark}
A similar definition of the category of quantum Harish-Chandra bimodules is given in \cite[Definition 5.26]{VoigtYuncken}.
\end{remark}

\begin{remark}
By \cite[Theorem 3.10]{SafronovQMM} the notion of quantum moment maps in this setting coincides with the quantum moment maps for quantum group actions introduced in \cite[Section 1.5]{VaragnoloVasserot}.
\end{remark}

In this case the algebra $\cD$ from \cref{def:Heisenbergdouble} is the algebra of quantum differential operators $\cD_q(G)$ on $G$ (see \cite[Section 4.1]{BackelinKremnizer} where it is denoted by $\cD_q^{fin}$).

As in the case of classical Harish-Chandra bimodules, in the abelian case the category $\HC_q(G)$ has a straightforward description. Let $H$ be a torus and $\Lambda$ its weight lattice. Then $\Uq(\h)^\lf = \Uq(\h) = \cO(H)$ and $\HC_q(H)$ is equivalent to the category of $\Lambda$-graded $\cO(H)$-modules. There is a homomorphism
\[\Lambda\longrightarrow H\]
whose dual map $\cO(H)=k[\Lambda]\rightarrow \cO(\Lambda)$ on the level of functions is
\[K_\mu\mapsto \left(\lambda \mapsto q^{(\mu, \lambda)}\right)\]
for $\mu,\lambda\in\Lambda$. In particular, $\Lambda$ acts by translations on $H$ an we denote the induced functor by
\[(q^\lambda)^*\colon \LMod_{\cO(H)}\rightarrow \LMod_{\cO(H)}.\]
The monoidal structure $\otimes^{\HC}$ on $\HC_q(H)$ is given by
\[M\otimes^{\HC} N = \bigoplus_{\lambda\in\Lambda} (q^\lambda)^*(M)\otimes N(\lambda).\]

Suppose $V\in\Rep(H)$, $v\in V$ and $f\in\cO(H)\cong \Uq(\h)$. Then the map \eqref{eq:quantumfieldgoal} is
\[v\otimes f(\lambda)\mapsto f(\lambda q^{-h}) \otimes v\]
for $\lambda\in H$.

\subsection{Harish-Chandra bimodules and bialgebroids}

Let us again consider the general setup of \cref{sect:HCgeneral}, where $\cC$ is a cp-rigid monoidal category with a compact projective unit. In particular, $\HC$ is also a cp-rigid monoidal category with a compact projective unit. Our goal in this section is to describe a Tannaka reconstruction result for monoidal forgetful functors to $\HC$.

Recall from \cref{ex:convolutionproduct} that the category $\HC\otimes \HC$ carries two monoidal structures: the pointwise monoidal structure on $\HC\otimes \HC^{\mop}$ and the convolution product. We will call the latter the Takeuchi product in this setting.

\begin{defn}
The \defterm{Takeuchi product} $\times_\cL$ is the monoidal structure on $\HC\otimes \HC$ given by
\[(M_1\boxtimes M_2)\times_\cL (N_1\boxtimes N_2) = \ev(M_2, N_1)\otimes (M_1\boxtimes N_2)\]
with the unit $\coev(k)=\cD\in\HC\otimes \HC$.
\label{def:HCTakeuchiProduct}
\end{defn}

\begin{example}
Consider the setup of \cref{def:classicalHC}. An object of $\HC(G)\otimes \HC(G)\cong \HC(G\times G)$ is a $\U\g\otimes (\U\g)^{\op}$-bimodule with a certain integrability condition. For a $(\U\g)^{\op}$-bimodule $M$ and a $\U\g$-bimodule $N$ the Takeuchi product is the subspace
\[M\times_{\U\g} N\subset M\otimes_{\U\g} N\]
of elements $\sum_i m_i\otimes n_i$ satisfying
\[\sum_i m_i x\otimes n_i = m_i\otimes n_ix\]
for every $x\in\U\g$, see \cite{Takeuchi}.
\end{example}

We will now formulate the notion of bialgebroids in the category of Harish-Chandra bimodules. Recall that the algebra $\cD\cong T^\R(\cL)\in\cC\otimes \cC^{\op}$ carries a natural quantum moment map \eqref{eq:idmomentmap}.

\begin{defn}
A \defterm{Harish-Chandra bialgebroid} is an algebra $B\in\cC\otimes \cC^{\mop}$ equipped with a quantum moment map $s\otimes t\colon \cL\boxtimes \cL^{\op}\rightarrow B$, which allows us to regard $B$ as an algebra in $\HC\otimes \HC^{\mop}$, together with a coassociative coproduct $\Delta\colon B\rightarrow B\times_\cL B$, a map of algebras in $\HC\otimes \HC^{\mop}$, and a counit map $\varepsilon\colon B\rightarrow \cD$, a map of algebras in $\cC\otimes \cC^{\mop}$ compatible with quantum moment maps.
\label{def:HCbialgebroid}
\end{defn}

\begin{example}
Let $H$ be a split torus, $\Lambda=\Hom(H, \Gm)$ its weight lattice and consider the category of Harish-Chandra bimodules $\HC(H)$. A bialgebroid in $\HC(H)$ is given by the following data:
\begin{itemize}
\item An algebra with a bigrading
\[B = \bigoplus_{\alpha,\beta\in\Lambda} B_{\alpha\beta}.\]
\item Algebra maps
\[s, t\colon \cO(\h^*)\longrightarrow B\]
which satisfy the quantum moment map equations
\[s(f(\lambda)) a = a s(f(\lambda + \alpha)),\qquad t(f(\lambda)) a = a t(f(\lambda+\beta))\]
for $f\in\cO(\h^*)$ and $a\in B_{\alpha\beta}$.
\item The coproduct $\Delta\colon B\rightarrow B\times_{\U\h} B$, a map of algebras compatible with the grading and quantum moment maps. Here the Takeuchi product is
\[(B\times_{\U\h} B)_{\alpha\beta} = \bigoplus_{\delta\in\Lambda} B_{\alpha\delta}\otimes_{\cO(\h^*)} B_{\delta\beta},\]
where the relative tensor product is the quotient of the $k$-linear tensor product modulo the relations $t(f)a\otimes b\sim a\otimes s(f) b$ for $a\otimes b\in B_{\alpha\delta}\otimes B_{\delta\beta}$ and $f\in\cO(\h^*)$.
\item The counit $\epsilon\colon B\rightarrow \D(H)$, a map of algebras compatible with the grading and quantum moment maps.
\end{itemize}
We see that this data is essentially an $\h$-bialgebroid in the sense of \cite[Section 4.1]{EtingofVarchenkoSolutions} (note that the Mellin transform identifies $\D(H)$ with the algebra of $\Lambda$-difference operators on $\h^*$).
\label{ex:hbiaglebroids}
\end{example}

\begin{thm}
Suppose $B$ is a Harish-Chandra bialgebroid. The functor $\bot\colon \HC\rightarrow \HC$ given by
\[\bot(M) = B\times_\cL M\]
defines a lax monoidal comonad. Conversely, let $\bot\colon \HC\rightarrow \HC$ be a colimit-preserving lax monoidal comonad on $\HC$. Then $\bot(-)\cong B\times_\cL(-)$ for some Harish-Chandra bialgebroid $B$.
\label{thm:HClaxmonoidalcomonad}
\end{thm}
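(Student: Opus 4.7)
The plan is to apply the duoidal equivalence of \cref{thm:endofunctorsduoidal} to $\HC$, which is cp-rigid with compact projective unit by \cref{prop:HCcprigid}. This yields an equivalence
\[\HC\otimes \HC \xrightarrow{\sim} \Fun^\L(\HC, \HC)\]
of duoidal categories, intertwining the Takeuchi product (i.e.\ the convolution product of \cref{ex:convolutionproduct} specialized to $\HC$, with unit $\cD = \coev(k)$) with composition of functors, and the pointwise monoidal structure on $\HC\otimes \HC^{\mop}$ with Day convolution. Under this equivalence, an object $B = B_1\boxtimes B_2$ corresponds to the endofunctor $M\mapsto \ev(B_2, M)\otimes B_1$, which by the description of the convolution action in \cref{ex:convolutionproduct} is precisely $M\mapsto B\times_\cL M$.

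A colimit-preserving lax monoidal comonad on $\HC$ is by definition a bimonoid in the duoidal category $\Fun^\L(\HC, \HC)$: a Day convolution algebra structure (which by the coend formula \eqref{eq:Dayconvolution} is exactly a lax monoidal functor structure), a composition coalgebra structure with counit landing in the identity functor (the composition unit), and the duoidal compatibility saying that the coproduct and counit of the comonad are lax monoidal transformations. Transporting across the equivalence, this becomes the data of an object $B\in \HC\otimes\HC$ equipped with a pointwise $\HC\otimes\HC^{\mop}$-algebra structure, a Takeuchi coalgebra structure $\Delta\colon B\rightarrow B\times_\cL B$ and $\varepsilon\colon B\rightarrow \cD$, such that $\Delta$ and $\varepsilon$ are morphisms of algebras in $\HC\otimes\HC^{\mop}$.

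Next, I will unpack what an algebra in $\HC\otimes \HC^{\mop}$ is. Using \cref{lm:HCop} to identify $\HC\otimes\HC^{\mop}\cong \HC(\cC\otimes\cC^{\mop}, \cL\boxtimes\cL^{\op})$, the analogue of \cref{prop:HCalgebras} shows this is the same as an algebra $B\in\cC\otimes \cC^{\mop}$ together with a quantum moment map $s\otimes t\colon \cL\boxtimes \cL^{\op}\rightarrow B$. The counit $\varepsilon\colon B\rightarrow \cD$ being a morphism of algebras in $\HC\otimes\HC^{\mop}$ is equivalent to being a morphism of algebras in $\cC\otimes\cC^{\mop}$ intertwining the quantum moment maps, where $\cD$ is equipped with its canonical moment map \eqref{eq:idmomentmap}. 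This matches \cref{def:HCbialgebroid} exactly, and the associated endofunctor is $\bot(M) = B\times_\cL M$ as required.

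The main technical obstacle will be the bookkeeping needed to confirm that the duoidal bimonoid compatibility — the condition that $\Delta$ and $\varepsilon$ of the comonad are morphisms of lax monoidal functors — corresponds to the algebra-map condition for the coproduct and counit of the bialgebroid. This requires tracking the explicit form of the equivalence in \cref{thm:endofunctorsduoidal} through the interchange morphisms of the duoidal structure, and in particular comparing the coend formula for Day convolution in \eqref{eq:Dayconvolution} with the definition of the Takeuchi product in \cref{def:HCTakeuchiProduct}. Once these identifications are in place, both directions of the theorem follow simultaneously from the equivalence of bimonoid data on the two sides.
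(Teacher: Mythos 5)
Your proposal is correct and follows essentially the same route as the paper's own proof: both identify a colimit-preserving lax monoidal comonad with a bimonoid in the duoidal category $\Fun^\L(\HC,\HC)$, transport it across the equivalence of \cref{thm:endofunctorsduoidal} to a bimonoid in $(\HC\otimes\HC,\times_\cL,\otimes)$, and then unpack the algebra and coalgebra data via \cref{lm:HCop} and \cref{prop:HCalgebras} to recover exactly \cref{def:HCbialgebroid}. The ``bookkeeping'' you flag as the remaining technical obstacle is likewise left implicit in the paper.
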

\begin{proof}
Recall from \cite[Definition 6.25]{AguiarMahajan} that a bimonoid in a duoidal category is an algebra with respect to one monoidal structure and a coalgebra with respect to the other monoidal structure, both compatible in a natural way. A coalgebra in $(\Fun^\L(\HC, \HC), \circ)$ is a colimit-preserving comonad on $\HC$ and a bimonoid in $\Fun^\L(\HC, \HC)$ is the same as lax monoidal comonad on $\HC$.

A colimit-preserving lax monoidal comonad on $\HC$ is the same as a bimonoid in the duoidal category $\Fun^\L(\HC, \HC)$. By \cref{thm:endofunctorsduoidal} we have an equivalence of duoidal categories $\HC\otimes \HC\cong \Fun^\L(\HC, \HC)$. So, $\bot$ corresponds to an object $B\in\HC\otimes \HC$ which is both an algebra in $\HC\otimes \HC^{\mop}$ as well as a coalgebra in $(\HC\otimes \HC, \times_\cL)$, both in a compatible way.

By \cref{lm:HCop} we have an equivalence of monoidal categories
\[\HC(\cC, \cL)\otimes \HC(\cC, \cL)^{\mop}\cong \HC(\cC, \cL)\otimes \HC(\cC^{\mop}, \cL^{\op}),\]
so by \cref{prop:HCalgebras} the data of an algebra $B\in\HC\otimes \HC^{\mop}$ boils down to an algebra $B\in\cC\otimes \cC^{\mop}$ equipped with a quantum moment map $\cL\boxtimes \cL^{\op}\rightarrow B$.

The data of a comonad boils down to a coalgebra $(B, \Delta, \epsilon)$ in $(\HC\otimes \HC, \times_\cL)$. The counit is given by a map of algebras $\epsilon\colon B\rightarrow \coev(k)$ in $\HC\otimes \HC^{\mop}$. Identifying algebras in $\HC\otimes \HC^{\mop}$ with algebras in $\cC\otimes \cC^{\mop}$ equipped with quantum moment maps by \cref{prop:HCalgebras}, the counit is the same as a map $\varepsilon\colon B\rightarrow T^\R(\cL)\cong \cD$ of algebras in $\cC\otimes \cC^{\mop}$ compatible with quantum moment maps from $\cL\boxtimes \cL^{\op}$.
\end{proof}

The definition of representations of Harish-Chandra bialgebroids is straightforward.

\begin{defn}
Suppose $B\in\HC\otimes \HC$ is a Harish-Chandra bialgebroid. A \defterm{$B$-comodule} is an object $M\in\HC$ together with a coassociative and counital coaction $M\rightarrow B\times_\cL M$.
\end{defn}

Equivalently, by \cref{thm:HClaxmonoidalcomonad} a $B$-comodule is a coalgebra over the comonad $\bot(M) = B\times_\cL M$.

\begin{example}
Consider the category of Harish-Chandra bimodules $\HC(H)$ for a split torus $H$ as in \cref{ex:hbiaglebroids} and let $B$ be a Harish-Chandra bialgebroid in $\HC(H)$. Then a $B$-comodule is a $\cO(\h^*)$-module
\[M = \bigoplus_{\alpha\in\Lambda} M_\alpha\]
together with a coaction map
\[M_\alpha\longrightarrow \bigoplus_{\beta\in\Lambda} B_{\alpha\beta}\otimes_{\cO(\h^*)} M_\beta,\]
where $B$ is considered as a right $\cO(\h^*)$-module via the \emph{left} action of $t\colon \cO(\h^*)\rightarrow B$. We require this coaction map to be compatible with the $\cO(\h^*)$-actions on both sides, where the $\cO(\h^*)$-action on the right is via the left multiplication by $s\colon \cO(\h^*)\rightarrow B$ and be coassociative and counital in the obvious way.
\end{example}

We obtain a Tannaka recognition statement for Harish-Chandra bialgebroids.

\begin{thm}
Suppose $\cD$ is a monoidal category with a monoidal functor $F\colon \cD\rightarrow \HC$ which admits a colimit-preserving right adjoint $F^\R\colon \HC\rightarrow \cD$. Then there is a Harish-Chandra bialgebroid $B$, such that $(F\circ F^\R)(-)\cong B\times_\cL(-)$ and $F$ factors through a monoidal functor
\[\cD\longrightarrow \CoMod_B(\HC).\]
If $F$ is conservative and preserves equalizers, the above functor is an equivalence.
\label{HarishChandraTannakaReconstruction}
\end{thm}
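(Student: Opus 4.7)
The plan is to apply \cref{thm:HClaxmonoidalcomonad} to the comonad arising from the adjunction $F\dashv F^\R$ and then conclude the equivalence via the dual of Beck's monadicity theorem, in direct parallel with the classical Tannaka argument summarized in \cref{prop:tannakabialgebras}.

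The first step is to identify $\bot := FF^\R$ as a colimit-preserving lax monoidal comonad on $\HC$. Since $F$ admits a right adjoint, it preserves colimits; combined with the hypothesis that $F^\R$ preserves colimits, $\bot$ is colimit-preserving. The strong monoidal structure on $F$ transfers, by the standard doctrinal adjunction construction, to a lax monoidal structure on $F^\R$, namely the mate
\[F^\R(M)\otimes F^\R(N)\longrightarrow F^\R(M\otimes N)\]
of the composite $F(F^\R(M)\otimes F^\R(N))\cong FF^\R(M)\otimes FF^\R(N)\to M\otimes N$, where the last arrow uses the counit of the adjunction. The composite of a strong monoidal functor with a lax monoidal functor is lax monoidal, so $\bot$ is a lax monoidal comonad. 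Applying \cref{thm:HClaxmonoidalcomonad} produces a Harish-Chandra bialgebroid $B$ together with a canonical isomorphism $\bot(-)\cong B\times_\cL(-)$.

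Because $\bot$ is a lax monoidal comonad, $\CoMod_\bot(\HC)\cong \CoMod_B(\HC)$ inherits a monoidal structure in which the tensor product of coalgebras $(M,\alpha)$ and $(N,\beta)$ is $M\otimes N$ with coaction
\[M\otimes N\xrightarrow{\alpha\otimes\beta}\bot(M)\otimes \bot(N)\longrightarrow \bot(M\otimes N).\]
The comparison functor sends $x\in\cD$ to $F(x)$ equipped with the coaction $F(\eta_x)\colon F(x)\to FF^\R F(x)=\bot F(x)$, where $\eta$ is the unit of the adjunction. That this comparison is strong monoidal follows by unwinding the definitions: the lax structure on $\bot$ was constructed precisely as the mate of the (invertible) monoidal constraint of $F$, so the diagrams expressing compatibility of the coaction with the tensor product reduce to those expressing that $F$ is strong monoidal. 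The bookkeeping is identical to the bialgebra case recalled in \cref{sect:TannakaBialgebras}.

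Finally, assume $F$ is conservative and preserves equalizers. Under these hypotheses the dual of Beck's monadicity theorem applies and guarantees that the comparison functor $\cD\to \CoMod_\bot(\HC)\cong \CoMod_B(\HC)$ is an equivalence. The main technical hurdle in the overall argument is the identification of lax monoidal comonads on $\HC$ with Harish-Chandra bialgebroids, but this has already been accomplished in \cref{thm:HClaxmonoidalcomonad}; once this input is available, the remainder is a formal consequence of doctrinal adjunction and the comonadicity criterion.
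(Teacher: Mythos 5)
Your proposal is correct and follows essentially the same route as the paper: form the colimit-preserving lax monoidal comonad $\bot=FF^\R$ (with the lax structure on $F^\R$ coming from doctrinal adjunction), invoke \cref{thm:HClaxmonoidalcomonad} to identify $\bot$ with $B\times_\cL(-)$ for a Harish-Chandra bialgebroid $B$, and conclude via the comparison functor and the dual Barr--Beck theorem. The only difference is that you spell out the monoidal structure on $\CoMod_\bot(\HC)$ and the strong monoidality of the comparison functor, which the paper delegates to a citation of Szlach\'anyi.
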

\begin{proof}
Since $F$ is monoidal, $F^\R$ is lax monoidal. Therefore, $\bot = FF^\R$ is a colimit-preserving lax monoidal comonad on $\HC$. By \cref{thm:HClaxmonoidalcomonad} there is a Harish-Chandra bialgebroid $B$, such that $\bot(-)\cong B\times_\cL(-)$. By the standard monadic arguments $F$ factors through
\[\cD\longrightarrow \CoMod_\bot(\HC)\cong \CoMod_B(\HC),\]
which is monoidal (see \cite[Proposition 3.5]{Szlachanyi} for the dual statement). If $F$ is conservative and preserves equalizers, by the Barr--Beck theorem \cite[Theorem VI.7.1]{Maclane} the above functor is an equivalence.
\end{proof}

\section{Dynamical \texorpdfstring{$R$}{R}-matrices}

In this section we explain how the dynamical twists and dynamical $R$-matrices arise from the categorical formalism explained in this paper.

\subsection{Dynamical twists}

Consider a Hopf algebra $H$, a commutative algebra $\cL\in\ZDr(\LMod_H)$ (see \cref{prop:YDmodules}) with a coaction map $\delta\colon \cL\rightarrow H\otimes \cL$ (denoted by $x\mapsto x_{(-1)}\otimes x_{(0)}$) and a cp-rigid monoidal category $\cC$ together with a forgetful functor $F\colon \cC\rightarrow \LMod_H$ which we assume sends compact projective objects in $\cC$ to finite-dimensional $H$-modules. It will be convenient to introduce the right $H$-coaction
\[\delta^R\colon \cL\longrightarrow \cL\otimes H\]
by
\[x\mapsto x_{(0)}\otimes S^{-1}(x_{(-1)}).\]

\begin{prop}
A monoidal structure with a strict unit map on the composite
\[\cC \longrightarrow \LMod_H \xrightarrow{\free} \HC(\LMod_H, \cL)\]
is the same as a natural collection of elements $J_{X, Y}\in \cL\otimes \Hom(F(X)\otimes F(Y), F(X\otimes Y))$ for $X,Y\in\cC^{\cp}$ satisfying
\begin{itemize}
\item The elements $J_{X, Y}$ are $H$-invariant.

\item For a triple $X,Y,Z\in\cC^{\cp}$ the equation
\[J_{X\otimes Y, Z}\circ(J_{X, Y}\otimes \id_Z) = J_{X, Y\otimes Z}\circ \delta^R_X(J_{Y, Z})\]
holds, where $\delta^R_X$ means the $H$-factor in $\delta^R$ acts on $X$.

\item For any $X\in\cC^{\cp}$ we have $J_{\bu, X}=J_{X, \bu} = 1\otimes \id_{F(X)}$.
\end{itemize}
\label{prop:dynamicaltwistmonoidal}
\end{prop}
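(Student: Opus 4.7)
The plan is to unwind the definition of a monoidal structure directly, using the concrete description of relative tensor products of free $\cL$-modules. The starting observation is that for any $V, W \in \LMod_H$ there is a canonical isomorphism of left $\cL$-modules in $\LMod_H$
\[\free(V) \otimes_\cL^{\HC} \free(W) \;\cong\; \cL \otimes V \otimes W\]
obtained by absorbing the left $\cL$-factor of $\free(W)$ through the right $\cL$-structure on $\free(V)$ coming from $\tau_V\colon V \otimes \cL \to \cL \otimes V$ as in \cref{prop:YDmodules}. Because $F$ sends compact projectives to finite-dimensional $H$-modules, a left-$\cL$-linear and $H$-equivariant map $\cL \otimes F(X) \otimes F(Y) \to \cL \otimes F(X \otimes Y)$ is exactly an $H$-invariant element of $\cL \otimes \Hom(F(X) \otimes F(Y), F(X \otimes Y))$. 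This identifies structure morphisms $\widetilde{J}_{X, Y}\colon \free(F(X)) \otimes_\cL \free(F(Y)) \to \free(F(X \otimes Y))$ in $\HC$ with the family of $H$-invariant tensors $J_{X, Y}$, and naturality in $X$ and $Y$ translates directly into naturality of $J_{X, Y}$.

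The main step is the associativity axiom. I realize the triple relative tensor product $\free(F(X)) \otimes_\cL \free(F(Y)) \otimes_\cL \free(F(Z))$ as a free left $\cL$-module on $F(X) \otimes F(Y) \otimes F(Z)$. The composite $\widetilde{J}_{X \otimes Y, Z} \circ (\widetilde{J}_{X, Y} \otimes \id)$ is straightforward to transcribe: the $\cL$-component of $J_{X, Y}$ sits on the left, and the net effect on $F(X) \otimes F(Y) \otimes F(Z)$ reduces to $J_{X \otimes Y, Z} \circ (J_{X, Y} \otimes \id_Z)$. The other composite $\widetilde{J}_{X, Y \otimes Z} \circ (\id \otimes \widetilde{J}_{Y, Z})$ is subtler: the $\cL$-factor of $J_{Y, Z}$ must be moved past $F(X)$ to the leftmost position using the right $\cL$-action on $\free(F(X))$. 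Substituting the explicit formula for $\tau_{F(X)}$ from \cref{prop:YDmodules}, an element $\ell \in \cL$ becomes $\ell_{(0)}$ with $S^{-1}(\ell_{(-1)})$ acting on $F(X)$. This is exactly the operator $\delta^R_X$ applied to the $\cL$-factor of $J_{Y, Z}$, so the second composite equals $J_{X, Y \otimes Z} \circ \delta^R_X(J_{Y, Z})$. Equating the two composites yields the required equation.

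The strict unit axiom is immediate under the identification: the unit coherences $\free(F(\bu)) \otimes_\cL \free(F(X)) \cong \free(F(X))$ and $\free(F(X)) \otimes_\cL \free(F(\bu)) \cong \free(F(X))$ agree with the canonical ones if and only if $J_{\bu, X} = 1 \otimes \id_{F(X)} = J_{X, \bu}$. Conversely, given a natural family $\{J_{X, Y}\}$ satisfying the three conditions, reversing the identifications produces morphisms $\widetilde{J}_{X, Y}$ whose naturality, associativity, and unitality follow formally. The chief obstacle is purely bookkeeping --- tracking how the $\cL$-factors are absorbed across $F(X)$ via $\tau$ in the triple relative tensor product --- but once the appearance of $\delta^R_X$ is recognized from the Yetter--Drinfeld formula of \cref{prop:YDmodules}, the associativity equation falls out directly.
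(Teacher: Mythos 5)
Your proposal is correct and follows essentially the same route as the paper: both identify $\free(F(X))\otimes_\cL\free(F(Y))$ with $\cL\otimes F(X)\otimes F(Y)$ via the monoidal structure on $\free$, read off the structure maps as $H$-invariant elements $J_{X,Y}$, and obtain the shifted cocycle equation by tracking how the $\cL$-factor of $J_{Y,Z}$ is transported past $F(X)$ through $\tau_{F(X)}$, which is precisely where $\delta^R_X$ enters. The paper phrases this by evaluating the associativity diagram on $(1\otimes a)\otimes(1\otimes b)\otimes(1\otimes c)$, which is the same bookkeeping you describe.
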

\begin{proof}
Recall that the monoidal structure on the functor $\free\colon \LMod_H \rightarrow \HC(\LMod_H, \cL)$ is given by the natural isomorphism
\begin{align*}
(\cL\otimes X) \otimes_\cL (\cL\otimes Y) &\xrightarrow{\sim} \cL\otimes X \otimes Y\\
a\otimes x \otimes b \otimes y &\mapsto ab_{(0)} \otimes S^{-1}(b_{(-1)}) \triangleright x \otimes y
\end{align*}
for any $X,Y\in \LMod_H$. So, the monoidal structure on $\cC\rightarrow \HC(\LMod_H, \cL)$ is given by
\[(\cL\otimes F(X))\otimes_{\cL} (\cL\otimes F(Y))\cong \cL\otimes F(X)\otimes F(Y)\xrightarrow{\sim} \cL\otimes F(X\otimes Y),\]
where the first isomorphism is given by the monoidal structure on $\free$ and the second isomorphism is
\[l\otimes a\otimes b\mapsto (l\otimes \id_{F(X\otimes Y)})J_{X, Y}(a\otimes b).\]
The composite is automatically a map of $\cL$-modules and the compatibility with the $H$-action is the $H$-invariance condition on $J_{X, Y}$.

The associativity condition for the monoidal structure on $F$ is that for compact projective objects $X,Y,Z\in\cC^{\cp}$ the diagram
\[
\xymatrix@C=-0.7cm{
((\cL \otimes F(X)) \otimes_\cL (\cL \otimes F(Y))) \otimes_\cL (\cL \otimes F(Z)) \ar^{\sim}[rr] \ar^{J_{X,Y}\otimes \id}[d] & & (\cL \otimes F(X)) \otimes_\cL ((\cL \otimes F(Y)) \otimes_\cL (\cL \otimes F(Z))) \ar^{\id\otimes J_{Y, Z}}[d] \\
(\cL \otimes F(X\otimes Y)) \otimes_\cL (\cL\otimes F(Z)) \ar_{J_{X\otimes Y,Z}}[dr] & & (\cL\otimes F(X)) \otimes_\cL (\cL\otimes F(Y\otimes Z)) \ar^{J_{X, Y\otimes Z}}[dl] \\
& \cL\otimes F(X\otimes Y\otimes Z)
}
\]
commutes. Considering the image of $(1\otimes a)\otimes (1\otimes b)\otimes (1\otimes c)$ under these maps we get the second equation.

The unitality condition for the monoidal structure on $F$ is equivalent to the last equation.
\end{proof}

Let us now introduce a universal version of the previous statement. Suppose $A$ is another Hopf algebra with a map of algebras $H\rightarrow A$. We assume $\cC=\LMod_A$ and the forgetful functor
\[F\colon \LMod_A\longrightarrow \LMod_H\]
is given by restriction of modules. Denote by $(A, \Delta, \varepsilon)$ the coalgebra structure on $A$.

\begin{defn}
A \defterm{dynamical twist} is an invertible element
\[J=J^0 \otimes J^1 \otimes J^2 \in \cL \otimes A \otimes A\]
satisfying 
\begin{enumerate}
\item The invariance condition 
\[ h_{(1)} \triangleright J^0 \otimes h_{(2)} J^1 \otimes h_{(3)} J^2 = J^0\otimes J^1 h_{(1)} \otimes J^2  h_{(2)}\]
for every $h\in H$;
\item The shifted cocycle equation 
\[((\id\otimes \Delta \otimes \id)J) (J \otimes 1) = ((\id \otimes \id\otimes \Delta)J)(J_{(0)}^0 \otimes S^{-1}(J_{(-1)}^0) \otimes J^1 \otimes J^2);  \]
\item The normalization condition
\[ (\id \otimes \varepsilon \otimes \id) J = 1\otimes 1\otimes 1 =  (\id \otimes \id \otimes \varepsilon) J.\]
\end{enumerate}
\end{defn}

\begin{example}
Consider the trivial pair $H=\cL=k$ given by the ground field. The invariance condition is empty, while the cocycle equation and the normalization condition imply that $J\in A\otimes A$ is a (constant) twist for the Hopf algebra in the sense of \cite[Proposition 4.2.13]{ChariPressley}.
\end{example}

\begin{example}
Suppose $\h$ is an abelian Lie algebras and consider $H=\cL=\U\h$ as in \cref{sect:classicalHC}. Then a dynamical twist is a function $J\colon \h^*\rightarrow A\otimes A$. The invariance condition is that $J(\lambda)$ is $\h$-invariant with respect to the adjoint action (the \emph{zero-weight condition}). The shifted cocycle equation is
\[((\Delta\otimes\id) J(\lambda))J_{12}(\lambda) = ((\id\otimes\Delta) J(\lambda)) J_{23}(\lambda-h^{(1)}).\]
\label{ex:additivitedynamicaltwist}
\end{example}

\begin{prop}
The data of a dynamical twist is equivalent to the data of a monoidal structure on $\LMod_A \rightarrow \HC(\LMod_H, \cL)$ with a strict unit map.
\label{prop:universaldynamicaltwistmonoidal}
\end{prop}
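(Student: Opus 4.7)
The plan is to reduce this to \cref{prop:dynamicaltwistmonoidal} specialized to $\cC = \LMod_A$, with $F \colon \LMod_A \to \LMod_H$ the restriction functor along $H\to A$. By that proposition, a monoidal structure with strict unit on the composite $\LMod_A \to \HC(\LMod_H, \cL)$ is the same data as a natural collection of $H$-invariant elements $J_{X,Y} \in \cL \otimes \Hom(F(X)\otimes F(Y), F(X\otimes Y))$ satisfying the displayed associativity and normalization conditions. So the task is to identify such a natural family with a single universal element $J \in \cL \otimes A \otimes A$ and to match the three conditions.

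First I would show the universality step: compact projectives in $\LMod_A$ are generated under colimits by free modules $A^{\oplus n}$, and $F(A) = A$ as an $H$-module. A natural-in-$(X,Y)$ assignment $X,Y \mapsto J_{X,Y}$ of $H$-linear maps $F(X)\otimes F(Y) \to F(X\otimes Y)$ is determined by its value on $(A,A)$ by the ninja Yoneda lemma (\cref{prop:ninjaYoneda}); naturality with respect to the right $A\otimes A$-action on $A\otimes A$ forces $J_{A,A}$ to be left multiplication by an element of $\cL\otimes A\otimes A$, which we denote $J = J^0\otimes J^1\otimes J^2$. For general compact projective $X,Y$, the map $J_{X,Y}$ is then the map $x\otimes y \mapsto J^0 \otimes J^1\triangleright x \otimes J^2\triangleright y$ acting via the $A$-module structures on $X,Y$.

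Next I would translate the three conditions. The invariance condition on $J_{X,Y}$ says that the map is $H$-equivariant where the $H$-action on the source $F(X)\otimes F(Y)$ uses the coproduct of $H$, the action on the target $F(X\otimes Y)$ uses the coproduct of $H$ composed with $H\to A$, and the action on $\cL$ is the given $H$-module structure; evaluating on the universal pair $(A,A)$ and absorbing the right $A\otimes A$-translation on the input into the output yields precisely
\[
h_{(1)} \triangleright J^0 \otimes h_{(2)} J^1 \otimes h_{(3)} J^2 = J^0\otimes J^1 h_{(1)} \otimes J^2 h_{(2)}.
\]
The associativity hexagon becomes the shifted cocycle equation once one unpacks the $\delta^R_X$-shift: the term $\delta^R_X(J_{Y,Z})$ in \cref{prop:dynamicaltwistmonoidal} is what converts the naive cocycle condition $((\id\otimes\Delta\otimes\id)J)(J\otimes 1) = ((\id\otimes\id\otimes\Delta)J)(1\otimes J)$ into the shifted version with $J^0_{(0)}\otimes S^{-1}(J^0_{(-1)})$ acting on the first tensor factor of the unshifted $J$ in the middle slot; this uses the explicit formula for $\tau$ from \cref{prop:YDmodules} and the description of the right coaction $\delta^R$. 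Finally, the normalization condition $J_{\bu,X} = J_{X,\bu} = 1\otimes \id$ translates directly into $(\id\otimes\varepsilon\otimes\id)J = 1\otimes 1 \otimes 1 = (\id\otimes\id\otimes\varepsilon)J$ by applying $\varepsilon$ in the relevant slot and using $F(\bu) = k$.

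The main obstacle is the bookkeeping in the second step: the appearance of $S^{-1}$ of the $H$-coaction on $\cL$ in the shifted cocycle equation has to be traced carefully through the isomorphism $\tau$ and the monoidal structure on $\free$, being attentive to whether the coaction on $\cL$ acts on $F(X)$ or on the other $J$-factor. Everything else is a direct unpacking of definitions once the universal element $J$ has been extracted.
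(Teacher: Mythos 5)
Your proposal is correct and follows essentially the same route as the paper: the paper's own proof simply invokes \cref{prop:dynamicaltwistmonoidal} and observes that by naturality the family $J_{X,Y}$ is determined by the universal element $J=J_{A,A}(1_A\otimes 1_A)\in\cL\otimes A\otimes A$. Your additional bookkeeping (Yoneda on the free module $A$, and the translation of the invariance, shifted-cocycle and normalization conditions, with the $\delta^R$-shift accounting for the $J^0_{(0)}\otimes S^{-1}(J^0_{(-1)})$ term) just fills in details the paper leaves implicit.
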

\begin{proof}
By \cref{prop:dynamicaltwistmonoidal} the monoidal structure is specified by a collection of elements 
\[J_{X, Y}\in \cL\otimes \End(X\otimes Y),\qquad X,Y\in\LMod_A.\]
By naturality these are uniquely determined by the elements
\[J = J_{A, A}(1_A\otimes 1_A)\in\cL\otimes A\otimes A.\]
\end{proof}

Two dynamical twists may be related by a gauge transformation.

\begin{defn}
A \defterm{gauge transformation} is an invertible $H$-invariant element $G\in\cL \otimes A$ satisfying the normalization condition
\[ (\id \otimes \varepsilon) (G) = 1 \otimes 1.  \]
\end{defn}

Given a dynamical twist $J$ and a gauge transformation $G$ we obtain a new dynamical twist by the formula
\begin{equation}
J^G = (\id \otimes \Delta)G\cdot J\cdot ((\delta^R\otimes\id)G)^{-1}(G \otimes 1)^{-1}.
\label{eq:dynamicalgaugetransformation}
\end{equation}

\begin{example}
Consider the pair $H=\cL=\U\h$ as in \cref{ex:additivitedynamicaltwist}. Then a gauge transformation is a zero-weight function $G\colon \h^*\rightarrow A^\times$ satisfying $\varepsilon(G(\lambda)) = 1$. Given a dynamical twist $J\colon \h^*\rightarrow A\otimes A$, its gauge transformation is
\[J^G(\lambda) = (\id\otimes \Delta)G(\lambda) \cdot J(\lambda) \cdot (G_2(\lambda-h^{(1)}))^{-1}(G_1(\lambda))^{-1}.\]
\end{example}

\begin{prop}
Suppose $J_1, J_2$ are two dynamical twists which give rise to monoidal structures on the functor $\LMod_A\rightarrow \HC(\LMod_H, \cL)$ by \cref{prop:universaldynamicaltwistmonoidal}. The data of a gauge transformation between them is a monoidal natural isomorphism
\[
\xymatrix@C=3cm{
\LMod_A \rtwocell^{J_1}_{J_2} & \HC(\LMod_H, \cL)
}
\]
\end{prop}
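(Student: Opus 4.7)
The plan is to mimic the proof of \cref{prop:universaldynamicaltwistmonoidal}: translate the data of a monoidal natural isomorphism between the two monoidal functors into an element of $\cL\otimes A$ by evaluating on the regular bimodule, and then check that the axioms for monoidal naturality recover exactly the three conditions (invertibility plus $H$-invariance, normalization, and the gauge-change identity \eqref{eq:dynamicalgaugetransformation}).

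More precisely, let $F_1,F_2\colon \LMod_A\rightarrow \HC(\LMod_H,\cL)$ be the two monoidal structures on the underlying functor $X\mapsto \cL\otimes X$. A natural isomorphism $\eta\colon F_1\Rightarrow F_2$ consists of $\cL$-linear $H$-equivariant natural isomorphisms $\eta_X\colon \cL\otimes X\rightarrow \cL\otimes X$. Since the underlying functor is $\free\circ F$, naturality in $X$ and $\cL$-linearity imply that such a collection is determined by $\eta_X(1\otimes x) = (\id\otimes\rho_X)(G\otimes x)$ for a single invertible element $G\in \cL\otimes A$, where $\rho_X$ is the action of $A$ on $X$; concretely $G = \eta_A(1\otimes 1_A)\in \cL\otimes A$, and invertibility of $G$ is equivalent to invertibility of $\eta$.

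Next, I would check the three axioms one by one. First, $H$-equivariance of $\eta_A$, together with the formula for $\tau$ on $\cL\otimes A$ from \cref{prop:YDmodules} and the definition of $\delta^R$, translates to the $H$-invariance condition on $G$. Second, compatibility with the unit isomorphisms (both $F_1,F_2$ have strict unit by assumption) says that $\eta_{\bu_\cC}$ is the identity on $\cL$, which after unwinding the $A$-action on the trivial module via $\varepsilon$ gives $(\id\otimes \varepsilon)G = 1\otimes 1$. Third, the monoidal compatibility square for $\eta$ at a pair $(X,Y)$, written via the monoidal structures on $\free$ and the elements $J_1,J_2$ as in the proof of \cref{prop:dynamicaltwistmonoidal}, becomes an identity in $\cL\otimes \End(F(X)\otimes F(Y))$. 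Evaluating at $X=Y=A$ on $(1\otimes 1_A)\otimes(1\otimes 1_A)$ and tracking where the $\delta^R$-factor arises (it comes, as in \cref{prop:dynamicaltwistmonoidal}, from moving $\cL$ past the second tensor factor using $\tau$) produces exactly the formula
\[J_2 = (\id\otimes \Delta)G\cdot J_1\cdot ((\delta^R\otimes \id)G)^{-1}(G\otimes 1)^{-1},\]
which is \eqref{eq:dynamicalgaugetransformation}.

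Conversely, given a gauge transformation $G$, the formula $\eta_X(a\otimes x) = a G^0\otimes G^1\triangleright x$ defines a natural family of $\cL$-linear isomorphisms; $H$-invariance of $G$ ensures $\eta_X$ is $H$-equivariant, and the above three conditions verified in reverse give respectively the unit and monoidal compatibility, so $\eta$ is a monoidal natural isomorphism $F_1\Rightarrow F_2$. The main (but routine) obstacle is a careful bookkeeping in the monoidal compatibility square: one has to trace the $\tau$-twist coming from the relative tensor product $\otimes_\cL$ so that the $\delta^R$-action on $G$ appears on the correct tensor factor; once this matches the appearance of $\delta^R$ in \eqref{eq:dynamicalgaugetransformation}, the equivalence is immediate.
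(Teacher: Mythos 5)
The paper states this proposition without proof, so there is no argument of its own to compare against; your proof is the evidently intended one, obtained by running the arguments of \cref{prop:dynamicaltwistmonoidal} and \cref{prop:universaldynamicaltwistmonoidal} on a natural transformation rather than on the functor itself (evaluate on the regular module to extract $G=\eta_A(1\otimes 1_A)\in\cL\otimes A$, then match $H$-equivariance, unit compatibility, and the monoidal square with the invariance, normalization, and gauge-change conditions), and it is correct. The only step needing care is the one you already flag: tracing the $\tau$-twist in $\otimes_\cL$ so that $\delta^R$ acts on the correct factor and the noncommutative factors of \eqref{eq:dynamicalgaugetransformation} come out in the right order, which is the same bookkeeping already carried out in the proof of \cref{prop:dynamicaltwistmonoidal}.
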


\subsection{Dynamical FRT and reflection equation algebras}

Let us describe the Harish-Chandra bialgebroid $B$ from \cref{HarishChandraTannakaReconstruction} explicitly. Let $\cD$ be a cp-rigid monoidal category and $F\colon \cD\rightarrow \HC$ a monoidal functor which admits a colimit-preserving right adjoint $F^\R$.

By \cref{prop:rightadjoint} the functor $FF^\R$ can be calculated as
\begin{align*}
FF^\R(x) &= \int^{y\in \cD^\cp} \Hom_{\HC}(F(y)^\vee, x) \otimes F(y)^\vee \\
&\cong \int^{y\in\cD^\cp} \Hom_{\HC}(\cL, F(y)\otimes_\cL x)\otimes F(y)^\vee.
\end{align*}
Recalling the definition of the Takeuchi product from \cref{def:HCTakeuchiProduct}, we obtain that $FF^\R(x)\cong B\times_\cL x$, where the Harish-Chandra bialgebroid $B$ is
\[B\cong \int^{y\in\cD^\cp} F(y)^\vee\boxtimes F(y)\in\HC\otimes\HC.\]

As in \cref{sect:TannakaBialgebras} denote by
\[\pi_y\colon F(y)^\vee\boxtimes F(y)\longrightarrow B\]
the natural projections. The Harish-Chandra bialgebroid structure is given on generators as follows:
\begin{enumerate}
\item The coproduct
\[B\longrightarrow B\times_{\cL} B\cong \int^{(y,z)\in \cD^\cp \times \cD^\cp} \Hom_{\HC}(\cL, F(y)\otimes_\cL F(z)^\vee)\otimes F(y)^\vee\boxtimes F(z)\]
is
\[F(y)^\vee\boxtimes F(y)\xrightarrow{\coev\otimes \id} \Hom_{\HC}(\cL, F(y)\otimes_\cL F(y)^\vee)\otimes F(y)^\vee\boxtimes F(y)\xrightarrow{\pi_{y, y}} B\times_\cL B.\]

\item The counit
\[B\longrightarrow T^\R_{\HC}(\cL) = \int^{P\in\HC^{\cp}} P^\vee\boxtimes P\]
is the projection
\[F(y)^\vee\boxtimes F(y) \rightarrow T^\R_{\HC}(\cL).\]

\item The product is the composite
\begin{align*}
(F(y)^\vee \boxtimes F(y)) \otimes_{\cL\otimes \cL} (F(z)^\vee \boxtimes F(z)) &= (F(y)^\vee\otimes_\cL  F(z)^\vee)\boxtimes (F(z)\otimes_\cL F(y)) \\
&\cong (F(z)\otimes_\cL F(y))^\vee\boxtimes (F(z)\otimes_\cL F(y)) \\
&\xrightarrow{(J^{-1}_{z,y})^\vee\boxtimes J_{z,y}} F(z\otimes y)^\vee\boxtimes F(z\otimes y) \\
&\xrightarrow{\pi_{z\otimes y}} B.
\end{align*}

\item The quantum moment map is 
\[ \cL \boxtimes \cL^{\op} \cong F(\bu) \boxtimes F(\bu) \xrightarrow{\pi_{\bu}} B. \]
\end{enumerate}

We will now concentrate on the case $\cC=\Rep(H)$ for $H$ a split torus with weight lattice $\Lambda$ and $\cL=\U\h$, so that $\HC=\HC(H)$. Moreover, we assume that the functor $F\colon \cD\rightarrow \HC(H)$ factors as the composite
\[\cD\longrightarrow \Rep(H)\xrightarrow{\free} \HC(H).\]
For an object $y\in \cD$ we denote its image in $\Rep(H)$ by the same letter. In this case the monoidal structure is given by a dynamical twist
\[J_{y, z}(\lambda)\colon \h^*\rightarrow \End(y\otimes z)\]
as in \cref{prop:dynamicaltwistmonoidal}.

The projections
\[\pi_y\colon (\U\h\otimes y^\vee)\boxtimes (\U\h\otimes y)\longrightarrow B\]
in $\HC(H\times H)$ may be encoded in elements
\[T_y\in B\otimes \End(y).\]

Analogously to \cref{thm:Bmatrixpresentation} we obtain the following explicit description of the bialgebroid $B$.

\begin{thm}
The bialgebroid $B$ is spanned, as an $\cO(\h^*)$-bimodule, by the matrix coefficients of $T_y$ for $y\in\cD^{\cp}$ subject to the relation
\[F(j)\circ T_x = T_y\circ F(j)\]
for every $j\colon x\rightarrow y$. Moreover, we have:
\begin{enumerate}
\item $\Delta(T_y) = T_y\otimes T_y$ for every $y\in\cD^{cp}$.

\item $\epsilon(T_y) = 1\otimes\id_y\in \D(H)\otimes \End(y)$ for every $y\in\cD^{\cp}$.

\item For every $f\in\cO(\h^*)$ and $y\in\cD^{\cp}$
\begin{align*}
s(f(\lambda)) T_y &= T_y s(f(\lambda + h)) \\
t(f(\lambda+h)) T_y &= T_y t(f(\lambda )).
\end{align*}

\item $J^t_{y, z}(\lambda)^{-1} T_{y\otimes z} J^s_{y, z}(\lambda) = (T_y\otimes \id)(\id\otimes T_z)$, where by the superscripts we mean the \emph{left} multiplication with the $\cO(\h^*)$-part by either the source $(s)$ or the target $(t)$ map.

\item $T_\bu\in B$ is the unit.
\end{enumerate}
\label{thm:Tpresentation}
\end{thm}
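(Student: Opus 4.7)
The plan is to mirror the proof of \cref{thm:Bmatrixpresentation} in the Harish-Chandra setting, using the explicit formulas for the Harish-Chandra bialgebroid structure listed in the bullet points immediately preceding the statement. Concretely, the coend
\[B\cong \int^{y\in \cD^{\cp}} F(y)^\vee\boxtimes F(y)\in\HC(H)\otimes\HC(H)\]
is by definition a coequalizer. Since $F$ factors through $\free\colon \Rep(H)\to \HC(H)$, the summands $F(y)^\vee\boxtimes F(y)$ are free $(\U\h,\U\h)$-bimodules, so $B$ is generated as an $\cO(\h^*)$-bimodule by the matrix coefficients of $\pi_y$, i.e.\ by those of $T_y\in B\otimes \End(y)$, modulo the naturality relation coming from morphisms $j\colon x\to y$ in $\cD^{\cp}$. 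That naturality is exactly $F(j)\circ T_x=T_y\circ F(j)$; this gives the generator-and-relation description and in particular item (5), since $T_\bu$ is the image of the unit $F(\bu)^\vee\boxtimes F(\bu)\cong \cL\boxtimes \cL^{\op}\to B$ of the bialgebroid.

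Items (1), (2) and (3) are obtained by evaluating the coproduct, counit and quantum moment map formulas, recalled in bullet form immediately before the theorem, on the generator $\pi_y$. For the coproduct, the map $\coev\otimes\id$ produces $\id_{F(y)}\in \Hom(\cL, F(y)\otimes_\cL F(y)^\vee)\otimes F(y)^\vee\boxtimes F(y)$, whose image under $\pi_{y,y}$ yields $\Delta(T_y)=T_y\otimes T_y$. The counit is by construction the projection to $T^\R_{\HC}(\cL)=\cD$, which in the abelian case is $\D(H)$, and sends $\pi_y$ to $1\otimes \id_y$. The commutation relation in (3) is the translation of the quantum moment map equation \eqref{eq:quantummomentmap} for the algebra $B\in\HC(H)\otimes \HC(H)^{\mop}$: the braiding $\tau_V$ in $\HC(H)$ is given (see \cref{sect:classicalHC}) by $v\otimes f(\lambda)\mapsto f(\lambda-h)\otimes v$, and specializing to the generators $T_y$ of weight $h$ produces the two displayed shifts, one for the source map $s$ and one for the target map $t$.

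Item (4) is the most delicate, as it requires unwinding the general product formula of the Harish-Chandra bialgebroid (third bullet above the theorem) in the presence of the dynamical twist $J$ specifying the monoidal structure on $F\colon \cD\to \HC(H)$. Starting from
\[(F(y)^\vee\boxtimes F(y))\otimes_{\cL\boxtimes \cL}(F(z)^\vee\boxtimes F(z))\cong (F(z)\otimes_\cL F(y))^\vee\boxtimes (F(z)\otimes_\cL F(y)),\]
one applies $(J_{z,y}^{-1})^\vee\boxtimes J_{z,y}$ and then $\pi_{z\otimes y}$. The first tensor factor of the dynamical twist $J_{z,y}(\lambda)\in \cL\otimes \End(z\otimes y)$ multiplies $B$ on the left, and on the two sides of the $\boxtimes$ this left multiplication is precisely multiplication by $s$ respectively $t$ from the \cref{ex:hbiaglebroids} description of the Takeuchi product. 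Tracing through yields the asserted identity $J^t_{y,z}(\lambda)^{-1}\, T_{y\otimes z}\, J^s_{y,z}(\lambda)=(T_y\otimes \id)(\id\otimes T_z)$.

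The main obstacle is precisely this last step: bookkeeping the left/right $\cO(\h^*)$-actions that distinguish $J^s$ from $J^t$, and checking that the two appearances of the twist land on the correct sides of the relative tensor product. Once the identification $B\cong \int^y F(y)^\vee\boxtimes F(y)$ is made and the formulas for the bialgebroid structure maps are in hand, all items reduce to rewriting morphisms between free $\U\h$-bimodules into matrix-coefficient form, with the dynamical twist appearing solely in the product.
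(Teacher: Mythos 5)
Your proposal follows exactly the route the paper takes: the paper gives no separate proof of \cref{thm:Tpresentation}, stating only that it is obtained ``analogously to \cref{thm:Bmatrixpresentation}'' by reading off the explicit generator-level formulas for the coproduct, counit, product and quantum moment map listed just before the statement, which is precisely what you do (including correctly isolating the only genuinely new point, namely that the $\cO(\h^*)$-component of the dynamical twist acts through $s$ on one side of the Takeuchi product and through $t$ on the other). Your write-up is a correct and slightly more detailed version of the paper's intended argument.
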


\begin{defn}
Suppose $\cD$ is a braided monoidal category together with a forgetful functor $\cD\rightarrow \Rep(H)$ and a monoidal structure on the composite $\cD\rightarrow \Rep(H)\xrightarrow{\free} \HC(H)$. For $x,y\in\cD$ define the morphism $\U\h\otimes x\otimes y\rightarrow \U\h\otimes y\otimes x$ by
\[\check{R}_{x,y} \colon F(x) \otimes_{\U\h} F(y) \xrightarrow{J_{x,y}} F(x\otimes y) \xrightarrow{F(\sigma_{x,y})} F(y\otimes x) \xrightarrow{J_{x,y}^{-1}} F(y) \otimes_{\U\h} F(x).\]
The \defterm{dynamical $R$-matrix} is the map $R_{x, y}\colon \h^*\rightarrow \End(x\otimes y)$ given by
\[R_{x, y} = (\id_{\U\h}\otimes \sigma_{x, y}^{-1})\circ \check{R}_{x, y}.\]
\end{defn}

As in \cref{sect:TannakaBialgebras}, we use the standard notation $T_1 = T_x \otimes \id$ and $T_2=\id\otimes T_y$ and similarly for the $R$-matrix.

\begin{prop}
Let $x,y,z\in\cD^{\cp}$.
\begin{enumerate}
\item The dynamical $R$-matrix satisfies the dynamical Yang-Baxter equation
\[ R_{23}(\lambda)R_{13}(\lambda-h^{(2)}) R_{12}(\lambda) = R_{12}(\lambda-h^{(3)}) R_{13}(\lambda) R_{23}(\lambda-h^{(1)})\]
in $\End(x\otimes y\otimes z)$.
\item The element $T$ satisfies the dynamical FRT relation
\[ R^t(\lambda) T_1 T_2 = T_2 T_1 R^s(\lambda)\]
in $B\otimes \End(x\otimes y)$.
\end{enumerate}
\label{prop:dynamicalFRT}
\end{prop}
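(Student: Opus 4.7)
The plan is to derive both statements in direct analogy with the non-dynamical case (\cref{prop:YBRTTequations}), replacing constant $J$'s and $R$'s by their dynamical versions and tracking the shifts that arise from the commutation isomorphism $\tau$ of \eqref{eq:classicalfieldgoal}.

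For part (1), I would start from the braid equation
\[(F(\sigma_{y,z})\otimes \id)(\id\otimes F(\sigma_{x,z}))(F(\sigma_{x,y})\otimes \id) = (\id\otimes F(\sigma_{x,y}))(F(\sigma_{x,z})\otimes\id)(\id\otimes F(\sigma_{y,z}))\]
in $\HC(H)$, which holds because $\cD$ is braided monoidal and $F$ is a functor. Using the definition of $\check R$, each occurrence of $F(\sigma_{x,y})$ is replaced by $J_{y,x}\circ \check R_{x,y}\circ J_{x,y}^{-1}$. The intermediate $J$'s pair up and cancel, using precisely the shifted cocycle axiom for a dynamical twist (\cref{prop:dynamicaltwistmonoidal} and \cref{prop:universaldynamicaltwistmonoidal}); this produces a "braid equation" for $\check R$ in $\HC(H)$. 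Converting from $\check R_{x,y}$ to $R_{x,y}=(\id_{\U\h}\otimes\sigma^{-1}_{x,y})\check R_{x,y}$ amounts to undoing the swap on the underlying representations, and the dynamical shifts $\lambda - h^{(i)}$ appear at exactly this step: a factor $\id\otimes R_{y,z}$ (resp.\ $R_{y,z}\otimes\id$) acting on $F(x)\otimes_{\U\h} F(y)\otimes_{\U\h} F(z)$ has to move its $\cO(\h^*)$-part past the weight-graded representation $x$ (resp.\ $z$), and $\tau$ shifts the argument $\lambda$ by $-h^{(1)}$ (resp.\ is unaffected). Collecting the shifts from all three factors yields the dynamical Yang--Baxter equation as stated.

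For part (2), I would begin from the explicit defining relation of \cref{thm:Tpresentation} applied to $j=\sigma_{x,y}\colon x\otimes y\to y\otimes x$:
\[F(\sigma_{x,y})\circ T_{x\otimes y} = T_{y\otimes x}\circ F(\sigma_{x,y}).\]
Item (4) of \cref{thm:Tpresentation} lets one write
\[T_{x\otimes y} = J^t_{x,y}(\lambda)\,(T_x\otimes\id)(\id\otimes T_y)\, J^s_{x,y}(\lambda)^{-1}\]
and similarly for $T_{y\otimes x}$. Substituting these expressions and the identity $F(\sigma_{x,y}) = J_{y,x}\circ\check R_{x,y}\circ J_{x,y}^{-1}$ into the naturality equation, the external $J_{x,y}$'s cancel on the right and the $J_{y,x}$'s cancel on the left. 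What remains on either side is $\check R_{x,y}$ (converted to $R$ by a further swap) multiplied by $(T_x\otimes\id)(\id\otimes T_y)$, but with the crucial distinction that on the left-hand side the surviving $\cO(\h^*)$-part of $R$ multiplies the $T$'s through the target moment map $t$, while on the right-hand side it multiplies through the source map $s$. This is exactly the dynamical FRT relation $R^t(\lambda)\,T_1 T_2 = T_2 T_1\, R^s(\lambda)$.

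The main obstacle is purely bookkeeping: keeping track of on which side and with which spectral parameter each $\U\h$-factor lives, since the $\tau$'s of \eqref{eq:classicalfieldgoal} repeatedly shift $\lambda$ by the weights of the representations they traverse. Conceptually nothing new happens beyond the non-dynamical proof of \cref{prop:YBRTTequations}, but the symbolic manipulations should be carried out by working first in the internal graphical/string-diagram calculus of $\HC(H)$ (treating $\U\h$ as a strand that threads through the diagram and records the shifts) and only then translating back to elementwise formulas; this keeps the combinatorics of the shifts under control.
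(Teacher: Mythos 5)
Your proposal is correct and follows essentially the same route as the paper: part (1) is obtained by transporting the braid relation for the braiding of $\cD$ through the monoidal functor to get the braid relation for $\check{R}$ and then converting $\check{R}=\sigma\circ R$ while tracking the $\lambda-h^{(i)}$ shifts, and part (2) is obtained from the naturality relation $F(\sigma_{x,y})T_{x\otimes y}=T_{y\otimes x}F(\sigma_{x,y})$ combined with the decomposition of $T_{x\otimes y}$ from item (4) of \cref{thm:Tpresentation}. The paper's proof is terser but contains no additional ideas beyond what you describe.
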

\begin{proof}
As in the proof of \cref{prop:YBRTTequations}, the element $\hat{R}$ satisfies the braid relation
\[\check{R}_{12} \check{R}_{23}  \check{R}_{12} = \check{R}_{23} \check{R}_{12} \check{R}_{23}\]
in $\U\h\otimes\End(x\otimes y\otimes z)$. Observing that $\check{R} = \sigma \circ R$, we get the dynamical Yang--Baxter equation.

To show the second part, recall from \cref{thm:Tpresentation} that
\[F(\sigma_{x,y}) T_{x\otimes y} = T_{y\otimes x} F(\sigma_{x,y}).\]
Decomposing $T_{x\otimes y}$ and $T_{y\otimes x}$ into $T_x$ and $T_y$ using property $(4)$ of the same theorem, we get the result.
\end{proof}

\section{Fusion of Verma modules}

\label{sect:fusion}

In this section we construct standard dynamical twists for $\U\g$ and $\U_q(\g)$ using the so-called exchange construction introduced in \cite{EtingofVarchenkoExchange}.

\subsection{Classical parabolic restriction}
\label{sect:classicalres}

Let $G$ be a reductive group over an algebraically closed field $k$ of characteristic zero and $\g$ its Lie algebra. Fix a Borel subgroup $B\subset G$ and denote $H = B / [B, B]$; their Lie algebras are denoted by $\b$ and $\h$. We denote by $N$ the kernel of $B\rightarrow H$ with Lie algebra $\n$. Let $W$ be the Weyl group.

We will later use the Harish-Chandra isomorphism, see \cite[Theorem 1.10]{HumphreysBGG}.

\begin{thm}
There is a unique homomorphism of algebras
\[\hc\colon \rZ(\U\g)\longrightarrow \U\h,\]
the \defterm{Harish-Chandra homomorphism}, such that for any $z\in\rZ(\U\g)$ and $m\in M^{\univ}$ we have
\[z m = m \hc(z).\]
\label{thm:HChomomorphism}
\end{thm}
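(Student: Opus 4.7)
The plan is to read off $\hc(z)$ from the action of $z$ on the canonical generator $v = 1 \otimes 1 \in M^{\univ}$, and then use centrality plus cyclicity to extend the identity $zv = v \hc(z)$ to all of $M^{\univ}$.

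First I would fix a PBW decomposition $\U\g \cong \U\n_- \otimes_k \U\h \otimes_k \U\n$ and use the identification $M^{\univ} \cong \U\n_- \otimes_k \U\h$, under which $v = 1 \otimes 1$ and $M^{\univ}$ is free of rank one as a right $\U\h$-module via the embedding $\U\h \hookrightarrow M^{\univ}$, $h \mapsto v \cdot h = 1 \otimes h$. The two key facts about $v$ are: $n \cdot v = 0$ for all $n \in \n$ (since $\n \subset \b$ maps to zero in $\U\h = \U\b / \U\b \cdot \n$), and $h \cdot v = v \cdot h$ for all $h \in \h$ (since $\h \subset \b$).

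Next I would prove existence. Given $z \in \rZ(\U\g)$, expand $z = \sum_{i,j,k} c_{ijk}\, f^{(i)} h^{(j)} e^{(k)}$ in the PBW basis, with $f^{(i)}$ a monomial in $\U\n_-$, $h^{(j)}$ in $\U\h$, $e^{(k)}$ in $\U\n$. Applying to $v$, all terms with $e^{(k)} \neq 1$ vanish by $\n \cdot v = 0$. Since $z$ is $\h$-invariant under the adjoint action, every monomial appearing in $z$ has $\h$-weight zero; among the surviving terms the weight equals the weight of $f^{(i)}$, so only $f^{(i)} = 1$ contributes. Hence $z \cdot v = \sum_j c_{0j0}\, v \cdot h^{(j)} = v \cdot \hc(z)$ where
\[\hc(z) := \sum_j c_{0j0}\, h^{(j)} \in \U\h.\]
This is the Harish-Chandra projection of $z$ on the $\U\h$-summand of $\U\g = \U\h \oplus (\n_- \U\g + \U\g \n)$. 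For general $m \in M^{\univ}$ use cyclicity: write $m = y \cdot v$ with $y \in \U\g$, then centrality gives $zm = zyv = yzv = y v \hc(z) = m \hc(z)$.

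Uniqueness is immediate because $v$ generates a free right $\U\h$-submodule of $M^{\univ}$: if $v \cdot a = v \cdot a'$ in $M^{\univ}$ for $a, a' \in \U\h$, then $a = a'$. The algebra homomorphism property follows formally from the defining relation:
\[v \cdot \hc(z_1 z_2) = (z_1 z_2) v = z_1 (v \hc(z_2)) = (z_1 v) \hc(z_2) = v \hc(z_1) \hc(z_2),\]
and uniqueness gives $\hc(z_1 z_2) = \hc(z_1)\hc(z_2)$; unitality is clear. The only delicate step is the weight argument that forces $f^{(i)} = 1$ among the surviving PBW terms, but this is a direct consequence of $[\h, z] = 0$ and the triangular decomposition, so no real obstacle arises.
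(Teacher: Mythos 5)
Your argument is correct and complete. The paper does not prove this statement itself --- it simply cites \cite[Theorem 1.10]{HumphreysBGG} --- and what you have written is precisely the standard argument found there: the triangular decomposition $\U\g\cong \U\n_-\otimes\U\h\otimes\U\n$ kills the terms with a nontrivial $\U\n$-factor on the cyclic vector $v=1\otimes 1$, the $\ad\h$-invariance of $z$ forces the surviving terms to have trivial $\U\n_-$-factor, cyclicity of $M^{\univ}$ over $\U\g$ propagates the identity $zv=v\,\hc(z)$ to all of $M^{\univ}$, and freeness of the right $\U\h$-submodule generated by $v$ gives uniqueness and the multiplicativity of $\hc$. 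One wording slip: $M^{\univ}\cong\U\n_-\otimes_k\U\h$ is free but of infinite rank as a right $\U\h$-module; what you need (and later correctly use) is only that $v$ generates a \emph{free rank-one} right $\U\h$-submodule, so nothing is affected.
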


\begin{defn}
The \defterm{universal category $\cO$} is the category $\cO^{\univ}$ of $(\U\g, \U\h)$-bimodules whose diagonal $\b$-action integrates to a $B$-action. The \defterm{universal Verma module} is
\[M^{\univ} = \U\g\otimes_{\U\b} \U\h \in \cO^{\univ}.\]
\end{defn}

\begin{remark}
Just like the usual category $\cO$ is constructed to contain objects like Verma modules, we define $\cO^{\univ}$ to contain objects like universal Verma modules.
\end{remark}

\begin{remark}
We may identify $\cO^{\univ}$ with the category of $\U\g$-modules in the category $\Rep(H)$ whose $\n$-action is locally nilpotent.
\label{rmk:Ounivhighestweight}
\end{remark}

We will now define an important bimodule structure on $\cO^{\univ}$:
\begin{equation}
\HC(G)\curvearrowright \cO^{\univ} \curvearrowleft \HC(H).
\label{eq:classicalcorrespondence}
\end{equation}
Both actions are given by the relative tensor products of bimodules. Given a $\U\g$-bimodule $X\in\HC(G)$ and a $(\U\g, \U\h)$-bimodule $M\in\cO^{\univ}$, $X\otimes_{\U\g} M$ is an $(\U\g, \U\h)$-bimodule. Since the diagonal $\g$-action on $X$ is integrable, so is the diagonal $\b$-action. Therefore, the diagonal $\b$-action on $X\otimes_{\U\g} M$ is integrable. The $\HC(H)$ action is defined similarly.

Let
\[\act_G\colon \HC(G)\longrightarrow \cO^{\univ},\qquad \act_H\colon \HC(H)\longrightarrow \cO^{\univ}\]
be the actions of $\HC(G)$ and $\HC(H)$ on the universal Verma module $M^{\univ}\in \cO^{\univ}$. Using \cref{prop:laxmonoidalactionadjoint} we obtain the following lax monoidal functors.

\begin{defn}
The \defterm{parabolic restriction} is the lax monoidal functor
\[\res= \act^\R_H\circ \act_G\colon \HC(G)\longrightarrow \HC(H).\]
The \defterm{parabolic induction} is the lax monoidal functor
\[\ind = \act^\R_G\circ \act_H\colon \HC(H)\longrightarrow \HC(G).\]
\end{defn}

Let us now make these functors more explicit. Consider the functor
\[(-)^N\colon \cO^{\univ}\longrightarrow \HC(H)\]
which sends a $(\U\g, \U\h)$-bimodule to the subspace of highest weight vectors with respect to the $\U\g$-action. It still has a remaining $\U\h$-bimodule structure and so it defines an object of $\HC(H)$.

\begin{prop}
The functor $(-)^N\colon \cO^{\univ}\rightarrow \HC(H)$ is right adjoint to $\act_H\colon \HC(H)\rightarrow \cO^{\univ}$.
\label{prop:resexplicit}
\end{prop}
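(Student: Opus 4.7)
The plan is to combine the tensor-hom adjunction for induced modules with the standard identification of $\n$-invariants as the right adjoint to restriction along $\U\b\twoheadrightarrow\U\h$.

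First I would simplify the source: by associativity of the relative tensor product,
\[\act_H(Y) = M^{\univ}\otimes_{\U\h} Y = (\U\g\otimes_{\U\b}\U\h)\otimes_{\U\h} Y \cong \U\g\otimes_{\U\b} Y,\]
where $Y$ is regarded as a left $\U\b$-module via the surjection $\U\b\twoheadrightarrow\U\h$; equivalently, $\n$ acts trivially on $Y$ and $\h\subset\b$ acts via the left $\U\h$-action coming from the $\U\h$-bimodule structure on $Y\in\HC(H)$.

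Next I would apply the standard tensor-hom adjunction for induced modules. For $M\in\cO^{\univ}$, viewed as a $(\U\b,\U\h)$-bimodule by restriction along $\U\b\hookrightarrow\U\g$, one obtains a natural isomorphism
\[\Hom_{(\U\g,\U\h)\text{-bimod}}(\U\g\otimes_{\U\b} Y, M)\cong \Hom_{(\U\b,\U\h)\text{-bimod}}(Y, M).\]
Since $\n$ acts trivially on $Y$, any $(\U\b,\U\h)$-bilinear map $f\colon Y\to M$ satisfies $\n\cdot f(Y) = f(\n\cdot Y) = 0$, so it factors through $M^{\n}\subset M$. Because $N$ is unipotent and $M$ has an integrable diagonal $B$-action, we have $M^{\n}=M^N$. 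Conversely, every $(\U\h,\U\h)$-bilinear map $Y\to M^N$ extends uniquely to a $(\U\b,\U\h)$-bilinear map $Y\to M$ by letting $\n$ act trivially on the source. This gives the desired isomorphism
\[\Hom_{\cO^{\univ}}(\act_H(Y), M)\cong \Hom_{\HC(H)}(Y, M^N),\]
natural in $Y$ and $M$.

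The only step requiring more than formal manipulation is checking that integrability conditions line up on both sides. On the source, $\U\g\otimes_{\U\b} Y$ lies in $\cO^{\univ}$ because its diagonal $B$-action is induced from the $H$-integrable $\U\b$-module $Y$. On the target, $M^N$ inherits an integrable diagonal $H$-action from $M$ since $H\subset B$ preserves $M^N$, so $M^N\in\HC(H)$. Finally, an $\h$-equivariant map between two $H$-bimodules with integrable diagonal action is automatically $H$-equivariant, so the intermediate Hom set of $(\U\b,\U\h)$-bilinear maps produced by tensor-hom adjunction already enforces the correct integrability. The main (very minor) obstacle will just be being careful about which $\U\h$-action on $Y$ is being used in the identifications above.
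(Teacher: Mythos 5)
Your proof is correct and follows essentially the same route as the paper: rewrite $\act_H(Y)=M^{\univ}\otimes_{\U\h}Y\cong\U\g\otimes_{\U\b}Y$, apply the tensor--hom adjunction to land in $(\U\b,\U\h)$-bilinear maps, and observe that triviality of the $\n$-action on the source forces such maps to factor through $M^{N}$. The extra checks you include (integrability on both sides, and that $\h$-equivariance upgrades to $H$-equivariance) are exactly the details the paper leaves implicit.
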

\begin{proof}
Identify $\cO^{\univ}$ with highest-weight $\U\g$-modules in the category $\Rep(H)$ following \cref{rmk:Ounivhighestweight}. For $M\in\cO^{\univ}$ and $X\in\HC(H)$ we have
\begin{align*}
\Hom_{\cO^{\univ}}(\act_H(X), M) &= \Hom_{\cO^{\univ}}(\U\g\otimes_{\U\b} X, M) \\
&\cong \Hom_{\BiMod{\U\b}{\U\h}}(X, M) \\
&\cong \Hom_{\HC(H)}(X, M^N).
\end{align*}
\end{proof}

So,
\[\res(X)\cong (X/X\n)^N.\]
The lax monoidal structure on $\res$ can be described explicitly as follows. For $X,Y\in \HC(G)$ the morphism
\begin{equation}
(X/X\n)^N \otimes_{\U\h} (Y/Y/\n)^N \longrightarrow (X\otimes_{\U\g} Y/(X\otimes_{\U\g} Y)\n)^N
\label{eq:resmonoidal}
\end{equation}
is given by $[x] \otimes [y] \mapsto [x \otimes y]$. This assignment is independent of the choice of a representative of $[x]$ since $[y]$ is $N$-invariant.

\begin{remark}
Since $\res\colon \HC(G)\rightarrow \HC(H)$ is lax monoidal, it sends algebras in $\HC(G)$ to algebras in $\HC(H)$. By \cref{prop:HCalgebras}, an algebra in $\HC(G)$ is a $G$-algebra equipped with a quantum moment map $\mu\colon \U\g\rightarrow A$. It is easy to see that $\res(A)$ is the quantum Hamiltonian reduction $A\ham N$. This algebra is known as the Mickelsson algebra \cite{Mickelsson}, we refer to \cite{ZhelobenkoSalgebras} for more details.
\label{rmk:Mickelsson}
\end{remark}

Recall that the coinduction functor
\[\coind_B^G\colon \Rep(B)\longrightarrow \Rep(G)\]
is right adjoint to the obvious restriction functor $\Rep(G)\rightarrow \Rep(B)$. Denote in the same way the functor
\[\coind_B^G\colon \cO^{\univ}\longrightarrow \HC(G)\]
of coinduction from $B$ to $G$ using the diagonal $B$-action.

\begin{prop}
The functor $\coind_B^G\colon \cO^{\univ}\rightarrow \HC(G)$ is right adjoint to $\act_G\colon \HC(G)\rightarrow \cO^{\univ}$.
\label{prop:indexplicit}
\end{prop}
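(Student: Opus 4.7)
The plan is to mirror the proof of \cref{prop:resexplicit}: compute both sides of the proposed adjunction as the same set of maps $\phi\colon X\to M$ subject to equivariance conditions, and identify them. The key reduction to exploit is the identification $M^{\univ} = \U\g\otimes_{\U\b}\U\h$, which gives
\[\act_G(X) = X\otimes_{\U\g}M^{\univ} \cong X\otimes_{\U\b}\U\h,\]
where $\U\b$ acts on $X$ on the right via restriction of the right $\U\g$-action.

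First I would analyse the left-hand side. A morphism $f\colon X\otimes_{\U\b}\U\h\to M$ in $\cO^{\univ}$ is determined by $\phi(x) := f(x\otimes 1)$. Unwinding left $\U\g$-linearity, right $\U\h$-linearity, $\U\b$-balancedness and diagonal $B$-equivariance, one obtains three conditions on $\phi$: (a) it is left $\U\g$-linear; (b) it is right $\U\b$-equivariant, where $\U\b$ acts on $M$ via the projection $\pi\colon\U\b\to\U\h$ composed with the right $\U\h$-action; (c) it intertwines the diagonal $B$-actions on $X$ and $M$. The only slightly nontrivial point is (c): the diagonal $\b$-action on $x\otimes 1$ in $X\otimes_{\U\b}\U\h$ is
\[\xi\cdot(x\otimes 1) - (x\otimes 1)\cdot\pi(\xi) = \xi x\otimes 1 - x\otimes\pi(\xi) = (\xi x - x\xi)\otimes 1,\]
using $\U\b$-balancedness, so it agrees with the diagonal $\b$-action on $X$ under $x\mapsto x\otimes 1$, which makes (c) equivalent to $B$-equivariance of $\phi$ itself.

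For the right-hand side, view $M$ as a $(\U\g,\U\b)$-bimodule with integrable diagonal $B$-action by extending its right $\U\h$-action along $\pi$. Let $\cC_B$ denote the category of such bimodules. There is a restriction functor $\HC(G)\to\cC_B$ obtained by restricting the right $\U\g$-action to $\U\b$ and the diagonal $G$-action to $B$, and by construction (coinduction for the diagonal $B$-action) the functor $\coind_B^G\colon\cO^{\univ}\to\HC(G)$ is the composition of the fully faithful embedding $\cO^{\univ}\hookrightarrow\cC_B$ with the right adjoint of this restriction (which exists by standard Frobenius reciprocity for $B\subset G$). Therefore $\Hom_{\HC(G)}(X,\coind_B^G M)$ equals the set of $\cC_B$-morphisms from the restriction of $X$ to $M$, which is exactly the set of $\phi$ characterized by (a)--(c) above. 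Comparing yields the adjunction.

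The main obstacle I anticipate is bookkeeping: one must carefully verify that the diagonal $B$-action on the tensor product $X\otimes_{\U\b}\U\h$ restricts to the diagonal $B$-action on $X$ along $x\mapsto x\otimes 1$, and that extending the right $\U\h$-structure on $M$ to a right $\U\b$-structure via $\pi$ is compatible with the integrable diagonal $B$-action so that $M$ really defines an object of $\cC_B$. These are both routine but require writing out the definitions of the actions explicitly, as sketched in (c) above.
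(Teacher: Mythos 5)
Your proposal is correct and follows essentially the same route as the paper's proof: identify $\act_G(X)\cong X\otimes_{\U\b}\U\h$, use the extension/restriction adjunction to reduce to morphisms of $(\U\g,\U\b)$-bimodules with integrable diagonal $B$-action (your $\cC_B$ is the paper's $\LMod_{\U\g}(\Rep B)$), and then invoke that $\coind_B^G$ is right adjoint to the forgetful functor from $\Rep G$ to $\Rep B$. You simply spell out the equivariance bookkeeping in more detail than the paper does.
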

\begin{proof}
For $M\in\cO^{\univ}$ and $X\in\HC(G)$ we have
\begin{align*}
\Hom_{\cO^{\univ}}(\act_G(X), M) &= \Hom_{\cO^{\univ}}(X\otimes_{\U\b} \U\h, M) \\
&\cong \Hom_{\BiMod{\U\g}{\U\b}}(X, M).
\end{align*}
Both $X$ and $M$ are $(\U\g, \U\b)$-bimodules whose diagonal $\b$-action integrates to a $B$-action, i.e. they are objects of $\LMod_{\U\g}(\Rep B)$. Moreover, $X$ lies in the image of the forgetful functor
\[\HC(G)=\LMod_{\U\g}(\Rep G)\rightarrow \LMod_{\U\g}(\Rep B).\]
But by definition $\coind_B^G$ is the right adjoint to the forgetful functor $\Rep G\rightarrow \Rep B$.
\end{proof}

Let us now compute the values of $\res$ and $\ind$ on the units.

\begin{prop}
The natural morphism $\U\h\rightarrow \res(\U\g)$ is an isomorphism.
\label{prop:classicalresunit}
\end{prop}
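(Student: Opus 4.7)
The plan is to identify the natural map $\U\h\to\res(\U\g)$ as the inclusion $h\mapsto 1\otimes h$ of the ``top weight space'' into the universal Verma module $M^{\univ}=\U\g\otimes_{\U\b}\U\h$, and then to show this inclusion exhausts $(M^{\univ})^N$. Unwinding the identifications, $\res(\U\g)=\act_H^\R(\act_G(\U\g))=\act_H^\R(M^{\univ})=(M^{\univ})^N$ by \cref{prop:resexplicit}, and the lax monoidal unit of \cref{prop:laxmonoidalactionadjoint} is the unit of the adjunction $\act_H\dashv\act_H^\R$ at $\U\h\in\HC(H)$; unwinding the adjunction, this is explicitly the map $h\mapsto 1\otimes h$.

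First I would verify that this map is well-defined and injective. The PBW theorem gives an isomorphism of right $\U\h$-modules $M^{\univ}\cong\U\n_-\otimes_k\U\h$, making $M^{\univ}$ a free right $\U\h$-module and realising $1\otimes\U\h$ as the rank-one summand corresponding to the unit of $\U\n_-$. The diagonal $H$-weight grading then takes the form $M^{\univ}_\mu\cong(\U\n_-)_\mu\otimes\U\h$ for $\mu\leq 0$, with $M^{\univ}_0=1\otimes\U\h$; since the $\n$-action shifts weights strictly by positive roots and $M^{\univ}$ has no strictly positive weights, the image $1\otimes\U\h$ is annihilated by $\n$, so the map lands in $(M^{\univ})^N$. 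Injectivity is then immediate from the PBW decomposition.

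For surjectivity I would pass to the generic fibre by localising along the right $\U\h$-action. Since $M^{\univ}$ is $\U\h$-flat, the formation of $\n$-invariants commutes with the localization $(-)\otimes_{\U\h}\Frac(\U\h)$, so $(M^{\univ})^N\otimes_{\U\h}\Frac(\U\h)$ is identified with the $\n$-invariants of the generic Verma module $\tilde{M}:=M^{\univ}\otimes_{\U\h}\Frac(\U\h)$. Irreducibility of $\tilde{M}$ (a classical fact, equivalent to nonvanishing of the Shapovalov determinants at the generic point of $\h^*$) gives $\tilde{M}^N\cong\Frac(\U\h)\cdot(1\otimes 1)$, a free module of $\Frac(\U\h)$-rank one. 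Thus $(M^{\univ})^N$ has generic $\U\h$-rank exactly one, so the quotient $(M^{\univ})^N/\U\h$ is $\U\h$-torsion; but it embeds into the torsion-free quotient $M^{\univ}/\U\h\cong(\U\n_-)^{>0}\otimes\U\h$, so it must vanish.

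The main obstacle is the irreducibility statement for the generic Verma module $\tilde{M}$. This is classical, but to stay self-contained within the paper it can be deduced from the extremal projector (\cref{thm:extremalprojector}), which splits the projection $\U\g\to M^{\univ}$ for generic $\h$-weights and manifestly projects $\tilde{M}$ onto its one-dimensional space of highest weight vectors. Once this input is in place, the remaining torsion/rank manipulation is formal.
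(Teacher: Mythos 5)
Your proof is correct and takes essentially the same route as the paper: both identify $\res(\U\g)$ with $(M^{\univ})^N$, base-change along $\U\h\to\Frac(\U\h)$, and invoke irreducibility of the generic Verma module to see that the invariants become one-dimensional over $\Frac(\U\h)$. The paper is terser --- it only uses left-exactness of $(-)^N$ to inject $(M^{\univ})^N$ into $(M^{\univ,\gen})^N\cong\Frac(\U\h)$ and cites \cite[Theorem 4.4]{HumphreysBGG} for irreducibility --- whereas you spell out the final descent step via the torsion/torsion-free argument and note that the extremal projector could substitute for the irreducibility input; these are elaborations of the same argument rather than a different one.
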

\begin{proof}
By \cref{prop:resexplicit} $\res(\U\g) \cong (M^{\univ})^N$ and we have to show that
\[\U\h\longrightarrow (M^{\univ})^N\]
is an isomorphism. Let
\[M^{\univ, \gen} = \U\g\otimes_{\U\b}\Frac(\U\h),\]
where $\Frac(\U\h)$ is the fraction field of $\U\h$. The map $M^{\univ}\rightarrow M^{\univ, \gen}$ is injective and $(-)^N$ is left exact, so $(M^{\univ})^N\longrightarrow (M^{\univ, \gen})^N$ is injective. But the Verma module for generic highest weights is irreducible (see \cite[Theorem 4.4]{HumphreysBGG}), so
\[\Frac(\U\h)\longrightarrow (M^{\univ, \gen})^N\]
is an isomorphism. This implies the claim.
\end{proof}

\begin{cor}
The induced map
\[\res\colon \rZ(\U\g)=\End_{\HC(G)}(\U\g)\longrightarrow \U\h=\End_{\HC(H)}(\U\h)\]
coincides with the Harish-Chandra homomorphism $\hc\colon \rZ(\U\g)\rightarrow \U\h$.
\end{cor}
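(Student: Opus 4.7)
The plan is to trace the definitions on both sides and check directly that the abstractly-defined map $\res\colon \End_{\HC(G)}(\U\g)\to \End_{\HC(H)}(\res(\U\g))\cong \End_{\HC(H)}(\U\h)$ recovers $\hc$.

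First, I will spell out the two identifications of endomorphism rings. For the unit $\U\g \in \HC(G)$, left multiplication defines a map $L\colon \rZ(\U\g)\to \End_{\HC(G)}(\U\g)$, $z\mapsto L_z$; this is well-defined because $z$ is central (so $L_z$ commutes with right multiplication) and $G$-equivariant (since $z$ is $\ad$-invariant), and it is an isomorphism because any bimodule endomorphism is determined by the image of $1$, which must be central. Analogously, since $H$ is abelian, $\U\h\xrightarrow{\sim} \End_{\HC(H)}(\U\h)$ by left multiplication. Under these identifications, the statement reduces to verifying that $\res(L_z) = L_{\hc(z)}$ for every $z \in \rZ(\U\g)$.

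Second, I will use the explicit formula from \cref{prop:resexplicit}: $\res(\U\g) = (\U\g/\U\g\n)^N = (M^{\univ})^N$, where the isomorphism $\U\g/\U\g\n \cong M^{\univ} = \U\g\otimes_{\U\b} \U\h$ is the canonical one. By \cref{prop:classicalresunit}, the map $\U\h\to (M^{\univ})^N$ sending $h \mapsto 1\otimes h$ is an isomorphism. The endomorphism $L_z$ of $\U\g$ descends to left multiplication by $z$ on the quotient $\U\g/\U\g\n$; on $M^{\univ}$ this is $m\mapsto zm$, which preserves $N$-invariants and hence restricts to an endomorphism of $(M^{\univ})^N$.

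Third, I will evaluate $\res(L_z)$ on the distinguished vector. Transport $1 \in \U\h$ to $1\otimes 1 \in (M^{\univ})^N$; then $\res(L_z)(1\otimes 1) = z\cdot(1\otimes 1)$. By the defining property of the Harish-Chandra homomorphism in \cref{thm:HChomomorphism}, $z\cdot(1\otimes 1) = (1\otimes 1)\cdot \hc(z) = 1\otimes \hc(z)$, which corresponds to $\hc(z)\in\U\h$ under the isomorphism of the previous paragraph. Since $\res(L_z)$ is a map of $\U\h$-bimodules and $\U\h$ is generated by its unit as a bimodule over itself, this determines it completely: $\res(L_z) = L_{\hc(z)}$, giving the claim.

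There is no real obstacle; the only thing one must be careful about is that the right $\U\h$-action on $M^{\univ}$ used in the Harish-Chandra theorem agrees with the $\U\h$-bimodule structure on $\res(\U\g)$ inherited from $\HC(H)$, which is immediate from the construction of the $\HC(H)$-action on $\cO^{\univ}$ in \cref{sect:classicalres}.
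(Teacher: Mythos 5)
Your proposal is correct and follows essentially the same route as the paper: identify $\res(\U\g)$ with $(M^{\univ})^N$, observe that $\res$ sends the central element $z$ to left multiplication by $z$ on the universal Verma module, invoke the defining property $zm = m\,\hc(z)$ from \cref{thm:HChomomorphism}, and conclude via the isomorphism $\U\h\cong (M^{\univ})^N$ of right $\U\h$-modules. The paper's proof is just a terser version of the same argument.
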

\begin{proof}
The map
\[\act_G\colon \rZ(\U\g)=\End_{\HC(G)}(\U\g)\longrightarrow \End_{\cO^{\univ}}(M^{\univ})\]
sends a central element $z\in\rZ(\U\g)$ to the left action of $z\in\rZ(\U\g)$ on $M^{\univ}$. By \cref{thm:HChomomorphism} it is equal to the right action of $\hc(z)\in\U\h$ on $M^{\univ}$. To conclude, observe that the map
\[\U\h\longrightarrow (M^{\univ})^N\]
is an isomorphism of right $\U\h$-modules.
\end{proof}

\begin{prop}
Suppose $G$ is connected and simply-connected. Then there is an isomorphism
\[\ind(\U\h)\cong \U\g\otimes_{\rZ(\U\g)} \U\h,\]
where the $\rZ(\U\g)$-action on $\U\h$ is via the Harish-Chandra homomorphism $\hc$.
\end{prop}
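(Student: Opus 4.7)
The plan is to construct an explicit morphism
\[\Phi\colon \U\g\otimes_{\rZ(\U\g)}\U\h\longrightarrow\ind(\U\h)\]
in $\HC(G)$ and then show it is an isomorphism by a PBW filtration argument. To build $\Phi$, I combine the adjunction $\act_G\dashv\act_G^\R$ from \cref{prop:indexplicit} with \cref{prop:classicalresunit} to obtain
\[\ind(\U\h)^G\;=\;\Hom_{\HC(G)}(\U\g,\ind(\U\h))\;\cong\;\End_{\cO^{\univ}}(M^{\univ})\;\cong\;\U\h,\]
where the last identification sends $h\in\U\h$ to the endomorphism of $M^{\univ}$ given by right multiplication by $h$; let $\tilde\omega(h)\in\ind(\U\h)^G$ denote the element corresponding to this endomorphism. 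I set $\Phi(x\otimes h):=x\cdot\tilde\omega(h)$, where $\cdot$ is the left $\U\g$-action on $\ind(\U\h)$. Well-definedness over $\rZ(\U\g)$ is exactly \cref{thm:HChomomorphism}: for $z\in\rZ(\U\g)$, left multiplication by $z$ on $M^{\univ}$ equals right multiplication by $\hc(z)$, so chasing through the adjunction gives $\tilde\omega(\hc(z)h)=z\cdot\tilde\omega(h)$ and therefore $\Phi(xz\otimes h)=\Phi(x\otimes\hc(z)h)$.

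To verify that $\Phi$ is a morphism in $\HC(G)$, the key observation is that the $G$-invariants of the diagonal action $\ind(\U\h)^G$ coincide with the centralizer of $\U\g$ inside the $\U\g$-bimodule $\ind(\U\h)$, since the diagonal $\g$-action is given by the commutator. Hence $\tilde\omega(h)$ commutes with every right $\U\g$-multiplication and is $G$-fixed. The right $\U\g$-action on $\U\g\otimes_{\rZ(\U\g)}\U\h$ is by right multiplication on the $\U\g$-factor, and the $G$-action is by the adjoint action on that factor (the $\U\h$-factor being untouched); both are visibly intertwined by $\Phi$.

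The main step is to prove $\Phi$ is an isomorphism, which I would handle by passing to the associated graded with respect to the PBW filtration on $\U\g$. On the left, the hypothesis that $G$ is connected and simply-connected enters through the identification $\mathrm{gr}(\rZ(\U\g))\cong\Sym(\g)^G$ (via the Harish-Chandra isomorphism together with the Chevalley restriction $\Sym(\g)^G\cong\Sym(\h)^W$), yielding
\[\mathrm{gr}\bigl(\U\g\otimes_{\rZ(\U\g)}\U\h\bigr)\;\cong\;\Sym(\g)\otimes_{\Sym(\g)^G}\Sym(\h)\;\cong\;\cO(\tilde\g),\]
the coordinate ring of the Grothendieck--Springer variety $\tilde\g\cong\g^*\times_{\g^*\ham G}\h^*$ appearing after \eqref{eq:correspondence}. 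Using the Rees deformation relating \eqref{eq:quantumbimodule} to its classical avatar \eqref{eq:classicalbimodule} over $k[\hbar]$, the classical specialization of $\ind(\U\h)$ is the pushforward $p_*q^*\cO_{[\h^*/H]}=\cO(\tilde\g)$, and $\mathrm{gr}(\Phi)$ becomes the tautological identification of the two descriptions of $\cO(\tilde\g)$, hence an isomorphism. A standard filtered-to-graded argument then yields that $\Phi$ itself is an isomorphism. The hard part is justifying that the associated graded of $\ind(\U\h)$ really recovers this classical pushforward, which amounts to showing that $\act_G^\R$ is flat under the $\hbar$-deformation; this in turn rests on Kostant's theorem (valid for simply-connected $G$) that $\U\g$ is free as a $\rZ(\U\g)$-module, so that the coinduction $\coind_B^G(M^{\univ})$ commutes with the specialization $\hbar\to 0$.
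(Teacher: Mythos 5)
Your construction of the comparison map $\Phi$ is correct and is a nice way to make the statement concrete: the identification $\Hom_{\HC(G)}(\U\g,\ind(\U\h))\cong\End_{\cO^{\univ}}(M^{\univ})\cong\U\h$ via the adjunction of \cref{prop:indexplicit} and \cref{prop:classicalresunit} is valid, and the balancing over $\rZ(\U\g)$ is indeed exactly \cref{thm:HChomomorphism}. Note that your overall route differs from the paper's, which simply rewrites $\ind(\U\h)\cong\coind_B^G(M^{\univ})\cong\D(G/N)^H$ and then quotes Shapovalov, van den Hombergh--de Vries and Mili\v{c}i\'c for the isomorphism $\D(G/N)^H\cong\U\g\otimes_{\rZ(\U\g)}\U\h$; you are in effect reproving that cited result.

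The gap is at the step you yourself flag as the hard part, and the justification you offer for it does not work. Kostant's theorem that $\U\g$ is free over $\rZ(\U\g)$ is the right input for computing the associated graded of the \emph{source}, $\mathrm{gr}(\U\g\otimes_{\rZ(\U\g)}\U\h)\cong\Sym(\g)\otimes_{\Sym(\g)^G}\Sym(\h)$, but it says nothing about the \emph{target}: it does not imply that the right adjoint $\coind_B^G$ commutes with the $\hbar\to 0$ specialization. What is actually needed there is a cohomology vanishing on the flag variety (equivalently, degeneration of the spectral sequence computing $\Gamma(G/B,(\pi_*\D_{G/N})^H)$ from its symbol sheaf), so that $\mathrm{gr}(\ind(\U\h))\cong\Gamma(\widetilde{\g},\cO)$. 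Moreover, the identification $\Gamma(\widetilde{\g},\cO)\cong\Sym(\g)\otimes_{\Sym(\g)^G}\Sym(\h)$ is not the ``tautological identification of two descriptions of $\cO(\widetilde{\g})$'': the Grothendieck--Springer map $\widetilde{\g}\rightarrow\g\times_{\h/W}\h$ is a proper birational morphism, and one must know that the fibre product is normal to conclude that global functions agree. These two facts --- the vanishing and the normality --- are precisely the content of the references the paper cites, so as written your argument assumes the substance of the result at its central step. (A smaller point: the place where simple connectedness enters is not the identification $\mathrm{gr}\,\rZ(\U\g)\cong\Sym(\g)^G$, which holds for any connected reductive $G$, but rather the comparison of $\coind_B^G$, hence of $\D(G/N)^H$, with the stated algebra.)
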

\begin{proof}
By \cref{prop:indexplicit} $\ind(\U\h)\cong \coind_B^G(M^{\univ})$. Identifying $B$-representations with $G$-equivariant quasi-coherent sheaves on $G/B$, $M^{\univ}$ is sent to $(\pi_* \D_{G/N})^H$, where $\pi\colon G/N\rightarrow G/B$. Therefore,
\[\coind_B^G(M^{\univ})\cong \D(G/N)^H.\]
The claim then follows from \cite{Shapovalov,HomberghdeVries}, see also \cite[Lemma 3.1]{Milicic}.
\end{proof}

Note that the functor $\res$ preserves neither limits nor colimits and it is merely lax monoidal. We will now show that after a localization it becomes exact and monoidal.

\begin{defn}
A weight $\lambda\in\h^*$ is \defterm{generic} if $\langle \lambda, \alpha^\vee\rangle\not\in \bZ$ for every root $\alpha$. Denote by $\h^{*, \gen}\subset \h^*$ the subset of generic weights. Let $\HC(H)^{\gen}\subset \HC(H)$ and $\cO^{\univ, \gen}\subset \cO^{\univ}$ be the full subcategories of right $\U\h$-modules supported on generic weights. Let $(\U\h)^{\gen}\subset \Frac(\U\h)$ be the subspace of rational functions on $\h^*$ regular on $\h^{*, \gen}$.
\label{def:genericweight}
\end{defn}

By construction
\[\HC(H)^{\gen} = \HC(\Rep(H), (\U\h)^{\gen})\]
and similarly for $\cO^{\univ, \gen}$. Moreover, both $\HC(H)^{\gen}\subset \HC(H)$ and $\cO^{\univ, \gen}\subset \cO^{\univ}$ admit left adjoints given by localization. Let
\[M^{\univ, \gen} = \U\g\otimes_{\U\b} (\U\h)^{\gen}\]
be the universal Verma module with generic highest weights.

Choose a Borel subgroup $B_-\subset G$ opposite to $B$, with Lie algebra $\b_-$. Let
\[M^{\univ}_- = \U\h\otimes_{\U\b_-} \U\g\]
be the opposite universal Verma module.

\begin{defn}
The functor of \defterm{$\n_-$-coinvariants}
\[(-)_{\n_-}\colon \cO^{\univ}\longrightarrow \HC(H)\]
is $M_{\n_-} = M^{\univ}_-\otimes_{\U\g} M$.
\end{defn}

We will now recall the \defterm{extremal projector} introduced in \cite{AsherovaSmirnovTolstoy}, see also \cite{ZhelobenkoSalgebras}.

\begin{thm}
There is an extension $T(\g)$ of $\U\g$ obtained by replacing $\U\h\subset\U\g$ with $\Frac(\U\h)$ and considering certain power series. There is an element $P\in T(\g)$ satisfying the following properties:
\begin{enumerate}
\item $\n P = P\n_- = 0$.

\item $P-1\in T(\g)\n\cap \n_- T(\g)$.
\end{enumerate}
The action of $P$ is well-defined on left $\U\g$-modules whose $\n$-action is locally nilpotent and which have generic $\h$-weights.
\label{thm:extremalprojector}
\end{thm}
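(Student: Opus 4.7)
The plan is to construct $P$ as an ordered product over the positive roots of elementary projectors $p_\alpha$, each associated to the $\sl_2$-triple $\{E_\alpha, H_\alpha, F_\alpha\}$, following the Asherova--Smirnov--Tolstoy construction. First I would fix a normal ordering of the positive roots $\alpha_1 < \alpha_2 < \dots < \alpha_N$ coming from a reduced expression of the longest element $w_0\in W$. For each positive root $\alpha$ I would define
\[
p_\alpha \;=\; \sum_{k\geq 0} \frac{(-1)^k}{k!}\,\frac{1}{\prod_{j=1}^{k}(H_\alpha+\rho(H_\alpha)+j)}\; F_\alpha^{\,k} E_\alpha^{\,k}\;\in\; T(\g),
\]
with the coefficients chosen (up to the standard $\rho$-shift) so that $E_\alpha p_\alpha = p_\alpha F_\alpha = 0$ and $p_\alpha - 1 \in T(\g)E_\alpha \cap F_\alpha T(\g)$. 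This is a direct computation using only the $\sl_2$-relations in the $\alpha$-subalgebra; the rational functions in $H_\alpha$ are regular precisely because we work over $\Frac(\U\h)$, i.e.\ at generic weights.

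Next I would set
\[
P \;=\; p_{\alpha_1} p_{\alpha_2} \cdots p_{\alpha_N}
\]
in the chosen normal order. To verify property (1), I would argue inductively on the number of factors using Levendorski\u{\i}--So\u{\i}belman-type commutation relations: for $\beta$ a simple root with respect to the ordering, $E_\beta$ commutes past $p_{\alpha_1},\ldots,p_{\alpha_{i-1}}$ modulo terms in $\n$ attached to higher roots, and is killed on the right by $p_\beta$; an inverse argument using $F_\beta$ and the rightmost factor handles $P\n_- = 0$. Property (2) is then immediate from the corresponding property of each $p_\alpha$, since $p_{\alpha_i}-1 \in T(\g)\n$ and telescoping the product gives $P-1\in T(\g)\n$, with the symmetric argument on the other side.

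For well-definedness on modules $M$ with locally nilpotent $\n$-action and generic $\h$-weights, I would observe that on any vector $v\in M$ the denominators in $p_\alpha$ are invertible (by the genericity hypothesis) and the sum defining each $p_\alpha$ truncates (by local $\n$-nilpotence, since $E_\alpha^k v = 0$ for $k\gg 0$). Hence each $p_\alpha$ acts as a well-defined linear endomorphism of $M$, and the finite product $P$ does too.

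The main obstacle I expect is the combinatorial bookkeeping in property (1): proving $\n P = 0$ (rather than the easy one-factor statement $E_\alpha p_\alpha = 0$) requires showing that when one commutes an arbitrary $E_\beta$ through the product, the error terms always involve strictly higher positive roots and are ultimately absorbed by a later $p_\gamma$. The cleanest way to handle this is to prove simultaneously that $P$ is independent of the normal ordering (so any $E_\beta$ may be moved to the rightmost factor $p_\beta$), which is exactly the content of the Asherova--Smirnov--Tolstoy theorem; alternatively one may characterize $P$ abstractly as the unique element of $T(\g)$ of the form $1 + T(\g)\n$ satisfying $\n P = 0$ and derive (1) and (2) from uniqueness. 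I would cite \cite{AsherovaSmirnovTolstoy} and \cite{ZhelobenkoSalgebras} for the details of this final step.
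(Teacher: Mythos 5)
The paper does not prove this statement: it is recalled from \cite{AsherovaSmirnovTolstoy} and \cite{ZhelobenkoSalgebras}, and your outline --- the ordered product over positive roots of elementary $\sl_2$-projectors, with independence of the normal ordering as the key lemma making $\n P = P\n_- = 0$ tractable --- is exactly the construction in those references, and is consistent with the paper's own $\sl_2$ formula for $P$. Your sketch is a correct outline of the standard argument.
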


\begin{example}
Suppose $\g=\sl_2$. The extremal projector in this case is (see e.g. \cite{KhoroshkinOgievetsky})
\[P = \sum_{n=0}^\infty \frac{(-1)^n}{n!} g_n^{-1} f^n e^n,\]
where
\[g_n = \prod_{j=1}^n (h+j+1).\]
\end{example}

We will now describe some applications of extremal projectors.

\begin{prop}
There is a natural isomorphism of functors $(-)_{\n_-}\cong (-)^N\colon \cO^{\univ,\gen}\rightarrow \HC(H)^{\gen}$. In particular, they are exact.
\label{prop:coinvinv}
\end{prop}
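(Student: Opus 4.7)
The plan is to use the extremal projector $P$ of \cref{thm:extremalprojector} to construct an explicit inverse to the natural map from invariants to coinvariants. First I would write down the obvious natural transformation: for $M\in\cO^{\univ,\gen}$, the composition
\[M^N\hookrightarrow M\twoheadrightarrow M/\n_- M \cong M^{\univ}_-\otimes_{\U\g} M = M_{\n_-}\]
is manifestly natural in $M$. The goal is to show this is an isomorphism on $\cO^{\univ,\gen}$.

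Since objects of $\cO^{\univ,\gen}$ have locally nilpotent $\n$-action (by definition of $\cO^{\univ}$, cf.\ \cref{rmk:Ounivhighestweight}) and generic $\h$-weights, the operator $P$ acts on any such $M$ by \cref{thm:extremalprojector}. The key properties I will exploit are: (i) $\n P = 0$, so $P$ factors through the subspace $M^N\subset M$; (ii) $P\n_- = 0$, so $P$ kills $\n_- M$ and therefore descends to a map $\bar{P}\colon M/\n_- M\to M^N$; (iii) $P-1\in T(\g)\n\cap \n_- T(\g)$, which gives the two identities $(P-1)m\in\n_- M$ for every $m\in M$, and $(P-1)v = 0$ for every $v\in M^N$ (since elements of $T(\g)\n$ annihilate $\n$-invariants). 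Properties (i) and (iii) are all it takes to verify that $\bar{P}$ is a two-sided inverse to the composition above: starting from $v\in M^N$, the round trip returns $Pv = v$; starting from $[m]\in M/\n_- M$, the round trip returns $[Pm] = [m]$ because $Pm-m\in\n_- M$. Naturality of $\bar{P}$ follows from the fact that $P$ is a fixed element of $T(\g)$ whose action commutes with any $\U\g$-linear map.

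For the exactness statement, the isomorphism does the work for free: $(-)^N$ is right adjoint to $\act_H$ by \cref{prop:resexplicit} and hence left exact, while $(-)_{\n_-} = M^{\univ}_-\otimes_{\U\g}(-)$ is a relative tensor product and hence right exact. Since the two functors are naturally isomorphic on $\cO^{\univ,\gen}$, both must be exact there.

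The main subtlety I expect is justifying (ii) and the descent of $P$ through $\n_- M$, together with confirming that $P$ actually acts on objects of $\cO^{\univ,\gen}$ rather than only on those with an \emph{a priori} $\Frac(\U\h)$-action; this is where the genericity assumption in \cref{def:genericweight} is essential, as it makes the denominators $g_n$ appearing in the formula for $P$ (illustrated for $\sl_2$ in the excerpt) invertible on all weight spaces so the formally infinite sum truncates to a finite one on each vector by local nilpotence of $\n$. Everything else is a formal consequence of the properties listed in \cref{thm:extremalprojector}.
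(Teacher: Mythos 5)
Your proof is correct and follows essentially the same route as the paper: both construct the composite $M^N\hookrightarrow M\twoheadrightarrow M_{\n_-}$ and invert it using the extremal projector, with $\n P=0$ and $P\n_-=0$ giving the descent and the two containments of $P-1$ giving the round-trip identities, and the exactness follows from left-exactness of $(-)^N$ and right-exactness of $(-)_{\n_-}$ exactly as you say. The one point worth making explicit (which the paper does) is that genericity in \cref{def:genericweight} refers to the \emph{right} $\U\h$-weights, and one deduces genericity of the \emph{left} $\h$-weights (which is what \cref{thm:extremalprojector} requires) from the fact that the diagonal weights are integral.
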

\begin{proof}
Take $M\in\cO^{\univ, \gen}$ and consider the composite
\[\pi\colon M^N\longrightarrow M\longrightarrow M_{\n_-}.\]
We will prove that it is an isomorphism.

Since the weights of the right $\U\h$-action on $M$ are generic and the weights of the diagonal $\U\h$-action are integral, the weights of the left $\U\h$-action are also generic. Moreover, the left $\U\n$-action is locally nilpotent. In particular, the action of the extremal projector from \cref{thm:extremalprojector}
\[P\colon M\longrightarrow M\]
is well-defined. It lands in $N$-invariants by the property $\n P = 0$. It factors through $\n_-$-coinvariants by the property $P\n_- = 0$. So, it gives a map
\[P\colon M_{\n_-}\longrightarrow M^N.\]
For $m\in M^N$ we have $Pm = m$ since $P-1\in T(\g)\n$. In particular, $P\circ \pi = \id$. For $m\in M$, $[m] = [Pm]$ in $M_{\n_-}$ since $P-1\in \n_- T(\g)$. In particular, $\pi\circ P = \id$.
\end{proof}

\begin{thm}
The category $\cO^{\univ, \gen}$ is free of rank 1 as a $\HC(H)^{\gen}$-module category in the sense of \cref{def:freerank1}.
\label{thm:Ofreerank1}
\end{thm}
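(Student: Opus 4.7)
The statement amounts to showing that the action functor $\act_H\colon\HC(H)^{\gen}\to\cO^{\univ,\gen}$ is an equivalence. The plan is to exhibit its right adjoint $(-)^N$ from \cref{prop:resexplicit} as the inverse, by verifying that both the unit and the counit of the adjunction are isomorphisms. The adjunction does restrict to the generic subcategories: $\act_H(X)=\U\g\otimes_{\U\b}X$ inherits its right $\U\h$-action from $X$, and $M^N\subset M$ inherits the generic support from $M$.

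For the unit $\eta_X\colon X\to(\act_H X)^N$, I compose with the isomorphism $(\act_H X)^N\cong(\act_H X)_{\n_-}$ from \cref{prop:coinvinv}. The PBW decomposition $\U\g\cong\U\n_-\otimes_k\U\b$ as right $\U\b$-modules gives $\act_H(X)\cong\U\n_-\otimes_k X$ as left $\U\n_-$-modules, and taking $\n_-$-coinvariants collapses $\U\n_-$ onto its augmentation to yield $X$. Tracing the image $x\mapsto[1\otimes x]$ through these identifications shows the induced composite $X\to X$ is the identity, so $\eta_X$ is an isomorphism.

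The main obstacle is the counit, for which the key lemma is that $(-)^N$ is conservative on $\cO^{\univ,\gen}$. Given $M\neq 0$, pick a nonzero $m\in M$; local nilpotency of $\n$ makes $\U\n\cdot m\subset M$ a finite-dimensional subspace, which decomposes into finitely many diagonal $H$-weight spaces since the diagonal action is integrable. For a nonzero element $m'$ of maximal weight in $\U\n\cdot m$, the image $\n\cdot m'$ lies in $\U\n\cdot m$ at strictly higher weights and hence vanishes, so $m'$ is a nonzero $N$-invariant, showing $M^N\neq 0$.

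To conclude, I apply $(-)^N$ (which is exact on $\cO^{\univ,\gen}$: left exact as a right adjoint and right exact by virtue of being isomorphic to the coinvariants functor $(-)_{\n_-}$ via \cref{prop:coinvinv}) to the four-term exact sequence
\[0\longrightarrow\ker\epsilon_M\longrightarrow\act_H(M^N)\xrightarrow{\epsilon_M}M\longrightarrow\coker\epsilon_M\longrightarrow 0.\]
The adjunction triangle identity $(\epsilon_M)^N\circ\eta_{M^N}=\id_{M^N}$, combined with $\eta_{M^N}$ being an isomorphism, makes $(\epsilon_M)^N$ an isomorphism, so $(\ker\epsilon_M)^N=0=(\coker\epsilon_M)^N$. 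Conservativity then forces $\ker\epsilon_M=0=\coker\epsilon_M$, so $\epsilon_M$ is an isomorphism, completing the proof.
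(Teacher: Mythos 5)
Your proof is correct and follows essentially the same route as the paper's: the unit of the adjunction is shown to be an isomorphism via \cref{prop:coinvinv} and the PBW decomposition, and the counit is handled by combining exactness of $(-)^N$ with the fact that every nonzero object of $\cO^{\univ,\gen}$ has a nonzero highest-weight vector. The only difference is that you spell out details the paper leaves implicit, namely the existence of highest-weight vectors and the unwinding of ``fully faithful left adjoint plus conservative exact right adjoint implies equivalence'' via the four-term exact sequence.
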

\begin{proof}
The unit of the adjunction $\act_H\dashv (-)^N$ between $\HC(H)^{\gen}$ and $\cO^{\univ, \gen}$ is
\[X\longrightarrow (\act_H(X))^N\xrightarrow{\sim} (M^{\univ}\otimes_{\U\h} X)_{\n_-}.\]
By the PBW isomorphism this map is an isomorphism. In particular, $\act_H\colon \HC(H)^{\gen}\rightarrow \cO^{\univ, \gen}$ is fully faithful.

Since the $\n$-action on $M\in\cO^{\univ, \gen}$ is locally nilpotent, $M^N = 0$ if, and only if, $M=0$. But $(-)^N\colon \cO^{\univ, \gen}\rightarrow \HC(H)^{\gen}$ is exact by \cref{prop:coinvinv}. Therefore, it is conservative. Since its left adjoint $\act_H$ is fully faithful, it is an equivalence.
\end{proof}

\begin{cor}
The composite
\[\res^{\gen}\colon \HC(G)\xrightarrow{\res} \HC(H)\longrightarrow \HC(H)^{\gen}\]
is strongly monoidal and colimit-preserving.
\label{cor:resmonoidal}
\end{cor}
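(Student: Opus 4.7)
The plan is to identify $\res^{\gen}$ with the functor $F_{\HC(G),\HC(H)^{\gen}}$ constructed from the bimodule category $\cO^{\univ,\gen}$ with distinguished object $M^{\univ,\gen}$ in the sense of \cref{prop:laxmonoidalactionadjoint}, and then invoke \cref{prop:freerank1monoidal} together with \cref{thm:Ofreerank1}. So the work is almost entirely bookkeeping with adjunctions; the real content has already been packaged into \cref{thm:Ofreerank1}.

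First I would show that $\res^{\gen}$ admits an alternative description as $\act_H^{\gen,\R}\circ \act_G^{\gen}$, where $\act_G^{\gen}\colon \HC(G)\rightarrow \cO^{\univ,\gen}$ is obtained by post-composing $\act_G$ with the localization $L'\colon \cO^{\univ}\rightarrow \cO^{\univ,\gen}$, and $\act_H^{\gen}\colon \HC(H)^{\gen}\rightarrow \cO^{\univ,\gen}$ is the action on $M^{\univ,\gen}$ with right adjoint $\act_H^{\gen,\R}$. To do this I would check compatibility of the two localizations with the right adjoints: since the $\HC(H)$-module structure on $\cO^{\univ}$ is by the relative tensor product $\otimes_{\U\h}$, it descends to a $\HC(H)^{\gen}$-action on $\cO^{\univ,\gen}$; and for $M\in\cO^{\univ}$ the functor $(-)^N$ is computed weight-by-weight on the right $\U\h$-action, so localization commutes with it, giving a natural isomorphism $L\circ(-)^N\cong \act_H^{\gen,\R}\circ L'$. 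Composing with $\act_G$ and using $L\circ\res = L\circ(-)^N\circ \act_G$ yields the claimed identification $\res^{\gen}\cong \act_H^{\gen,\R}\circ\act_G^{\gen}$.

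With this in hand, \cref{thm:Ofreerank1} asserts that $\act_H^{\gen}$ is an equivalence, i.e. $\cO^{\univ,\gen}$ is free of rank $1$ over $\HC(H)^{\gen}$. \cref{prop:freerank1monoidal} then immediately implies that $\res^{\gen} = F_{\HC(G),\HC(H)^{\gen}}$ is strongly monoidal and colimit-preserving, since $\act_H^{\gen,\R}$ is the inverse equivalence and thus both the counit $\epsilon$ and the lax module structure $\phi$ used to define the monoidal structure in \cref{prop:laxmonoidalactionadjoint} are isomorphisms.

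The only step that requires any care is the compatibility of the adjunction $\act_H\dashv(-)^N$ with the Gabriel-type localizations on both sides; this is where a potential obstacle could lurk, but it dissolves once one observes that localization at generic weights is exact and preserves the relative tensor product $M^{\univ}\otimes_{\U\h}(-)$, so the required Beck--Chevalley isomorphism is formal.
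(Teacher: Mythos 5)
Your proof is correct and follows the same route as the paper: the paper's own proof is exactly the two-line invocation of \cref{thm:Ofreerank1} and \cref{prop:freerank1monoidal}. The only difference is that you additionally spell out the compatibility of the generic-weight localization with the adjunction $\act_H\dashv(-)^N$ needed to identify the composite $\HC(G)\xrightarrow{\res}\HC(H)\rightarrow\HC(H)^{\gen}$ with $F_{\HC(G),\HC(H)^{\gen}}$; the paper leaves this bookkeeping implicit, and your treatment of it is sound.
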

\begin{proof}
By \cref{thm:Ofreerank1} $\cO^{\univ, \gen}$ is free of rank 1 as a $\HC(H)^{\gen}$-module category. The claim then follows from \cref{prop:freerank1monoidal}.
\end{proof}

\begin{remark}
Consider the morphism of stacks $p\colon [\b/B]\rightarrow [\h/H]$. It admits a section $s\colon [\h/H]\rightarrow [\b/B]$, so that $p\circ s=\id_{[\h/H]}$. It is shown in \cite{Sevostyanov} that, restricting to generic weights, there is a homotopy $s\circ p\sim \id_{[\b/B]}$ given by the classical limit of the extremal projector. The proof of \cref{thm:Ofreerank1} gives an analogous intepretation of the extremal projector on the quantum level.
\end{remark}

We will now show that $\res^{\gen}$ gives rise to a dynamical twist. For this, according to \cref{prop:dynamicaltwistmonoidal}, we have to show that $\res^{\gen}$ of a free Harish--Chandra bimodule is free, i.e. we have to establish an isomorphism between $(V\otimes M^\univ)^N$ and $V\otimes (\U\h)^{\gen}$ in $\HC(H)^{\gen}$, for every $V\in\Rep(G)$.

\begin{thm}
The morphism
\[(V\otimes M^{\univ, \gen})^N\subset V\otimes M^{\univ, \gen} \longrightarrow V\otimes (\U\h)^{\gen},\]
where the second morphism is induced by the projection $M^{\univ, \gen}\rightarrow (\U\h)^{\gen}$ onto highest weights, defines a natural isomorphism witnessing commutativity of the diagram
\[
\xymatrix{
\Rep(G) \ar[d] \ar^{\free_G}[r] & \HC(G) \ar^{\res^{\gen}}[d] \\
\Rep(H) \ar^{\free_H}[r] & \HC(H)^{\gen}
}
\]
\label{thm:EVclassicalbasis}
\end{thm}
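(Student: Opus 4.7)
The plan is to identify $\res^{\gen}(\free_G V)$ with $(V \otimes M^{\univ,\gen})^N$, verify that the map $\psi_V$ from the statement is $(\U\h)^{\gen}$-linear and natural in $V$, and then to prove it is an isomorphism by separately establishing injectivity (via a PBW/leading-term argument) and surjectivity (using the extremal projector of \cref{thm:extremalprojector}).

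First, I would apply \cref{prop:resexplicit}, which gives $\res(\free_G V) = ((\free_G V) \otimes_{\U\g} M^{\univ})^N$; the standard bimodule isomorphism $(\U\g \otimes V) \otimes_{\U\g} M \cong V \otimes M$, $a \otimes v \otimes m \mapsto a_{(1)} v \otimes a_{(2)} m$ (with diagonal left $\U\g$-action on the right-hand side), together with localization to generic weights, identifies this with $(V \otimes M^{\univ,\gen})^N$. The object $V \otimes M^{\univ,\gen}$ lies in $\cO^{\univ,\gen}$ since the diagonal $\b$-action is $B$-integrable, the diagonal $\n$-action is locally nilpotent (both factors are), and the right $(\U\h)^{\gen}$-weights are generic (inherited from $M^{\univ,\gen}$). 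The map $\psi_V$ is then visibly $(\U\h)^{\gen}$-linear and natural in $V$, giving a natural transformation between the two composites in the diagram.

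For injectivity, suppose $\xi \in (V \otimes M^{\univ,\gen})^N$ satisfies $\psi_V(\xi) = 0$, so that $\xi \in V \otimes \n_- M^{\univ,\gen}$. Using the PBW isomorphism $M^{\univ,\gen} \cong \U\n_- \otimes (\U\h)^{\gen}$, I would decompose $\xi$ by $\n_-$-degree and let $\xi_{k_0}$ denote the leading term, of degree $k_0 \geq 1$. Writing $\xi_{k_0} = \sum c_{ij}\, b_j \otimes n_i m_0$ in bases $\{b_j\}$ of $V$ and $\{n_i\}$ of $\U\n_-^{(k_0)}$, the $\n_-$-degree $(k_0 - 1)$ component of the equation $e \xi = 0$, holding for every $e \in \n$, reads $\sum c_{ij}\, b_j \otimes [e, n_i] m_0 = 0$. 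Linear independence of $\{b_j\}$ then forces each $\sum_i c_{ij} n_i m_0$ to be an $N$-invariant element of $M^{\univ,\gen}$ of $\n_-$-degree $k_0$; but for generic weights $M^{\univ,\gen}$ is irreducible (as used in the proof of \cref{prop:classicalresunit}), so its $N$-invariants are concentrated in $\n_-$-degree zero, forcing $c_{ij} = 0$ for all $i, j$ and contradicting $\xi_{k_0} \neq 0$.

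For surjectivity, I would use the extremal projector $P$ of \cref{thm:extremalprojector}, which acts on $V \otimes M^{\univ,\gen}$ because it lies in $\cO^{\univ,\gen}$. For each weight vector $v \in V_\mu$, the element $\tilde v = P(v \otimes m_0)$ lies in $(V \otimes M^{\univ,\gen})^N$ by $\n P = 0$. Since $P - 1 \in T(\g) \n$ and $\n m_0 = 0$, a direct computation shows $\psi_V(\tilde v) = A_\mu(\lambda)(v) \otimes 1$ for a rational endomorphism $A_\mu(\lambda) \in \End(V_\mu) \otimes (\U\h)^{\gen}$ built from the $\lambda$-dependent coefficients of $P$. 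The hard part will be showing that $A_\mu(\lambda)$ is invertible on the generic locus $\h^{*,\gen}$; this is a Shapovalov-type statement, reducing to the observation that the denominators appearing in $P$ are products of linear forms in $\lambda$ vanishing only on root hyperplanes, which lie outside $\h^{*,\gen}$. Granted this, the images $\{\psi_V(\tilde v)\}$ span $V \otimes (\U\h)^{\gen}$ as a right $(\U\h)^{\gen}$-module, completing the proof that $\psi_V$ is an isomorphism.
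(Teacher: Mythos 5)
Your overall strategy is sound and is genuinely different from the paper's proof, which simply reduces to a single Verma module $M_\lambda$ of generic highest weight and cites \cite[Theorem 8]{EtingofVarchenkoExchange}. Your identification of $\res^{\gen}(\free_G V)$ with $(V\otimes M^{\univ,\gen})^N$ and your injectivity argument are essentially correct: grading $V\otimes M^{\univ,\gen}$ by the height of the weight of the $M$-factor below the highest weight, the operator $e_\alpha\otimes 1$ preserves the degree while $1\otimes e_\alpha$ lowers it by exactly one, so if $k_0$ is the \emph{minimal} degree occurring in $\xi$ (your ``leading term'' must be read this way, otherwise lower-degree terms of $\xi$ contaminate the degree-$(k_0-1)$ component of $e\xi$ through $e\otimes 1$), then each $M$-component of $\xi_{k_0}$ is a singular vector of $M^{\univ,\gen}$ of positive height, which vanishes by the irreducibility of the generic Verma module, exactly as in \cref{prop:classicalresunit}.

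The genuine gap is in the surjectivity step, at precisely the point you flag as ``the hard part.'' You need $\det A_\mu(\lambda)$ to be a \emph{unit} in $(\U\h)^{\gen}$, i.e.\ nonvanishing at every $\lambda\in\h^{*,\gen}$, and your proposed justification --- that the denominators occurring in the extremal projector $P$ are linear forms vanishing only outside $\h^{*,\gen}$ --- only establishes that the entries of $A_\mu(\lambda)$ are \emph{regular} on $\h^{*,\gen}$ (no poles). It says nothing about the \emph{zeros} of the determinant, which is the actual obstruction to invertibility: a matrix of functions regular on $\h^{*,\gen}$ can perfectly well have determinant vanishing at points of $\h^{*,\gen}$. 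Note that injectivity of $\psi_V$ only gives $\det A_\mu(\lambda)\neq 0$ as a rational function, hence invertibility over $\Frac(\U\h)$, which is strictly weaker than invertibility over $(\U\h)^{\gen}$. Showing that the zero locus of this determinant is contained in the union of affine root hyperplanes $\langle\lambda,\alpha^\vee\rangle\in\bZ$ is a Shapovalov-determinant-type computation and is exactly the content of \cite[Theorem 8]{EtingofVarchenkoExchange} that the paper invokes; as written, your argument assumes it rather than proves it. To close the gap you would either have to carry out that determinant computation, or replace the projector argument by a structural one (e.g.\ splitting the standard filtration of $V\otimes M^{\univ,\gen}$ by Verma modules over the generic locus and checking that the resulting isomorphism is unitriangular with respect to the weight filtration, so that it agrees with $\psi_V$ up to an automorphism).
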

\begin{proof}
Let $M_\lambda$ be the Verma module of a generic highest weight $\lambda\in\h^*$ and denote by $x_\lambda\in M_\lambda$ the highest weight vector. We have to show that the map $(V\otimes M_\lambda)^N\rightarrow V$ given by
\[v\otimes x_\lambda + \dots\mapsto v\otimes 1,\]
where $\dots$ contain elements of $M_\lambda$ of weight less than $\lambda$, is an isomorphism. This is the content of \cite[Theorem 8]{EtingofVarchenkoExchange}.
\end{proof}

\begin{remark}
The $(\U\h)^{\gen}$-module $\res^{\gen}(V\otimes \U\g)$ admits \emph{another} natural basis constructed in \cite{Khoroshkin}.
\end{remark}

Consider $V,W\in\Rep(G)$. Let us recall that Etingof and Varchenko \cite{EtingofVarchenkoExchange} have introduced the \defterm{fusion matrix}
\[J^{EV}_{V, W}(\lambda)\colon V\otimes W\rightarrow V\otimes W\]
depending rationally on a parameter $\lambda\in\h^*$ as follows. Consider the Verma module $M_\lambda$ with highest weight $\lambda\in\h^*$. For $V\in\Rep(G)$ denote by $V=\oplus_{\lambda\in\Lambda} V[\lambda]$ its weight decomposition. Consider a morphism $M_\lambda\rightarrow M_\mu\otimes V$. The image of a highest-weight vector $x_\lambda\in M_\lambda$ has the form
\[x_\mu\otimes v + \dots,\]
where $\dots$ denote terms containing elements of $M_\mu$ of lower weight. This determines a morphism
\begin{equation}
\Hom_{\U\g}(M_\lambda, M_\mu\otimes V)\longrightarrow V[\lambda-\mu].
\label{verma_iso}
\end{equation}
For generic $\mu$ it is an isomorphism and for $v\in V[\lambda-\mu]$ we denote by $\Phi^v_\lambda\in \Hom_{\U\g}(M_\lambda, M_\mu\otimes V)$ the preimage of $v$ under this map.

For $v\in V$ and $w\in W$ of weights $\wt(v)$ and $\wt(w)$ consider the composite
\begin{equation}
M_\lambda\xrightarrow{\Phi_\lambda^v} M_{\lambda-\wt(v)}\otimes V\xrightarrow{\Phi^w_{\lambda-\wt(v)}\otimes \id} M_{\lambda-\wt(v)-\wt(w)} \otimes W\otimes V.
\label{eq:classicalfusioncomposite}
\end{equation}
The fusion matrix is defined so that this composite is $\Phi^{J^{EV}_{W, V}(\lambda)(w\otimes v)}_\lambda$. By \cite[Theorem 48]{EtingofVarchenkoExchange} $J^{EV}_{W, V}(\lambda)$ quantizes the standard rational solution of the dynamical Yang--Baxter equation (see \cite[Theorem 3.2]{EtingofVarchenkoGeometry}).

Combining \cref{cor:resmonoidal} and \cref{thm:EVclassicalbasis} we obtain a monoidal structure on the composite
\[\Rep(G)\longrightarrow \Rep(H)\longrightarrow \HC(H)^{\gen}.\]
In particular, as in \cref{prop:dynamicaltwistmonoidal} this gives rise to linear maps
\[J_{V, W}(\lambda)\colon V\otimes W\rightarrow V\otimes W\]
depending rationally on $\lambda\in\h^*$.

\begin{prop}
Let $V,W\in\Rep(G)$. The map $J_{V, W}(\lambda)\colon V\otimes W\rightarrow V\otimes W$ coincides with a permutation of the fusion matrix:
\[J_{V, W}(\lambda) = \tau J^{EV}_{W,V}(\lambda)\tau,\]
where $\tau$ is the flip of tensor factors.
\label{prop:classicalfusion}
\end{prop}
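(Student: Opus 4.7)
The plan is to unwind the dynamical twist $J_{V, W}(\lambda)$ arising from the monoidal structure on the composite $\Rep(G) \xrightarrow{\free_G} \HC(G) \xrightarrow{\res^{\gen}} \HC(H)^{\gen}$ and to match it against the Etingof--Varchenko fusion matrix via the flip $\tau$.

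First I would translate \cref{thm:EVclassicalbasis} into the language of $\U\g$-intertwiners. For $v \in V$ of weight $\alpha$, the element $\phi_V^{-1}(v \otimes 1) \in (V \otimes M^{\univ, \gen})^N$ corresponds, after specializing the right $\U\h$-parameter to a generic $\lambda$, to the unique $\U\g$-intertwiner $\Psi^v_\lambda \colon M_{\lambda + \alpha} \to V \otimes M_\lambda$ sending the highest weight vector $x_{\lambda + \alpha}$ to $v \otimes x_\lambda + (\text{lower weight terms})$. Because $\U\g$ is cocommutative, the flip $V \otimes M_\lambda \cong M_\lambda \otimes V$ is $\U\g$-linear, which identifies $\Psi^v_\lambda$ with the Etingof--Varchenko intertwiner $\Phi^v_{\lambda + \alpha}$ and matches the coefficients in the expansions of the two highest weight vectors.

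Next I would describe the lax monoidal structure on $\res$ provided by \cref{prop:laxmonoidalactionadjoint} in these terms. For $\Psi^v$ and $\Psi^w$ (with $w \in W$ of weight $\beta$) as above, the image of $\Psi^v \otimes_{\U\h} \Psi^w$ in $(V \otimes W \otimes M^{\univ, \gen})^N$ is computed by embedding it into $V \otimes M^{\univ, \gen} \otimes_{\U\h} (W \otimes M^{\univ, \gen})^N$ and applying the counit of the adjunction $\act_H \dashv \act_H^R$, i.e.\ the action map $M^{\univ} \otimes_{\U\h} Z^N \to Z$ for $Z \in \cO^{\univ}$, on the inner factor. The compatibility of the right $\U\h$-action on $\Psi^v$ with the left $\U\h$-action on $\Psi^w$ fixes the parameters, so the resulting $N$-invariant element is realized by the composite $\U\g$-intertwiner
\[
M_{\lambda + \alpha + \beta} \xrightarrow{\Psi^v_{\lambda+\beta}} V \otimes M_{\lambda + \beta} \xrightarrow{\id_V \otimes \Psi^w_\lambda} V \otimes W \otimes M_\lambda.
\]

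Finally, $\phi_{V \otimes W}$ extracts the leading $V \otimes W$-coefficient of this composite, which is $J_{V, W}(\lambda)(v \otimes w)$ by construction. Applying the flip $V \otimes M \cong M \otimes V$ throughout converts the above composite into the Etingof--Varchenko composite of \cref{sect:classicalres} with output in $M_\lambda \otimes W \otimes V$, whose leading coefficient is $J^{EV}_{W, V}(\lambda)(w \otimes v)$ by the definition of the fusion matrix; reading the result back through the flip $\tau$ then yields the equality $J_{V, W}(\lambda) = \tau J^{EV}_{W, V}(\lambda) \tau$. The main technical obstacle is the second step, which requires a careful interpretation of the counit of $\act_H \dashv \act_H^R$ and the precise matching of $\U\h$-parameters under the Harish-Chandra tensor product.
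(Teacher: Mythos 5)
Your plan follows the paper's own proof: the paper likewise identifies the highest-weight elements $\sum_i v_i\otimes a_i x^{\univ}\in(V\otimes M^{\univ})^N$ with the intertwiners $\Phi^v_\lambda$ (up to the flip of tensor factors, which is $\U\g$-linear by cocommutativity), computes the lax monoidal structure map \eqref{eq:resmonoidal} as the composition of intertwiners, yielding $\sum_{i,j}v_i\otimes(a_i)_{(1)}w_j\otimes(a_i)_{(2)}b_j x^{\univ}$, and reads off the leading coefficient to match $\tau J^{EV}_{W,V}(\lambda)\tau$. The only difference is presentational: the parameter bookkeeping you flag as the main technical obstacle is handled in the paper by the PBW identification $M^{\univ}\cong\U\n_-\otimes\U\h$, which lets one treat the coefficients $a_i$ as $\U\n_-$-valued functions on $\h^*$ and evaluate them at the shifted weight $\lambda-\wt(v)$ on both sides of the comparison.
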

\begin{proof}
Let $x^{\univ}\in M^{\univ}$ be the generator of the universal Verma module and $x_\lambda\in M_\lambda$ be the generator of the Verma module of highest weight $\lambda$. Using the PBW identification $M^{\univ}\cong \U\n_-\otimes \U\h$ we identify elements of $M^{\univ}$ with functions $\h^*\rightarrow \U\n_-$.

For $v\in V$ we denote by $\sum v_i\otimes a_i x^{\univ}$ the unique highest-weight element of $V\otimes M^{\univ}$ which has an expansion $v\otimes x^{\univ}+\dots$. Similarly, for $w\in W$ we denote by $\sum w_i\otimes b_i x^{\univ} = w\otimes x^{\univ} + \dots$ the highest-weight element of $W\otimes M^{\univ}$.

Under the morphism \eqref{eq:resmonoidal}
\[(V\otimes M^{\univ})^N\otimes_{\U\h} (W\otimes M^{\univ})^N\longrightarrow (V\otimes W\otimes M^{\univ})^N\]
we have
\[\sum_{i, j} (v_i\otimes a_i x^{\univ})\otimes (w_j\otimes b_j x^{\univ})\mapsto \sum_{i, j} v_i\otimes (a_i)_{(1)} w_j\otimes (a_i)_{(2)} b_j x^{\univ}.\]

It is then easy to see that
\[J_{V,W}(\lambda)(v\otimes w) = \sum_i v_i\otimes a_i(\lambda-\wt(v)) w.\]

Using the same notations, the map $\Phi_\lambda^v\colon M_\lambda\rightarrow M_{\lambda-\wt(v)} \otimes V$ is
\[x_\lambda\mapsto \sum_i a_i(\lambda-\wt(v)) x_{\lambda-\wt(v)}\otimes v_i.\]
Therefore, the composite \eqref{eq:classicalfusioncomposite} is
\begin{align*}
x_\lambda &\mapsto \sum_i a_i(\lambda-\wt(v)) x_{\lambda-\wt(v)}\otimes v_i\\
&\mapsto \sum_{i,j} a_i(\lambda-\wt(v))_{(1)} b_j(\lambda-\wt(v)-\wt(w))x_{\lambda-\wt(v)-\wt(w)}\otimes a_i(\lambda-\wt(v))_{(2)} w_j\otimes v_i.
\end{align*}
The resulting element of $M_{\lambda-\wt(v)-\wt(w)}\otimes W\otimes V$ is
\[\sum_i x_{\lambda-\wt(v)-\wt(w)}\otimes a_i(\lambda-\wt(v)) w\otimes v_i + \dots\]
which proves the claim.
\end{proof}

Moreover, in \cite[Section 5]{EtingofVarchenkoExchange} Etingof and Varchenko have introduced an $\h$-bialgebroid $F(G)$.

\begin{thm}
Consider the monoidal functor
\[\Rep(G)\xrightarrow{\free_G} \HC(G)\xrightarrow{\res^{\gen}}\HC(H)^{\gen}.\]
It admits a colimit-preserving right adjoint; denote by $B\in\HC(H)^{\gen}\otimes \HC(H)^{\gen}$ the Harish-Chandra bialgebroid corresponding to this monoidal functor constructed in \cref{HarishChandraTannakaReconstruction}. Then we have an isomorphism of $\h$-bialgebroids
\[B\otimes_{(\U\h)^{\gen}\otimes (\U\h)^{\gen}} (\Frac(\U\h)\otimes \Frac(\U\h))\cong F(G).\]
\label{thm:FGexchange}
\end{thm}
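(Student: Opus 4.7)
The plan is to compare the explicit presentation of $B \otimes_{(\U\h)^{\gen}\otimes(\U\h)^{\gen}}(\Frac(\U\h)\otimes\Frac(\U\h))$ furnished by \cref{thm:Tpresentation} and \cref{prop:dynamicalFRT} with the defining presentation of $F(G)$ given in \cite[Section 5]{EtingofVarchenkoExchange}. By \cref{cor:resmonoidal} the composite $F=\res^{\gen}\circ\free_G$ is monoidal and colimit-preserving, and by \cref{thm:EVclassicalbasis} it factors through free Harish--Chandra bimodules: $F(V)\cong(\U\h)^{\gen}\otimes V$ for every $V\in\Rep^{\fd}(G)$. Consequently the right adjoint $F^\R$ exists, and after base change to $\Frac(\U\h)\otimes\Frac(\U\h)$ the Harish--Chandra bialgebroid $B$ is computed as the coend
\[
\int^{V\in\Rep^{\fd}(G)} (\Frac(\U\h)\otimes V^\vee)\boxtimes(\Frac(\U\h)\otimes V).
\]

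By \cref{thm:Tpresentation}, this bialgebroid is generated as a $\Frac(\U\h)$-bimodule by matrix coefficients of operators $T_V\in B\otimes\End(V)$, $V\in\Rep^{\fd}(G)$, subject to naturality under $G$-intertwiners, the source--target commutation relations, the tensor product relation twisted by $J_{V,W}(\lambda)$, the coproduct $\Delta(T_V)=T_V\otimes T_V$ and the counit $\epsilon(T_V)=\id_V$. By \cref{prop:dynamicalFRT} the $T_V$ additionally satisfy a dynamical FRT relation with respect to the associated dynamical $R$-matrix. I would next identify this $R$-matrix with the Etingof--Varchenko exchange matrix: since the braiding on $\Rep(G)$ is the symmetric flip and $F(\sigma_{V,W})$ is therefore the tensor-factor flip, substituting the identification $J_{V,W}(\lambda)=\tau J^{EV}_{W,V}(\lambda)\tau$ from \cref{prop:classicalfusion} into the defining formula for the dynamical $R$-matrix yields precisely the Etingof--Varchenko exchange matrix built from ratios of fusion matrices in \cite[Section 3]{EtingofVarchenkoExchange}.

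The final step is to send each $L$-operator $L^V$ of $F(G)$ to the corresponding $T_V$. The defining relations of $F(G)$ -- naturality, tensor product relation, dynamical FRT, source--target commutation, together with the coproduct and counit -- match those produced by \cref{thm:Tpresentation} and \cref{prop:dynamicalFRT} generator by generator, so the resulting map of $\h$-bialgebroids is an isomorphism. The main obstacle I anticipate is the bookkeeping of conventions: the position of the flip $\tau$ appearing in \cref{prop:classicalfusion}, the distinction between left and right $\Frac(\U\h)$-actions implicit in the source and target maps, and the direction of shifts in the dynamical variable inside the coproduct. These differ superficially between the two presentations and must be reconciled to ensure the comparison is an isomorphism of $\h$-bialgebroids rather than of its opposite or coopposite.
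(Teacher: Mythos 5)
Your proposal follows essentially the same route as the paper's proof: use \cref{thm:EVclassicalbasis} to see that the composite lands in free modules (hence admits a colimit-preserving right adjoint and $B$ is the coend over $\Rep^{\fd}(G)$), invoke the presentation of \cref{thm:Tpresentation} to match $T_V$ with the Etingof--Varchenko generators $L^V$, and reduce the only nontrivial check --- the multiplication relation --- to \cref{prop:classicalfusion}. The detour through the dynamical FRT relation and the exchange $R$-matrix is harmless but not needed, since the product relation of $F(G)$ is stated directly in terms of the fusion matrix; otherwise the argument matches the paper's.
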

\begin{proof}
By \cref{thm:EVclassicalbasis} the functor $\Rep(G)\rightarrow \HC(H)^{\gen}$ factors as
\[\Rep(G)\longrightarrow \Rep(H)\longrightarrow \HC(H)^{\gen}.\]
Under this composite a finite-dimensional $G$-representation $V\in\Rep(G)$ is sent to a compact projective object $(\U\h)^{\gen}\otimes V\in\HC(H)^{\gen}$, so this functor admits a colimit-preserving right adjoint.

Since $G$ is semisimple, by \cref{thm:Tpresentation} the Harish-Chandra bialgebroid $B$ is isomorphic to
\[\bigoplus_{V\in\Irr(G)} ((\U\h)^{\gen}\otimes V^\vee)\boxtimes ((\U\h)^{\gen}\otimes V)\in\HC(H)^{\gen}\otimes \HC(H)^{\gen},\]
where the sum is over isomorphism classes of irreducible finite-dimensional $G$-representations.
In particular, we get an isomorphism of $(\Frac(\U\h),\Frac(\U\h))$-bimodules
\[B\otimes_{(\U\h)^{\gen}\otimes (\U\h)^{\gen}} (\Frac(\U\h)\otimes \Frac(\U\h))\cong F(G).\]

In the notations of \cref{thm:Tpresentation} and \cite[Section 5]{EtingofVarchenkoExchange}, the isomorphism is given by 
\begin{align*}
T_V & \mapsto L^V, \\
t(f(\lambda)) & \mapsto f(\lambda^1), \\
s(f(\lambda)) & \mapsto f(\lambda^2).
\end{align*}
It is clear that this isomorphism preserves coproduct, counit and unit and the only nontrivial check is that the product is preserved as well. The relations $(18),(19)$ in \textit{loc. cit.} are clearly satisfied. For $(20)$, the claim follows from \cref{prop:classicalfusion}.
\end{proof}

\subsection{Quantum parabolic restriction}
\label{sect:quantumres}

In this section we define parabolic restriction in the setting of quantum groups; we use the notation from \cref{sect:quantumgroups}.

\begin{defn}
The \defterm{universal quantum category $\cO$} is the category $\cO_q^{\univ}$ of $(\Uq(\g),\Uq(\h))$-bimodules whose diagonal $\Uq(\b)$-action is integrable. The \defterm{universal quantum Verma module} is the object
\[M^{\univ} = \Uq(\g)\otimes_{\Uq(\b)}\Uq(\h)\in\cO_q^{\univ}.\]
\end{defn}

\begin{remark}
As in the classical case, we may identify $\cO_q^{\univ}$ with the full subcategory of $\LMod_{\Uq(\g)}(\Rep_q(H))$ of $\Uq(\g)$-modules whose $\Uq(\n)$-action is locally finite.
\end{remark}

We will now define a quantum analog of the bimodules \eqref{eq:classicalcorrespondence}:
\begin{equation}
\HC_q(G)\curvearrowright \cO_q^{\univ} \curvearrowleft \HC_q(H).
\end{equation}

\begin{lm}
Suppose $X\in\HC_q(G)$. The left $\Uq(\g)^{\lf}$-module structure on $X\otimes_{\Uq(\g)^\lf} \Uq(\g)$ has a canonically extension to a $\Uq(\g)$-module structure. Moreover, the left $\Uq(\n)$-action on $X\otimes_{\Uq(\g)^\lf} M^{\univ}$ is locally finite.
\label{lm:Uqlfnaction}
\end{lm}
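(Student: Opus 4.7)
The candidate left $\Uq(\g)$-action on $X \otimes_{\Uq(\g)^\lf} \Uq(\g)$ is diagonal: for $y \in \Uq(\g)$,
\[ y \cdot (v \otimes m) := \sum_{(y)} y_{(1)} \triangleright v \otimes y_{(2)} m, \]
where $\triangleright$ is the $\Uq(\g)$-action on $X$ coming from the underlying object of $\Rep_q(G)$ and the right-hand tensor factor uses left multiplication in $\Uq(\g)$. Coassociativity makes this a left $\Uq(\g)$-action on $X \otimes \Uq(\g)$, and the bulk of the work is to verify that it descends to the quotient by the balanced relations $v \cdot_{\mathrm{r}} l \otimes m \sim v \otimes lm$, where $\cdot_{\mathrm{r}}$ denotes the right $\Uq(\g)^\lf$-action on $X$ obtained from $\tau$ as in \cref{prop:YDmodules}.

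To establish descent, I rewrite $y \cdot (v \cdot_{\mathrm{r}} l \otimes m)$ using $\Uq(\g)$-equivariance of $\cdot_{\mathrm{r}}$ with respect to the adjoint action $\triangleright_{\mathrm{ad}}$ on $\Uq(\g)^\lf$:
\[ y \cdot (v \cdot_{\mathrm{r}} l \otimes m) = \sum (y_{(1)} \triangleright v) \cdot_{\mathrm{r}} (y_{(2)} \triangleright_{\mathrm{ad}} l) \otimes y_{(3)} m. \]
Since $\Uq(\g)^\lf$ is stable under the adjoint action by definition, $y_{(2)} \triangleright_{\mathrm{ad}} l \in \Uq(\g)^\lf$, so the balanced relation applies; moving this factor across, expanding the adjoint action, and applying the antipode identity $\sum S(y_{(3)}) y_{(4)} = \epsilon(y_{(3)})$ (followed by a counit collapse) yields $\sum y_{(1)} \triangleright v \otimes y_{(2)} l m = y \cdot (v \otimes l m)$, as required. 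That this is the canonical extension of the natural left $\Uq(\g)^\lf$-action $l \cdot (v \otimes m) = (l \triangleright_\cL v) \otimes m$ coming from the $\cL$-module structure on $X$ follows from the identity
\[ \sum l_{(3)} \otimes S^{-1}(l_{(2)}) l_{(1)} = l \otimes 1, \]
obtained by regrouping via coassociativity and applying $\sum S^{-1}(z_{(2)}) z_{(1)} = \epsilon(z)$; combined with the formula for $\cdot_{\mathrm{r}}$ and balancing (which is legal because $l_{(2)} \in \Uq(\g)^\lf$ by the left coideal property, \cref{prop:Uqlfcoideal}), this gives $\sum l_{(1)} \triangleright v \otimes l_{(2)} m \equiv (l \triangleright_\cL v) \otimes m$.

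For the second part, both $X$ and $M^{\univ}$ are locally finite as $\Uq(\b)$-modules: $X \in \Rep_q(G)$ is integrable, and $M^{\univ} \in \cO_q^{\univ}$ has locally finite $\Uq(\n)$-action by definition, while $\Uq(\h)$ acts diagonally through the $\Lambda$-grading. Since $\Uq(\b)$ is a Hopf subalgebra of $\Uq(\g)$, any $v \otimes m$ lies in a finite-dimensional $\Uq(\b)$-stable subspace $X_0 \otimes M_0$ under the diagonal action, so the $\Uq(\n)$-action on $X \otimes M^{\univ}$ is locally finite. By the first part (applied inside $M^{\univ} = \Uq(\g) \otimes_{\Uq(\b)} \Uq(\h)$), the quotient map $X \otimes M^{\univ} \twoheadrightarrow X \otimes_{\Uq(\g)^\lf} M^{\univ}$ is $\Uq(\g)$-equivariant, and local finiteness of the $\Uq(\n)$-action is inherited.

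The main obstacle is the Sweedler bookkeeping in the descent verification; the key observation that makes it go through is the left coideal property of $\Uq(\g)^\lf$, which keeps the relevant intermediate expressions inside $\Uq(\g)^\lf$ so that the tensor-product relations can be applied repeatedly.
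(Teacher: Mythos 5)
Your proof is correct and follows essentially the same route as the paper: the extended action is the diagonal one, $a\triangleright(x\otimes h)=(\ad a_{(1)})(x)\otimes a_{(2)}h$, descent to the balanced tensor product is checked via $\ad$-stability of $\Uq(\g)^{\lf}$ together with the identity $(\ad a_{(1)})(l)\,a_{(2)}=al$, and local finiteness of the $\Uq(\n)$-action follows from local finiteness on each factor. The paper states these steps more tersely, but the content is identical.
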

\begin{proof}
Recall from \cref{rmk:inversefieldgoal} that the left action of $a\in\Uq(\g)^\lf$ on $x\in X$ is
\[a\triangleright x = (\ad a_{(1)})(x) \triangleleft a_{(2)},\]
where $\ad$ refers to the diagonal $\Uq(\g)$-action on $X$. So, we may extend the left $\Uq(\g)^\lf$-action on the relative tensor product $X\otimes_{\Uq(\g)^\lf} \Uq(\g)$ to a $\Uq(\g)$-action by the formula
\[a\triangleright (x\otimes h) = (\ad a_{(1)})(x)\otimes a_{(2)} h\]
for $a\in\Uq(\g)$ an $x\otimes h\in X\otimes_{\Uq(\g)^\lf} \Uq(\g)$. It is well-defined (i.e. descends to the relative tensor product) using the formula $(\ad a_{(1)})(l)a_{(2)} = al$ for any $a\in\Uq(\g)$ and $l\in\Uq(\g)^\lf$.

The diagonal $\Uq(\n)$-action on $X\otimes_{\Uq(\g)^\lf} M^{\univ}$ is locally finite since it is so on $X$ and $M^{\univ}$.
\end{proof}

A $\Uq(\g)^\lf$-bimodule $X\in\HC_q(G)$ acts on a $(\Uq(\g), \Uq(\h))$-bimodule $M\in\cO_q^{\univ}$ via
\[X, M\mapsto X\otimes_{\Uq(\g)^{\lf}} M.\]
By construction it is a $(\Uq(\g), \Uq(\h))$-bimodule. Since the diagonal $\Uq(\n)$-action on $X$ and the left $\Uq(\n)$-action on $M$ are locally finite, so is the left $\Uq(\n)$-action on this bimodule. In particular, it lies in $\cO_q^{\univ}$.

For a $\Uq(\h)$-bimodule $X\in\HC_q(H)$ and a $(\Uq(\g), \Uq(\h))$-bimodule $M\in\cO_q^{\univ}$ the action is
\[M, X\mapsto M\otimes_{\Uq(\h)} X.\]

Let
\[\act_G\colon \HC_q(G)\longrightarrow \cO_q^{\univ},\qquad \act_H\colon \HC_q(H)\longrightarrow \cO_q^{\univ}\]
be the actions of $\HC_q(G)$ and $\HC_q(H)$ on the universal Verma module $M^{\univ}$.

\begin{defn}
The \defterm{parabolic restriction} and \defterm{parabolic induction} are the lax monoidal functors
\begin{align*}
\res&=\act^\R_H\circ \act_G\colon \HC_q(G)\longrightarrow \HC_q(H) \\
\ind&=\act^\R_G\circ \act_H\colon \HC_q(H)\longrightarrow \HC_q(G).
\end{align*}
\end{defn}

We have a functor
\[(-)^{\Uq(\n)}\colon \cO_q^{\univ}\longrightarrow \HC_q(H)\]
of $\Uq(\n)$-invariants.

\begin{prop}
The functor $(-)^{\Uq(\n)}\colon \cO_q^{\univ}\rightarrow \HC_q(H)$ is right adjoint to $\act_H\colon \HC_q(H)\rightarrow \cO_q^{\univ}$.
\end{prop}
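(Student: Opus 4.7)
The plan is to mimic the classical argument from \cref{prop:resexplicit}, the main substance being a tensor–hom adjunction along the inclusion $\Uq(\b)\subset\Uq(\g)$, together with a check that all integrability conditions survive.

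First I would rewrite $\act_H(X)$ in a more convenient form. For $X\in\HC_q(H)$, view $X$ as a $(\Uq(\b),\Uq(\h))$-bimodule on which $\Uq(\n)^{>0}$ acts trivially via the projection $\Uq(\b)\twoheadrightarrow\Uq(\h)$; then since $M^{\univ}=\Uq(\g)\otimes_{\Uq(\b)}\Uq(\h)$ we have a canonical identification
\[
\act_H(X) \;=\; M^{\univ}\otimes_{\Uq(\h)} X \;\cong\; \Uq(\g)\otimes_{\Uq(\b)} X,
\]
with left $\Uq(\g)$-action on the first factor and right $\Uq(\h)$-action on $X$. This is analogous to the classical picture in the proof of \cref{prop:resexplicit}.

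Next, for $M\in\cO_q^{\univ}$, the standard tensor–hom adjunction for the inclusion $\Uq(\b)\subset\Uq(\g)$ (which survives passage to the integrable subcategories by the same argument as in \cref{lm:Uqlfnaction}, since restriction preserves local finiteness of the diagonal $\Uq(\b)$-action) yields
\[
\Hom_{\cO_q^{\univ}}\bigl(\Uq(\g)\otimes_{\Uq(\b)} X,\, M\bigr)\;\cong\;\Hom_{(\Uq(\b),\Uq(\h))\text{-bimod}}(X,\,M),
\]
where the right-hand side denotes maps of $(\Uq(\b),\Uq(\h))$-bimodules whose diagonal $\Uq(\b)$-action is integrable.

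Now I would exploit the fact that $\Uq(\n)^{>0}$ acts by zero on $X$: any such bimodule map factors through the subspace $M^{\Uq(\n)}\subset M$ of $\Uq(\n)$-invariants, and conversely every $\Uq(\h)$-bimodule map $X\to M^{\Uq(\n)}$ extends uniquely to a $(\Uq(\b),\Uq(\h))$-bimodule map. This identifies the right-hand side with $\Hom_{\HC_q(H)}(X, M^{\Uq(\n)})$, provided $M^{\Uq(\n)}$ really lives in $\HC_q(H)$. For this one notes that $M^{\Uq(\n)}$ is preserved by the left $\Uq(\h)$-action (since $\Uq(\h)$ normalizes $\Uq(\n)$), is a $\Uq(\h)$-sub-bimodule of $M$, and inherits integrability of the diagonal $\Uq(\h)$-action from $M$. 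The only minor obstacle worth highlighting is this last integrability and well-definedness check; everything else is formal. Chaining the isomorphisms gives the claimed adjunction $\act_H\dashv(-)^{\Uq(\n)}$.
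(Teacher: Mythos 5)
Your argument is correct and is essentially the paper's own proof: the paper likewise identifies $\act_H(X)\cong \Uq(\g)\otimes_{\Uq(\b)} X$, applies the tensor--hom adjunction along $\Uq(\b)\subset\Uq(\g)$, and then factors the resulting $(\Uq(\b),\Uq(\h))$-bimodule maps through $M^{\Uq(\n)}$. You simply spell out the intermediate checks (integrability, well-definedness of $M^{\Uq(\n)}$ as an object of $\HC_q(H)$) that the paper leaves implicit.
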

\begin{proof}
For $M\in\cO_q^{\univ}$ and $X\in\HC_q(H)$ we have
\begin{align*}
\Hom_{\cO_q^{\univ}}(\act_H(X), M)&= \Hom_{\cO_q^{\univ}}(\Uq(\g)\otimes_{\Uq(\b)} X, M) \\
&\cong \Hom_{\BiMod{\Uq(\b)}{\Uq(\h)}}(X, M) \\
&\cong \Hom_{\HC_q(H)}(X, M^{\Uq(n)}).
\end{align*}
\end{proof}

\begin{prop}
The natural morphism $\Uq(\h)\rightarrow \res(\Uq(\g)^\lf)$ is an isomorphism.
\end{prop}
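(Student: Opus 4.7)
The plan is to emulate the classical argument of \cref{prop:classicalresunit}. First, I would unfold the definitions: by the adjunction $\act_H \dashv (-)^{\Uq(\n)}$, we have $\res(\Uq(\g)^\lf) \cong (\act_G(\Uq(\g)^\lf))^{\Uq(\n)}$, and since $\Uq(\g)^\lf$ is the monoidal unit of $\HC_q(G)$, the relative tensor product $\Uq(\g)^\lf \otimes_{\Uq(\g)^\lf} M^{\univ}$ is canonically isomorphic to $M^{\univ}$ as a $(\Uq(\g), \Uq(\h))$-bimodule (the map $a\otimes m\mapsto am$ is inverse to $m\mapsto 1\otimes m$). This reduces the claim to showing that the inclusion $\Uq(\h)\to (M^{\univ})^{\Uq(\n)}$ of the highest weight line is an isomorphism.

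Next, I would introduce a generic version of the universal Verma module. Let $\Frac(\Uq(\h))$ be the fraction field of $\Uq(\h) = k[\Lambda]$ and set
\[
M^{\univ,\gen} := \Uq(\g)\otimes_{\Uq(\b)} \Frac(\Uq(\h)).
\]
Flatness of localization gives an embedding $M^{\univ}\hookrightarrow M^{\univ,\gen}$, and left-exactness of $(-)^{\Uq(\n)}$ (as a right adjoint) yields an injection $(M^{\univ})^{\Uq(\n)}\hookrightarrow (M^{\univ,\gen})^{\Uq(\n)}$. The decisive input is the identification $(M^{\univ,\gen})^{\Uq(\n)}\cong \Frac(\Uq(\h))$ concentrated on the highest weight line. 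This is equivalent to irreducibility of the quantum Verma module at a generic highest weight, which holds because $q^{1/d}$ is not a root of unity: any putative singular vector in a weight $-\beta$ component would be detected by the nonvanishing of a quantum Shapovalov-type determinant on the Cartan factor, which becomes invertible after passing to $\Frac(\Uq(\h))$.

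Once this is in hand, the PBW decomposition $M^{\univ}\cong \Uq(\n_-)\otimes \Uq(\h)$ and the left $\Uq(\h)$-weight grading show that the image of $(M^{\univ})^{\Uq(\n)}$ inside $(M^{\univ,\gen})^{\Uq(\n)}\cong \Frac(\Uq(\h))$ is contained in the weight-zero piece of $M^{\univ}$, which is precisely $\Uq(\h)\subset \Frac(\Uq(\h))$. Combined with the given map $\Uq(\h)\to (M^{\univ})^{\Uq(\n)}$, this produces the desired isomorphism. The only nonformal ingredient is the irreducibility of the generic quantum Verma module; modulo that standard fact, the argument is a direct transcription of the classical proof, with the locally nilpotent $\n$-action replaced by the locally finite $\Uq(\n)$-action inherent to $\cO_q^{\univ}$.
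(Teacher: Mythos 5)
Your proposal is correct and follows essentially the same route as the paper: reduce to showing $\Uq(\h)\to (M^{\univ})^{\Uq(\n)}$ is an isomorphism, embed into the generic Verma module $\Uq(\g)\otimes_{\Uq(\b)}\Frac(\Uq(\h))$ using left-exactness of invariants, and invoke irreducibility of the quantum Verma module at generic highest weight (the paper cites this as a known result rather than sketching the Shapovalov-determinant argument). The final PBW/weight-zero step you spell out is exactly what the paper leaves implicit in ``this implies the claim.''
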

\begin{proof}
The proof is similar to the proof of \cref{prop:classicalresunit}, where we again use the fact that the quantum Verma module is irreducible for generic parameters \cite[Theorem 4.15]{VoigtYuncken}.
\end{proof}

A weight for a $\Uq(\g)$-module is specified by an element of $H(k)\cong \Hom(\Lambda, k^\times)$. We will use an additive notation for weights, so that a vector $v$ of weight $\lambda$ satisfies $K_\mu v= q^{(\lambda, \mu)} v$. For a root $\alpha$ we denote $q_\alpha = q^{(\alpha, \alpha)/2}$.

\begin{defn}
A weight $\lambda$ is \defterm{generic} if $q^{(\alpha, \lambda)}\not\in \pm q_\alpha^\Z$ for every root $\alpha$. Denote by $H^\gen\subset H$ the subset of generic weights. We denote by $\HC_q^{\gen}(H)\subset \HC_q(H)$ and $\cO_q^{\univ,\gen}\subset \cO_q^{\univ}$ the full subcategories of modules with generic $\Uq(\h)$-weights. Let $\Uq(\h)^{\gen}\subset\Frac(\Uq(\h))$ be the subspace of rational functions on $H$ regular on $H^{\gen}$.
\end{defn}

We denote by
\[M^{\univ, \gen} = \Uq(\g)\otimes_{\Uq(\b)} \Uq(\h)^{\gen}\]
the universal quantum Verma module with generic highest weights.

A generalization of the extremal projector to quantum groups was introduced in \cite{KhoroshkinTolstoy}.

\begin{thm}
There is an extension $T_q(\g)$ of $\Uq(\g)$ obtained by replacing $\Uq(\h)\subset\Uq(\g)$ with $\Frac(\Uq(\h))$ and considering certain power series. There is an element $P\in T_q(\g)$ satisfying the following properties:
\begin{enumerate}
\item $\Uq^{>0}(\n) P = P \Uq^{>0}(\n_-) = 0$.

\item $P-1\in T_q(\g)\Uq^{>0}(\n)\cap \Uq^{>0}(\n_-) T_q(\g)$.
\end{enumerate}
The action of $P$ is well-defined on left $\Uq(\g)$-modules whose $\Uq(\n)$-action is locally nilpotent and which have generic $\Uq(\h)$-weights.
\label{thm:quantumextremalprojector}
\end{thm}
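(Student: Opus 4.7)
The plan is to adapt the classical construction (\cref{thm:extremalprojector}) to the quantum setting, following the approach of Khoroshkin--Tolstoy. First I would treat the rank-one case $\g=\sl_2$ directly: one defines a formal series
\[P_\alpha = \sum_{n=0}^\infty c_n(K_\alpha)\, F_\alpha^n E_\alpha^n,\]
where the coefficients $c_n\in \Frac(\Uq(\h))$ are determined by a simple recursion that forces $E_\alpha P_\alpha = 0$, obtained by commuting $E_\alpha$ past $F_\alpha^n E_\alpha^n$ using the quantum commutation relation and collecting powers. The same coefficients also yield $P_\alpha F_\alpha = 0$ by a symmetric computation, and the denominators that appear in $c_n$ are quantum integers of the form $q^{(\alpha,\lambda)+k}_\alpha-q^{-(\alpha,\lambda)-k}_\alpha$ for $k\geq 1$, which are invertible on generic weights.

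For general $\g$, I would fix a reduced expression $w_0=s_{i_1}\cdots s_{i_N}$ for the longest element of the Weyl group; this gives a normal (convex) ordering $\beta_1<\beta_2<\cdots<\beta_N$ of the positive roots together with Lusztig's quantum root vectors $E_{\beta_k},F_{\beta_k}\in\Uq(\g)$ obtained via the braid-group action. For each positive root $\beta$ these generate a quantum $\sl_2$-subalgebra of $\Uq(\g)$ with Cartan element $K_\beta$, so the rank-one construction produces a projector $P_\beta$ living in the appropriate completion. I would then define $T_q(\g)$ as the space of $\Frac(\Uq(\h))$-linear formal sums of ordered PBW monomials, and set
\[P = P_{\beta_1}P_{\beta_2}\cdots P_{\beta_N}\]
in the chosen order. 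On any vector in a module whose $\Uq(\n)$-action is locally nilpotent and whose weights lie in $H^{\gen}$, each factor $P_\beta$ acts as a finite sum (because $E_\beta^n$ eventually annihilates the vector) with coefficients that are regular at generic weights; this guarantees that $P$ acts in a well-defined manner.

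The verification of property (1) then proceeds by a standard induction using the key feature of normal orderings: for a simple root $\alpha_i$, commuting $E_{\alpha_i}$ through the product rewrites it, modulo terms absorbed by later factors, as a product in which $E_{\alpha_i}$ stands immediately to the left of the factor $P_{\alpha_i}$, whose defining property $E_{\alpha_i}P_{\alpha_i}=0$ then kills it. A symmetric argument handles $P\cdot \Uq^{>0}(\n_-)$. Property (2) follows from the observation that each $P_\beta - 1$ lies in both $T_q(\g)\cdot E_\beta$ and $F_\beta\cdot T_q(\g)$, together with the fact that $T_q(\g)\cdot \Uq^{>0}(\n)$ is a left ideal and similarly on the other side.

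The main technical obstacle is to show that $P$ is independent of the chosen reduced expression; I would reduce, via Matsumoto's theorem, to the rank-two case, where one verifies by direct (but involved) $q$-series manipulation that two orderings of the quantum root vectors produce the same element of $T_q(\g)$. This rank-two check is the heart of the construction and requires the Levendorskii--Soibelman type commutation relations between quantum root vectors. Once this is established, the uniqueness statement, together with the annihilation properties (1) and the normalization (2), characterizes $P$ completely; the fact that these properties alone force uniqueness gives an a posteriori verification that the construction does not depend on the chosen reduced expression.
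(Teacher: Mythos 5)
The paper does not actually prove this theorem: it is imported from the reference [KhoroshkinTolstoy] (with \cref{thm:extremalprojector} as its classical antecedent), so the comparison is with the construction in that reference. Your outline follows it in broad strokes — a rank-one projector, a product over a normal ordering of the positive roots built from Lusztig's quantum root vectors, reduction of ordering-independence to rank two via Matsumoto's theorem — and your closing uniqueness observation is correct and standard: if $P$ and $P'$ both satisfy (1) and (2), then $P'P=P'$ (using $P-1\in \Uq^{>0}(\n_-)T_q(\g)$ and $P'\,\Uq^{>0}(\n_-)=0$) and $P'P=P$ (using $P'-1\in T_q(\g)\Uq^{>0}(\n)$ and $\Uq^{>0}(\n)P=0$), whence $P=P'$.

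There is, however, one genuine gap. For a non-simple positive root $\beta$ the factor $P_\beta$ is \emph{not} the rank-one extremal projector of the quantum $\sl_2$-subalgebra generated by $E_\beta, F_\beta, K_\beta^{\pm1}$. The correct factor has Cartan denominators $\prod_{j=1}^{n}[h_\beta+(\rho,\beta^\vee)+j]_\beta$, where $\rho$ is the half-sum of positive roots of the ambient $\g$; the recursion you describe, which forces $E_\beta P_\beta=0$ inside the $\sl_2$-subalgebra, produces the shift $+1$ (the $\rho$ of that $\sl_2$), and this equals $(\rho,\beta^\vee)$ only when $\beta$ is simple (e.g.\ for the highest root of $\sl_3$ the correct shift is $+2$). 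With the unshifted factors the rank-two identity — which you rightly call the heart of the construction — fails, and the product does not annihilate $\Uq^{>0}(\n)$. Relatedly, your verification of property (1) by ``commuting $E_{\alpha_i}$ through the product until it sits next to $P_{\alpha_i}$'' is not how the argument actually closes: the individual non-simple factors do not satisfy $E_\beta P_\beta=0$, so one first proves independence of the normal ordering and then, for each simple root $\alpha_i$, chooses an ordering beginning (resp.\ ending) with $\alpha_i$, so that $E_{\alpha_i}P$ (resp.\ $P F_{\alpha_i}$) is killed by the first (resp.\ last) factor, for which the rank-one identity does hold because $(\rho,\alpha_i^\vee)=1$. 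Thus ordering-independence is not merely an a posteriori consistency check; it is the step that makes (1) provable at all.
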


\begin{example}
Consider $\Uq(\sl_2)$ with generators $E,K,F$ as in \cref{ex:Uqsl2lf}. Let $[n] = \frac{q^n-q^{-n}}{q-q^{-1}}$ be the quantum integer, $[n]! = \prod_{j=1}^n [j]!$ the quantum factorial and
\[[h+n] = \frac{Kq^n-K^{-1}q^{-n}}{q-q^{-1}}\in\Uq(\sl_2)\]
for $n\in\Z$. Then the extremal projector is (see e.g. \cite[Section 9]{KhoroshkinOgievetsky})
\[P = \sum_{n=0}^\infty \frac{(-1)^n}{[n]!} g_n^{-1} F^n E^n,\]
where $g_n = \prod_{j=1}^n [h+j+1]$.
\end{example}

Completely analogously to the proof of \cref{thm:Ofreerank1}, one proves the following statement.

\begin{thm}
The category $\cO_q^{\univ, \gen}$ is free of rank $1$ as a $\HC_q(H)^{\gen}$-module category.
\end{thm}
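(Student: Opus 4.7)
The plan is to mimic the proof of \cref{thm:Ofreerank1} step by step, substituting the quantum versions of the key ingredients. The strategy is to show that the action functor $\act_H\colon \HC_q(H)^{\gen}\to \cO_q^{\univ,\gen}$ is an equivalence by establishing (i) fully faithfulness via a quantum PBW argument and (ii) conservativity of its right adjoint $(-)^{\Uq(\n)}$ via the quantum extremal projector.

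First, I would introduce the opposite universal quantum Verma module $M^{\univ}_- = \Uq(\h)\otimes_{\Uq(\b_-)}\Uq(\g)$ and the corresponding $\Uq(\n_-)$-coinvariants functor $(-)_{\Uq(\n_-)} = M^{\univ}_-\otimes_{\Uq(\g)}(-)\colon \cO_q^{\univ}\to \HC_q(H)$. The natural composite $\pi\colon M^{\Uq(\n)}\hookrightarrow M\twoheadrightarrow M_{\Uq(\n_-)}$ makes sense for any $M\in\cO_q^{\univ}$.

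Second, I would prove a quantum analogue of \cref{prop:coinvinv}: $\pi$ is an isomorphism for $M\in\cO_q^{\univ,\gen}$. Because the left $\Uq(\n)$-action on $M$ is locally finite and the left $\Uq(\h)$-weights of $M$ are generic (they differ from right weights only by a weight of the diagonal integrable action, hence remain generic), the quantum extremal projector $P$ of \cref{thm:quantumextremalprojector} is well-defined on $M$. The relations $\Uq^{>0}(\n)P = P\Uq^{>0}(\n_-) = 0$ guarantee that the action of $P$ on $M$ factors through $M_{\Uq(\n_-)}$ and lands in $M^{\Uq(\n)}$, while the conditions $P - 1\in T_q(\g)\Uq^{>0}(\n)$ and $P-1\in \Uq^{>0}(\n_-)T_q(\g)$ imply that $P$ acts as the identity on invariants and coinvariants respectively, exhibiting $P$ as a two-sided inverse to $\pi$. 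As a corollary, $(-)^{\Uq(\n)}\cong (-)_{\Uq(\n_-)}$ on $\cO_q^{\univ,\gen}$, and in particular $(-)^{\Uq(\n)}$ is exact there.

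Third, I would check that the unit of the adjunction $\act_H\dashv (-)^{\Uq(\n)}$ is an isomorphism. Explicitly, for $X\in\HC_q(H)^{\gen}$ the unit is
\[X\longrightarrow (\Uq(\g)\otimes_{\Uq(\b)} X)^{\Uq(\n)}\cong (M^{\univ}\otimes_{\Uq(\h)} X)_{\Uq(\n_-)},\]
where the isomorphism comes from the previous step. By the quantum PBW theorem $M^{\univ}\cong \Uq(\n_-)\otimes \Uq(\h)$ as right $\Uq(\h)$-modules compatibly with the left $\Uq(\n_-)$-action, so the right-hand side collapses to $X$. Hence $\act_H$ is fully faithful.

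Finally, I would conclude by conservativity: since the $\Uq(\n)$-action on any $M\in\cO_q^{\univ,\gen}$ is locally nilpotent on weight vectors, $M^{\Uq(\n)} = 0$ forces $M = 0$, so $(-)^{\Uq(\n)}$ is conservative. A fully faithful left adjoint with conservative right adjoint is an equivalence, proving the theorem. The main technical point, which is the quantum counterpart of the only nontrivial step in the classical argument, is verifying that the generic-weight condition $q^{(\alpha,\lambda)}\notin \pm q_\alpha^{\Z}$ ensures that all denominators $g_n = \prod_{j=1}^n[h+j+1]$ appearing in the extremal projector are invertible on $M$; this should follow directly from the weight hypothesis after decomposing along root $\Uq(\sl_2)$-subalgebras.
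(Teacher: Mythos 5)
Your proposal is correct and follows exactly the route the paper intends: the paper's own proof consists of the single remark that the statement is proved ``completely analogously'' to \cref{thm:Ofreerank1}, and you have faithfully carried out that analogy --- quantum extremal projector for the identification of $\Uq(\n)$-invariants with $\Uq(\n_-)$-coinvariants, quantum PBW for full faithfulness of $\act_H$, and conservativity of $(-)^{\Uq(\n)}$ to conclude.
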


\begin{cor}
The functor $\res^{\gen}\colon \HC_q(G)\rightarrow \HC_q(H)^{\gen}$ is strongly monoidal and colimit-preserving.
\label{cor:quantumresmonoidal}
\end{cor}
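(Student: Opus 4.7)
The plan is to mirror the proof of \cref{cor:resmonoidal} verbatim, with the quantum ingredients replacing the classical ones. All the substantive work has already been done: the quantum extremal projector (\cref{thm:quantumextremalprojector}) was used to prove the quantum analog of \cref{prop:coinvinv}, and this in turn fed into the quantum freeness statement that $\cO_q^{\univ,\gen}$ is free of rank $1$ as a $\HC_q(H)^{\gen}$-module category. Once that freeness is in hand, the corollary is a purely formal consequence of \cref{prop:freerank1monoidal}.

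In more detail, I would first recall that by construction $\res^{\gen}\colon \HC_q(G)\to \HC_q(H)^{\gen}$ arises from the bimodule structure
\[\HC_q(G)\curvearrowright \cO_q^{\univ,\gen}\curvearrowleft \HC_q(H)^{\gen}\]
with distinguished object $M^{\univ,\gen}\in\cO_q^{\univ,\gen}$: namely, $\res^{\gen}$ is the composite of the action functor $\act_G\colon \HC_q(G)\to \cO_q^{\univ,\gen}$ (post-composed with localization, which is harmless since $M^{\univ}\mapsto M^{\univ,\gen}$) with the right adjoint $\act_H^{\R}\colon \cO_q^{\univ,\gen}\to \HC_q(H)^{\gen}$ of the $\HC_q(H)^{\gen}$-action. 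This is precisely the setup $F_{\cC\cD}=\act_\cD^{\R}\circ\act_\cC$ of \cref{prop:laxmonoidalactionadjoint}, applied to $\cC=\HC_q(G)$, $\cD=\HC_q(H)^{\gen}$, $\cM=\cO_q^{\univ,\gen}$.

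By the preceding freeness theorem, $\cO_q^{\univ,\gen}$ is free of rank $1$ over $\HC_q(H)^{\gen}$ in the sense of \cref{def:freerank1}. Applying \cref{prop:freerank1monoidal} then immediately yields that $\res^{\gen}$ is strongly monoidal and preserves colimits. There is no real obstacle here: the analog of \cref{thm:Ofreerank1} in the quantum setting, for which the quantum extremal projector of \cref{thm:quantumextremalprojector} is the key nontrivial input, is precisely what allows the same abstract machinery to apply.
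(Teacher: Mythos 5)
Your proposal is correct and follows exactly the paper's route: the corollary is deduced by combining the freeness-of-rank-one theorem for $\cO_q^{\univ,\gen}$ over $\HC_q(H)^{\gen}$ (proved via the quantum extremal projector, in parallel with \cref{thm:Ofreerank1}) with the formal statement \cref{prop:freerank1monoidal}. Nothing to add.
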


Similarly to the classical case, parabolic restriction of a free Harish-Chandra bimodule is free.

\begin{thm}
For every $V\in\Rep_q(G)$ the morphism
\[(V\otimes M^{\univ, \gen})^{\Uq(\n)}\subset V\otimes M^{\univ, \gen} \longrightarrow V\otimes \Uq(\h)^{\gen},\]
where the second morphism is induced by the projection $M^{\univ, \gen}\rightarrow \Uq(\h)^{\gen}$ onto highest weights, defines a natural isomorphism witnessing commutativity of the diagram
\[
\xymatrix{
\Rep_q(G) \ar[d] \ar^{\free_G}[r] & \HC_q(G) \ar^{\res^{\gen}}[d] \\
\Rep_q(H) \ar^{\free_H}[r] & \HC_q(H)^{\gen}
}
\]
\label{thm:EVquantumbasis}
\end{thm}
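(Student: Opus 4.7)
The approach I would take follows the proof of the classical Theorem~\ref{thm:EVclassicalbasis}, substituting the quantum extremal projector from Theorem~\ref{thm:quantumextremalprojector} for the classical one. Passing to fibers at closed points $\lambda\in H^{\gen}$, the statement becomes the quantum analog of \cite[Theorem 8]{EtingofVarchenkoExchange}: for every finite-dimensional $V\in\Rep_q(G)$ and every generic $\lambda$, the projection
\[
\Pi_\lambda\colon (V\otimes M_\lambda)^{\Uq(\n)}\longrightarrow V,\qquad v\otimes x_\lambda+\cdots\mapsto v,
\]
sending a singular vector to its highest-weight component is a vector-space isomorphism.

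The fiberwise claim splits into two steps. First I would establish the dimension count: since $M_{\lambda+\nu}$ is irreducible for generic $\lambda+\nu$ (\cite[Theorem 4.15]{VoigtYuncken}) and the central characters of the $M_{\lambda+\nu}$ for $\nu\in\wt V$ are pairwise distinct, the tensor product decomposes as a $\Uq(\g)$-module
\[
V\otimes M_\lambda\cong\bigoplus_{\nu\in\wt V}M_{\lambda+\nu}^{\oplus\dim V[\nu]},
\]
whence $\dim(V\otimes M_\lambda)^{\Uq(\n)}=\dim V$. Matching weights shows that $\Pi_\lambda$ restricts to a map of equal dimension $\dim V[\nu]$ on each weight $\nu$, so it suffices to verify invertibility weight by weight. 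Second, I would produce an explicit right inverse via the quantum extremal projector $P$: for $v\in V$, the element $P(v\otimes x_\lambda)$ lies in $(V\otimes M_\lambda)^{\Uq(\n)}$ by the relation $\Uq^{>0}(\n)P=0$, and a direct PBW computation using $Ex_\lambda=0$ shows that $\Pi_\lambda(P(v\otimes x_\lambda))=Q_\lambda v$ for a weight-preserving operator $Q_\lambda\in\End(V)$ whose matrix entries are rational in $\lambda$ with poles only on the root hyperplanes removed in defining $H^{\gen}$. Invertibility of $Q_\lambda$ on $H^{\gen}$ then follows from an asymptotic analysis (or direct inspection of the quantum-integer denominators in $Q_\lambda-\id$), yielding $\Pi_\lambda^{-1}(v)=P(Q_\lambda^{-1}v\otimes x_\lambda)$.

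To deduce the universal statement I would observe that the morphism in question is $\Uq(\h)^{\gen}$-linear between $\Uq(\h)^{\gen}$-modules that are locally free of rank $\dim V$; being a fiberwise isomorphism then forces it to be a global one, and naturality in $V$ is immediate from the construction. The main obstacle I expect is establishing the universal version of the decomposition of $V\otimes M^{\univ,\gen}$ into shifted universal Verma summands cleanly over $\Uq(\h)^{\gen}$; for this one relies on the fact that the projectors realizing the splitting depend rationally on $\lambda$ (so that the decomposition makes sense over the generic locus) together with the well-definedness of the quantum extremal projector on $V\otimes M^{\univ,\gen}$, which holds because this bimodule has locally nilpotent $\Uq(\n)$-action by Lemma~\ref{lm:Uqlfnaction} and generic $\Uq(\h)$-weights (genericity being preserved by shifts by integral weights of $V$).
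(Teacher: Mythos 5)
Your top-level strategy coincides with the paper's: the paper proves the classical \cref{thm:EVclassicalbasis} by reducing to the fiberwise claim that the leading-term map $(V\otimes M_\lambda)^N\to V$ is an isomorphism for generic $\lambda$ and then citing \cite[Theorem 8]{EtingofVarchenkoExchange}, and the quantum case is handled the same way (the paper offers no separate argument beyond ``similarly to the classical case''). Where you differ is that you attempt to actually prove the fiberwise statement rather than cite it, and that is where the gap sits. Your dimension count via the splitting $V\otimes M_\lambda\cong\bigoplus_\nu M_{\lambda+\nu}^{\oplus\dim V[\nu]}$ is fine, but equal dimensions weight by weight only reduces the problem to injectivity \emph{or} surjectivity of $\Pi_\lambda$, and your sole mechanism for that is the right inverse $v\mapsto P(Q_\lambda^{-1}v\otimes x_\lambda)$, which presupposes that $Q_\lambda=\Pi_\lambda(P(-\otimes x_\lambda))$ is invertible for \emph{every} $\lambda\in H^{\gen}$, with inverse regular on $H^{\gen}$. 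That is essentially equivalent to the theorem itself and is not established by what you write: an asymptotic argument only gives invertibility on a Zariski-dense subset, and ``inspection of the quantum-integer denominators'' locates the poles of $Q_\lambda$, not the zero locus of $\det Q_\lambda$. Note also that $Q_\lambda-\id$ is built from operators of the form $F_{\vec\beta}\,E_{\vec\gamma}$ acting on $V$, which preserve each weight space and are not nilpotent there, so there is no formal triangularity forcing invertibility. (In the rank-one computation of \cref{ex:SL2J} one finds $Q_\lambda v_-=\tfrac{q[\lambda]}{[\lambda+1]}v_-$, which does happen to be invertible over $\Uq(\h)^{\gen}$, but that is a computation, not an argument.) The standard way to close this is to prove injectivity of $\Pi_\lambda$ directly (e.g.\ via the decomposition into irreducible Vermas and the identification of singular vectors, or via the $\n_-$-coinvariants comparison as in \cref{prop:coinvinv}), after which the dimension count finishes the fiberwise claim without ever needing $Q_\lambda^{-1}$.

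A secondary issue is the passage from fibers to the universal statement: you assume that $(V\otimes M^{\univ,\gen})^{\Uq(\n)}$ is locally free of rank $\dim V$ over $\Uq(\h)^{\gen}$, but that is precisely the content of the theorem, so the deduction ``fiberwise isomorphism of locally free modules, hence isomorphism'' is circular as stated. One either has to argue injectivity and surjectivity of the universal map directly over $\Uq(\h)^{\gen}$ (injectivity by a leading-term/PBW filtration argument, surjectivity from the extremal projector of \cref{thm:quantumextremalprojector} once the previous gap is closed), or invoke \cite{EtingofVarchenkoExchange} at the universal level, which is what the paper implicitly does. Your observation that genericity of weights is preserved under shifts by weights of $V$, so that the projector is defined on $V\otimes M^{\univ,\gen}$, is correct and is the right ingredient for the surjectivity half.
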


Combining \cref{cor:quantumresmonoidal} and \cref{thm:EVquantumbasis} we obtain a monoidal structure on the composite
\[\Rep_q(G)\longrightarrow \Rep_q(H)\longrightarrow \HC_q(H)^{\gen}.\]
In particular, by \cref{prop:dynamicaltwistmonoidal} this gives rise to linear maps
\[J_{V, W}(\lambda)\colon V\otimes W\rightarrow V\otimes W,\]
rational functions on $H$.

\begin{example}
Consider $G=\SL_2$ and $V\in\Rep_q(\SL_2)$ the irreducible two-dimensional representation with the basis $\{v_+,v_- \}$ such that
\[K v_+ = q v_+,\qquad K v_- = q^{-1} v_+,\qquad F v_+ = v_-.\]
The isomorphism $\U_q(\h)^{\gen} \otimes V \rightarrow (V \otimes M^{\univ,\gen})^{\Uq(\n)}$ is given by
\begin{align*} 
1 \otimes v_+ & \mapsto v_+ \otimes 1, \\
1 \otimes v_- & \mapsto v_- \otimes 1 - q^{-1}  v_+ \otimes F\cdot [h]^{-1} x^{\univ}. 
\end{align*}
Then the matrix of $J_{V,V}(\lambda)$ in the basis $\{ v_+ \otimes v_+,v_+ \otimes v_-, v_- \otimes v_+, v_- \otimes v_- \}$ is given by
\[ \left( \begin{array}{cccc} 
1 & 0 & 0 & 0 \\ 
0 & 1 & -q^{-\lambda-1}[\lambda+1]^{-1} & 0 \\
0 & 0 & 1 & 0 \\
0 & 0 & 0 & 1
\end{array} \right).\]
\label{ex:SL2J}
\end{example}

Our convention for the coproduct on $\Uq(\g)$ follows \cite[Lemma 3.1.4]{Lusztig}. For two $\Uq(\g)$-modules $V,W$ we denote by $V\botimes W$ the vector space $V\otimes W$ equipped with the $\Uq(\g)$-module structure via the opposite coproduct:
\[h\triangleright (v\otimes w) = h_{(2)}\triangleright v\otimes h_{(1)}\triangleright w.\]

Consider $V,W\in\Rep_q(G)$. Similarly to the classical case, Etingof and Varchenko \cite{EtingofVarchenkoExchange} have introduced the \defterm{fusion matrix} $J^{EV}_{V, W}(\lambda)\colon V\otimes W\rightarrow V\otimes W$, a rational function on $H$, using intertwiners of quantum Verma modules. Note, however, that in our notations they are considering maps
\[\Phi^v_{\lambda}\colon M_\lambda\longrightarrow M_{\lambda-\wt(v)}\botimes V\]
with the property that $\Phi^v_{\lambda}(x_\lambda) = x_\lambda\otimes v + \dots$ Analogously to \cref{prop:classicalfusion} we have the following statement.

\begin{prop}
Let $V,W\in\Rep_q(G)$. The maps $J_{V, W}$ and $J^{EV}_{W, V}$ are related as follows:
\[J_{V, W}(\lambda) = \tau J^{EV}_{W, V} \tau.\]
\label{prop::quantumfusion}
\end{prop}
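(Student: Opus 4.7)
The strategy is to mirror the proof of \cref{prop:classicalfusion}, with the only genuine new ingredient being a careful conversion between the $\botimes$-conventions used by Etingof--Varchenko for the quantum intertwiners $\Phi^v_\lambda$ and the standard $\otimes$-conventions used by our $J_{V,W}(\lambda)$. As in the classical case, I would use the quantum PBW theorem to identify $M^{\univ,\gen}\cong \Uq(\n_-)\otimes\Uq(\h)^{\gen}$ and view elements of $M^{\univ,\gen}$ as $\Uq(\n_-)$-valued rational functions on $H$. By \cref{thm:EVquantumbasis}, for each weight vector $v\in V$ there exists a unique $\Uq(\n)$-invariant element of $V\otimes M^{\univ,\gen}$ of the form $\sum_i v_i\otimes a_i x^{\univ}$ with $v_0=v, a_0=1$; similarly define $\sum_j w_j\otimes b_j x^{\univ}$ for $w\in W$.

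Next, I would apply the explicit formula \eqref{eq:resmonoidal} for the lax monoidal structure on $\res$ (which extends verbatim to the quantum setting), giving
\[\sum_{i,j}(v_i\otimes a_i x^{\univ})\otimes(w_j\otimes b_j x^{\univ})\longmapsto \sum_{i,j} v_i\otimes (a_i)_{(1)}\triangleright w_j\otimes (a_i)_{(2)} b_j\, x^{\univ},\]
with $\Delta(a_i)=(a_i)_{(1)}\otimes(a_i)_{(2)}$ the standard quantum coproduct. Specializing at $\lambda$ and projecting to the highest-weight coefficient of $M^{\univ,\gen}$ gives
\[J_{V,W}(\lambda)(v\otimes w)=\sum_i v_i\otimes a_i(\lambda-\wt(v))\triangleright w.\]

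The comparison with $J^{EV}_{W,V}$ proceeds through the flip $\tau\colon V\otimes M\xrightarrow{\sim}M\botimes V$, which is a $\Uq(\g)$-module isomorphism between the standard tensor product and the tensor product using the opposite coproduct. This flip preserves $\Uq(\n)$-invariants, so $\Phi^v_\lambda(x_\lambda)=\sum_i a_i(\lambda-\wt(v))x_{\lambda-\wt(v)}\otimes v_i$ in EV's convention. Iterating, the order-reversing flip $V\otimes W\otimes M\xrightarrow{\sim}M\botimes W\botimes V$ is a $\Uq(\g)$-module isomorphism (using coassociativity to identify the two ways of iterating $\botimes$), and under this flip the EV composite \eqref{eq:classicalfusioncomposite} (quantum version) corresponds to the composite $M_\lambda\xrightarrow{\varphi^v_\lambda}V\otimes M_{\lambda-\wt(v)}\xrightarrow{\id_V\otimes\varphi^w_{\lambda-\wt(v)}}V\otimes W\otimes M_{\lambda-\wt(v)-\wt(w)}$, where $\varphi^u_\lambda=\tau\circ\Phi^u_\lambda$. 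A direct computation of this latter composite (mirroring the classical case and using $\Delta(a_i)$ as in step two) yields leading coefficient $\sum_i v_i\otimes a_i(\lambda-\wt(v))\triangleright w$ in $V\otimes W$, while the defining property of $J^{EV}_{W,V}$ identifies the same coefficient, after the flip, with $\tau(J^{EV}_{W,V}(\lambda)(w\otimes v))$.

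The main obstacle is the bookkeeping in the third paragraph: confirming that iterated $\botimes$ products are indeed canonically isomorphic to reverse-ordered $\otimes$ products as $\Uq(\g)$-modules, and that $\Phi^w\otimes\id_V$ corresponds correctly to $\id_V\otimes\varphi^w$ under these identifications. Once this flip compatibility is pinned down, equating the two expressions for the leading coefficient gives $J_{V,W}(\lambda)(v\otimes w)=\tau(J^{EV}_{W,V}(\lambda)(w\otimes v))$, which is the desired identity.
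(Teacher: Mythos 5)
Your proposal is correct and takes essentially the same approach as the paper: the paper gives no separate proof of \cref{prop::quantumfusion}, deferring to the argument of \cref{prop:classicalfusion}, and your write-up is precisely that analogous computation (the explicit formula for the lax monoidal structure applied to the unique $\Uq(\n)$-invariant lifts, compared with the leading coefficient of the iterated intertwiners), with the only genuinely new point --- the identification of iterated $\botimes$-products with reverse-ordered $\otimes$-products under the order-reversing flip --- correctly isolated and straightforward to verify.
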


In \cite[Section 5]{EtingofVarchenkoExchange} Etingof and Varchenko have introduced an $\h$-bialgebroid $F_q(G)$. Analogously to \cref{thm:FGexchange} we obtain the following statement.

\begin{thm}
Consider the monoidal functor
\[\Rep_q(G)\xrightarrow{\free_G} \HC_q(G)\xrightarrow{\res^{\gen}}\HC_q(H)^{\gen}.\]
It admits a colimit-preserving right adjoint; denote by $B\in\HC_q(H)^{\gen}\otimes \HC_q(H)^{\gen}$ the Harish-Chandra bialgebroid corresponding to this monoidal functor constructed in \cref{HarishChandraTannakaReconstruction}. Then we have an isomorphism of $\h$-algebroids
\[B\otimes_{\Uq(\h)^{\gen}\otimes \Uq(\h)^{\gen}}\otimes (\Frac(\Uq(\h))\otimes \Frac(\Uq(\h)))\cong F_q(G).\]
\label{thm:FqGexchange}
\end{thm}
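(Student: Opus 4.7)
The plan is to proceed in complete analogy with the proof of \cref{thm:FGexchange}, substituting quantum group analogues at each step. First, I would invoke \cref{thm:EVquantumbasis} to deduce that the composite $\res^{\gen}\circ \free_G$ factors through $\free_H\colon \Rep_q(H)\rightarrow \HC_q(H)^{\gen}$, and in particular sends a finite-dimensional $V\in\Rep_q(G)^{\cp}$ to the free Harish-Chandra bimodule $\Uq(\h)^{\gen}\otimes V$. By \cref{prop:HCcprigid} this is compact projective in $\HC_q(H)^{\gen}$, so since finite-dimensional representations generate $\Rep_q(G)$ under colimits, the functor preserves compact projectives and therefore admits a colimit-preserving right adjoint.

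Next, I would apply \cref{HarishChandraTannakaReconstruction} to extract the Harish-Chandra bialgebroid $B\in\HC_q(H)^{\gen}\otimes \HC_q(H)^{\gen}$, and invoke semisimplicity of $\Rep_q(G)$ to decompose the underlying coend from \cref{thm:Tpresentation} as a direct sum
\[B\cong \bigoplus_{V\in\Irr(\Rep_q(G))} ((\Uq(\h)^{\gen})\otimes V^\vee) \boxtimes ((\Uq(\h)^{\gen}) \otimes V).\]
After base change to $\Frac(\Uq(\h))\otimes \Frac(\Uq(\h))$, the assignment $T_V\mapsto L^V$, $t(f(\lambda))\mapsto f(\lambda^1)$, $s(f(\lambda))\mapsto f(\lambda^2)$ yields an isomorphism of $(\Frac(\Uq(\h)),\Frac(\Uq(\h)))$-bimodules with the underlying bimodule of $F_q(G)$ as defined in \cite[Section 5]{EtingofVarchenkoExchange}.

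It remains to check that this linear isomorphism is compatible with the bialgebroid structures on both sides. Compatibility with the coproduct, counit, unit, and the source/target maps is immediate from parts (1), (2), (3), (5) of \cref{thm:Tpresentation} together with the defining formulas for $F_q(G)$; these correspond directly to the analogues of relations (18) and (19) in \textit{loc. cit.} The main obstacle of the proof, and the only non-trivial verification, is the compatibility of products. The multiplication in $B$ is controlled by the dynamical twist $J_{V, W}(\lambda)$ via part (4) of \cref{thm:Tpresentation}, whereas the multiplication in $F_q(G)$ is given (up to permutation of tensor factors) in terms of the Etingof--Varchenko fusion matrix $J^{EV}_{W, V}(\lambda)$. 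The reconciliation is supplied precisely by \cref{prop::quantumfusion}, which identifies $J_{V, W}(\lambda)$ with $\tau J^{EV}_{W, V}(\lambda) \tau$; substituting this into relation (4) of \cref{thm:Tpresentation} recovers the multiplication in $F_q(G)$, completing the identification.
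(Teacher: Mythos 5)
Your proposal is correct and matches the paper's intended argument: the paper proves this theorem simply by declaring it analogous to \cref{thm:FGexchange}, and your proof carries out exactly that analogy, substituting \cref{thm:EVquantumbasis} for \cref{thm:EVclassicalbasis} and \cref{prop::quantumfusion} for \cref{prop:classicalfusion} at the corresponding steps. In particular you correctly identify the product compatibility (relation (20) of \cite[Section 5]{EtingofVarchenkoExchange}) as the only nontrivial verification and resolve it with the fusion-matrix comparison, just as in the classical case.
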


\section{Dynamical Weyl groups}

In this section we introduce a Weyl symmetry of the parabolic restriction functors $\res\colon \HC(G)\rightarrow \HC(H)$ and $\res\colon \HC_q(G)\rightarrow \HC_q(H)$ introduced in \cref{sect:fusion} and relate it to dynamical Weyl groups.

\subsection{Classical Zhelobenko operators}

Fix a group $G$ and its Lie algebra $\g$ as in \cref{sect:classicalres}. Recall that the Weyl group is
\[W = \N(H) / H,\]
where $\N(H)$ is the normalizer of $H$ in $G$. Denote by $\hat{W}$ the corresponding braid group generated by simple reflections $s_\alpha\in W$ with the relation (for $\alpha\neq \beta$)
\[\underbrace{s_\alpha s_\beta s_\alpha \dots}_{m_{\alpha\beta}} = \underbrace{s_\beta s_\alpha s_\beta \dots}_{m_{\alpha\beta}},\]
where $m_{\alpha\beta}$ is the Coxeter matrix. There is a canonical map $W\rightarrow \hat{W}$ which sends a reduced expression $w=s_1\cdot\dots\cdot s_n\in W$ to the corresponding element in $\hat{W}$.

We may lift $w\in W$ to elements $T_w\in \N(H)$ satisfying the braid relations. Moreover, for a simple reflection $s_\alpha\in W$ the element $T_{s_\alpha}^2\in H$ has order at most 2 \cite{Tits}. For concreteness, we assume that the elements $T_w$ act via the $q=1$ version of Lusztig's operators $T'_{w, 1}$ as in \cite[Section 5.2.1]{Lusztig}.

Denote by $\rho\in\h^*$ the half-sum of positive roots. For an element $w\in W$ and $\lambda\in\h^*$ denote by $w\cdot \lambda\in\h^*$ the \defterm{dot action}:
\[w\cdot\lambda = w(\lambda+\rho) - \rho.\]
The induced action on $h\in\h\subset\U\h$ is denoted by
\[w\cdot h = w(h) + \langle h, w(\rho) - \rho\rangle,\]
where the usual $W$ action on $\U\h$ is simply denoted by $w(d)$ for $d\in\U\h$.

Recall that for a right $\U\g$-module $X$, $X\otimes_{\U\g} M^{\univ}\cong X/X\n$. In the study of Mickelsson algebras Zhelobenko \cite{ZhelobenkoCocycles} has introduced a collection of operators acting on $\U\g$-bimodules for each element of the Weyl group. We refer to \cite[Section 6]{KhoroshkinOgievetsky} for the proof of the following results.

\begin{thm}
Suppose $X\in\HC(G)$. Suppose $\alpha$ is a simple root and denote by $\{e_\alpha, h_\alpha, f_\alpha\}$ the standard generators of the corresponding $\sl_2$ subalgebra $\g_\alpha\subset \g$. Consider the \defterm{Zhelobenko operator} $\bq_\alpha\colon X\rightarrow X$ given by an infinite series
\[\bq_\alpha(x) = \sum_{n=0}^\infty \frac{(-1)^n}{n!} (\ad e_\alpha)^n(T_{s_\alpha}(x)) f_\alpha^n g_{n, \alpha}^{-1},\]
where
\[g_{n,\alpha} = \prod_{j=1}^n (h_\alpha-j+1)\]
and $\ad e_\alpha$ refers to the diagonal $\g$-action on $X$. Then the operators $\bq_\alpha$ descend to well-defined linear isomorphisms
\[\bq_\alpha\colon (X\otimes_{\U\g} M^{\univ, \gen})^N\longrightarrow (X\otimes_{\U\g} M^{\univ, \gen})^N\]
which satisfy the following relations:
\begin{enumerate}
\item $\bq_\alpha((\ad d)(x)) = (\ad s_\alpha(d))(\bq_\alpha(x))$ for every $d\in\h$ and $x\in X$.

\item $\bq_\alpha(dx) = (s_\alpha\cdot d)\bq_\alpha(x)$ for every $d\in\h$ and $x\in X$.

\item $\underbrace{\bq_\alpha\bq_\beta\bq_\alpha\dots}_{m_{\alpha\beta}} = \underbrace{\bq_\beta\bq_\alpha\bq_\beta\dots}_{m_{\alpha\beta}}$ for $\alpha\neq \beta$.

\item $\bq_\alpha^2(x) = (h_\alpha+1)^{-1}T_{s_\alpha}^2(x) (h_{\alpha}+1)$
 for every $x\in X$.
 \label{eq::zhelobenko_squareprop}
\end{enumerate}
\label{thm:classicalZhelobenko}
\end{thm}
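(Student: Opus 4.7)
The plan is to recognize $\bq_\alpha$ as a Weyl-reflected $\sl_2$-extremal projector acting by a combination of $\ad$ and right multiplication: writing $P_\alpha = \sum_n \frac{(-1)^n}{n!} (\ad e_\alpha)^n(\cdot)\, f_\alpha^n g_{n,\alpha}^{-1}$ for the rank-one extremal projector attached to $\g_\alpha \subset \g$ (the simple-root version of \cref{thm:extremalprojector}), the Zhelobenko operator is $\bq_\alpha(x) = P_\alpha(T_{s_\alpha}(x))$. This observation will guide every subsequent step.

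For well-definedness I would first note that local finiteness of the diagonal $\g$-action on $X \in \HC(G)$ ensures $(\ad e_\alpha)^n T_{s_\alpha}(x)$ vanishes for $n$ sufficiently large, so the series is a finite sum. The elements $g_{n,\alpha}$ become invertible after passing to modules with generic right $\h$-weights, which is why we restrict to $X \otimes_{\U\g} M^{\univ, \gen}$. To see that $\bq_\alpha$ descends to $N$-invariants on $X/X\n$, I would split $\n = \n_\alpha \oplus \n'$, where $\n'$ is $s_\alpha$-stable up to sign (as $s_\alpha$ permutes positive roots other than $\alpha$); equivariance of $T_{s_\alpha}$ handles $\n'$, while for $\n_\alpha$ the defining extremal-projector identity $\U^{>0}(\n_\alpha) P_\alpha = 0$ ensures invariance. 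Once property \eqref{eq::zhelobenko_squareprop} is established, it shows $\bq_\alpha^2$ equals conjugation by an invertible Cartan element on generic weights, so $\bq_\alpha$ is a bijection.

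Properties (1) and (2) then follow by direct computation. For (1), $T_{s_\alpha}$ intertwines $\ad d$ with $\ad s_\alpha(d)$ for $d \in \h$, and the extremal projector commutes with $\ad \h$ because $\ad e_\alpha$ raises weight by $\alpha$ while $f_\alpha g_{n,\alpha}^{-1}$ lowers weight by $\alpha$ through right multiplication in a compensating way. For (2), pushing a Cartan element past $f_\alpha^n g_{n,\alpha}^{-1}$ via $d\, f_\alpha^n = f_\alpha^n(d - n\alpha(d))$ and summing produces a $\rho$-correction: the difference between $s_\alpha(d)$ and $s_\alpha \cdot d$ is exactly $\langle d, s_\alpha(\rho)-\rho\rangle$, which matches the bookkeeping. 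Property \eqref{eq::zhelobenko_squareprop} reduces to the $\sl_2$-identity $P_\alpha^2 = P_\alpha$ together with the observation that $T_{s_\alpha}^2 \in H$ is Cartan, so it crosses one $P_\alpha$ and contributes only a multiplicative $h_\alpha$-shift; collecting terms gives the stated $(h_\alpha+1)^{-1} T_{s_\alpha}^2(-)(h_\alpha+1)$.

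The main obstacle is the braid relation (3). My approach is to reduce to rank-two Levi subalgebras $\g_{\alpha\beta}$, since both sides of the braid identity involve only the adjoint action of $\g_{\alpha\beta}$ on $X$ and the element $T_{s_\alpha s_\beta \cdots} = T_{s_\beta s_\alpha \cdots}$, where Lusztig's lifts already satisfy the braid relations. Within $\g_{\alpha\beta}$ I would interpret each iterated composition $\bq_\alpha \bq_\beta \bq_\alpha \cdots$ as implementing the extremal projector for $\g_{\alpha\beta}$, using the Asherova--Smirnov--Tolstoy factorization of the rank-two projector into an ordered product indexed by positive roots. The delicate point is tracking how the right-multiplication factors $g_{n,\alpha}$ permute past one another under reordering, which is precisely where the dot action is forced. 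Once this ordered-product identity is established on the universal $X = \U\g_{\alpha\beta}$ and extended by naturality, both compositions agree. This is the technical heart of the argument and the step where we would follow \cite{KhoroshkinOgievetsky} most closely.
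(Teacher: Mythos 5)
The paper does not actually prove this theorem: immediately before the statement it says ``We refer to \cite[Section 6]{KhoroshkinOgievetsky} for the proof of the following results,'' so there is no in-text argument to compare yours against line by line. Your outline is, in substance, a reconstruction of the Zhelobenko--Khoroshkin--Ogievetsky argument that the paper delegates to the literature: the identification $\bq_\alpha = P_\alpha\circ T_{s_\alpha}$ with $P_\alpha$ a rank-one extremal projector acting by mixed adjoint/right multiplication, finiteness of the series from integrability of the diagonal action, invertibility of $g_{n,\alpha}$ on generic weights, and the reduction of the braid relation to rank two via the Asherova--Smirnov--Tolstoy ordered-product factorization are all the right ingredients, and you correctly flag the braid relation as the technical heart where one follows \cite{KhoroshkinOgievetsky}.

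One substantive imprecision: for property \eqref{eq::zhelobenko_squareprop} the relevant identity is \emph{not} the idempotency $P_\alpha^2=P_\alpha$. Writing $\bq_\alpha^2 = P_\alpha\, T_{s_\alpha}\, P_\alpha\, T_{s_\alpha}$, to extract $T_{s_\alpha}^2$ you must move one copy of $T_{s_\alpha}$ across one copy of $P_\alpha$, and conjugation by $T_{s_\alpha}$ sends $e_\alpha, f_\alpha, h_\alpha$ to (signed multiples of) $f_\alpha, e_\alpha, -h_\alpha$; so what appears is the product $P_\alpha P_\alpha^-$ of the projector with its \emph{opposite}, not $P_\alpha^2$. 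It is precisely the evaluation of $P_\alpha P_\alpha^-$ on the relevant subspace (where neither factor acts as the identity a priori, since elements of $(X\otimes_{\U\g}M^{\univ,\gen})^N$ are killed by left multiplication by $\n$, not by $\ad e_\alpha$) that produces the conjugation by $h_\alpha+1$; this is \cite[Corollary 9.6]{KhoroshkinOgievetsky} in the quantum case and its classical analogue here. Relatedly, your descent-to-invariants step for the root spaces $\g_\beta$, $\beta\neq\alpha$, needs the commutator of $P_\alpha$ with left multiplication by $e_{s_\alpha(\beta)}$ to be controlled modulo $X\n$, which is a genuine computation rather than a formal consequence of $T_{s_\alpha}$-equivariance; it is worth stating that this is where the quotient by $X\n$ is actually used. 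With those two points repaired, the sketch is a faithful account of the cited proof.
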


For an element $w\in W$ with a reduced decomposition $w=s_{\alpha_1}\dots s_{\alpha_n}$ we define
\[\bq_w = \bq_{\alpha_{i_1}}\dots \bq_{\alpha_{i_n}}.\]
The third relation in \cref{thm:classicalZhelobenko} shows that $\bq_w$ is independent of the chosen decomposition.

In addition, we have the following important multiplicativity property of the Zhelobenko operators proven in \cite[Theorem 3]{KhoroshkinOgievetsky}.

\begin{thm}
Let $X,Y\in\HC(G)$ and take $x\in X$ and $y\in Y$, where $\n y\in Y\n$. Then we have an equality
\[\bq_w(x\otimes y) = \bq_w(x)\otimes \bq_w(y)\]
in $X\otimes_{\U\g} Y\otimes_{\U\g} M^{\univ, \gen}$.
\label{thm:classicalZhelobenkoMultiplicative}
\end{thm}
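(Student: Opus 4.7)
The plan is to reduce to the case $w = s_\alpha$ of a simple reflection by iteration, and then perform a direct computation. Inductively, if $w = s_{\alpha_1}\cdots s_{\alpha_n}$ is a reduced expression then by \cref{thm:classicalZhelobenko}(3) we have $\bq_w = \bq_{\alpha_1}\circ\cdots\circ\bq_{\alpha_n}$. The condition $\n y \subseteq Y\n$ is equivalent to $y \otimes x^{\univ}$ being $N$-invariant in $Y\otimes_{\U\g}M^{\univ,\gen}$, and each $\bq_{\alpha_i}$ is a well-defined endomorphism of those $N$-invariants, so the condition is preserved at every step. Hence it suffices to treat the base case $w = s_\alpha$.

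For a simple reflection, only the $\sl_2$-subalgebra $\g_\alpha = \langle e_\alpha, h_\alpha, f_\alpha\rangle$ appears in the defining formula for $\bq_\alpha$, so the problem reduces to an $\sl_2$-computation. The lift $T_{s_\alpha} \in \N(H) \subset G$ acts on any $\U\g$-bimodule by conjugation, hence acts diagonally on the balanced tensor product: $T_{s_\alpha}(x\otimes y) = T_{s_\alpha}(x)\otimes T_{s_\alpha}(y)$. The diagonal $\g$-action obeys the Leibniz rule
\[(\ad e_\alpha)^n(a\otimes b) = \sum_{k=0}^n \binom{n}{k}(\ad e_\alpha)^k(a)\otimes(\ad e_\alpha)^{n-k}(b),\]
which I would substitute into $\bq_\alpha(x\otimes y)$ to obtain a double sum indexed by $(n,k)$. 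On the other side,
\[\bq_\alpha(x)\otimes\bq_\alpha(y) = \sum_{k,m\geq 0}\frac{(-1)^{k+m}}{k!\,m!}(\ad e_\alpha)^k(T_{s_\alpha}(x))f_\alpha^k g_{k,\alpha}^{-1}\otimes (\ad e_\alpha)^m(T_{s_\alpha}(y))f_\alpha^m g_{m,\alpha}^{-1}.\]
Using the balanced-tensor identity $a\xi\otimes b = a\otimes \xi b$ for $\xi \in \U\g$, I would slide $f_\alpha^k g_{k,\alpha}^{-1}$ across the tensor and then commute these factors past $(\ad e_\alpha)^m(T_{s_\alpha}(y))$ using the $\sl_2$-relations $[e_\alpha, f_\alpha] = h_\alpha$ and $[h_\alpha, f_\alpha] = -2f_\alpha$, tracking the weight shifts in $g_{k,\alpha}^{-1}$. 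The hypothesis $\n y \subseteq Y\n$ enters at this stage to discard terms in which a factor of $e_\alpha$ arrives on the left of $y$: all such terms land in $Y\n$ and therefore vanish on $x^{\univ}$, since $\n\cdot x^{\univ} = 0$.

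The main obstacle is the matching of coefficients after these rearrangements. What remains is a combinatorial identity among the rational functions $g_{n,\alpha}^{-1} \in \Frac(\U\h)$, which are defined precisely on generic weights. Conceptually, $\bq_\alpha$ may be viewed as the composition of $T_{s_\alpha}$ with the $\sl_2$-extremal projector of $\g_\alpha$ from \cref{thm:extremalprojector}, so multiplicativity becomes the statement that this extremal projector acts like a grouplike element on tensor products whose right-hand factor is already annihilated by $\n$. A clean way to conclude is to exploit uniqueness: both sides are $N$-invariant elements of $X\otimes_{\U\g}Y\otimes_{\U\g}M^{\univ,\gen}$, and one checks they have the same image in the $\n_-$-coinvariants (using that $T_{s_\alpha}$ together with the order-zero term of the extremal projector contributes $T_{s_\alpha}(x)\otimes T_{s_\alpha}(y)$ on both sides). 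By \cref{prop:coinvinv}, the projection from $N$-invariants to $\n_-$-coinvariants is an isomorphism on $\cO^{\univ,\gen}$, so the two $N$-invariants must coincide.
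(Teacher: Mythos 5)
The paper does not actually prove this statement: it is imported wholesale from \cite[Theorem 3]{KhoroshkinOgievetsky} (the surrounding text says the multiplicativity property is ``proven in'' that reference). So the only question is whether your argument stands on its own, and it does not: you correctly reduce to a simple reflection $s_\alpha$ and correctly locate where the hypothesis $\n y\subseteq Y\n$ enters, but the base case is never established. The direct route is abandoned exactly where the content of the theorem lies (``what remains is a combinatorial identity among the $g_{n,\alpha}^{-1}$''), and the shortcut you offer in its place is unsound.

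Specifically, the ``clean'' conclusion via \cref{prop:coinvinv} rests on the claim that both sides project to $T_{s_\alpha}(x)\otimes T_{s_\alpha}(y)$ in the $\n_-$-coinvariants, i.e.\ that only the order-zero term of $\bq_\alpha$ survives there. That is false. The $n$-th term of $\bq_\alpha(z)$ in $X\otimes_{\U\g}M^{\univ,\gen}$ carries a right multiplication by $f_\alpha^n$ on the $X$-factor; since right multiplication by $f_\alpha$ equals left multiplication by $f_\alpha$ minus $\ad f_\alpha$, and the coinvariants only kill the left-multiplication part, the $n$-th term contributes $(-1)^n(\ad f_\alpha)^n\bigl((\ad e_\alpha)^n(T_{s_\alpha}z)\bigr)$ up to an $\h$-factor, which is nonzero for $n\geq 1$ in general. (You can see this already in the $\SL_2$ example of \cref{prop:classicaldynamicalWeylsl2}: the image of $\bq_w(v_+\otimes 1)=v_-\otimes 1 - v_+\otimes fh^{-1}$ in the $\n_-$-coinvariants is not $v_-\otimes x^{\univ}$.) So both images are infinite sums and verifying that they agree is exactly the coefficient-matching you set aside; the uniqueness trick buys nothing. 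There is also a smaller gap in the reduction step: the induction applies the base case to $\bq_{\alpha_i}(x)$ and $\bq_{\alpha_i}(y)$, which are only defined after localization and only modulo $X\n$, $Y\n$, so the base case must be formulated for such classes. To close the argument you must either carry out the $\sl_2$ computation in full or do as the paper does and cite \cite[Theorem 3]{KhoroshkinOgievetsky}.
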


\subsection{Classical dynamical Weyl group}
\label{sect:classicaldynamicalWeyl}

Given a group $G$ we may regard it as a discrete monoidal category $\Cat(G)$. Let us recall the notion of a $G$-action on a monoidal category and the category of $G$-equivariant objects (see e.g. \cite[Section 2.7]{EGNO}).

\begin{defn}
Let $\cC\in\PrL$ be a monoidal category. A \defterm{$G$-action on $\cC$} is a monoidal functor
\[\Cat(G)\longrightarrow \Fun^{\L, \otimes}(\cC, \cC)\]
to the monoidal category of monoidal colimit-preserving endofunctors on $\cC$.
\end{defn}

Explicitly, for every element $g\in G$ we have a monoidal functor $S_g\colon \cC\rightarrow \cC$ together with a natural isomorphism $S_e\cong \id$ and natural isomorphisms $S_{gh}\cong S_g\circ S_h$ for a pair of elements $g,h\in G$ satisfying an associativity axiom.

\begin{defn}
Suppose $\cC$ is a monoidal category with a $G$-action. A \defterm{$G$-equivariant object} is an object $x\in \cC$ equipped with isomorphisms $S_g(x)\cong x$ compatible with the isomorphisms $S_{gh}\cong S_g\circ S_h$ and $S_e\cong \id$. We denote by $\cC^G$ the category of $G$-equivariant objects.
\end{defn}

The category $\HC(H)\cong \LMod_{\U\h}(\Rep H)$ carries a natural action of the Weyl group $W$ defined as follows. Let us regard $X\in\HC(H)$ as a $\U\h$-bimodule. Then the action of $w\in W$ twists the left and the right $\U\h$-action by the dot action: $S_w(X) = X$ as a plain vector space with the $\U\h$-bimodule structure given by
\[d\triangleright^w x = (w\cdot d)\triangleright x,\qquad x\triangleleft^w d = x\triangleleft(w\cdot d)\]
for $x\in X$ and $d\in\U\h$. The dot action of $W$ on $\h$ is given by affine transformations, so the corresponding diagonal $\h$-action on $S_w(X)$ is given by its linear part, i.e. we twist the diagonal $\h$-action on $X$ by the usual $W$-action. By construction $S_e=\id$ and $S_{w_1w_2} = S_{w_1}\circ S_{w_2}$. Moreover, the identity map of vector spaces
\[S_w(X)\otimes_{\U\h} S_w(Y)\longrightarrow S_w(X\otimes_{\U\h} Y)\]
together with the dot action
\[\U\h\longrightarrow S_w(\U\h)\]
define a monoidal structure on the collection $\{S_w\}_{w\in W}$.

The functor
\[\free\colon \Rep(H)\longrightarrow \HC(H)\]
is naturally $W$-equivariant, where the maps
\begin{equation}
\U\h\otimes S_w(V)\rightarrow S_w(\U\h\otimes V)
\label{eq:freeWequivariance}
\end{equation}
are given by the dot action on the $\U\h$ factor.

Restricting the $W$-action on $\HC(H)$ under the quotient map $\hat{W}\rightarrow W$ from the braid group we obtain a natural action of $\hat{W}$ on $\HC(H)$.

Recall that by \cref{cor:resmonoidal} the parabolic restriction functor
\[\res^{\gen}\colon \HC(G)\longrightarrow \HC(H)^{\gen}\]
given by $X\mapsto (X\otimes_{\U\g} M^{\univ})^N$ is monoidal. We will now show that it factors through $\hat{W}$-invariants.

\begin{thm}
The Zhelobenko operators define a factorization
\[
\xymatrix{
\HC(G) \ar@{-->}[r] \ar_{\res^{\gen}}[dr] & \HC(H)^{\gen, \hat{W}} \ar[d] \\
& \HC(H)^{\gen}
}
\]
of $\res^{\gen}\colon \HC(G)\rightarrow \HC(H)^{\gen}$ through a monoidal functor $\res^{\gen}\colon\HC(G)\rightarrow \HC(H)^{\gen, \hat{W}}$.
\label{thm:classicalZhelobenkoFactorization}
\end{thm}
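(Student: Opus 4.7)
The plan is to build an $\hat{W}$-equivariance structure on $\res^{\gen}(X)$ naturally in $X\in\HC(G)$ using the Zhelobenko operators of \cref{thm:classicalZhelobenko}, and then to check that this structure is compatible with the monoidal structure of \cref{cor:resmonoidal}. First, fix a simple reflection $s_\alpha$. The operator $\bq_\alpha\colon \res^{\gen}(X)\to \res^{\gen}(X)$ is a linear isomorphism. Properties (1) and (2) of \cref{thm:classicalZhelobenko}, together with the analogous compatibility with the right $\U\h$-action obtained by transporting the same formula through the identification $(X\otimes_{\U\g}M^{\univ,\gen})^N\cong\res^{\gen}(X)$ (cf.\ \cite{KhoroshkinOgievetsky}), show that $\bq_\alpha$ intertwines the standard $\U\h$-bimodule structure on $\res^{\gen}(X)$ with the one obtained by twisting both the left and the right $\U\h$-action by the dot action of $s_\alpha$. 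In other words, $\bq_\alpha$ furnishes an isomorphism $S_{s_\alpha}(\res^{\gen}(X))\xrightarrow{\sim}\res^{\gen}(X)$ in $\HC(H)^{\gen}$. The braid relations of property (3) then ensure that the assignment $w\mapsto \bq_w$ extends coherently to all of $\hat{W}$, giving the required $\hat{W}$-equivariance structure on $\res^{\gen}(X)$.

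Second, naturality in $X$ is immediate from the explicit formula: $\bq_w$ is assembled from the left and right $\U\g$-actions and the diagonal $\g$-action on $X$, each of which is functorial in $X\in\HC(G)$. This packages the preceding construction into a functor $\res^{\gen}\colon\HC(G)\to\HC(H)^{\gen,\hat{W}}$ lifting the original. To upgrade this lift to a monoidal functor it remains to verify that the lax monoidal structure from \cref{cor:resmonoidal}, which on representatives is given by $[x]\otimes [y]\mapsto [x\otimes y]$, is compatible with the $\hat{W}$-equivariances. This is precisely the statement of \cref{thm:classicalZhelobenkoMultiplicative}, whose hypothesis $\n y\subset Y\n$ is automatic whenever $y$ represents an $N$-invariant class in $\res^{\gen}(Y)$: in a Harish-Chandra bimodule $\xi y=y\xi+(\ad\xi)(y)$, and for $\xi\in\n$ both $y\xi$ and $(\ad\xi)(y)$ lie in $Y\n$ once $[y]\in(Y/Y\n)^N$. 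Since $\res^{\gen}$ is already strongly monoidal and colimit-preserving by \cref{cor:resmonoidal}, the lifted functor is strongly monoidal as well.

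The principal obstacle is the first step: carefully checking that $\bq_\alpha$ is a morphism in $\HC(H)^{\gen}$ with the correct dot-twist on \emph{both} sides of the $\U\h$-bimodule structure, as well as with respect to the integrable diagonal $H$-action. The dot-action shift on the right is produced exactly by the denominator $g_{n,\alpha}^{-1}$ in the formula for $\bq_\alpha$ after it is moved across the tensor product into the $M^{\univ,\gen}$-factor; the integrable $H$-action is handled by property (1) once one observes that the affine linear difference between the dot action and the usual $W$-action on $\h$ cancels on the diagonal. With this verification in hand, the braid relations, naturality, and monoidal compatibility follow formally from \cref{thm:classicalZhelobenko,thm:classicalZhelobenkoMultiplicative}.
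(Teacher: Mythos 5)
Your proposal follows essentially the same route as the paper: the $\hat{W}$-equivariance on $\res^{\gen}(X)$ is built from the Zhelobenko operators, with properties (1)--(3) of \cref{thm:classicalZhelobenko} giving compatibility with the $\U\h$-bimodule structure and the braid relations, and the monoidal compatibility is deduced from \cref{thm:classicalZhelobenkoMultiplicative}. Your observation that the hypothesis $\n y\subset Y\n$ of the multiplicativity theorem holds automatically for any representative of an $N$-invariant class is a correct and useful addition that the paper leaves implicit.

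One piece is missing: a monoidal structure consists of the tensor-product constraint \emph{and} the unit constraint, and you only verify the former. The paper explicitly checks that the unit map $\U\h\hookrightarrow (M^{\univ})^N$ is compatible with the equivariances, i.e.\ that the square relating $\U\h\to S_{s_\alpha}(\U\h)$ (the dot action) to $\bq_\alpha$ on $(M^{\univ})^N$ commutes; this reduces, via property (2) of \cref{thm:classicalZhelobenko}, to the computation $\bq_\alpha(1)=1$, which is immediate from the explicit series. This is an easy but non-vacuous verification (it is exactly what forces the dot action, rather than the linear action, on the unit object), so it should be included. A second, more technical point the paper makes that you skip: to interpret the commutative square for the tensor constraint one uses \cref{prop:coinvinv} to exchange the order of left $N$-invariants and right $\n$-coinvariants in $\res(X)=(X/X\n)^N$ over generic weights, which is what lets \cref{thm:classicalZhelobenkoMultiplicative} (stated in $X\otimes_{\U\g}Y\otimes_{\U\g}M^{\univ,\gen}$) be applied directly to the monoidal structure map \eqref{eq:resmonoidal}. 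With these two points supplied, your argument matches the paper's proof.
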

\begin{proof}
Let us first construct a factorization of $\res^{\gen}$ through $\HC(H)^{\gen, \hat{W}}\rightarrow \HC(H)^{\gen}$ as a plain (non-monoidal) functor. Since the braid group $\hat{W}$ is generated by simple reflections $\{s_\alpha\}$, for $X\in\HC(G)$ we have to specify natural isomorphisms
\[\res^{\gen}(X)\xrightarrow{\sim} S_{s_\alpha}(\res^{\gen}(X))\]
satisfying the braid relations. We define them to be the Zhelobenko operators $\bq_\alpha$. The compatibility with the $\U\h$-bimodule action follows from parts (1) and (2) of \cref{thm:classicalZhelobenko}. The braid relations follow from part (3) of the same theorem.

Next, we have to construct a monoidal structure on $\HC(G)\rightarrow \HC(H)^{\gen, \hat{W}}$ compatible with the one on $\res^{\gen}\colon \HC(G)\rightarrow \HC(H)^{\gen}$ which we recall is given by \eqref{eq:resmonoidal}. The unit map is the natural inclusion $\U\h\hookrightarrow (M^{\univ})^N$.

We begin by showing compatibility with the tensor products. By \cref{prop:coinvinv} the functor of $N$-invariants $\cO^{\univ}\rightarrow \HC(H)^{\gen}$ is exact. In particular, we may exchange the order of left $N$-invariants and right $\n$-coinvariants in the definition of $\res(X) = (X/X\n)^N$. But then the diagram
\begin{equation}
\xymatrix{
\res^{\gen}(X)\otimes_{(\U\h)^{\gen}} \res^{\gen}(Y) \ar[r] \ar^{\bq_\alpha\otimes \bq_\alpha}[d] & \res^{\gen}(X\otimes_{\U\g} Y) \ar^{\bq_\alpha}[d] \\
S_{s_\alpha}(\res^{\gen}(X))\otimes_{(\U\h)^{\gen}} S_{s_\alpha}(\res^{\gen}(Y)) \ar[r] & S_{s_\alpha}(\res^{\gen}(X\otimes_{\U\g} Y))
}
\label{diag::zhelobenko_monoidal}
\end{equation}
is commutative by \cref{thm:classicalZhelobenkoMultiplicative}.

Next, we have to show compatibility with the unit maps. Consider the diagram
\[
\xymatrix{
\U\h \ar[r] \ar[d] & (M^{\univ})^N \ar^{\bq_\alpha}[d] \\
S_{s_\alpha}(\U\h) \ar[r] & S_{s_\alpha}((M^{\univ})^N)
}
\]
To show that it is commutative, we have to compute the action of $\bq_{\alpha}$ on $\U\h\hookrightarrow M^{\univ}$. By part (2) of \cref{thm:classicalZhelobenko} $\bq_{\alpha}(d\cdot 1) = (s_\alpha\cdot d) \bq_{\alpha}(1)$, where $d\in \U\h$ and $1\in\U\g$ is the unit. But it is obvious from the explicit formula for $\bq_{\alpha}$ that $\bq_{\alpha}(1)=1$.
\end{proof}

Let us now analyze the composite monoidal functor
\[\Rep(G)\xrightarrow{\free_G}\HC(G)\xrightarrow{\res^{\gen}} \HC(H)^{\gen, \hat{W}}.\]
Recall that by \cref{thm:EVclassicalbasis} we have a commutative diagram
\[
\xymatrix{
\Rep(G) \ar^{\free_G}[r] \ar[d] & \HC(G) \ar^{\res^{\gen}}[d] \\
\Rep(H) \ar^{\free_H}[r] & \HC(H)^{\gen}
}
\]
of plain (non-monoidal) categories.

Consider $V\in\Rep(G)$. Using the natural isomorphism
\[\res^{\gen}(\U\g\otimes V)\cong (\U\h)^{\gen}\otimes V\]
in $\HC(H)^{\gen}$ provided by the above diagram we obtain that the $\hat{W}$-invariance of $\res^{\gen}(\U\g\otimes V)$ boils down to maps $(\U\h)^{\gen}\otimes V\rightarrow (\U\h)^{\gen}\otimes S_w(V)$ obtained via the composite
\[
(\U\h)^{\gen}\otimes V \xleftarrow{\sim} \res^{\gen}(\U\g\otimes V)\xrightarrow{\bq_w} S_w(\res^{\gen}(\U\g\otimes V))\xrightarrow{\sim} S_w((\U\h)^{\gen}\otimes V)\xrightarrow{\sim} (\U\h)^{\gen}\otimes S_w(V).
\]

Such maps are uniquely determined by their value on $1\otimes v$, which gives linear maps
\[A_{w, V}(\lambda)\colon V\longrightarrow V\]
depending rationally on a parameter $\lambda\in\h^*$.

Let $V,U\in\Rep(G)$ and recall the matrix $J_{V, U}(\lambda)\colon V\otimes U\rightarrow V\otimes U$ defined in \cref{sect:classicalres} which controls the monoidal structure on the composite $\Rep(G)\rightarrow\Rep(H)\xrightarrow{\free_H} \HC(H)^{\gen}$.

\begin{prop}
For any simple reflection $s_\alpha$ and $V,U\in\Rep(G)$ we have an equality
\[A_{s_\alpha, V\otimes U}(\lambda) J_{V,U}(\lambda) = J_{V,U}(s_\alpha\cdot \lambda) A_{s_\alpha, U}^{(1)}(\lambda)A_{s_\alpha, U}^{(2)}(\lambda-h^{(1)})\]
of rational functions $\h^*\rightarrow \End(V\otimes U)$, where $A^{(1)}$ denotes $A\otimes 1$ and $A^{(2)}$ denotes $1\otimes A$.
\label{prop:classicaldynamicalWeylgroupmultiplicative}
\end{prop}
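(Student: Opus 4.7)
The approach is to specialize the commutative square \eqref{diag::zhelobenko_monoidal} from the proof of \cref{thm:classicalZhelobenkoFactorization} to the free Harish-Chandra bimodules $X = \U\g \otimes V$ and $Y = \U\g \otimes U$, and to read off each of its four arrows concretely using the trivializations of \cref{thm:EVclassicalbasis}. Commutativity of the square has already been established there (via the multiplicativity property \cref{thm:classicalZhelobenkoMultiplicative} of Zhelobenko operators), so the proof reduces to identifying the four arrows and equating the two composites.

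I identify the arrows in turn. The top horizontal arrow is the monoidal structure map of $\res^{\gen}$ transported through the isomorphisms of \cref{thm:EVclassicalbasis}; by construction this is the dynamical twist $J_{V,U}(\lambda)$ in the sense of \cref{prop:dynamicaltwistmonoidal}. The right vertical arrow is $\bq_\alpha$ acting on $\res^{\gen}(\U\g \otimes V \otimes U)$, which under the trivialization becomes $A_{s_\alpha, V \otimes U}(\lambda)$ by definition of the dynamical Weyl group operators. The bottom horizontal arrow is the same monoidal structure but now in the $S_{s_\alpha}$-twisted category; using the $W$-equivariance of $\free_H$ from \eqref{eq:freeWequivariance}, the identification of $S_{s_\alpha}((\U\h)^{\gen}\otimes V)$ with $(\U\h)^{\gen}\otimes S_{s_\alpha}(V)$ replaces the $(\U\h)^{\gen}$-variable by its dot-image, so that the arrow becomes $J_{V,U}(s_\alpha\cdot\lambda)$. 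Finally, the left vertical arrow is $\bq_\alpha\otimes\bq_\alpha$ on the Harish-Chandra tensor product; the description of $\otimes^{\HC}$ recalled at the end of \cref{sect:classicalHC} (whereby moving a $\U\h$-factor of the second component past a weight-$\mu$ vector in the first shifts the argument by $-\mu$) shows that on the free basis this arrow evaluates to $A_{s_\alpha, V}^{(1)}(\lambda)\,A_{s_\alpha, U}^{(2)}(\lambda-h^{(1)})$.

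Equating the two paths around the square then yields the stated identity. The main subtlety is the last step: one must carefully track how the shift $\lambda \mapsto \lambda - h^{(1)}$ arising from the Harish-Chandra tensor product interacts with the dot-shift introduced by $S_{s_\alpha}$ on the bottom row, and verify that the order of the composite $A^{(1)} A^{(2)}$ matches the conventions for $J_{V,U}$ (as opposed to $J_{U,V}$) used in \cref{prop:classicalfusion}. Conceptually, this identity is the dynamical Weyl-group analogue of the shifted cocycle equation for $J$, and both come from the same mechanism: compatibility of $\bq_\alpha$-operators with the Harish-Chandra tensor structure on $\res^{\gen}$.
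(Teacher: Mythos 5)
Your proposal is correct and follows essentially the same route as the paper: the paper's proof also specializes the commutative square \eqref{diag::zhelobenko_monoidal} to $X=\U\g\otimes V$, $Y=\U\g\otimes U$, pads it top and bottom with the trivializations of \cref{thm:EVclassicalbasis}, and identifies the four arrows exactly as you do (left vertical $=A_{s_\alpha,V}^{(1)}(\lambda)A_{s_\alpha,U}^{(2)}(\lambda-h^{(1)})$, right vertical $=A_{s_\alpha,V\otimes U}(\lambda)$, bottom $=J_{V,U}(s_\alpha\cdot\lambda)$ via \eqref{eq:freeWequivariance}). Your reading $A_{s_\alpha,V}^{(1)}$ for the first factor is the correct one, matching the paper's proof rather than the apparent typo $A_{s_\alpha,U}^{(1)}$ in the displayed statement.
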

\begin{proof}
Consider the diagram
\[
\xymatrix{
((\U\h)^{\gen}\otimes V)\otimes_{(\U\h)^{\gen}} ((\U\h)^{\gen}\otimes U) \ar[r] \ar[d] & (\U\h)^{\gen}\otimes V\otimes U\ar[d] \\
\res^{\gen}(\U\g\otimes V)\otimes_{(\U\h)^{\gen}} \res^{\gen}(\U\g\otimes U) \ar[r] \ar^{\bq_\alpha\otimes \bq_\alpha}[d] & \res^{\gen}((\U\g\otimes V)\otimes_{\U\g} (\U\g\otimes U)) \ar^{\bq_\alpha}[d] \\
S_{s_\alpha}(\res^{\gen}(\U\g\otimes V))\otimes_{(\U\h)^{\gen}} S_{s_\alpha}(\res^{\gen}(\U\g\otimes U)) \ar[r] \ar[d] & S_{s_\alpha}(\res^{\gen}((\U\g\otimes V)\otimes_{\U\g} (\U\g\otimes U)))  \ar[d] \\
(\U\h)^{\gen}\otimes V\otimes U \ar[r] & (\U\h)^{\gen}\otimes V\otimes U
}
\]
where the middle square is \eqref{diag::zhelobenko_monoidal}.

The left vertical arrow is $A_{s_\alpha, V}^{(1)}(\lambda)A_{s_\alpha, U}^{(2)}(\lambda-h^{(1)}) $ and the right vertical arrow is $A_{s_\alpha, V\otimes U}(\lambda)$. Using the isomorphism \eqref{eq:freeWequivariance} the bottom horizontal arrow is $J_{V,U}(s_{\alpha}\cdot \lambda)$.
\end{proof}

Let us now compute a particular example of the operators $A_{w, V}(\lambda)$. Consider $G=\SL_2$, $V$ the two-dimensional irreducible representation, $H\subset G$ the subgroup of diagonal matrices and $w$ the unique simple reflection. We can lift it to the matrix $T\in\N(H)$ given by
\[
T = \left(\begin{array}{cc} 0 & -1 \\ 1 & 0\end{array}\right).
\]
Let $\{e, h, f\}$ be the standard basis of $\sl_2$. Let $\{v_+, v_-\}$ be the basis of $V$ such that
\[hv_+ = v_+, \qquad hv_- = -v_-,\qquad fv_+ = v_-.\]

\begin{prop}
The action of $A_{w, V}(\lambda)$ is given as follows:
\begin{align*}
A_{w, V}(\lambda) v_+ &= v_- \\
A_{w, V}(\lambda) v_- &= -\frac{\lambda+2}{\lambda+1} v_+
\end{align*}
\label{prop:classicaldynamicalWeylsl2}
\end{prop}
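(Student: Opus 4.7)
The plan is to unwind the definition: $A_{w,V}(\lambda)(v)$ is the vector in $V$ read off from the composite
\[
(\U\h)^{\gen}\otimes V \xleftarrow{\sim}
\res^{\gen}(\U\g\otimes V)
\xrightarrow{\bq_\alpha} S_w(\res^{\gen}(\U\g\otimes V))
\xrightarrow{\sim} S_w((\U\h)^{\gen}\otimes V)
\xrightarrow{\sim} (\U\h)^{\gen}\otimes S_wV
\]
applied to $1\otimes v$, where the first and third arrows come from the isomorphism of \cref{thm:EVclassicalbasis} and the last arrow, coming from the $W$-equivariance of $\free_H$, acts on the $(\U\h)^{\gen}$-factor by $p(\lambda)\mapsto p(s\cdot\lambda)=p(-\lambda-2)$.

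First I would compute the isomorphism of \cref{thm:EVclassicalbasis} explicitly. The highest-weight lift of $v_+$ is $v_+\otimes x^{\univ}$. Solving the equation $e\cdot(a\,v_-\otimes x^{\univ}+b\,v_+\otimes fx^{\univ})=0$ for $a,b\in(\U\h)^{\gen}$ and normalizing the projection to $V\otimes(\U\h)^{\gen}$ gives the lift of $v_-$ as $v_-\otimes x^{\univ}-v_+\otimes fx^{\univ}\cdot\lambda^{-1}$. For the first identity, I would apply the Zhelobenko formula of \cref{thm:classicalZhelobenko} to $v_+\otimes x^{\univ}$: since $Tv_+=v_-$ and $(\ad e)^2 v_-=0$, the series terminates at $n\leq1$, producing precisely the highest-weight lift of $v_-$. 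Tracing through the composite (the coefficient is the constant $1$, which is invariant under the dot action) yields $A_{w,V}(\lambda)v_+=v_-$.

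For $A_{w,V}(\lambda)v_-$, direct computation via the Zhelobenko formula is awkward because the lift of $v_-$ is not a simple tensor and involves the rational coefficient $\lambda^{-1}$. Instead I would use property~(4) of \cref{thm:classicalZhelobenko}, namely $\bq_\alpha^2(x)=(h+1)^{-1}T_{s_\alpha}^2(x)(h+1)$. For $\SL_2$ the given Lusztig lift satisfies $T^2=-I\in H$, which acts trivially on $\U\g$ by conjugation and as $-\id$ on $V$; consequently $T_{s_\alpha}^2$ sends the lift of $v_+$ to its negative. In the bimodule $(\U\h)^{\gen}\otimes V$ identified with $\res^{\gen}(\U\g\otimes V)$ -- where on $1\otimes v_+$ the left $h$-action is multiplication by $\lambda$ and the right $h$-action is multiplication by $\lambda-1$ -- a short computation gives $\bq_\alpha^2$ applied to the lift of $v_+$ equal to $-\tfrac{\lambda}{\lambda+1}$ times itself. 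Since $\bq_\alpha^2(\text{lift of }v_+)=\bq_\alpha(\text{lift of }v_-)$, applying the inverse isomorphism and the dot-action isomorphism $\lambda\mapsto-\lambda-2$ transforms the rational function $-\lambda/(\lambda+1)$ into $-(\lambda+2)/(\lambda+1)$, yielding $A_{w,V}(\lambda)v_-=-\tfrac{\lambda+2}{\lambda+1}v_+$.

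The main bookkeeping task is tracking the $\U\h$-bimodule structure on $\res^{\gen}$ and the effect of the final dot-action isomorphism on the resulting rational coefficient; with these conventions in hand, both assertions reduce to mechanical computations.
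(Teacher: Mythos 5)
The proposal is correct and follows essentially the same route as the paper's proof: compute the explicit highest-weight lifts $1\otimes v_+\mapsto v_+\otimes x^{\univ}$ and $1\otimes v_-\mapsto v_-\otimes x^{\univ}-v_+\otimes fh^{-1}x^{\univ}$, apply the Zhelobenko series directly to the lift of $v_+$ (it terminates after two terms and lands on the lift of $v_-$), and then handle $v_-$ via the square formula $\bq_\alpha^2(x)=(h+1)^{-1}T_{s_\alpha}^2(x)(h+1)$ with $T^2=-I$, followed by the dot-action twist $\lambda\mapsto -\lambda-2$. The only quibble is a minor left-versus-right $\U\h$-weight bookkeeping wobble (the rational coefficients shift by $\wt(v)$ depending on which action you call $\lambda$), which does not affect the structure of the argument or the final answer.
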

\begin{proof}
The isomorphism $(\U\h)^{\gen} \otimes V \rightarrow (V \otimes M^{\univ,\gen})^N$ is given by
\begin{align*}
1 \otimes v_+ & \mapsto v_+ \otimes x^{\univ}, \\
1 \otimes v_- & \mapsto v_- \otimes x^{\univ} - v_+ \otimes fh^{-1} x^{\univ},
\end{align*}
where $x^{\univ}\in M^{\univ,\gen}$ is the generator. We have 
\[ \bq_w(v_+ \otimes 1)  = \sum\limits_n \frac{(-1)^n}{n!} \ad_e^n(v_- \otimes 1) f^n g_n^{-1} = v_- \otimes 1 - v_+ \otimes fh^{-1}, \]
hence $A_{w,V}(\lambda)(v_+) = v_-$. To compute $A_{w,V}(\lambda)(v_-)$, we use property \eqref{eq::zhelobenko_squareprop} from \cref{thm:classicalZhelobenko}, namely, 
\[ \bq_w(v_- \otimes 1 - v_+ \otimes fh^{-1}) = \bq_w^2 (v_+ \otimes 1) = - (h+1)^{-1}(v_+ \otimes 1)(h+1) = -h(h+1)^{-1}(v_+ \otimes 1). \]
Under identification $S_w(\res^{\gen} (V \otimes \U\g)) \cong (\U\h)^{\gen} \otimes S_w(V)$, we have
\[ \bq_w(v_- \otimes 1 - v_+ \otimes fh^{-1}) \mapsto -\frac{w\cdot \lambda }{w\cdot\lambda + 1} \otimes v_+ = - \frac{\lambda+2}{\lambda+1} \otimes v_+,  \]
and the claim follows.

\end{proof}

We return to the case of arbitrary $G$. Recall that Tarasov and Varchenko \cite{TarasovVarchenko} have introduced the dynamical Weyl group, i.e. a collection of operators $A^{TV}_{w, V}(\lambda)\colon V\rightarrow V$ for every finite-dimensional $\g$-representation $V$ and $w\in W$ depending rationally on the parameter $\lambda\in\h^*$. We will now prove that the operators $A_{w, V}$ constructed from the Zhelobenko operators coincide with the dynamical Weyl group.

\begin{thm}
For any $V\in\Rep(G)$ and $w\in W$ we have an equality of rational functions
\[A^{TV}_{w, V}(\lambda) = A_{w, V}(\lambda).\]
\label{thm:TVdynamicalWeylGroup}
\end{thm}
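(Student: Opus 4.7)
The plan is a three-step reduction: from general $w\in W$ to simple reflections, from general $G$ to $\mathrm{SL}_2$, and from general $V\in\Rep(G)$ to the two-dimensional representation of $\mathrm{SL}_2$. At each stage the key ingredient is that both families of operators $\{A_{w,V}\}$ and $\{A^{TV}_{w,V}\}$ obey the same structural relations, so equality on a small generating set propagates to the general case.

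First, I would use the braid relations. The Zhelobenko operators satisfy the braid relations on generic weights by part (3) of \cref{thm:classicalZhelobenko}, and consequently so do the operators $A_{w,V}(\lambda)$; the Tarasov--Varchenko operators $A^{TV}_{w,V}(\lambda)$ are also well-known to satisfy the braid relations. Writing $w=s_{\alpha_1}\cdots s_{\alpha_n}$ as a reduced expression and using that both families are multiplicative along such decompositions (modulo the dot-shifts built into the dynamical action), it suffices to prove the equality when $w=s_\alpha$ is a simple reflection. Next, I would reduce to $\mathrm{SL}_2$: the Zhelobenko operator $\bq_\alpha$ is built only from the $\sl_2$-triple $\{e_\alpha,h_\alpha,f_\alpha\}$ and the Tits lift $T_{s_\alpha}$, so for the $\g_\alpha\cong\sl_2$ subalgebra attached to $\alpha$ the operator $A_{s_\alpha, V}(\lambda)$ depends only on the restricted $\g_\alpha$-action on $V$ and on $\langle \lambda,\alpha^\vee\rangle$. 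The same reduction principle holds for $A^{TV}_{s_\alpha,V}$ by its very definition in \cite{TarasovVarchenko}. Hence one is reduced to proving the statement for $G=\mathrm{SL}_2$, with its unique simple reflection $w$ and arbitrary finite-dimensional $V\in\Rep(\mathrm{SL}_2)$.

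At this point I would exploit the multiplicativity relation \cref{prop:classicaldynamicalWeylgroupmultiplicative}, which reads
\[
A_{w, V\otimes U}(\lambda)\, J_{V,U}(\lambda)\;=\;J_{V,U}(w\cdot \lambda)\, A^{(1)}_{w,V}(\lambda)\, A^{(2)}_{w,U}(\lambda-h^{(1)}).
\]
By \cref{prop:classicalfusion} the twist $J_{V,U}$ coincides (up to the flip $\tau$) with the Etingof--Varchenko fusion matrix $J^{EV}_{U,V}$, so this is exactly the fusion relation characterising the Tarasov--Varchenko dynamical Weyl group (see \cite[Theorem~7]{EtingofVarchenkoDynamicalWeyl}). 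Since any finite-dimensional $\mathrm{SL}_2$-representation embeds into a tensor power of the two-dimensional irreducible representation $V$, the relation above together with the knowledge of $A_{w,V}$ on this single representation determines $A_{w,-}$ on all of $\Rep(\mathrm{SL}_2)$; the same is true for $A^{TV}_{w,-}$ with $A^{TV}$ in place of $A$. It therefore suffices to match $A_{w,V}$ and $A^{TV}_{w,V}$ on the two-dimensional irreducible representation of $\mathrm{SL}_2$, which is precisely the content of \cref{prop:classicaldynamicalWeylsl2}: comparing with the explicit formula for $A^{TV}_{w,V}$ on the fundamental representation of $\sl_2$ from \cite[Section~2]{EtingofVarchenkoDynamicalWeyl}, the two agree on the basis $\{v_+,v_-\}$.

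The main obstacle is bookkeeping: one has to verify that the normalisation conventions for $A^{TV}$ used in \cite{TarasovVarchenko,EtingofVarchenkoDynamicalWeyl} (in particular the choice of Tits lift of $s_\alpha$, the half-sum of positive roots $\rho$ entering the dot action, and the placement of the dot-shift in the multiplicativity relation) match ours, so that the fundamental-representation computation in \cref{prop:classicaldynamicalWeylsl2} really is the value assigned by TV. Once normalisations are aligned, the inductive extension via the fusion relation is routine.
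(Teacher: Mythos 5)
Your proposal follows essentially the same route as the paper's proof: reduce to simple reflections via the product decomposition, reduce to the $\sl_2$-subalgebra $\g_\alpha$ and hence to $G=\SL_2$, use the multiplicativity relation of \cref{prop:classicaldynamicalWeylgroupmultiplicative} (matched with the Tarasov--Varchenko fusion relation via \cref{prop:classicalfusion}) to reduce to the two-dimensional irreducible representation, and conclude by the explicit computation in \cref{prop:classicaldynamicalWeylsl2}. The only cosmetic difference is that you should say a general $V\in\Rep(\SL_2)$ is a \emph{direct summand} of a tensor power of the fundamental representation (semisimplicity plus naturality of the $A$-operators), rather than that it embeds into one.
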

\begin{proof}
Both $A^{TV}_{w, V}(\lambda)$ and $A_{w, V}(\lambda)$ are given by products in terms of simple reflections, so it is enough to establish the fact for a simple reflection $w=s_\alpha$ along a simple root $\alpha$.

In turn, both $A^{TV}_{s_\alpha, V}(\lambda)$ and $A_{s_\alpha, V}(\lambda)$ are defined by considering the corresponding $\sl_2$-subalgebra $\g_\alpha\subset \g$ generated by $\{e_\alpha, h_\alpha, f_\alpha\}$. So, it is enough to prove the claim for $G=\SL_2$.

For a tensor product of representations $A_{w, V}(\lambda)$ satisfies a multiplicativity property given by \cref{prop:classicaldynamicalWeylgroupmultiplicative} and so does $A^{TV}_{w, V}(\lambda)$ (see \cite[Lemma 7]{TarasovVarchenko}, where the relationship between $J_{V, U}(\lambda)$ and $J^{EV}_{V, U}(\lambda)$ is given by \cref{prop:classicalfusion}). Therefore, it is enough to check the equality on the 2-dimensional irreducible representation of $\sl_2$, which follows by comparing the expressions given in \cref{prop:classicaldynamicalWeylsl2} with the explicit expressions given in \cite[Section 2.5]{TarasovVarchenko} (see also \cite[Lemma 5]{EtingofVarchenkoDynamicalWeyl} for an explicit description of the dynamical Weyl group in the 2-dimensional representation of quantum $\sl_2$).
\end{proof}

\subsection{Quantum Zhelobenko operators}

We continue to use notations for quantum groups from \cref{sect:quantumres}. It was shown by Lusztig \cite{Lusztig}, Soibelman \cite{Soibelman} and Kirillov--Reshetikhin \cite{KirillovReshetikhin} that one can introduce an action of the braid group $\hat{W}$ on modules in $\Rep_q(G)$. For $V\in\Rep_q(G)$ and $w\in W$ we denote by $T_w\colon V\rightarrow V$ the corresponding operator of the quantum Weyl group (for definitiveness, we consider $T'_{w, +1}$ in the notation of \cite[Chapter 5]{Lusztig}).

\begin{example}
Consider $\Uq(\sl_2)$ with generators $E,K,F$ as in \cref{ex:Uqsl2lf}, $V\in\Rep_q(\SL_2)$ and $v\in V$ a vector of weight $n$. Then
\[T_w(v) = \sum_{a,b,c;a-b+c=n} (-1)^b q^{-ac+b} \frac{F^a E^b F^c}{[a]![b]![c]!} v\]
for the unique nontrivial element $w\in W$.
\end{example}

The Weyl group $W$ acts in the standard way on the weight lattice $\Lambda$. We introduce the dot action of $W$ on $\Uq(\h)=k[\Lambda]$ by
\[w\cdot K_\mu = K_{w(\mu)} q^{(\mu, w(\rho)-\rho)}\]
for every $\mu\in\Lambda$.

Recall that for a root $\alpha$ we denote $q_\alpha = q^{(\alpha, \alpha)/2}$. The quantum integer is
\[[n]_\alpha = \frac{q_\alpha^n - q_\alpha^{-n}}{q_\alpha-q_\alpha^{-1}}\]
and the quantum factorial is defined similarly. The quantum Zhelobenko operators were introduced in \cite[Section 9]{KhoroshkinOgievetsky}. For the following statement recall \cref{lm:Uqlfnaction} which explains that the infinite sums in the quantum Zhelobenko operators are well-defined.

\begin{thm}
Suppose $X\in\HC_q(G)$. For a simple root $\alpha$ we denote by $\{E_\alpha, K_\alpha, F_\alpha\}$ the corresponding subset of generators of $\Uq(\g)$. Consider the \defterm{quantum Zhelobenko operator} on $X$ given by
\[\bq_\alpha(x) = \sum_{n=0}^\infty \frac{(-1)^n}{[n]_\alpha!} (\ad (K_\alpha^{-1}E_\alpha))^n((\ad T_{s_\alpha})(x)) F_\alpha^n g_{n,\alpha}^{-1},\]
where
\[g_{n,\alpha} = \prod_{j=1}^n [h_\alpha-j+1]_\alpha\]
and $\ad (K_{\alpha}^{-1} E_\alpha)$ refers to the diagonal $\Uq(\g)$-action. Then the operators $\bq_\alpha$ descend to linear isomorphisms
\[(X\otimes_{\Uq(\g)^\lf} M_q^{\univ, \gen})^{\Uq(\n)}\longrightarrow (X\otimes_{\Uq(\g)^\lf} M_q^{\univ, \gen})^{\Uq(\n)}\]
which satisfy the following relations:
\begin{enumerate}
\item $\bq_\alpha((\ad d)(x)) = (\ad s_\alpha (d))(\bq_\alpha(x))$ for every $d\in\Uq(\h)$ and $x\in X$.

\item $\bq_\alpha(dx) = (s_\alpha\cdot d)\bq_\alpha(x)$ for every $d\in\Uq(\h)$ and $x\in X$.

\item $\underbrace{\bq_\alpha\bq_\beta\bq_\alpha\dots}_{m_{\alpha\beta}} = \underbrace{\bq_\beta\bq_\alpha\bq_\beta\dots}_{m_{\alpha\beta}}$ for $\alpha\neq\beta$.
\end{enumerate}
\end{thm}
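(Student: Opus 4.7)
The proof proceeds in direct analogy with the classical \cref{thm:classicalZhelobenko}, replacing Lie-algebraic computations with their Lusztig-Kashiwara quantum counterparts. The plan has four steps.

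First, I would verify that the defining infinite series is well-defined on $X\otimes_{\Uq(\g)^\lf} M_q^{\univ,\gen}$. By \cref{lm:Uqlfnaction} the diagonal $\Uq(\n)$-action on this tensor product is locally finite, so the operators $(\ad(K_\alpha^{-1}E_\alpha))^n$ annihilate any fixed element for $n$ large enough, making the sum finite. The denominator $g_{n,\alpha}^{-1} = \prod_{j=1}^n [h_\alpha-j+1]_\alpha^{-1}$ acts on the right $\Uq(\h)$-factor and is invertible thanks to the generic weight assumption.

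Second, I would show that $\bq_\alpha$ preserves the $\Uq(\n)$-invariant subspace. For the simple root $\alpha$ this is the main content: using the quantum commutator $[E_\alpha, F_\alpha^n]$ in $\Uq(\sl_2)$, the coefficients have been designed so that a telescoping cancellation forces $E_\alpha \bq_\alpha(x)=0$ when $E_\alpha \triangleright (\ad T_{s_\alpha})(x)=0$, which in turn holds because $T_{s_\alpha}$ intertwines $E_\alpha$ with (up to a sign and Cartan factor) $F_\alpha$, and $F_\alpha\triangleright x$ is trivial modulo $\Uq(\n)$-action after projecting to $N$-invariants. For a different positive root $\beta$, the key fact is that $s_\alpha(\beta)$ is again a positive root (since $\alpha$ is simple), so $T_{s_\alpha}^{-1} E_\beta T_{s_\alpha}\in\Uq^{>0}(\n)$, and commuting it through $F_\alpha^n g_{n,\alpha}^{-1}$ preserves $\Uq^{>0}(\n)\cdot X$.

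Third, relations (1) and (2) follow by direct calculation. Relation (1) is immediate from the $\ad$-equivariance of each term and the fact that $T_{s_\alpha}$ implements $s_\alpha$ on $\Uq(\h)$. Relation (2), concerning the left (outer) action of $\Uq(\h)$, combines the conjugation $T_{s_\alpha} K_\mu = K_{s_\alpha(\mu)} T_{s_\alpha}$ with the commutation of $K_\mu$ through the normalization $g_{n,\alpha}^{-1}$; the correction by $\langle -, w(\rho)-\rho\rangle$ coming from the dot action is exactly produced by the shift $[h_\alpha-j+1]$ compared to $[h_\alpha]$, matching the classical computation.

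The main obstacle is relation (3), the braid relation. I would reduce to the rank two case: for distinct simple roots $\alpha, \beta$ the operators $\bq_\alpha, \bq_\beta$ are built entirely from the Levi subalgebra $\Uq(\g_{\alpha,\beta})\subset \Uq(\g)$, so the verification takes place inside this finite-type subalgebra ($A_1\times A_1$, $A_2$, $B_2$, or $G_2$). Within the rank two Levi, I would invoke the quantum extremal projector of \cref{thm:quantumextremalprojector} to show that both $\underbrace{\bq_\alpha\bq_\beta\bq_\alpha\cdots}_{m_{\alpha\beta}}$ and $\underbrace{\bq_\beta\bq_\alpha\bq_\beta\cdots}_{m_{\alpha\beta}}$ equal the single operator $\bq_{w_0^{\alpha\beta}}(x) = P_{\alpha\beta}\cdot(\ad T_{w_0^{\alpha\beta}})(x)\cdot (\text{normalization})$, where $w_0^{\alpha\beta}$ is the longest element of the rank two Weyl subgroup and $P_{\alpha\beta}$ is the corresponding extremal projector. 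The equality then reduces to the braid relation for the quantum Weyl group operators $T_{s_\alpha}, T_{s_\beta}$, which is Lusztig's theorem. Handling the normalization factors $g_{n,\alpha}^{-1}$ through the iterated compositions, including the shifts by $\rho$ introduced by repeated dot actions, is the technically delicate part; explicit computations of this type in the $\U_q(\sl_2)$ and $\U_q(\sl_3)$ cases (as in \cite{KhoroshkinOgievetsky}) provide templates for the general rank two analysis.
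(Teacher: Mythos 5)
The paper does not actually prove this theorem: as with its classical counterpart \cref{thm:classicalZhelobenko}, the statement is quoted from \cite[Section 9]{KhoroshkinOgievetsky}, the only in-paper input being the remark that \cref{lm:Uqlfnaction} makes the infinite series well defined on $X\otimes_{\Uq(\g)^\lf}M_q^{\univ,\gen}$. So there is no in-paper argument to compare yours against, and I can only judge your sketch against the cited source. Your steps one through three track the standard line of reasoning correctly: well-definedness from local finiteness of the diagonal action together with genericity of the right $\Uq(\h)$-weights; preservation of $\Uq(\n)$-invariance by treating the root $\alpha$ itself via the rank-one ($\Uq(\sl_2)_\alpha$) computation and the remaining positive roots via the fact that $s_\alpha$ permutes them; and relations (1)--(2) by direct commutation (though your attribution of the $\rho$-shift in the dot action to the normalization $g_{n,\alpha}$ rather than to commuting Cartan elements past $F_\alpha^n$ is imprecise). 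The place where your sketch conceals the real work is the braid relation. Asserting that both iterated composites equal $P_{\alpha\beta}\cdot(\ad T_{w_0^{\alpha\beta}})$ up to normalization is essentially a restatement of what is to be proved: to establish it for even one reduced word of $w_0^{\alpha\beta}$ you need the factorization of the rank-two extremal projector $P_{\alpha\beta}$ as an ordered product of rank-one projectors $P_\gamma$ over the positive roots $\gamma$ of the rank-two subsystem, taken in the normal ordering attached to that reduced word, together with the nontrivial theorem that this product is independent of the choice of normal ordering. That independence statement (due to \cite{AsherovaSmirnovTolstoy} classically and \cite{KhoroshkinTolstoy} in the quantum case, cf.\ \cref{thm:quantumextremalprojector}) is the actual engine of the Khoroshkin--Ogievetsky proof; without naming and invoking it, your rank-two reduction is circular, since "both sides equal the same canonical operator" is exactly the content of relation (3).
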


The third property allows us to define $\bq_w$ for any element $w\in\hat{W}$. We also have a multiplicativity property.

\begin{thm}
Let $X,Y\in\HC_q(G)$ and take $x\in X$ and $y\in Y$, where $\Uq^{>0}(\n)y\in Y\Uq^{>0}(\n)$. Then we have an equality
\[\bq_w(x\otimes y) = \bq_w(x)\otimes \bq_w(y)\]
in $X\otimes_{\Uq(\g)^\lf} Y\otimes_{\Uq(\g)^\lf} M_q^{\univ, \gen}$.
\end{thm}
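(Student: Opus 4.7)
The plan is to parallel the classical proof of \cref{thm:classicalZhelobenkoMultiplicative} given by Khoroshkin--Ogievetsky. We first reduce to the case of a single simple reflection $w=s_\alpha$. For a reduced decomposition $w=s_{\alpha_1}\cdots s_{\alpha_\ell}$ the braid relations (property (3)) give $\bq_w=\bq_{\alpha_1}\cdots\bq_{\alpha_\ell}$, so by induction on the length of $w$ it suffices to check the multiplicativity of each $\bq_{s_\alpha}$. We must also verify that if $y$ satisfies $\Uq^{>0}(\n)y\subseteq Y\Uq^{>0}(\n)$ then $\bq_{s_\beta}(y)$ satisfies the analogous centralizing condition with respect to the $s_\beta$-twisted Harish--Chandra bimodule structure; this follows from property (1) together with the fact that $\ad T_{s_\beta}$ permutes the root subspaces of $\Uq(\n)$, with the single exception of the root $\beta$, whose contribution is killed by the generic-weight condition on $M_q^{\univ,\gen}$.

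Fixing a simple root $\alpha$, we expand both sides of the desired identity using the explicit series
\[
\bq_\alpha(z) = \sum_{n\geq 0} \frac{(-1)^n}{[n]_\alpha!}\,(\ad(K_\alpha^{-1}E_\alpha))^n\bigl((\ad T_{s_\alpha})(z)\bigr)F_\alpha^n g_{n,\alpha}^{-1}.
\]
The essential algebraic inputs are: the coproduct of $T_{s_\alpha}$ in the appropriate completion of $\Uq(\g)$, which decomposes as $(T_{s_\alpha}\otimes T_{s_\alpha})$ times a correction lying in $\Uq(\g_\alpha)\widehat\otimes\Uq(\g_\alpha)$ coming from the quasi-$R$-matrix of the rank-one subalgebra; the quantum Leibniz rule for $\ad(K_\alpha^{-1}E_\alpha)$ coming from its coproduct, producing weighted binomial sums on $x\otimes y$; and the standard $\Uq(\sl_2)$-Verma computations for the right multiplication by $F_\alpha^n g_{n,\alpha}^{-1}$. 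All operators appearing in $\bq_\alpha$ involve only the rank-one subalgebra $\Uq(\g_\alpha)$, so the whole identity is effectively a statement about $\Uq(\sl_2)$-modules.

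Inside this $\Uq(\sl_2)$-computation, the hypothesis $\Uq^{>0}(\n)y\subseteq Y\Uq^{>0}(\n)$ lets us transfer any left action of $E_\alpha$ on $y$ to a right action, which then annihilates the highest-weight generator of $M_q^{\univ,\gen}$; this collapses the double sum produced by the Leibniz expansion to a single sum matching the product $\bq_\alpha(x)\otimes\bq_\alpha(y)$, with the quasi-$R$-matrix insertions compensating the surviving $F_\alpha\otimes E_\alpha$-type cross terms. The main obstacle is the bookkeeping of the $K_\alpha$-factors produced by the non-cocommutative coproducts: each quantum expansion carries weights depending on the grading of the element being transported, so the final cancellation relies on careful matching of weight shifts against the $\Uq(\h)$-valued denominators $g_{n,\alpha}$ appearing on either side, using standard quantum binomial identities. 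Once this rank-one identity is established, the reduction in the first paragraph promotes it to the multiplicativity statement for arbitrary $w$.
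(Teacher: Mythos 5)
The paper itself does not prove this statement: both the classical multiplicativity (\cref{thm:classicalZhelobenkoMultiplicative}) and its quantum analogue are imported from Khoroshkin--Ogievetsky (the quantum Zhelobenko operators and their properties are taken from Section 9 of that paper), so there is no in-text argument to measure yours against. On its own terms, your outline identifies the correct skeleton: reduction to a single simple reflection, locality of all the operators in the rank-one subalgebra $\Uq(\g_\alpha)$, the coproduct of $T_{s_\alpha}$ expressed as $T_{s_\alpha}\otimes T_{s_\alpha}$ times a rank-one quasi-$R$-matrix correction, and the use of the hypothesis $\Uq^{>0}(\n)y\subseteq Y\Uq^{>0}(\n)$ to convert left actions of $E_\alpha$ on $y$ into right multiplications that vanish in $Y\otimes_{\Uq(\g)^\lf}M_q^{\univ,\gen}$. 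This is consistent with how the result is established in the literature.

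Nevertheless there is a genuine gap: the rank-one identity, which is the entire mathematical content of the theorem, is never actually verified. You list the ingredients and then assert that ``careful matching of weight shifts against the denominators $g_{n,\alpha}$'' and ``standard quantum binomial identities'' collapse the double sum; that is precisely the computation the theorem consists of, and nothing in the write-up certifies that the quasi-$R$-matrix correction to $\Delta(T_{s_\alpha})$ exactly compensates the surviving $F_\alpha\otimes E_\alpha$ cross terms against the $q$-shifted denominators. A secondary gap sits in the reduction to general $w$: iterating the rank-one statement requires applying it to $\bq_{w'}(x)$ and $\bq_{w'}(y)$, which live in $(X\otimes_{\Uq(\g)^\lf}M_q^{\univ,\gen})^{\Uq(\n)}$ rather than in $X$ and $Y$, so you must choose representatives in a localization and check the centralizing condition for the representative of $\bq_{w'}(y)$. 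Your justification via the action of $\ad T_{s_\beta}$ on root subspaces is not the clean mechanism here (note $T_{s_\beta}$ sends $E_\beta$ to a multiple of $F_\beta K_\beta$, so it does not preserve $\Uq(\n)$); the relevant point is rather that $\bq_\beta$ lands by construction in the $\Uq(\n)$-invariants of the generic localization, and any such class admits a representative satisfying the required condition. As written, the proposal is a plausible plan rather than a proof.
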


\subsection{Quantum dynamical Weyl group}

As in \cref{sect:classicaldynamicalWeyl}, quantum Zhelobenko operators define Weyl symmetry of the parabolic restriction functor $\res^{\gen}\colon \HC_q(G)\rightarrow \HC_q(H)^{\gen}$.

The $W$-action on $\HC_q(H)$ is defined similarly to the $W$-action on $\HC(H)$. An element $w\in W$ gives rise to a functor $S_w\colon \HC_q(H)\rightarrow \HC_q(H)$ given as follows. For $X\in\HC_q(H)$ we set $S_w(X) = X$ as a vector space with the $\Uq(\h)$-bimodule structure given by
\[d\triangleright^w x = (w\cdot d)\triangleright x,\qquad x\triangleleft^w d  = x\triangleleft(w\cdot d),\]
where $d\in\Uq(\h)$ and $x\in X$. The functors $\{S_w\}$ have obvious monoidal structures.

Consider the action of the quantum Zhelobenko operators
\[\bq_{\alpha}\colon \res^{\gen}(X)\xrightarrow{\sim} S_{s_\alpha}(\res^{\gen}(X)).\]

\begin{thm}
The quantum Zhelobenko operators define a factorization
\[
\xymatrix{
\HC_q(G) \ar@{-->}[r] \ar_{\res^{\gen}}[dr] & \HC_q(H)^{\gen, \hat{W}} \ar[d] \\
& \HC_q(H)^{\gen}
}
\]
of $\res^{\gen}_q\colon \HC_q(G)\rightarrow \HC_q(H)^{\gen}$ through a monoidal functor $\res^{\gen}\colon\HC_q(G)\rightarrow \HC_q(H)^{\gen, \hat{W}}$.
\end{thm}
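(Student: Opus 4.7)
The proof proceeds in complete analogy with the classical case treated in \cref{thm:classicalZhelobenkoFactorization}, substituting the quantum Zhelobenko operators and their properties for their classical counterparts. The plan is to first produce the lift as a plain functor via the equivariance data supplied by the $\bq_\alpha$, and then equip this lift with a monoidal structure compatible with the one already established on $\res^{\gen}_q$ in \cref{cor:quantumresmonoidal}.

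First, for $X\in\HC_q(G)$ I would define the $\hat{W}$-equivariance structure on $\res^{\gen}_q(X)\in\HC_q(H)^{\gen}$ by declaring, for every simple root $\alpha$, the isomorphism $\res^{\gen}_q(X)\xrightarrow{\sim} S_{s_\alpha}(\res^{\gen}_q(X))$ to be the quantum Zhelobenko operator $\bq_\alpha$. That this lands in the $\hat{W}$-equivariant subcategory and is functorial in $X$ follows from parts (1) and (2) of the quantum Zhelobenko relations (compatibility with both the left and right $\Uq(\h)$-actions up to the dot action, matching the definition of $S_{s_\alpha}$). The braid relations in $\hat{W}$ are ensured by part (3), so the assignment $w\mapsto \bq_w$ on a reduced expression is well-defined and the equivariance data is coherent. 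This produces the lift as a plain, non-monoidal functor.

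Second, I would upgrade this lift to a monoidal functor by transporting the monoidal structure on $\res^{\gen}_q$ already constructed in \cref{cor:quantumresmonoidal} (given on objects $X,Y\in\HC_q(G)$ by the natural map $\res^{\gen}_q(X)\otimes_{\Uq(\h)^{\gen}}\res^{\gen}_q(Y)\to \res^{\gen}_q(X\otimes_{\Uq(\g)^{\lf}} Y)$ induced by $[x]\otimes[y]\mapsto[x\otimes y]$), and verifying that it is compatible with the $\bq_\alpha$-equivariances. The key input here is the quantum multiplicativity theorem, which asserts $\bq_w(x\otimes y)=\bq_w(x)\otimes\bq_w(y)$ whenever $y$ is $\Uq(\n)$-invariant in the appropriate sense; since in the relevant tensor product the right factor comes from $\res^{\gen}_q(Y)$ and is therefore $\Uq(\n)$-invariant after applying $(-\otimes_{\Uq(\g)^{\lf}} M_q^{\univ,\gen})^{\Uq(\n)}$, this hypothesis is met and the analog of diagram \eqref{diag::zhelobenko_monoidal} commutes.

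Finally, compatibility with the unit is the identity $\bq_\alpha(1)=1$ applied to the canonical inclusion $\Uq(\h)\hookrightarrow (M_q^{\univ})^{\Uq(\n)}$: part (2) of the Zhelobenko relations gives $\bq_\alpha(d\cdot 1)=(s_\alpha\cdot d)\bq_\alpha(1)$ for $d\in\Uq(\h)$, and inspection of the defining series shows that on $1\in\Uq(\g)^{\lf}$ all higher-order terms vanish because $F_\alpha^n g_{n,\alpha}^{-1}$ acts through right multiplication while $(\ad (K_\alpha^{-1}E_\alpha))^n (\ad T_{s_\alpha})(1)=0$ for $n\geq 1$, leaving $\bq_\alpha(1)=1$. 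The main subtlety — and the only place where one must be careful — is to check that the quantum multiplicativity applies with the correct $\Uq(\n)$-invariance hypothesis on the tensor factors obtained after passing to $\res^{\gen}_q$; once this is in place, the rest is a direct transposition of the classical argument.
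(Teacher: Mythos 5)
Your proposal is correct and follows exactly the route the paper intends: the paper states this theorem without a separate proof, deferring to the classical argument of \cref{thm:classicalZhelobenkoFactorization}, and your write-up is precisely that argument transposed to the quantum setting (equivariance from properties (1)--(3) of the quantum Zhelobenko operators, monoidality from the quantum multiplicativity theorem, and unit compatibility from $\bq_\alpha(1)=1$). Your extra care about the $\Uq^{>0}(\n)$-invariance hypothesis in the multiplicativity theorem and the vanishing of the higher terms of the series on $1$ matches the corresponding steps in the classical proof.
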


By \cref{thm:EVquantumbasis} we have a commutative diagram
\[
\xymatrix{
\Rep_q(G) \ar[d] \ar^{\free_G}[r] & \HC_q(G) \ar^{\res^{\gen}}[d] \\
\Rep_q(H) \ar^{\free_H}[r] & \HC_q(H)^{\gen}
}
\]
which gives rise to a monoidal structure on the composite
\[\Rep_q(G)\longrightarrow \Rep_q(H)\xrightarrow{\free_H} \HC_q(H)^{\gen, \hat{W}}.\]
As in \cref{sect:classicaldynamicalWeyl}, we obtain linear maps $A_{w, V}(\lambda)\colon V\rightarrow V$ for every $V\in\Rep_q(G)$, which are rational functions on $H$. For $V,U\in\Rep_q(G)$ recall the matrix $J_{V, U}(\lambda)\colon V\otimes U\rightarrow V\otimes U$ defined in \cref{sect:quantumres}.

\begin{prop}
For any simple reflection $s_\alpha$ and $V,U\in\Rep_q(G)$ we have an equality
\[A_{s_\alpha, V\otimes U}(\lambda) J_{V,U}(\lambda) = J_{V,U}(s_\alpha\cdot \lambda) A_{s_\alpha, V}^{(1)}(\lambda)A_{s_\alpha, U}^{(2)}(\lambda-h^{(1)})\]
of rational functions $H\rightarrow \End(V\otimes U)$.
\label{prop:quantumdynamicalWeylgroupmultiplicative}
\end{prop}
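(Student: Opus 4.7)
The plan is to mirror the proof of \cref{prop:classicaldynamicalWeylgroupmultiplicative} verbatim. All ingredients are already in hand: the quantum Zhelobenko factorization theorem just proven supplies a monoidal functor $\res^{\gen}\colon \HC_q(G)\to \HC_q(H)^{\gen,\hat{W}}$; the multiplicativity theorem $\bq_w(x\otimes y)=\bq_w(x)\otimes \bq_w(y)$ ensures the middle square commutes; \cref{thm:EVquantumbasis} furnishes canonical isomorphisms $\res^{\gen}(\Uq(\g)^\lf\otimes V)\cong \Uq(\h)^{\gen}\otimes V$ through which the matrix $J_{V,U}$ manifests the monoidal structure on the composite $\Rep_q(G)\to \HC_q(H)^{\gen}$; and the quantum analogue of the $W$-equivariance isomorphism \eqref{eq:freeWequivariance} for $\free_H\colon \Rep_q(H)\to \HC_q(H)^{\gen}$ transports $J_{V,U}(\lambda)$ into $J_{V,U}(s_\alpha\cdot\lambda)$ under the dot-action twist $S_{s_\alpha}$.

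Concretely, I would form the quantum version of the four-row, two-column diagram from the classical proof. The rows are $(\Uq(\h)^{\gen}\otimes V)\otimes_{\Uq(\h)^{\gen}}(\Uq(\h)^{\gen}\otimes U)$ in the top left and $\Uq(\h)^{\gen}\otimes V\otimes U$ in the top right; then $\res^{\gen}(\Uq(\g)^\lf\otimes V)\otimes_{\Uq(\h)^{\gen}}\res^{\gen}(\Uq(\g)^\lf\otimes U)$ and $\res^{\gen}((\Uq(\g)^\lf\otimes V)\otimes_{\Uq(\g)^\lf}(\Uq(\g)^\lf\otimes U))$; then their images under $S_{s_\alpha}$; and finally $\Uq(\h)^{\gen}\otimes V\otimes U$ on both sides once more. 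The horizontal arrows in each row are built from the monoidal structure on $\res^{\gen}$; the middle vertical maps are $\bq_\alpha\otimes \bq_\alpha$ on the left and $\bq_\alpha$ on the right, so the central square commutes by multiplicativity. The top and bottom squares commute by naturality together with the definition of $J_{V,U}$ as the comparison datum implicit in \cref{thm:EVquantumbasis}, combined with the quantum version of \eqref{eq:freeWequivariance} at the bottom. Reading off the perimeter yields the desired identity: the composite along the right column is $A_{s_\alpha, V\otimes U}(\lambda)$ conjugated by the top-right and bottom-right identifications involving $J_{V,U}(\lambda)$ and $J_{V,U}(s_\alpha\cdot\lambda)$, while the composite along the left column is exactly $A^{(1)}_{s_\alpha,V}(\lambda)\,A^{(2)}_{s_\alpha,U}(\lambda-h^{(1)})$ by definition of $A_{s_\alpha,-}$ applied separately to each tensor factor.

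The step requiring the most care is the appearance of the dynamical shift $\lambda-h^{(1)}$. This shift is produced by the Harish--Chandra relative tensor product $\otimes_{\Uq(\h)^{\gen}}$: because the right $\Uq(\h)^{\gen}$-action on $\Uq(\h)^{\gen}\otimes V$ is twisted by the weight of $V$ via $\tau$ from \eqref{eq:quantumfieldgoal}, evaluating the second $A$-operator inside the tensor product forces its spectral parameter to be shifted by the $\h$-weight of the first factor. I would verify this shift explicitly using the formula for $\tau_V$ from \cref{sect:quantumHC} together with the fact that $A_{s_\alpha,U}(\mu)$ is, by construction, a rational function of $\mu$ acting on the right $\Uq(\h)^{\gen}$-module structure. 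The remaining subtlety, namely that the bottom horizontal arrow is $J_{V,U}(s_\alpha\cdot\lambda)$ rather than $J_{V,U}(\lambda)$, is handled by the quantum analogue of the $W$-equivariance isomorphism \eqref{eq:freeWequivariance}, which in the quantum case amounts to the fact that the dot action on $\Uq(\h)$ commutes with the fusion map up to substitution $\lambda\mapsto s_\alpha\cdot\lambda$. Neither point introduces new difficulty beyond the classical case; both arguments transport verbatim once the quantum Zhelobenko multiplicativity and the quantum analogue of \cref{thm:EVclassicalbasis} are in place.
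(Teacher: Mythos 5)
Your proposal is correct and is exactly the argument the paper intends: the quantum statement is given without proof precisely because it follows by transporting the diagram from the proof of \cref{prop:classicaldynamicalWeylgroupmultiplicative} verbatim, using the quantum Zhelobenko multiplicativity theorem for the middle square and \cref{thm:EVquantumbasis} for the outer identifications. Your identification of the origin of the shift $\lambda-h^{(1)}$ in the relative tensor product over $\Uq(\h)^{\gen}$ and of the substitution $\lambda\mapsto s_\alpha\cdot\lambda$ in the bottom row matches the classical proof exactly.
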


Let us now compute the operators $A_{w, V}$ for $G=\SL_2$. Consider the irreducible two-dimensional representation $V\in\Rep_q(G)$ with the basis $\{v_+, v_-\}$, such that
\[K v_+ = q v_+,\qquad K v_- = q^{-1} v_+,\qquad F v_+ = v_-.\]

\begin{prop}
The action of $A_{w, V}(\lambda)$ is given as follows:
\begin{align*}
A_{w, V}(\lambda) v_+ &= v_- \\
A_{w, V}(\lambda) v_- &= -\frac{[\lambda+2]}{[\lambda+1]} v_+
\end{align*}
\label{prop:quantumdynamicalWeylsl2}
\end{prop}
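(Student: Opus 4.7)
The plan is to mirror the strategy of the classical computation in \cref{prop:classicaldynamicalWeylsl2}, replacing each ingredient by its quantum analogue. First I would recall, from \cref{ex:SL2J}, the explicit form of the isomorphism $\Uq(\h)^{\gen}\otimes V\xrightarrow{\sim} (V\otimes M^{\univ,\gen})^{\Uq(\n)}$:
\begin{align*}
1\otimes v_+ &\longmapsto v_+\otimes x^{\univ}, \\
1\otimes v_- &\longmapsto v_-\otimes x^{\univ} - q^{-1}\,v_+\otimes F\cdot[h]^{-1}x^{\univ},
\end{align*}
so that $A_{w,V}(\lambda)$ is obtained by transporting the action of the quantum Zhelobenko operator $\bq$ on $(V\otimes M^{\univ,\gen})^{\Uq(\n)}$ through this identification and through the $W$-equivariance \eqref{eq:freeWequivariance} of $\free_H$.

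Second, I would apply $\bq$ to $v_+\otimes x^{\univ}$ using the defining series. Since $V$ is two-dimensional, $\ad(K^{-1}E)$ applied to $v_-$ produces $v_+$ (up to an explicit $q$-power) and is annihilated by a further application, so only the terms $n=0$ and $n=1$ survive. Using $T_w(v_+)=v_-$ for the chosen branch of the quantum Weyl group, the $n=0$ term contributes $v_-\otimes x^{\univ}$ and the $n=1$ term contributes precisely $-q^{-1}v_+\otimes F[h]^{-1}x^{\univ}$, matching the image of $1\otimes v_-$ under the isomorphism above. Transporting through the dot-shift in $S_{s_\alpha}$, this yields $A_{w,V}(\lambda)v_+=v_-$.

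Third, to compute $A_{w,V}(\lambda)v_-$ I would derive and use a quantum analogue of property (4) of \cref{thm:classicalZhelobenko}, namely a formula of the shape
\[
\bq^2(x)\;=\;[h+1]^{-1}\,(\ad T_{s_\alpha}^2)(x)\,[h+1],
\]
obtained from the quadratic relation satisfied by the quantum Weyl group element $T_{s_\alpha}$ together with the definition of $\bq$. Applied to $x=v_+\otimes x^{\univ}$, the right-hand side produces a scalar multiple of $v_+\otimes x^{\univ}$ with coefficient $-[h]/[h+1]$ (after tracking how $T_{s_\alpha}^2$ acts on $v_+$ in the two-dimensional module). Passing through the identification and the dot-shift $\lambda\mapsto s_\alpha\cdot\lambda=-\lambda-2$ then gives
\[
A_{w,V}(\lambda)v_-\;=\;-\frac{[\lambda+2]}{[\lambda+1]}\,v_+.
\]

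The main obstacle is precisely the quantum square identity for $\bq$: it is not recorded in the excerpt, so one must either check it directly from the defining series, working modulo the right-hand ideal generated by $\Uq^{>0}(\n)$ as in the classical case, or circumvent it by applying $\bq$ directly to the preimage $v_-\otimes x^{\univ}-q^{-1}v_+\otimes F[h]^{-1}x^{\univ}$ of $1\otimes v_-$. The latter is purely mechanical given the two-dimensionality of $V$, but the former is more conceptual and also useful elsewhere; in either route the only delicate point is keeping track of the $q$-shifts arising from $\ad(K^{-1}E)$ and from the commutation of $F$ past the quantum Cartan factor $[h]^{-1}$.
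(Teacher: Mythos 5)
Your proposal is correct and follows essentially the same route as the paper: the same explicit isomorphism $\Uq(\h)^{\gen}\otimes V\cong (V\otimes M^{\univ,\gen})^{\Uq(\n)}$, a direct series computation of $\bq_w(v_+\otimes 1)$ for the first formula, and the square identity for the quantum Zhelobenko operator (which the paper cites from Khoroshkin--Ogievetsky, Corollary 9.6, rather than rederiving) followed by the dot-shift $\lambda\mapsto -\lambda-2$ for the second.
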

\begin{proof}
The isomorphism $\Uq(\h)^{\gen} \otimes V \rightarrow (V \otimes M^{\univ,\gen})^N$ is given by
\begin{align*}
1 \otimes v_+ & \mapsto v_+ \otimes x^{\univ}, \\
1 \otimes v_- & \mapsto v_- \otimes 1 - q^{-1} v_+ \otimes F\cdot [h]^{-1} \cdot x^{\univ}.
\end{align*}

By \cite[Proposition 5.2.2]{Lusztig} we have
\[T_w(v_+)  = v_-,\qquad T_w(v_-) = -qv_+.\]
Therefore,
\[\bq_w(v_+\otimes 1) = \sum_{n=0}^\infty \frac{(-1)^n}{[n]!} (\ad (K^{-1}E))^n(v_-\otimes 1) F^n g_{n}^{-1} = v_-\otimes 1 - q^{-1} v_+\otimes F[h]^{-1},\]
which implies that
\[A_{w, V}(\lambda)v_+ = v_-.\]

Using the formula for the square of the quantum Zhelobenko operator \cite[Corollary 9.6]{KhoroshkinOgievetsky} we obtain
\[\bq_w(v_-\otimes 1 - q^{-1} v_+\otimes F[h]^{-1}) = - [h+1]^{-1}(v_+\otimes 1)[h+1] = -\frac{[h]}{[h+1]} (v_+\otimes 1),\]
which implies that
\[A_{w, V}(\lambda) v_- = -\frac{[\lambda+2]}{[\lambda+1]} v_+.\]
\end{proof}

\begin{remark}
The formulas (9.10) and (9.11) in \cite{KhoroshkinOgievetsky} are missing a sign, see \cite[Proposition 5.2.2]{Lusztig}.
\end{remark}

Etingof and Varchenko \cite{EtingofVarchenkoDynamicalWeyl} have introduced a quantum analog of the dynamical Weyl group, i.e. a collection of rational functions $A^{EV}_{w, V}(\lambda)\colon V\rightarrow V$ for every $V\in\Rep_q(G)$ and $w\in W$. We are now ready to relate $A_{w, V}$ and $A^{EV}_{w, V}$.

\begin{thm}
For any $V\in\Rep_q(G)$ an $w\in W$ we have an equality of rational functions
\begin{equation}
A^{EV}_{w, V}(\lambda) = q^{(w(\rho)-\rho, h)} A_{w, V}(\lambda).
\label{eq:EVdynamicalWeylGroup}
\end{equation}
\end{thm}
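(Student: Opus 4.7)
The proof strategy mirrors the classical case, \cref{thm:TVdynamicalWeylGroup}. Both operators $A_{w,V}(\lambda)$ and $A^{EV}_{w,V}(\lambda)$ admit multiplicative decompositions along any reduced expression $w = s_{\alpha_{i_1}} \cdots s_{\alpha_{i_\ell}}$: for $A_{w,V}$ this is a consequence of the braid relations for the quantum Zhelobenko operators and the monoidal lift $\res^{\gen}\colon \HC_q(G)\rightarrow \HC_q(H)^{\gen, \hat{W}}$, while for $A^{EV}_{w,V}$ it is proved in \cite{EtingofVarchenkoDynamicalWeyl}. The scalar factor $q^{(w(\rho)-\rho, h)}$ satisfies the corresponding cocycle relation, since $w_1w_2(\rho) - \rho = w_1(w_2(\rho)-\rho) + (w_1(\rho)-\rho)$ translates into the required compatibility between consecutive operators (which act with shifted parameters). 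Hence it is enough to verify the equality for a simple reflection $w=s_\alpha$.

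Because both $A_{s_\alpha, V}$ and $A^{EV}_{s_\alpha, V}$ are defined entirely in terms of the $\Uq(\sl_2)$-subalgebra generated by $\{E_\alpha, K_\alpha, F_\alpha\}$, the problem reduces further to the case $G = \SL_2$ with $w$ the unique nontrivial element of $W$. At this stage one exploits the tensor-product multiplicativity: \cref{prop:quantumdynamicalWeylgroupmultiplicative} for $A_{w, V\otimes U}$ and the analogous formula for $A^{EV}_{w, V\otimes U}$ from \cite[Theorem 8]{EtingofVarchenkoDynamicalWeyl}. These two multiplicativity statements involve, respectively, the fusion matrices $J_{V,U}$ and $J^{EV}_{V,U}$, which are identified up to transposition of the tensor factors by \cref{prop::quantumfusion}. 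Combined with the fact that $q^{(w(\rho)-\rho,h)}$ also satisfies a compatible shift identity under the coproduct, these observations reduce the equality \eqref{eq:EVdynamicalWeylGroup} to the case when $V$ is the two-dimensional irreducible representation of $\Uq(\sl_2)$.

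On the two-dimensional representation the required formulas for $A_{w,V}$ are worked out in \cref{prop:quantumdynamicalWeylsl2}, and the corresponding formulas for $A^{EV}_{w,V}$ are given in \cite[Lemma 5]{EtingofVarchenkoDynamicalWeyl}. The operator $q^{(w(\rho)-\rho,h)}$ acts on the weight-$\pm 1$ vectors $v_\pm$ as $q^{\mp 1}$. Plugging these in, the identity \eqref{eq:EVdynamicalWeylGroup} becomes a direct comparison of rational functions in $\lambda$ and can be verified by hand, completing the proof.

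The main obstacle is bookkeeping. First, one must check that the scalar $q^{(w(\rho)-\rho,h)}$ is compatible simultaneously with the $w$-multiplicativity (so that the reduction to simple reflections goes through) and with the tensor-product multiplicativity (so that the reduction to the $2$-dimensional representation works), and that this compatibility matches the discrepancy between the conventions of this paper and those of \cite{EtingofVarchenkoDynamicalWeyl}. Second, the verification on the $2$-dimensional representation requires aligning several sign and normalization conventions, notably between Lusztig's quantum Weyl group operators $T'_{w,+1}$ used here and those in \cite{EtingofVarchenkoDynamicalWeyl}, as well as the corrected signs in formulas (9.10)–(9.11) of \cite{KhoroshkinOgievetsky}.
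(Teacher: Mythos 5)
Your proposal is correct and follows essentially the same route as the paper's proof: reduce to a simple reflection via multiplicativity over reduced expressions, observe that $q^{(s_\alpha(\rho)-\rho,h)}=K_\alpha^{-1}$ so everything lives in the $\Uq(\sl_2)$-subalgebra, reduce to the defining representation using \cref{prop:quantumdynamicalWeylgroupmultiplicative} and its analogue in \cite{EtingofVarchenkoDynamicalWeyl}, and compare \cref{prop:quantumdynamicalWeylsl2} with \cite[Lemma 5]{EtingofVarchenkoDynamicalWeyl}. The extra bookkeeping you flag (cocycle compatibility of the scalar factor and convention matching) is legitimate but is exactly what the identity $s_\alpha(\rho)-\rho=-\alpha$ and the cited multiplicativity lemmas absorb in the paper's argument.
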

\begin{proof}
The proof is analogous to the proof of \cref{thm:TVdynamicalWeylGroup}. Both $A^{EV}_{w, V}$ and $A_{w, V}$ are given by a product over simple reflections, so it is enough to establish this equality for a simple reflection $w=s_\alpha$.

We have $s_\alpha(\rho) = \rho - \alpha$, so
\[q^{(s_\alpha(\rho)-\rho, h)} A_{s_\alpha, V}(\lambda) = q^{-(\alpha, h)} A_{s_\alpha, V}(\lambda) = K_\alpha^{-1} A_{s_\alpha, V}(\lambda).\]
In particular, both sides of the equality \eqref{eq:EVdynamicalWeylGroup} are defined in terms of the corresponding $\Uq(\sl_2)$-subalgebra, so it is enough to restrict our attention to $G=\SL_2$. Using the multiplicativity property of $A^{EV}_{w, V}$ and $A_{w, V}$ given by \cite[Lemma 4]{EtingofVarchenkoDynamicalWeyl} and \cref{prop:quantumdynamicalWeylgroupmultiplicative}, we reduce to the case of the defining representation. The equality on the defining representation of $\SL_2$ follows from comparing the formulas in \cite[Lemma 5]{EtingofVarchenkoDynamicalWeyl} and \cref{prop:quantumdynamicalWeylsl2}.
\end{proof}

\printbibliography

\end{document}